\renewcommand{\AA}{\mathbb{A}}
\newcommand{\ZZ}{\mathbb{Z}}
\newcommand{\RR}{\mathbb{R}}
\newcommand{\PP}{\mathbb{P}}
\newcommand{\MM}{\mathbb{M}}
\newcommand{\KK}{\mathbb{K}}
\newcommand{\CC}{\mathbb{C}}
\renewcommand{\SS}{\mathbb{S}}
\newcommand{\cA}{\mathcal{A}}
\newcommand{\cC}{\mathcal{C}}
\newcommand{\cD}{\mathcal{D}}
\newcommand{\cE}{\mathcal{E}}
\newcommand{\cF}{\mathcal{F}}
\newcommand{\cH}{\mathcal{H}}
\newcommand{\cL}{\mathcal{L}}
\newcommand{\cM}{\mathcal{M}}
\newcommand{\cO}{\mathcal{O}}
\newcommand{\cP}{\mathcal{P}}
\newcommand{\cT}{\mathcal{T}}
\newcommand{\cX}{\mathcal{X}}
\DeclareMathOperator{\phase}{phase}
\DeclareMathOperator{\rk}{rk}
\DeclareMathOperator{\Ext}{Ext}
\DeclareMathOperator{\Hom}{Hom}
\DeclareMathOperator{\Mod}{Mod}
\DeclareMathOperator{\Coh}{Coh}
\DeclareMathOperator{\Stab}{Stab}
\DeclareMathOperator{\PreStab}{PreStab}
\DeclareMathOperator{\RelStab}{RelStab}
\DeclareMathOperator{\SL}{SL}
\newcommand{\glue}{\mathrm{glue}}
\newcommand{\cut}{\mathrm{cut}}
\newcommand{\del}{\partial}
\newtheorem{theorem}{Theorem}
\newtheorem{proposition}[theorem]{Proposition}
\newtheorem{corollary}[theorem]{Corollary}
\newtheorem{lemma}[theorem]{Lemma}
\newtheorem*{theorem*}{Theorem}
\theoremstyle{definition}
\newtheorem{definition}[theorem]{Definition}
\theoremstyle{remark}
\newtheorem*{remark}{Remark}
\newtheorem*{example}{Example}
\author{Alex Takeda}
\title{Relative stability conditions on Fukaya categories of surfaces}
\begin{document}
\begin{abstract}
It is shown that there is a useful notion of a relative Bridgeland stability condition on the partially wrapped Fukaya category of a marked surface, relative to some part of the surface's boundary. This construction has nice functorial properties, obeying cutting and gluing relations. This reduces the calculation of stability conditions on the Fukaya category of any fully stopped surface into three types of base cases. Calculations of these cases shows that every Bridgeland stability condition on such categories can be described by flat surfaces. In other words, the map constructed by Haiden-Katzarkov-Kontsevich from the moduli of flat surfaces to the stability space of the Fukaya category is a global homeomorphism when the surface is fully stopped.
\end{abstract}
\maketitle

\section{Introduction}
In the article \citep{Bri1}, T. Bridgeland defines a notion of stability conditions on triangulated categories, having as inspiration the stability of D-branes in string theory and SCFTs \citep{aspinwall2002d,douglas2005stability}. This definition generalizes the classical concept of slope-stability for vector bundles and is quite remarkable; in particular the space of such structures naturally carries the structure of a complex manifold with a natural action by the group of automorphisms of the category.

The space $\Stab(\cD)$ of Bridgeland stability conditions (in this paper just `stability conditions') on a triangulated category $\cD$ has been well-understood for many cases of geometric interest. For instance, on the `B-side' of mirror symmetry, the initial example to be examined by Bridgeland is the calculation of $\Stab(\cD)$ when $\cD$ is the derived category of coherent sheaves on the elliptic curve \citep{Bri1}. Following this we have Macr\`i's calculation for higher-genus curves \citep{Mac1} and Okada's description of $\Stab(\Coh(\PP^1))$ \cite{Oka}. The complete description of stability conditions on compact surfaces is also known, due to the work of Bridgeland \citep{Bri2}, Toda \citep{toda2014stability}, Okada \citep{okada2006stability} and others, and the difficult case of smooth projective threefolds  \citep{bayer2011bridgeland,bayer2011bridgeland2,maciocia2015fourier} has been a subject of recent developments, with the construction of a family of stability conditions on the quintic threefold \citep{li2018stability}. 

The analogous questions for noncompact spaces \citep{bridgeland2006stability,huybrechts2008stability,Bay2} are often more tractable, and so are the cases of categories defined by quivers and other representation-theoretic data \citep{bridgeland2014stability,king2015exchange,qiu2015stability,ikeda2017stability,dimitrov2016bridgeland}. In these cases, it is often possible to construct families of stability conditions since one has explicit exceptional collections \citep{dimitrov2015bridgeland}; however, constraining all the components of stability spaces requires strong finiteness conditions on $\cD$.

On the other side (A-side) of mirror symmetry there have been many indications that stability conditions on Fukaya categories can recover some geometric data, in particular information related to special Lagrangian geometry \citep{Joy,Smi}. In \citep{HKK}, Haiden, Katzarkov and Kontsevich look at stability conditions on the partially wrapped Fukaya category of a marked surface $\Sigma$, and show that the space of stability conditions on $\cF(\Sigma)$ is related to the geometry of quadratic differentials on $\Sigma$. The relation between moduli spaces of quadratic differentials and spaces of stability conditions already appeared in the work of Bridgeland and Smith \citep{bridgeland2015quadratic}, albeit for a different category. The relation between these two appearances of quadratic differentials for these related categories is explained in \cite{ikeda2018q}.

In more detail, the authors of \citep{HKK} construct a map
\[ \cM(\Sigma) \to \Stab(\cF(\Sigma)) \]
to the space of stability conditions from a moduli space $\cM(\Sigma)$ of ``marked flat structures'' on $\Sigma$, whose points are given by a Riemann surface $X$ diffeomorphic to a compactification of $\Sigma$ together with a meromorphic quadratic differential $\phi$ on $X$, with singularities type prescribed by the marking data of $\Sigma$. This map is proven to be a homeomorphism to a union of connected components of $\Stab(\cF(\Sigma))$; we will call the stability conditions in these components \emph{HKK stability conditions}. In some small cases (disk and annulus), it is shown that this image is in fact the whole stability space, using finiteness properties of these categories.

Those examples display a recurring feature of the existing calculations of $\Stab(\cD)$; it is easier to make statements about individual components of these spaces than to know them in their entirety, since it is \emph{a priori} possible that there might be exotic stability conditions that do not correspond to intuitive geometric structures, living in components of $\Stab(\cD)$ that cannot be accessed by deformations from known stability conditions of geometric origin.

One of the reasons for this recurring difficulty is the relative lack of general tools for constructing stability conditions in a functorial manner. The two constituent parts of a stability condition, the central charge and the slicing, have opposite functoriality, and it is not obvious that stability conditions should exhibit any sheaf- or cosheaf-like behavior. It appears then that one must start with knowing the `global' behavior of the geometry to study stability conditions; all the cases cited above rely heavily on knowledge of the global behavior of morphisms between objects.

The initial motivation for this paper is the observation that \citep{HKK} provides an interesting counterexample to that general principle, since it builds stability conditions on $\cF(\Sigma)$ from geometric objects with nice functorial properties, namely flat structures that glue along nicely under a certain type of decompositions of surfaces. For example, given a decomposition of a marked surface $\Sigma$ into two pieces $\Sigma_1$ and $\Sigma_2$ mutually overlapping along a rectangular strip $R$, and a flat structure on $\Sigma$, restricting the flat structure to each side gives a flat structure (with a new boundary `at infinity'). Moreover, once one defines the appropriate notion of compatibility between flat structures along the strip, one can glue compatible flat structures on $\Sigma_1$ and $\Sigma_2$ into a flat structure on $\Sigma$.

This paper is an effort towards abstracting this idea of cutting and gluing flat surfaces in terms of stability conditions on their Fukaya categories. The appropriate local pieces of this construction are presented in Section \ref{sec:relstab}, where we introduce the definition of \emph{relative stability conditions} on a marked surface. A relative stability condition (Definition \ref{def:relativestabcond}) on $\Sigma$ with respect to one of its unmarked boundary arcs $\gamma$ is an ordinary stability condition on an extended surface $\tilde\Sigma$, obtained from $\Sigma$ by an appropriate modification along the part of the boundary isotopic to $\gamma$.

Let $\RelStab(\Sigma,\gamma)$ denote the set of relative stability conditions on $\Sigma$ with respect to $\gamma$. This set inherits a topology from spaces of (ordinary) Bridgeland stability conditions $\Stab(\cF(\tilde\Sigma))$, and also a compatible ``generalized metric'' coming from Bridgeland's generalized metric on stability space. The resulting topological space is infinite-dimensional, but can be shown (Proposition \ref{prop:hausdorff}) to be a Hausdorff space.

Consider a decomposition $\Sigma = \Sigma_L \cup_\gamma \Sigma_R$ of a marked surface $\Sigma$ into two surfaces glued along boundary arcs. Our main technical result is about the existence of cutting and gluing maps relating stability conditions on $\Sigma$ and relative stability conditions on $\Sigma_L$ and $\Sigma_R$.
\begin{theorem}\label{thm:cutglue}
There is a relation of compatibility along $\gamma$ defining a subset $\Gamma \subset \RelStab(\Sigma_L,\gamma)\times \RelStab(\Sigma_R,\gamma)$ and continuous maps
\[ \Stab(\cF(\Sigma)) \xrightarrow{\cut_\gamma} \Gamma \xrightarrow{\glue_\gamma} \Stab(\cF(\Sigma)) \]
which, when restricted to the locus of stability conditions whose stable objects are all supported on intervals, are inverse homeomorphisms.
\end{theorem}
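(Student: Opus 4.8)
The plan is to work throughout in the combinatorial model for $\cF(\Sigma)$ given by a geometric (arc-system) presentation, chosen compatibly with the decomposition. First I would fix a full formal arc system $\cA$ on $\Sigma$ adapted to $\gamma$ — one for which $\gamma$ is (transverse to exactly one arc of) $\cA$ — so that $\cA$ restricts to arc systems on $\tilde\Sigma_L$ and $\tilde\Sigma_R$ and the graded gentle algebra of $\Sigma$ is the amalgamation of those of $\tilde\Sigma_L$ and $\tilde\Sigma_R$ along the piece coming from $\gamma$. On the categorical side this provides the restriction functors $\rho_L\colon \cF(\Sigma)\to\cF(\tilde\Sigma_L)$ and $\rho_R\colon \cF(\Sigma)\to\cF(\tilde\Sigma_R)$ (``cut a curve along $\gamma$, keep the left/right arcs'') together with the presentation of $\cF(\Sigma)$ as a homotopy colimit of $\cF(\tilde\Sigma_L)$, $\cF(\tilde\Sigma_R)$ over the finite, explicitly understood category $\cF(\tilde R)$ of a neighbourhood of $\gamma$; the existence and variances of these functors are the content of Section~\ref{sec:relstab}. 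I would also record, as input to be used freely, the HKK structure theorem for these categories: every object is a finite direct sum of indecomposables, each an immersed curve with a local system, with $\Hom$-spaces computed from graded intersection data. This is a statement about the category, not about $\Stab$.

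The cut map. Given $\sigma=(Z,\cP)\in\Stab(\cF(\Sigma))$ I would set $\mathrm{cut}(\sigma)=(\sigma_L,\sigma_R)$ by transporting $\sigma$ along $\rho_L,\rho_R$: the central charge $Z_L$ on $K_0(\cF(\tilde\Sigma_L))$ is $Z$ composed with a section of $(\rho_L)_*$, realised geometrically by the fact that every curve in $\tilde\Sigma_L$ occurs as a cutting-piece of a curve in $\Sigma$; and a curve with local system in $\tilde\Sigma_L$ is declared semistable of phase $\phi$ when it arises as a $\cP(\phi)$-semistable cutting-piece of an object of $\cF(\Sigma)$. The substantive point is that $(Z_L,\cP_L)$ satisfies the Harder--Narasimhan and support axioms. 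I would argue geometrically: a sub- or quotient object of something represented by curves supported in $\tilde\Sigma_L$ is again so represented (sub-curves of left curves are left), hence the HN factors in $\cF(\Sigma)$ of such an object are themselves left and descend; finiteness of HN filtrations on the left is inherited from $\Sigma$; and the support property for $Z_L$ follows from that for $Z$ and a length-positivity estimate (no arbitrarily short left arcs, as their masses bound below masses of genuine curves in $\Sigma$). Continuity of $\mathrm{cut}$ is immediate from the definition of the topology on $\RelStab$. Finally, since $\sigma_L$ and $\sigma_R$ are restrictions of a single $\sigma$, they induce the same stability condition on $\cF(\tilde R)$ — this is exactly the compatibility relation, so $(\sigma_L,\sigma_R)\in\Gamma$.

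The glue map and composition. Given $(\sigma_L,\sigma_R)\in\Gamma$ I reconstruct $\sigma$ on $\cF(\Sigma)$. Since $K_0(\cF(\Sigma))$ is generated by classes of curves and each curve $c$ is cut by $\gamma$ into arcs $c^{(i)}_L$ in $\tilde\Sigma_L$ and $c^{(j)}_R$ in $\tilde\Sigma_R$, I set $Z(c)=\sum_i Z_L(c^{(i)}_L)+\sum_j Z_R(c^{(j)}_R)$; well-definedness — independence of the cutting and compatibility with the relations of the gentle algebra — is exactly where agreement of $\sigma_L,\sigma_R$ on $\cF(\tilde R)$ is used. For the slicing, an object $X$, written via HKK as a sum of curves with local systems, is $\sigma$-semistable of phase $\phi$ when every cutting-piece of every summand is $\sigma_L$- resp. $\sigma_R$-semistable of phase $\phi$ and the pieces match across $\gamma$ into genuine curves of constant phase; the HN filtration of a general object is assembled by interleaving the HN filtrations of its pieces, which terminates because each side's does. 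After verifying the stability axioms and continuity of $\mathrm{glue}$, the identity $\mathrm{glue}\circ\mathrm{cut}=\mathrm{id}$ is the formal observation that cutting a curve along $\gamma$ and reassembling the arcs (with their matching data) returns the original curve, and that mass and phase are additive over this cutting by construction; hence both $Z$ and $\cP$ are recovered. (One does not expect $\mathrm{cut}\circ\mathrm{glue}=\mathrm{id}$, and the theorem does not claim it.)

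The main obstacle. The hard part is the Harder--Narasimhan compatibility underlying both maps: that HN factors of a curve supported on one side of $\gamma$ stay on that side, and conversely that the filtration assembled from the two sides' HN filtrations is again an HN filtration — in particular finite, with semistable factors of strictly decreasing phase, and with no ``new'' semistable objects appearing after cutting. This is where I expect to need genuine use of the geometry of semistable objects on a surface (and of how destabilising sub-objects are realised by sub-curves). The support-property bookkeeping for the restricted and glued central charges is a secondary technical point that I expect to dispatch by comparing norms under the maps on $K_0$ induced by cutting.
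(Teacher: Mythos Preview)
Your proposal has two genuine gaps that would prevent the argument from going through.

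First, you treat $\tilde\Sigma_L$ and $\tilde\Sigma_R$ as fixed surfaces determined by a fixed arc system adapted to $\gamma$, with fixed restriction functors $\rho_L,\rho_R$. In the paper's construction they are not: the surfaces $\tilde\Sigma_L = \Sigma_L\cup_\gamma\Delta_m$ and $\tilde\Sigma_R = \Sigma_R\cup_\gamma\Delta_n$ depend on the stability condition $\sigma$ itself, via the simple \textsc{cosi} decomposition of the object $C$ supported on $\gamma$ (Corollary~\ref{corollary:chainofobjects}). The integers $m,n$ count left versus right extensions in that chain, and the arc system $\cA_\gamma$ is the chain of stable intervals. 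The relevant functors are then fully faithful \emph{embeddings} $\cF(\tilde\Sigma_L)\hookrightarrow\cF(\Sigma)$, not restrictions in the other direction; the cut stability conditions are obtained by intersecting the slicing with these full subcategories, and the proof that this yields a slicing is the non-crossing Lemma~\ref{lemma:noncrossing}. Your ``sub-curves of left curves are left'' is gesturing at this, but without the \textsc{cosi} chain replacing $\gamma$ there is no reason the HN factors of a left-supported object should themselves be left-supported.

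Second, and more seriously, your glued slicing is wrong. You declare $X$ semistable of phase $\phi$ when all of its cutting-pieces are semistable of that same phase $\phi$. But a stable interval in $\Sigma$ that crosses $\gamma$ essentially never has this property: its left and right pieces typically have \emph{different} phases, and neither equals the phase of $X$. The paper handles this with the lozenge construction (Definition~\ref{def:lozenge}): one adds to the glued slicing precisely the diagonals of \emph{unobstructed lozenges}, whose four sides are chains of stable intervals on alternating sides satisfying a system of phase inequalities, and whose phase is determined by the total central charge. Lemma~\ref{lemma:lozenges} is then the heart of the proof that $\text{glue}\circ\text{cut}=\mathrm{id}$: every $\sigma$-stable interval crossing $\gamma$ is the diagonal of an unobstructed lozenge, and conversely. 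Without this mechanism your glued pre-slicing is too small and will fail the HN axiom for any object whose support crosses $\gamma$. (Incidentally, the paper does also obtain $\text{cut}\circ\text{glue}=\mathrm{id}$ on $\Gamma$; your parenthetical disclaimer is unnecessary.)
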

Moreover, the cutting map $\cut_\gamma$ behaves nicely with respect to Bridgeland's generalized metric on the stability space; in particular it never sends points in different connected components of $\Stab(\cD)$ (that is, at an infinite distance with respect to the generalized metric) to points at a finite distance in the relative stability spaces (Lemma \ref{lem:distance}).

Consider now any marked graded surface $\Sigma$ that is `fully stopped', ie. every boundary circle has at least one marked interval. In Section \ref{sec:calculations}, we describe a procedure for reducing the calculation of $\Stab{\cF(\Sigma)}$ to the calculation of (ordinary) stability conditions on three types of surfaces with marked boundary: the disk, the annulus and the punctured torus. In all of these cases it can be shown that every stability condition is an HKK stability condition, ie. the map $\cM(\Sigma) \to \Stab(\cF(\Sigma))$ is an isomorphism. The cases of the disk and of the annulus are dealt with in \citep{HKK}, but the calculation for the case of the punctured torus is new. 

These calculations, together with Theorem \ref{thm:cutglue} and the metric properties of the cutting map, imply the following result:
\begin{theorem}\label{thm:noexotic}
If $\Sigma$ is fully stopped, every Bridgeland stability condition on $\cF(\Sigma)$ is an HKK stability condition, ie. given by a flat structure on $\Sigma$.
\end{theorem}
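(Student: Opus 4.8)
The plan is to argue by induction on a suitable complexity invariant of the marked graded surface $\Sigma$ — the one attached to the reduction procedure of Section \ref{sec:calculations}, say a lexicographic combination of the genus, the number of boundary circles, and the total number of marked intervals — with the base of the induction given by the disk, the annulus and the punctured torus. The hypothesis that some boundary circle of $\Sigma$ carries at least two marked intervals is exactly what lets us begin: it produces an unmarked arc $\gamma$ along which $\Sigma$ decomposes as $\Sigma = \Sigma_L \cup_\gamma \Sigma_R$, with both pieces again fully stopped graded surfaces and, after the modification along $\gamma$ from Section \ref{sec:relstab}, the associated surfaces $\tilde\Sigma_L$, $\tilde\Sigma_R$ being of strictly smaller complexity (or equal to one of the base cases). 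Since by definition $\RelStab(\Sigma_L,\gamma) = \Stab(\cF(\tilde\Sigma_L))$ and likewise on the right, the inductive hypothesis — that every stability condition on $\cF$ of a simpler surface is an HKK stability condition — applies to each side.

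Carrying out the inductive step: let $\sigma \in \Stab(\cF(\Sigma))$. Theorem \ref{thm:cutglue} gives a compatible pair $\mathrm{cut}(\sigma) = (\sigma_L,\sigma_R) \in \Gamma \subset \RelStab(\Sigma_L,\gamma)\times\RelStab(\Sigma_R,\gamma)$, and under the identification of the two factors with $\Stab(\cF(\tilde\Sigma_L))$ and $\Stab(\cF(\tilde\Sigma_R))$ the inductive hypothesis says that $\sigma_L$ and $\sigma_R$ are each induced by a flat structure, i.e. by a ``relative flat structure'' on $\Sigma_L$, resp. $\Sigma_R$. I would then check that the compatibility relation cutting out $\Gamma$, when restricted to the locus of HKK relative stability conditions, matches precisely the gluing-compatibility of flat structures along a rectangular strip described in the introduction and in \citep{HKK} — the combinatorial matching of the arcs hitting $\gamma$ together with the metric/phase matching of the strip. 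Granting this, the pair of relative flat structures glues, in the sense of \citep{HKK}, to a genuine flat structure $F$ on $\Sigma$, hence to an HKK stability condition $\sigma' := \cM(F) \in \Stab(\cF(\Sigma))$. Finally, because the geometric gluing of flat structures is compatible with the map $\mathrm{glue}$ of Theorem \ref{thm:cutglue} (another square that must be checked to commute), we get $\sigma' = \mathrm{glue}(\sigma_L,\sigma_R) = \mathrm{glue}(\mathrm{cut}(\sigma)) = \sigma$, so $\sigma$ is an HKK stability condition and the induction closes.

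What remains are the base cases. The disk and the annulus are done in \citep{HKK}: the relevant Fukaya categories are of finite representation type, and finiteness forces every stability condition into the geometric component. The punctured torus is the genuinely new input and is, I expect, the main obstacle, precisely because its wrapped Fukaya category is of \emph{tame} (infinite) type, so the finiteness argument of \citep{HKK} is unavailable. One instead has to classify stability conditions directly: present $\cF$ of the punctured torus by generators and relations (via the associated graded gentle algebra), describe its indecomposable objects and the possible bounded $t$-structures (in particular the hearts, which one expects to be length categories built from the tubes of the tame family), control the allowed central charges, and verify that the resulting component is exactly the image of $\cM(-)$, with no extra hearts giving rise to new components.

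A secondary obstacle, more bookkeeping than conceptual, is the pair of compatibility statements flagged in the inductive step: that on HKK loci the abstract relation $\Gamma$ of Theorem \ref{thm:cutglue} coincides with the geometric gluing condition for flat structures, and that the two gluing operations agree. These must be checked compatibly with the grading and orientation data on $\Sigma$, and together they are what actually rules out exotic components — they guarantee that passing a stability condition through $\mathrm{cut}$ and $\mathrm{glue}$ neither loses nor manufactures any geometry, so that surjectivity of $\mathrm{glue}$ upgrades ``every relative stability condition on the pieces is HKK'' to ``every stability condition on $\cF(\Sigma)$ is HKK.''
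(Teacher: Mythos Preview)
Your overall architecture matches the paper's: decompose $\Sigma$, use Theorem~\ref{thm:cutglue} to reduce to the base cases (disk, annulus, punctured torus), and invoke $\mathrm{glue}\circ\mathrm{cut}=\mathrm{id}$ together with the geometric gluing of flat structures to conclude. Your explicit flagging of the two compatibility squares is in fact more careful than the paper, which leaves them essentially implicit in the Conclusions.

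One correction: it is not true that $\RelStab(\Sigma_L,\gamma) = \Stab(\cF(\tilde\Sigma_L))$ for a single fixed surface $\tilde\Sigma_L$. By Definition~\ref{def:relativestabcond} and the construction in \S\ref{subsec:space}, $\RelStab(\Sigma_L,\gamma)$ is the quotient $\bigsqcup_{n\ge 2}\Stab(\cF(\Sigma_L\cup_\gamma\Delta_n))/{\sim}$; the integer $n$, and hence the surface $\tilde\Sigma_L$, depends on $\sigma$ through the \textsc{cosi} decomposition of the object supported on $\gamma$. This does not break your induction --- each such $\tilde\Sigma_L$ still has strictly smaller genus or fewer boundary circles than $\Sigma$, or is already a base case --- but the identification as stated is wrong, and your complexity invariant had better not weight the number of marked intervals too heavily, since that number can grow under modification.

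The substantive divergence is in the punctured torus. You propose to classify bounded $t$-structures directly via the gentle-algebra presentation and the tube structure of the tame category. The paper takes a different and shorter route: it shows (Lemma~\ref{lemma:gap} and the arguments of \S5.4) that after an arbitrarily small deformation any stability condition on $\cF(T^*_n)$ has a gap in its set of stable phases $\Phi\subset S^1$, which by the support property forces the heart to be finite and hence the stability condition to be HKK. The key step is an $\SL(2,\ZZ)$-flavoured argument: assuming $\Phi$ dense, one manufactures an infinite family of mutually transversely intersecting stable objects whose index/phase constraints force a contradiction with genericity of the central charge. This sidesteps any classification of hearts; your proposed approach, if it could be carried out, would yield more structural information about $\Stab(\cF(T^*_n))$, but it is not clear it is tractable, and in any case it is not what the paper does.
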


The cutting and gluing procedures can be used to give an explicit description of the spaces $\Stab(\cF(\Sigma))$ in this fully stopped case. In the forthcoming work \cite{takeda2021future}, we use this technique to give a combinatorial description of (a generalized form of) wall-and-chamber structures in these stability spaces.

Recent work of Lekili-Polishchuk \cite{lekili2020derived} (see also Opper-Plamondon-Schroll \cite{opper2018geometric} for a similar result in a slightly different context) establishes a two-way dictionary between graded marked surfaces and smooth $\ZZ$-graded \emph{gentle algebras}; under this equivalence, fully-stopped surfaces correspond to smooth and finite-dimensional gentle algebras. Therefore as a corollary we can also use relative stability conditions to study the structure of stability spaces of representation categories of such algebras.

\section*{Acknowledgments}
I would like to especially thank Tom Bridgeland, Fabian Haiden and my graduate advisor Vivek Shende for very helpful conversations and great patience in explaining technical points to me. I would also like to thank Dori Bejleri, Benjamin Gammage, Tatsuki Kuwagaki, David Nadler, Ky\=oji Sait\=o, Ryan Thorngren and Gjergji Zaimi for helpful discussions. A crucial part of this work was conducted during a working visit to the IPMU in Japan, and I would like to thank the faculty and staff of that institute for providing a great working environment. This project was supported in part by NSF CAREER DMS-1654545.

\section{Background}

\subsection{Fukaya categories of graded marked surfaces}
We fix a field $\KK$ of characteristic zero. A graded marked surface (or just surface for brevity) is a smooth oriented surface $\Sigma$ with boundary $\del\Sigma$ and a set of \emph{marked boundaries} $\MM$, and a grading. The set $\MM$ has as elements intervals contained in $\del\Sigma$; the intervals in the complement $\del\Sigma \setminus \MM$ will be the \emph{unmarked boundaries}. Let us assume throughout this paper that each component of $\del \Sigma$ has at least one marked boundary.

A grading on $\Sigma$ is a line field $\eta \in \Gamma(\Sigma,\PP T\Sigma)$. The set of gradings on $\Sigma$, up to graded diffeomorphism isotopic to the identity, is a torsor over $H^1(\Sigma,\ZZ)$. Curves immersed in $\Sigma$ can be graded with respect to the line field $\eta$; this defines the integer degree of a point of intersection between curves. We can equivalently interpret the choice of data $(\Sigma,\MM,\eta)$ above as a stopped surface $(\Sigma,\Lambda,\eta)$ (in the sense of \cite{auroux2010fukaya,lekili2020derived}) by collapsing each unmarked boundary interval to a point, giving a stop; our standing assumption then implies we only work with \emph{fully stopped surfaces}. The data of a marked surface or a stopped surface are equivalent.

An arc in $\Sigma$ is an embedded interval with ends on marked boundaries, and an arc system $\cA$ is a collection of pairwise disjoint and non-isotopic arcs.

\begin{figure}[h]
    \centering
    \includegraphics[width=0.45\textwidth]{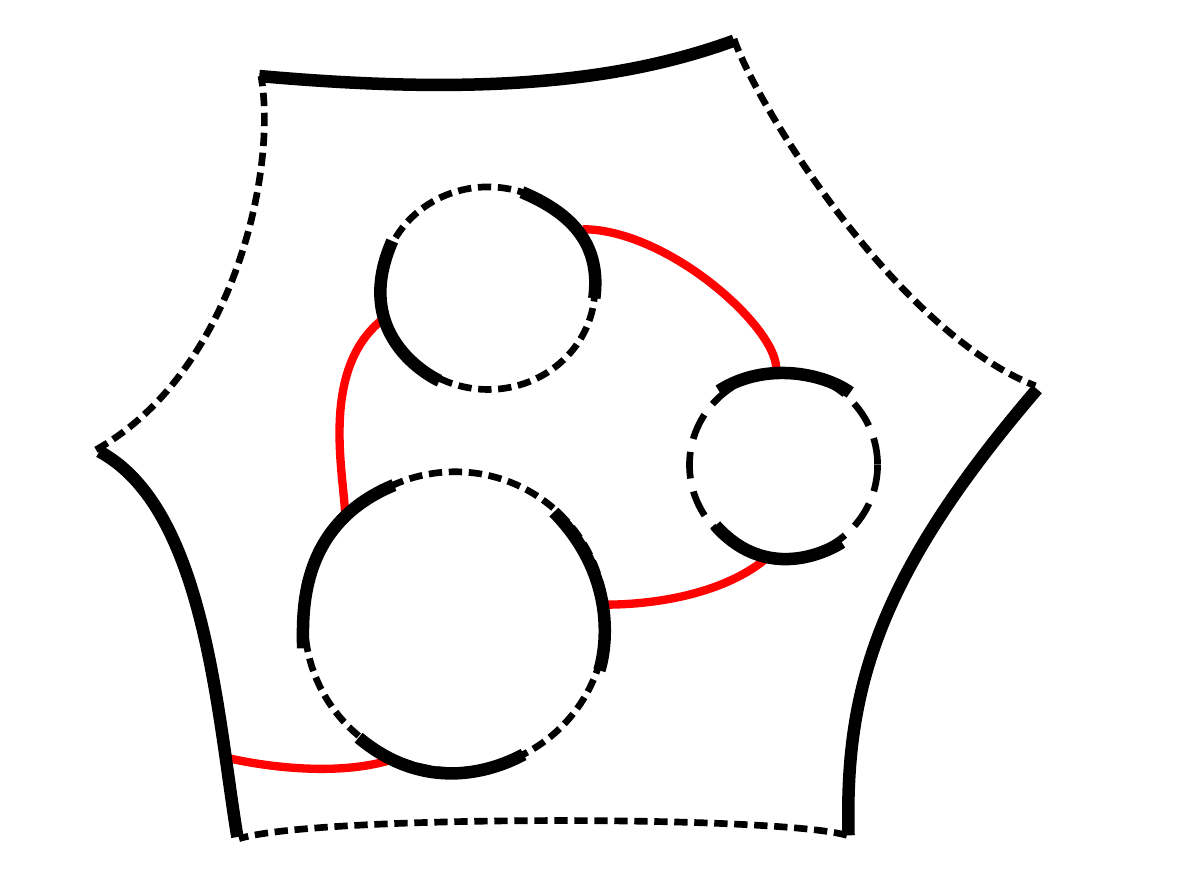}
    \caption{A marked surface with a system of arcs in red. The marked boundary intervals are denoted by solid black lines and the unmarked ones by dotted black lines.}
\end{figure}

Given any full arc system (that is, dividing the surface into polygons and including arcs isotopic to all unmarked boundaries) $\cA$, one can define an $\ZZ$-graded $A_\infty$-category $\cF_\cA(\Sigma)$; the triangulated Fukaya category can then be defined \citep{HKK} as the the category $\mathrm{Tw} \cF_\cA(\Sigma,\MM,\eta)$ of twisted complexes. This is a triangulated $A_\infty$-category, which is proven to be independent of the choice of $\cA$, up to equivalence. In this paper, we will denote by $\cF(\Sigma) = H^0(\mathrm{Tw} \cF_\cA(\Sigma,\MM,\eta))$ its homotopy category; this is a $\KK$-linear triangulated category in the usual sense. For conciseness, we will simply refer to this triangulated category as `the Fukaya category'.

From its description as the homotopy category of the category of twisted complexes, one might think that general indecomposable objects of $\cF(\Sigma)$ could be hard to classify. Nevertheless, it can be shown to that any indecomposable object admits a description in terms of certain immersed curves.

An \emph{admissible graded curve} $\gamma$ in $\Sigma$ is either an immersed interval ending at marked boundaries or an immersed circle, equipped with a grading. To be admissible, the immersed curve $\gamma$ is not allowed to bound any teardrops \citep[Sec.2.1]{HKK}, or to be nullhomotopic, or isotopic to one of the marked boundaries.
\begin{theorem}\citep[Theorem 4.3]{HKK}\label{thm:geometricity}
Every isomorphism class of indecomposable objects in $\cF(\Sigma)$ can be represented by an admissible graded curve endowed with an indecomposable local system, uniquely up to graded isotopy.
\end{theorem}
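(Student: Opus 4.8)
The plan is to reduce the statement to the combinatorial classification of indecomposable objects over graded gentle algebras, and then to geometrize that classification. First I would fix a full arc system $\cA$ on $\Sigma$ (one exists because $\Sigma$ is fully stopped) and recall that the $A_\infty$-endomorphism algebra of $\bigoplus_{a\in\cA} L_a$, where $L_a$ is the object attached to the arc $a$, is a $\ZZ$-graded gentle algebra $A=A_\cA$: the arcs are the vertices, the arrows record which arcs share an endpoint on a marked interval and in which cyclic order, the gentle relations come from the corners of the polygons cut out by $\cA$, and the internal degrees are read off from the line field $\eta$. Moreover this $A_\infty$-structure is formal, so $\cF(\Sigma)\simeq \Perf A$ as triangulated categories. (This equivalence is established earlier in the development of the category; I would cite it rather than reprove it.)

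Next I would invoke the classification of indecomposable objects in the perfect derived category of a gentle algebra: every indecomposable object of $\Perf A$ is either a \emph{string complex} $P_w$, attached to a reduced walk $w$ in the quiver avoiding the relations, or a \emph{band complex} $P_{b,\lambda,n}$, attached to a primitive cyclic band $b$ together with an indecomposable module over $k[t,t^{-1}]$, i.e.\ a scalar $\lambda\in\Cstar$ and a Jordan-block size $n\ge 1$; in the graded setting one additionally carries the sequence of internal degree shifts along $w$ or $b$, subject to the obvious consistency condition around a band. This is the technical heart of the argument and I would import it from the literature on derived categories of gentle algebras (functorial-filtration / Bekkert–Merklen-type methods), adapting the bookkeeping to the $\ZZ$-graded situation.

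Then comes the geometrization. A reduced walk $w$ in the gentle quiver of $\cA$ describes an immersed path in $\Sigma$ crossing the arcs of $\cA$ in the prescribed order and turning through the prescribed corners; since $\cA$ cuts $\Sigma$ into polygons each containing exactly one marked interval, straightening this path exhibits it, rel endpoints, as an embedded arc ending on marked intervals — an admissible graded curve with trivial local system, the grading coming from $\eta$. A primitive band $b$ describes an immersed non-contractible closed curve, and the $k[t,t^{-1}]$-datum $(\lambda,n)$ is precisely an indecomposable rank-$n$ local system on it, with monodromy a single Jordan block of eigenvalue $\lambda$. Conversely, an admissible graded curve with indecomposable local system, put in minimal position with respect to $\cA$, yields a reduced string or band by reading off its successive intersections with $\cA$. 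Here ``admissible'' (no contractible component, no curve isotopic into an unmarked boundary region, minimal intersection with $\cA$, no bigons) matches exactly the ``reduced'' condition on strings and bands.

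Finally, uniqueness up to graded isotopy: the classification tells us that $P_w\cong P_{w'}$ iff $w'$ is $w$ read backwards, and $P_{b,\lambda,n}\cong P_{b',\lambda',n'}$ iff $(b',\lambda',n')$ is obtained from $(b,\lambda,n)$ by a cyclic rotation or inversion of $b$ — which is exactly the ambiguity of choosing a basepoint and orientation on the curve — so the geometric datum is determined by the object; and a surface-topology argument (innermost-bigon removal) shows that the minimal position of a curve with respect to $\cA$ is unique up to isotopy, so the string/band datum determines the graded isotopy class. The main obstacle I expect is Step 2 in the graded dg setting: making sure the gentle-algebra indecomposable classification genuinely captures all of $\Perf A$ with the correct handling of degree shifts, that no extra indecomposables are created by the grading, and that the formality claim in Step 1 is stated precisely enough to license the reduction. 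I would also note that none of this uses the support property or any finiteness of $\Stab$ — it is a purely triangulated-category statement — so those hypotheses can be ignored here.
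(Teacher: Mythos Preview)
The paper does not prove this theorem at all; it is quoted from \cite{HKK} (their Theorem~4.3) and used as a black box throughout. There is therefore no proof in the present paper to compare your proposal against.

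For what it is worth, your outline is essentially the strategy of the original proof in \cite{HKK}: pass to the graded gentle algebra attached to a full arc system, invoke the string/band classification of indecomposables in $\Perf A$, and translate strings and bands back into admissible curves with local systems. One inaccuracy worth flagging: you assert that a reduced string geometrizes to an \emph{embedded} arc, but in general the resulting curve is only immersed --- the theorem allows immersed admissible curves (no teardrops), and genuinely self-intersecting indecomposables do occur (cf.\ the annulus example in this paper, Remark after Proposition~\ref{prop:stableobjects}). The admissibility condition (no nullhomotopic components, no teardrops) is what plays the role you assign to embeddedness.
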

By a local system on an admissible curve, it is meant a local system on the source (either an interval or a circle) of the immersion. The geometricity theorem above admits generalizations to $\ZZ/2$-graded and non-exact cases, as it was recently explained in \citep{auroux2020fukaya}.

Another result of \citep{HKK} is a description of $K_0(\cF(\Sigma))$: the grading on $\Sigma$ gives a double cover $\tau$, which corresponds to a local system of abelian groups $\ZZ_\tau = \ZZ \otimes_{\ZZ/2} \tau$.
 \begin{theorem}\citep[Theorem 5.1]{HKK}\label{thm:K0}
There is a natural isomorphism of abelian groups $K_0(\cF(\Sigma)) \cong H^1(\Sigma,\MM; \ZZ_\tau)$.
\end{theorem}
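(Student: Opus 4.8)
The plan is to pin down both sides by means of a full arc system and then match their presentations. First I would fix a full arc system $\cA$ on $\Sigma$. The arc objects $\{X_a\}_{a\in\cA}$ generate $\cF(\Sigma)$ — this underlies the geometricity statement of Theorem~\ref{thm:geometricity} — and with respect to $\cA$ the category is quasi-equivalent to $\Perf$ of the associated graded gentle algebra $\bigoplus_{a,b\in\cA}\Hom^\bullet_{\cF(\Sigma)}(X_a,X_b)$. From this description one extracts that $K_0(\cF(\Sigma))$ is a free abelian group of rank $|\cA|$ with basis the classes $[X_a]$, at least in the fully stopped case. The point of starting here is that a homomorphism \emph{out of} $K_0(\cF(\Sigma))$ is then specified simply by its values on the $[X_a]$, with no exact triangles left to check.

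Second I would write down the comparison map. The arc system also exhibits $(\Sigma,\cM)$ as a CW pair — the arcs and the boundary segments as $1$-cells, the complementary regions of $\Sigma\setminus\bigcup_{a\in\cA}a$ as $2$-cells — with respect to which $H^1(\Sigma,\cM;\ZZ_\tau)$ acquires a finite presentation as relative cellular cohomology with local coefficients in $\ZZ_\tau$. The map I want sends $[X_a]$ to the class of the arc $a$; more intrinsically, using Theorem~\ref{thm:geometricity}, it sends an indecomposable object presented by a graded curve $\gamma$ with indecomposable local system $L$ to $\rk(L)$ times the twisted class carried by $\gamma$. The appearance of $\ZZ_\tau$ rather than $\ZZ$ is exactly where the grading enters: an unoriented curve determines a class only modulo $2$, and coherently orienting it is the same datum as trivialising $\tau$ along it, so that a loop of $\Sigma$ along which the foliation direction of $\eta$ reverses — a loop on which $\tau$ has monodromy $-1$ — acts by $-1$, matching the $2\pi$-rotation ambiguity of the grading.

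Third, I would check the map is an isomorphism. Surjectivity is geometric: a full arc system cuts $\Sigma$ into disks, so every relative cycle is, after the $\ZZ_\tau$-twist, a combination of the arcs $a\in\cA$ together with unmarked boundary segments, and the relations coming from the complementary $2$-cells express the latter in terms of the former; hence the $[a]$ span $H^1(\Sigma,\cM;\ZZ_\tau)$. For injectivity it suffices to observe that the two sides are free abelian groups of the same rank — a routine Euler-characteristic count for the CW pair above, the fully stopped hypothesis ensuring that the relevant relative $H^0$ vanishes — so a surjection between them is an isomorphism. (One can alternatively be fully explicit: for each $a$ produce a transverse arc from the unmarked boundary to itself meeting only $a$ among the arcs of $\cA$, and use the Lefschetz pairing $H_1(\Sigma,\cM;\ZZ_\tau)\times H_1(\Sigma,\del\Sigma\setminus\cM;\ZZ_\tau)\to\ZZ$ to exhibit the $[a]$ as a basis directly.)

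Finally, naturality. Since the intrinsic description of the map makes no reference to $\cA$, it is enough to know it is an isomorphism for one arc system; to see it does not depend on the choice I would connect any two full arc systems by a sequence of flips and verify that under a flip replacing $a$ by $a'$ the identities $[X_{a'}]=\pm[X_a]\pm[X_b]$ in $K_0(\cF(\Sigma))$ and $[a']=\pm[a]\pm[b]$ in $H^1(\Sigma,\cM;\ZZ_\tau)$ hold with matching signs. This is also where I expect the real difficulty to sit: the whole argument hinges on keeping the grading conventions for graded curves consistent with the monodromy of $\tau$, simultaneously on the categorical side (signs and directions of the surgery and flip triangles, i.e.\ degrees of morphisms) and on the topological side (orientations of $\tau$-twisted cells), in a manner visibly independent of all auxiliary choices. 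The remaining ingredients — generation by arc objects, the gentle description and freeness of $K_0$, the rank count, and the spanning statement — are standard or routine.
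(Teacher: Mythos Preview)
The paper does not prove this statement: Theorem~\ref{thm:K0} is quoted verbatim from \citep[Theorem 5.1]{HKK} and used as a black box (see also its reappearance in Subsection~\ref{subsec:centralcharge} and in the punctured-torus calculation). There is therefore no proof in the present paper to compare your proposal against.

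That said, your outline is a reasonable reconstruction of the standard argument and is essentially the approach of \citep{HKK}: one presents both sides via a full arc system and matches the presentations, the twist by $\ZZ_\tau$ encoding the sign ambiguity of the grading. Two small cautions. First, your assertion that $K_0(\cF(\Sigma))$ is free of rank $|\cA|$ is correct in the fully stopped case because the arc objects are the indecomposable projectives of the graded gentle algebra, but you should say this explicitly rather than leave it as an extraction; the paper itself later speaks of ``relations coming from polygons'' in $K_0$, and the point is that for a \emph{full} arc system each complementary polygon has an unmarked boundary segment on its boundary, so the would-be relation is absorbed. Second, your rank argument for injectivity requires knowing that $H^1(\Sigma,\cM;\ZZ_\tau)$ is torsion-free; this is true here (the pair is homotopy equivalent to a pair of graphs), but it deserves a word. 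The alternative Lefschetz-pairing route you sketch is cleaner and avoids this issue entirely.
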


\subsection{Intersection and morphisms}
Besides the combinatorial description above in terms of twisted complexes of arc systems, the category $\cF(\Sigma)$ admits also a description in terms of symplectic geometry as (the triangulated completion) of the \emph{partially wrapped Fukaya category} of the corresponding stopped surface $(\Sigma,\Lambda,\eta)$ \cite{auroux2010fukaya}. Objects in this category are given by unobstructed Lagrangians $L \subset \Sigma$, endowed with finite-rank local systems, which are allowed to end on the unstopped boundary. Morphisms between two such objects are given by the partially wrapped Floer complex $CW^*(L_1,L_2)$, generated by the intersection points $p \in L_1 \cap L_2$ (after appropriate perturbation) \emph{and} by clockwise boundary paths $L_1 \rightsquigarrow L_2$ which avoid the stopped part $\Lambda$. The differential on $CW^*(L_1,L_2)$ is given by counting bigons between $L_1$ and $L_2$.

As in the combinatorial description, one can take the category of twisted complexes on these Lagrangians and then its derived category; this symplectic approach should furnish an equivalent triangulated category. We will not need to use this explicit equivalence; instead we will now establish some facts about the morphism spaces and extensions of $\cF(\Sigma)$, inspired by the symplectic description.

Let us first generalize some topological definitions to include the data of the marked boundary. Let $\gamma_1,\gamma_2$ be two admissible curves in $\Sigma$, intersecting transversely. We modify the notion of intersection in the following way.
\begin{definition}
The set of \emph{directed intersections} $\gamma_1 \cap' \gamma_2$ is equal to the disjoint union of the set of points $\gamma_1 \cap \gamma_2$ and the set of (isotopy classes of) boundary paths $\gamma_1 \rightsquigarrow \gamma_2$, i.e. paths from an end of $\gamma_1$ to an end of $\gamma_2$, along some marked boundary $M$, keeping the interior of $\Sigma$ to its right.
\end{definition}

Note that in this definition the order of the intervals matters. We also modify the notion of bigon bounded by $\gamma_1$ and $\gamma_2$ in a similar way.
\begin{definition}
A \emph{generalized bigon} between $\gamma_1$ and $\gamma_2$ is either an embedded bigon in $\Sigma$ bound by $\gamma_1$ on one side, $\gamma_2$ on the other, and two intersection points $p,q \in \gamma_1 \cap \gamma_2$ at each end, \emph{or} an embedded triangle in $\Sigma$ bound by $\gamma_1$, $\gamma_2$ and an interval $I \subset M$ contained in a marked boundary; in other words a `bigon' with a genuine intersection point at one end, and some directed intersection in $\gamma_1 \cap' \gamma_2$ or $\gamma_2 \cap' \gamma_1$.
\end{definition}

Let us now define a notion of representatives with minimal intersection among admissible curves in some isotopy class.
\begin{definition}
We say that the admissible curves $\gamma_X, \gamma_Y$ are in \emph{minimal position} if:
\begin{enumerate}
    \item they only intersect transversely,
    \item there are no generalized bigons bound by each of $\gamma_X$ and $\gamma_Y$,
    \item there are no generalized bigons between $\gamma_X$ and $\gamma_Y$.
\end{enumerate}
\end{definition}
Note that in the two definitions above the order of the two curves does not matter. Note also that in the definition of generalized bigon we \emph{do not} include `bigons' bound by boundary paths on both ends (i.e. embedded quadrilaterals between the curves); otherwise two isotopic immersed intervals could never be put in minimal position. By `embedded bigons', we do allow bigons whose interior intersects other parts of the curves $\gamma_X,\gamma_Y$. 

Let us state now an auxiliary lemma, that will be used in this paper to reduce questions about circle objects to questions about interval objects.
\begin{lemma}\label{lem:circletointerval}
Let $X$ be an object of $\cF(\Sigma)$ supported on an immersed circle $\gamma$, bounding no embedded bigons. Then there are objects $X_1$ and $X_2$, supported on immersed intervals $\gamma_1,\gamma_2$ ending on marked boundaries $M,N$ giving extension maps $\eta_M,\eta_N \in \Hom(X_1,X_2[1])$, such that:
\begin{enumerate}
    \item there is a distinguished triangle $X_2 \to X \to X_1 \xrightarrow{\eta_M + \eta_N} \dots$
    \item the curves $\gamma_1$ and $\gamma_2$ are in minimal position, and
    \item there is a bijection between the set $\gamma \pitchfork \gamma$ of self-intersections of the immersed curve $\gamma$ and the set $(\gamma_1 \pitchfork \gamma_1) \cup (\gamma_2 \pitchfork \gamma_2) \cup (\gamma_1 \cap' \gamma_2).$
\end{enumerate}
Note that the marked boundaries $M$ and $N$ may not be distinct.
\end{lemma}
\begin{proof}
This claim follows from the fact that roughly speaking, there is at least one marked boundary to each side of any immersed circle, embedded or not. Let us be more explicit: consider first the embedded case. If $\gamma$ is non-separating, then all of $\Sigma$ sits on both sides of $\gamma$, and since our surfaces have at least one marked boundary, we can pick paths from marked boundaries $M,N$ (not necessarily distinct) to $\gamma$ and draw embedded intervals $\gamma_1,\gamma_2$.

\begin{figure}
    \centering
    \includegraphics[width=\textwidth]{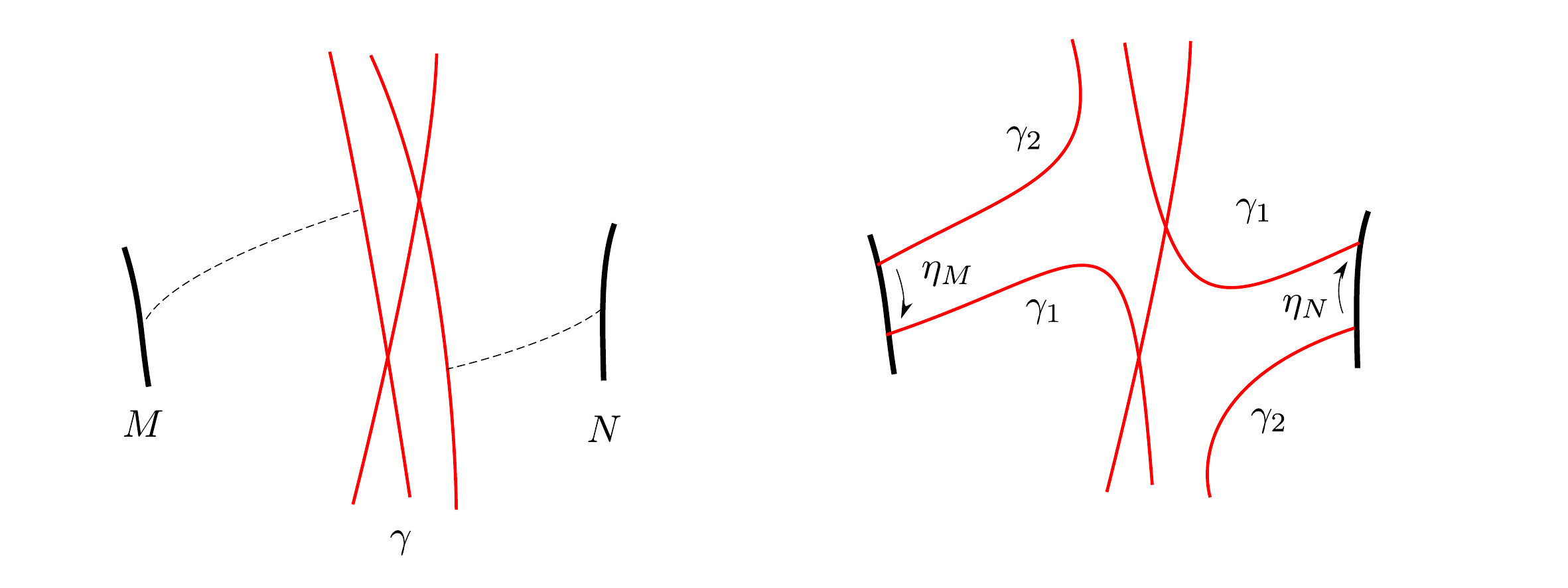}
    \caption{Replacing the circle object supported on $\gamma$ by two interval objects, with extension morphisms $\eta_M,\eta_N$.}
    \label{fig:circletointerval}
\end{figure}

Suppose instead that the embedded circle $\gamma$ separates $\Sigma$ into two connected components $\Sigma_l,\Sigma_r$. We argue that each of those components must have at least one boundary circle. Say $\Sigma_l$ does not have a boundary (apart from the newly created $\gamma$). Then it has the topological type of the complement $\Sigma_g \setminus D$ of a disk in a genus $g$ surface. But the simple closed curve giving the boundary of such a surface is never gradable, so this curve does not support an object.

We endow these intervals with local systems with equal rank to the local system in $X$, and choose the extension maps $\eta_M,\eta_N$ corresponding to directed boundary paths along $M,N$, respectively, such that the induced monodromy given by $\eta_M \circ \eta_N$ is conjugate to the monodromy of $X$. This gives condition (1); conditions (2) and (3) follow from the lack on intersections between $\gamma_1,\gamma_2$ in the interior.

Suppose now that $\gamma$ has non-trivial self-intersections. We choose a cover $\hat\Sigma \to \Sigma$ to which $\gamma$ lifts to an embedded circle $\hat\gamma$, and find marked boundaries to each side of $\hat\gamma$. Their image in $\Sigma$ gives marked boundaries $M,N$ to each side of $\gamma$; they might be identical if $\gamma$ is non-separating.

Now we look at the complement $\Sigma \setminus \gamma$. This a disjoint union of many open surfaces, whose boundaries are pieces of $\gamma$. We look at the components containing $M$ and $N$, and pick paths connecting them to $\gamma$ on either side, as shown in Figure \ref{fig:circletointerval}.

This gives the desired objects as in the embedded case, and satisfies conditions (2) and (3) since the paths we chose from $M,N$ to $\gamma$ do not intersect $\gamma$ in their interior, so in modifying $\gamma$ into $\gamma_1,\gamma_2$ no intersections are created or cancelled.
\end{proof}

Now let $X$ and $Y$ be rank one objects as above, supported on a pair $(\gamma_X,\gamma_Y)$ in minimal position. The following two lemmas appear to be well-known facts about Fukaya categories of surfaces, or at least used implicitly in some of the literature, but we include their proofs here for completeness.
\begin{lemma}\label{lemma:morphisms}
There is a basis of $\Hom(X,Y)$ whose elements are in bijection with the points in $\gamma_X \cap' \gamma_Y$.
\end{lemma}
\begin{proof}
The case for embedded curves in surfaces without boundary is \citep[Cor.2.11]{auroux2020fukaya}. Let us argue three cases in sequence: when $\gamma_X$ and $\gamma_Y$ are intervals, when one is an interval and the other is a circle, and when both are circles.

Suppose $\gamma_X$ and $\gamma_Y$ are both intervals. In that case, we can consider the universal cover $\hat\Sigma \to \Sigma$ (as a graded marked surface) and pick lifts $\hat\gamma_X$ and $\hat\gamma_Y$ (as graded curves). As in \citep[Sec.5.5]{HKK}, there is an equivalence
\[ \Hom_{\cF(\Sigma)}(X,Y) = \bigoplus_{g\in \pi_1(\Sigma)}\Hom_{\cF(\hat\Sigma)}(\hat\gamma_X,g\cdot \hat\gamma_Y). \]
Since the cover $\hat\Sigma\to\Sigma$ is a local homeomorphism, if $\gamma_X,\gamma_Y$ are in minimal position, then so are $\hat\gamma_X$ and $g\cdot \hat\gamma_Y$ for any $g$, so they are embedded intervals intersecting once, and the result follows from a calculation on the disc with four marked boundaries, whose category is equivalent to the (derived) representation category of the $A_3$ quiver.

Suppose now that $\gamma_X$ is an immersed circle and $\gamma_Y$ is an immersed interval. We take the annular cover $\hat\Sigma\to \Sigma$, on which $\gamma_X$ lifts to an embedded circle $\hat\gamma_X$. We again pick a lift $\hat\gamma_Y$ of $\gamma_Y$ and consider the intersections between the circle $\hat\gamma_X$ and the intervals $g\cdot\hat\gamma_Y$. Since they intersect minimally, they either intersect only once or not at all; in either case the result follows from a calculation on the annulus. The case where $\gamma_X$ is an immersed interval and $\gamma_Y$ is an immersed circle is analogous.

The most complicated case is when $\gamma_X$ and $\gamma_Y$ are both embedded circles; in which case they do not easily lift to the same cover as in the two previous cases. We use Lemma \ref{lem:circletointerval} to split $Y$ into an extension of two interval objects extended at marked boundaries $M,N$. We can choose the resulting intervals $\gamma_1, \gamma_2$ to intersect $\gamma_Y$ transversely some number of times, as indicated in Figure \ref{fig:threecolors}. Let us label the generating degree one morphisms along $M,N$ as $m,n$; then the object $Y$ can be recovered by a distinguished triangle
\[ Y_2 \to Y \to Y_1 \xrightarrow{Am + Bn} \dots \]
in $\cF(\Sigma)$ where $A,B \in \KK^*$; note that $AB$ is the monodromy of the rank one local system on $Y$.

\begin{figure}
    \centering
    \includegraphics[width=\textwidth]{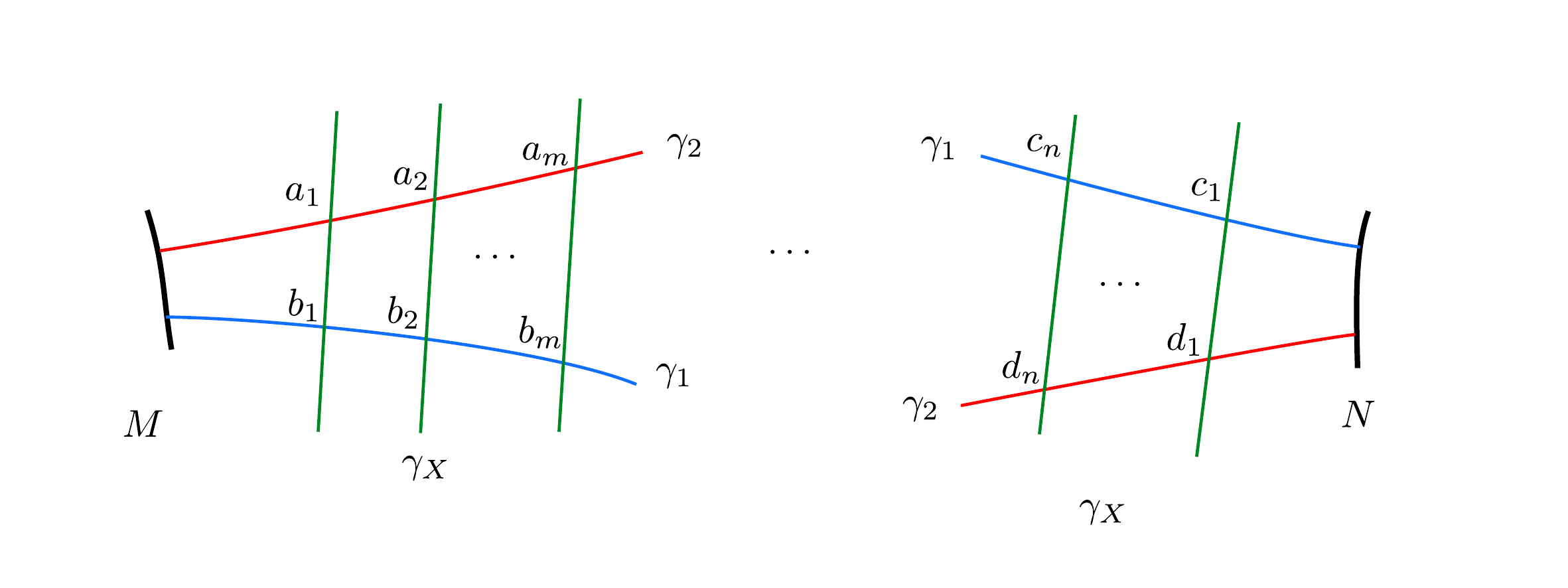}
    \caption{The two ends of the immersed intervals $\gamma_1, \gamma_2$. These intervals are chosen to intersect $\gamma_X$ transversely, at additional points $a_i,b_i,c_i,d_i$, in addition to the already-existing intersections between $\gamma_X,\gamma_Y$.}
    \label{fig:threecolors}
\end{figure}

Note that in comparison to $\gamma_X \pitchfork \gamma_Y$, there is now an even number $2(m+n)$ of new intersections in $(\gamma_X \cap \gamma_1) \cup (\gamma_X \pitchfork \gamma_2)$, which we label
\[ a_1, \dots, a_m, b_1,\dots, b_m, c_1, \dots, c_n, d_1, \dots, d_n \]
as in Figure \ref{fig:threecolors}.

We already know the statement of the lemma for $\Hom(X,Y_1)$ and $\Hom(X,Y_2)$ since $Y_1,Y_2$ are interval objects. We now apply $\Hom(X,-)$ to the distinguished triangle composed of $Y,Y_1,Y_2$ to get a distinguished triangle
\[ \Hom(X,Y_2) \to \Hom(X,Y) \to \Hom(X,Y_1) \xrightarrow{(Am+Bn) \circ} \dots \]
Note that the extension map on Hom-spaces is given by postcomposition with the degree one morphism $Am+Bn \in \Ext^1(Y_1,Y_2)$. The claim then follows for $\Hom(X,Y)$ by the calculation that the only non-trivial compositions are given by the `triangles'
\[ m \circ a_i = b_i \qquad n \circ c_i = d_i \]
appearing in the figure above; by the assumption of no embedded bigons between the original $\gamma_X,\gamma_Y$, those are the only triangles which appear with $M$ and $N$ at a vertex.
\end{proof}

The argument above also gives the degrees of the generators of the morphism space $\Hom_{\cF(\Sigma)}(X,Y)$. Each generalized intersection $p \in \gamma_X \cap \gamma_Y$ carries an intersection index $i_p(X,Y) \in \ZZ$ \citep[Sec.2.1]{HKK}; the corresponding morphism $p$ in $\Hom_{\cF(\Sigma)}(X,Y)$ sits in degree $i_p(X,Y)$.

We now prove a relation between extensions of indecomposable objects and intersections between their representing admissible (possibly immersed) curves.
\begin{lemma}
Let $X,Y$ be two objects with rank one as above, supported on a pair of immersed curves $(\gamma_X,\gamma_Y)$ in minimal position, and $p \in \gamma_X \cap \gamma_Y$ be an intersection point with index $i_p(X,Y) = 1$. Then there is an object $Z \in \cF(\Sigma)$ together with a distinguished triangle
\[ Y \to Z \to X \overset{p}{\dashrightarrow} \]
such that $Z$ is supported on a (possibly disconnected) immersed curve obtained by smoothing $\gamma_X \cup \gamma_Y$ at $p$.
\end{lemma}
\begin{proof}
Let us first address the case where $X$ and $Y$ are supported on embedded intervals intersecting only once, and do not share any marked boundaries. In that case, $X, Y$ and the object $Z = Z_1 \oplus Z_2$ given by smoothing the intersection as in Figure \ref{fig:intervalstriangle} are in the image of an exact functor $\cF(\Delta_4) \to \cF(\Sigma)$, where $\Delta_4$ is the disk with 4 marked boundaries, and the result follows from a calculation there.

\begin{figure}
    \centering
    \includegraphics[width=0.4\textwidth]{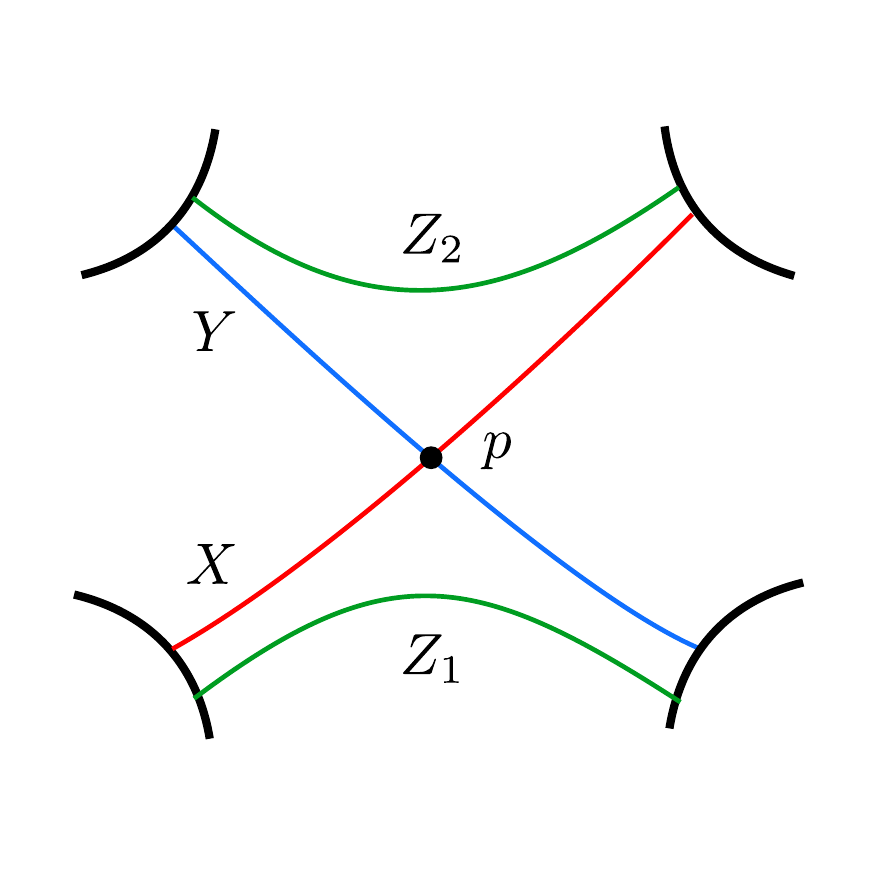}    \caption{The curves representing a distinguished triangle $Y \to Z \to X \xrightarrow{p} \dots$, where $Z = Z_1 \oplus Z_2$ is supported on two intervals disjoint from each other.}
    \label{fig:intervalstriangle}
\end{figure}

We then prove the case for immersed intervals, possibly with multiple intersections, by again lifting to the universal cover $\pi:\hat\Sigma\to\Sigma$. Let us fix two lifts $\hat\gamma_X,\hat\gamma_Y$ of the immersed curves; there is then some group element $g \in \pi_1(\Sigma)$ such that $\hat\gamma_X$ and $g \cdot \hat\gamma_Y$ intersect at a unique point $\hat p$, which is a lift of $p$; by the disk calculation we have a distinguished triangle $\hat Y \to \hat Z \to \hat X \dashrightarrow$, corresponding to some functor $\cD \to \cF(\hat\Sigma)$, which we compose down to $\cF(\Sigma)$ to get the desired object as the image of $\hat Z$.

Now we move on to the case where $\gamma_X$ is an immersed circle, and $\gamma_Y$ is an immersed interval. We lift to the annular cover $\hat\Sigma' \to \Sigma$ with respect to $\gamma_X$; then $\gamma_X$ lifts to an embedded circle $\hat\gamma_X$. Let us pick a lift $\hat\gamma_Y$ of $\gamma_Y$ which intersects $\gamma_X$ at a lift $\hat p$ of $p$. By the minimality assumption, there are no bigons, and the only intervals that satisfy these conditions in the annulus are embedded intervals intersecting $\hat\gamma_X$ exactly once; the result then follows from a calculation on the annulus $\Delta^*_{1,1}$ with one marked boundary on each boundary circle, whose category is equivalent to $D^b(\mathrm{Coh}(\PP^1))$.

The remaining case is where $\gamma_X$ and $\gamma_Y$ are both immersed circles. We use Lemma \ref{lem:circletointerval} to split $Y$ into two interval objects $Y_1,Y_2$, supported along $\gamma_1,\gamma_2$ extended at two common marked boundaries $M,N$. Let $\lambda_X,\lambda_Y \in \KK^*$ be the monodromies of the rank one local systems associated to $X,Y$. Let $Z$ be the circle object supported along the `smoothing at $p$' $\gamma_Z$ with rank one local system of monodromy $\lambda_Z = \lambda_X \lambda_Y$.

Suppose without loss of generality that $\gamma_X$ intersects $\gamma_1$ near $p$ at a point $q$. We then consider the interval object $Z'$ given by the distinguished triangle
\[ Y_1 \to Z' \to X \xrightarrow{q} \dots \]
By the calculation in the annulus above, the object $Z'$ is supported on an immersed interval $\gamma_{Z'}$ obtained by smoothing at $q$. We can then arrange the objects as indicated in Figure \ref{fig:complicated}.

\begin{figure}
    \centering
    \hspace*{-1.2cm}
    \includegraphics[width=1.2\textwidth]{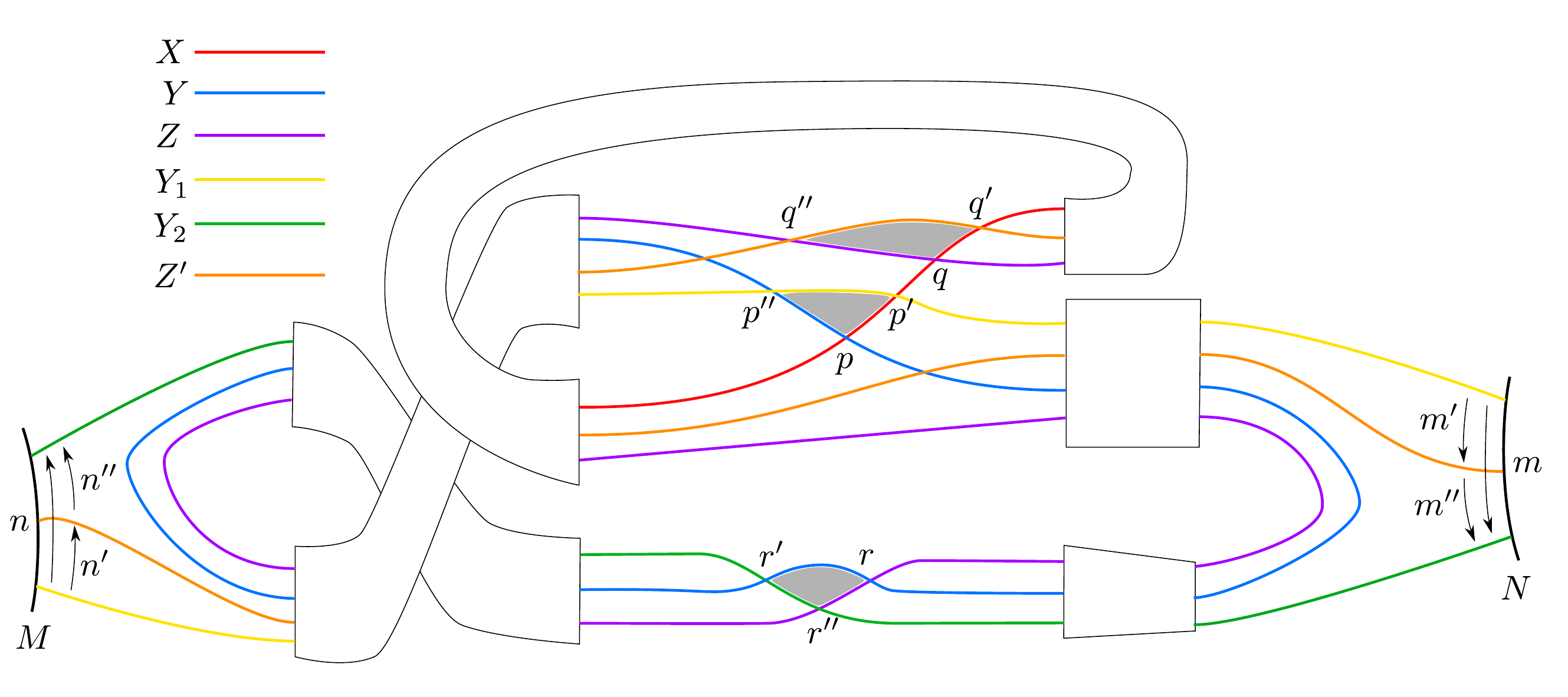}
    \caption{The intersections and boundary paths appearing in the distinguished triangles involving the immersed circle objects $X,Y,Z$ and interval objects $Y_1,Y_2,Z'$. The three shaded triangles correspond to the three compositions appearing in the octahedral diagram below. The five white strips represent embedded rectangles in the surface along which some number of strands run parallel, without crossing each other; note that they are allowed to cross strands in other white strips, depending on how the immersed circles $\gamma_X,\gamma_Y$ intersect.}
    \label{fig:complicated}
\end{figure}

We can fit the morphisms corresponding to the intersection points and marked boundaries in the following octahedron:
\[\xymatrix{
                                            & X \ar@{-->}[ld]_{p} \ar@{-->}[ldd]^{p'}   & \\
Y \ar[rr]^{r} \ar[d]_{p''}    &                                           & Z \ar[lu]_{q} \\
Y_1 \ar[rr]_{\lambda_Z m'+n'} \ar@{-->}[rd]_{\lambda_Y m+n}     &                                           & Z' \ar@{-->}[ld]^{\lambda_X m''+n''} \ar[u]_{q''} \ar[luu]^{q'}\\
                                            & Y_2 \ar[uur]^{r''} \ar[luu]_{r'}                & \\
}\]
In the diagram above the dotted arrows indicate morphisms of degree $+1$. Each face of the octahedron is given by a calculation in the annulus; we have distinguished triangles
\[ Y_2 \to Y \to Y_1 \to \dots, \quad Z' \to Z \to Y_2 \to \dots, \quad Y_1 \to Z' \to X \to \dots \]
and compositions
\[ p' \simeq p'' \circ p, \quad r'' \simeq r \circ r', \quad q' \simeq q\circ q''. \]
We deduce the Lemma from the octahedral axiom satisfied by the triangulated category $\cF(\Sigma)$.
\end{proof}

\section{Lemmas about stability conditions}\label{sec:lemmas}
In this section we collect some lemmas about (Bridgeland) stability conditions on general triangulated categories, and also about the specific case where $\cD = \cF(\Sigma)$ is the Fukaya category of a marked surface $\Sigma$.

As usual, $\Stab(\cD)$ will denote the space of stability conditions on a triangulated category $\cD$ satisfying the so-called \emph{support property} \citep{KS,Bay2} (in the original paper \citep{Bri1} these are called full locally finite stability conditions). In all of our cases, $K_0(\cD)$ is finite-rank so we will use the lattice $\Lambda = K_0(\cD)$. As shown by Bridgeland, $\Stab(\cD)$ has the structure of a $(\rk K_0(\cD))$-dimensional complex variety.

In \citep{HKK}, it is shown that one can construct stability conditions on $\cF(\Sigma)$ using flat surfaces, or equivalently, quadratic differentials with exponential-type singularities. Namely, there is a moduli space $\cM(\Sigma)$ whose points are roughly pairs $(X,\phi)$ of a Riemann surface $X$ diffeomorphic to $\Sigma$ and a quadratic differential $\phi$ on $X$ with certain singularities prescribed by the data of the marked boundary $\MM$ and the line field $\eta$. The horizontal foliation of $\phi$ gives $\Sigma$ the structure of a flat surface with conical singularities, of both finite and infinite angular defect. In our case of fully stopped surfaces, all these singularities will be exponential-type singularities, or equivalently, they will only have infinite-angle conical singularities.

\subsection{Stability conditions and genericity}
Let us precisely state our genericity assumptions, which will play an important role in later proofs. We first recall the \emph{support property} \citep{KS,Bay2}:
\begin{definition}\label{def:supportproperty}
A stability condition $\sigma = (Z,\cP)$ satisfies the support property if
\[ \inf_{0 \neq X \mathrm{semistable}} \frac{|Z(X)|}{\lVert [X] \rVert} = C > 0,\]
where $\lVert\cdot\rVert$ is a norm on $\Lambda \otimes \RR$.
\end{definition}

From now on, we will only consider stability conditions satisfying the support property. The space $\Stab(\cD)$ of such stability conditions is a complex manifold and the map $\Stab(\cD) \to \Hom_\ZZ(\Lambda,\CC)$, given by forgetting the slicing $\cP$, is a local homeomorphism \citep{Bri1}. To express genericity we need to define walls in this space, following \citep{bridgeland2015quadratic}. Let us fix a class $\gamma \in \Lambda$, and consider other classes $\alpha$ such that $\alpha$ and $\gamma$ are not both multiples of the same class in $\Lambda$.
\begin{definition}
The wall $W_\gamma(\alpha) \subset \Stab(\cD)$ is the subset of stability conditions such that there is a phase $\phi \in \RR$ and objects $A,G$ with respective classes $\alpha,\gamma$ such that $A \subset G$ in the abelian category $\cP_\phi$.
\end{definition}

Each wall $W_\gamma(\alpha)$ is contained within a codimension one subset of $\Stab(\cD)$ where $Z(\alpha)/Z(\gamma)$ is real, and we have the following local finiteness result:
\begin{lemma}\citep[Lemma 7.7]{bridgeland2015quadratic}
If $B\subset \Stab(\cD)$ is compact then for a fixed $\gamma$ only finitely many walls $W_\gamma(\alpha)$ intersect $B$.
\end{lemma}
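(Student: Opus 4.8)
The plan is to bound, uniformly over the compact set $B$, the norm of any class $\alpha$ for which the wall $W_\gamma(\alpha)$ can meet $B$, and then invoke the discreteness of the lattice $\Lambda$. First I would unwind the definition of the wall: if $\sigma = (Z,\cP) \in W_\gamma(\alpha)$, then for some phase $\phi$ there is an inclusion $A \subset G$ in the abelian category $\cP_\phi$ with $[A] = \alpha$ and $[G] = \gamma$, giving a short exact sequence $0 \to A \to G \to G/A \to 0$ in $\cP_\phi$. Since every nonzero object of $\cP_\phi$ is semistable of phase $\phi$, the central charges $Z(A)$, $Z(G)$ and $Z(G/A)$ all lie on the ray $\RR_{>0}\cdot e^{i\pi\phi}$, so additivity of $Z$ forces $|Z(A)| + |Z(G/A)| = |Z(G)|$, and in particular $|Z_\sigma(\alpha)| = |Z(A)| \le |Z(G)| = |Z_\sigma(\gamma)|$, a bound that does not depend on $\phi$.

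Next I would control the right-hand side by compactness: the forgetful map $\Stab(\cD) \to \Hom_\ZZ(\Lambda,\CC)$ is continuous, so $\sigma \mapsto |Z_\sigma(\gamma)|$ is continuous on $B$, hence bounded by some $M < \infty$. Combining this with the support property turns the estimate into a bound on the norm: because $A$ is nonzero and semistable, $\lVert \alpha \rVert = \lVert [A] \rVert \le |Z_\sigma(A)|/C_\sigma \le M/C_\sigma$, where $C_\sigma > 0$ is the support-property constant at $\sigma$. The crux — and the step I expect to be the main obstacle — is that $C_\sigma$ can be bounded below by a single $C > 0$ uniformly for $\sigma \in B$. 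This is where one must use that the support property is an open condition: fixing, near each $\sigma_0 \in B$, a quadratic form that is negative definite on $\ker Z_{\sigma_0}$, one shows (as in the deformation arguments of \citep{Bri1, Bay2}) that the same form witnesses the support property on a whole neighborhood of $\sigma_0$, with a uniform constant there; covering $B$ by finitely many such neighborhoods and taking the minimum of the resulting constants yields the desired $C > 0$.

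Finally, with $\lVert \alpha \rVert \le M/C =: R$ for every $\alpha$ whose wall meets $B$, I would conclude: $\Lambda$ is a finitely generated free abelian group and $\lVert \cdot \rVert$ a norm on $\Lambda \otimes \RR$, so the set $\{\alpha \in \Lambda : \lVert \alpha \rVert \le R\}$ is finite. Since each class $\alpha$ contributes at most the single wall $W_\gamma(\alpha)$, only finitely many walls $W_\gamma(\alpha)$ can intersect $B$. The only genuinely subtle input is the uniform support-property constant; the remaining steps are the additivity of $Z$ on the ray, continuity of the central charge, and lattice discreteness, all of which are routine.
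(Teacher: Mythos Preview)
Your argument is correct and is essentially the standard proof of this local-finiteness result. Note, however, that the paper does not supply its own proof: the lemma is quoted verbatim as \citep[Lemma 7.7]{bridgeland2015quadratic} and used as a black box, so there is no in-paper argument to compare against. Your write-up matches the approach in the cited source --- bound $|Z_\sigma(\alpha)|$ by $|Z_\sigma(\gamma)|$ via the short exact sequence in $\cP_\phi$, use compactness of $B$ to bound $|Z_\sigma(\gamma)|$, invoke the support property to convert this into a norm bound on $\alpha$, and conclude by discreteness of $\Lambda$ --- with the uniform support constant over $B$ being, as you correctly flag, the only point requiring care.
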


Note that this is not true if we consider the whole collection of walls for all $\gamma$; the union of all walls for all classes $\alpha$ can be dense in $\Stab(\cD)$.
\begin{definition}\label{def:xistable}
Let $\Xi \subset \Lambda$ be a finite subset of classes. Take the union
\[ W_\Xi = \bigcup_{\gamma \in \Xi, \alpha \in \Lambda} \overline{W_\gamma(\alpha)} \]
of all closures of walls for classes in $\Xi$; we will say a stability condition $\sigma$ is  $\Xi$-generic if $\sigma \in \Stab(\cD) \setminus \bar W_\Xi$.
\end{definition}

By local finiteness, $W_\Xi$ is a locally-finite union of closed subsets so $\Xi$-genericity is an open condition. The connected components of $\Stab(\cD) \setminus W_\Xi$ will be called the $\Xi$-chambers.

\subsection{Stability conditions on Fukaya categories of surfaces}
Now we turn our attention to stability conditions on the categories $\cF(\Sigma)$. Importantly, we do not make \emph{a priori} assumptions about whether such stability conditions are describable by flat surfaces; the lemmas here follow from the general axioms of stability conditions and the properties of such categories.

An important role will be played by objects that can be represented by embedded curves. Let us from now let us say an object is a \emph{(embedded) interval object} if it can be represented by an (embedded) interval, and a \emph{(embedded) circle object} if it can be represented by an (embedded) circle. Note that any interval object, embedded or not, must carry a trivial rank one local system, if it is indecomposable.

We now prove the following proposition, which constrains the type of geometric objects. This will play an important role throughout this paper.
\begin{proposition}\label{prop:stableobjects}
For any stability condition $\sigma \in \Stab(\cF(\Sigma))$, every stable object is either an embedded interval object or an embedded circle object.
\end{proposition}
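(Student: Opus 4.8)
The plan is to pit the two most elementary consequences of stability against the explicit geometric description of morphism spaces in $\cF(\Sigma)$. First I would record what stability gives for free: if $X$ is stable of phase $\phi$, then $X$ is a simple object of the abelian category $\cP_\phi$, so Schur's lemma gives $\Hom^0(X,X)=\CC$, and in particular $X$ has no nontrivial idempotents and is indecomposable; moreover, for $k<0$ the shift $X[k]$ has phase $\phi+k<\phi$ and there are no nonzero morphisms from a semistable object to a semistable object of strictly smaller phase, so $\Hom^{<0}(X,X)=0$. By the geometricity Theorem~\ref{thm:geometricity}, $X$ is then represented by an admissible graded curve $c$ with an indecomposable local system $L$; write $X=X_{c,L}$. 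Since an indecomposable embedded interval object automatically carries a rank-one local system, it suffices to prove that $c$ may be taken embedded and that $\rk L=1$.

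The geometric input I would invoke is the description, in the conventions of \citep{HKK}, of the self-morphism complex of a curve-with-local-system: representing $c$ in minimal position and computing self-morphisms via a small push-off, $\Hom^\bullet(X_{c,L},X_{c,L})$ has a basis given by a copy of $\End(L)$ in degree $0$ (containing the identity) when $c$ is an interval, or a copy of $H^\bullet(S^1;\End(L)_{\mathrm{ad}})$ when $c$ is a circle, together with, for each transverse self-intersection point $p$ of $c$, a pair of generators in degrees $d_p$ and $1-d_p$ for some $d_p\in\ZZ$ (each tensored with $\End(L)$); since minimal position excludes bigons and admissibility excludes teardrops, the differential on this complex vanishes, so this basis computes cohomology on the nose.

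With this I would finish in two steps. Suppose $c$ has a self-intersection point $p$ in minimal position. Because $d_p+(1-d_p)=1$ with $d_p\in\ZZ$, at least one of these two degrees is $\le 0$. If $d_p\notin\{0,1\}$ this degree is strictly negative, contradicting $\Hom^{<0}(X,X)=0$; if $d_p\in\{0,1\}$ the corresponding generator lies in degree $0$ and is linearly independent of the identity, so $\dim\Hom^0(X,X)\ge 2$, contradicting $\Hom^0(X,X)=\CC$. Hence $c$ has no self-intersections and is embedded. Now if $c$ is an embedded interval then $\rk L=1$ automatically, while if $c$ is an embedded circle and $L$ has monodromy a single Jordan block $J_r(\lambda)$ with $\lambda\in\Cstar$, then $\Hom^0(X_{c,L},X_{c,L})=H^0(S^1;\End(L)_{\mathrm{ad}})$ is the centralizer of $J_r(\lambda)$, of dimension $r$; as this must equal $1$, we get $r=1$. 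This yields the proposition.

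The main obstacle is the middle step: one has to pin down from \citep{HKK} both the vanishing of the differential on the self-Floer complex of a minimal-position curve-with-local-system, and --- the genuinely load-bearing point --- the complementary-degree relation $d_p+(1-d_p)=1$ at an interior self-intersection, which reflects Poincar\'e duality on the surface. Everything else (Schur's lemma, the phase bound eliminating $\Hom^{<0}$, and the bookkeeping of the two cases) is routine.
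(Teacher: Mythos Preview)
Your argument is correct and in fact cleaner than the paper's at the crucial step. Both proofs begin identically: a self-intersection of index $i_p\notin\{0,1\}$ forces a nonzero self-ext in negative degree, which stability forbids. For $i_p\in\{0,1\}$, however, the paper does \emph{not} invoke Schur's lemma; it uses only the $\Ext^1$ class at $p$, forms the nontrivial extension $L\to E\to L$, observes geometrically that the support of $E$ consists of two curves so that $E\cong F\oplus G$, and then runs Jordan--H\"older in the finite-length abelian category $\cP_{\phi_L}$ to force $F\cong G\cong L$, contradicting nontriviality of the extension. Your route --- the degree-$0$ generator at $p$ is nonzero (by minimality of self-intersections, exactly as the paper asserts) and is a basis element distinct from the identity (equivalently, its cone is the nontrivial resolution of $L$ at $p$, so it is not invertible), giving $\dim\Hom^0(L,L)\ge 2$ --- is more direct and sidesteps the Jordan--H\"older machinery entirely. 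Your final paragraph, showing via the centralizer of a Jordan block that a stable embedded circle must carry a rank-one local system, is correct and strictly sharper than what the proposition literally claims; the paper leaves ``embedded circle object'' agnostic about rank here and handles higher-rank circles in a separate later lemma.
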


\begin{proof}
Since $L$ is indecomposable its support cannot have more than one connected component. Thus the only objects we have to rule out are objects whose representatives all have self-intersections; we will call these \emph{truly immersed objects}.

A stable object $L$ must have $\Ext^i(L,L) = 0$ for $i < 0$. Let $L$ be a truly immersed objects and pick a representative of $L$ which bounds no generalized bigons, supported on an immersed curve $\gamma_L$. Perturbing $L$ to calculate endomorphisms, we see that a self-intersection point $p$ of $\gamma_L$ contributes classes to $\Ext^*(L,L)$ in degrees $i_p$ and $1-i_p$, where $i_p$ is the degree of intersection at $p$. These classes are nonzero by minimality of self-intersections, so if there is a self-intersection point with $i_p \neq 0,1$, one of these degrees is negative and therefore $L$ cannot be semistable.

\begin{figure}[h]
    \centering
    \includegraphics[width=0.8\textwidth]{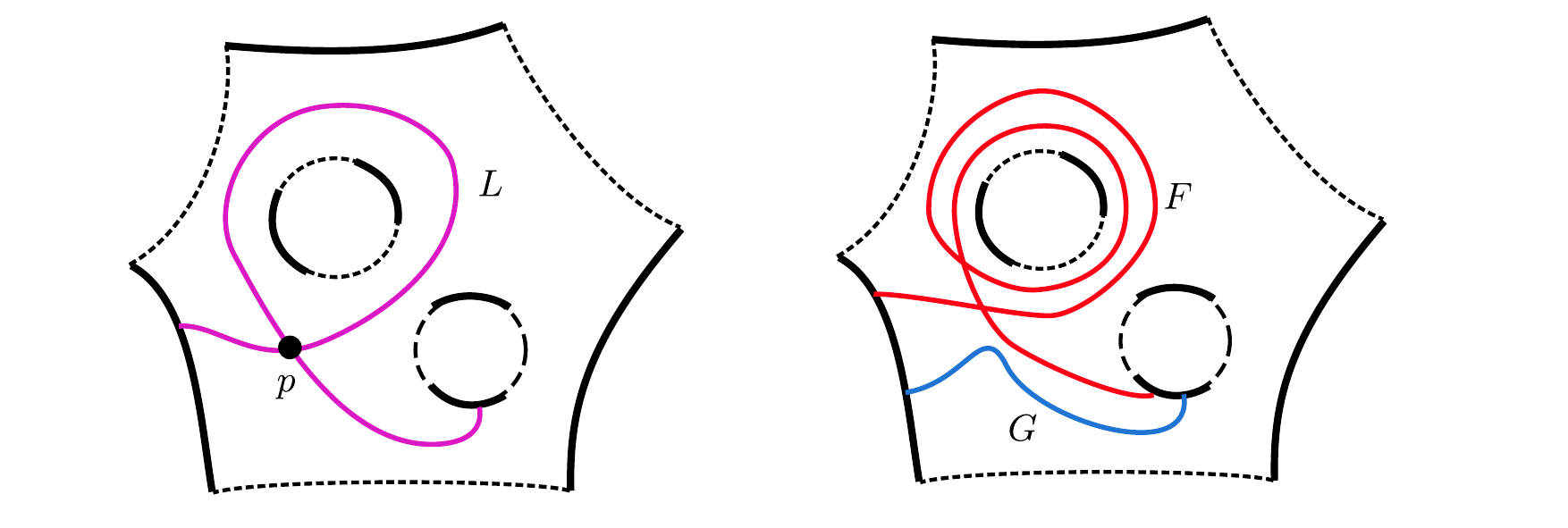}
    \caption{A truly immersed Lagrangian $L$. The self-extension $L \to E \to L$ at the self-intersection point $p$ splits as a direct sum $E = F \oplus G$.}
\end{figure}

The only case left to consider is when $\gamma_L$ only has self-intersection points of degree $0$ and $1$; each one of these points gives nonzero classes in $\Hom(L,L)$ and $\Ext^1(L,L)$. Let us pick one of these points $p$, and consider the corresponding nontrivial extension $L \to E \to L$. Note that the support of $E$ is given by two superimposed curves so we have a direct sum decomposition $E = F \oplus G$. But by assumption $L \in \cP_\phi$ for some $\phi$, and by \citep[Lemma 5.2]{Bri1} each $\cP_\phi$ is abelian, so $E, F, G \in \cP_\phi$ as well. Since the stability condition is locally finite, the abelian category $\cP_\phi$ is finite length; therefore the Jordan-H\"older theorem applies \citep{joyce2006configurations}. Since the length of $E$ is 2, $F$ and $G$ are length one, and by uniqueness of the simple objects in the Jordan-H\"older filtration (up to permutations) we must have $F \cong G \cong L$. But this is impossible because $E$ is a nontrivial extension so $E \neq L \oplus L$.
\end{proof}

\begin{remark}\label{rem:annulus}
Note that the proof above does not preclude a self-intersecting object $L$ from being \emph{semistable}; it just cannot be simple in $\cP_{\phi_L}$. In fact this even happens generically: take $\Sigma$ to be the annulus with one marked interval on each boundary circle and grading such that the nontrivial embedded circle is gradable; by mirror symmetry the category $\cF(\Sigma)$ is equivalent to $D^b(\Coh(\PP^1))$. Under this equivalence, the rank one circle object with monodromy $z \in \CC^\times$ gets mapped to the skyscraper sheaf $\CC_z$ on $\PP^1$, and the interval object $I$ with both ends on the outer boundary, wrapping the annulus once, gets mapped to the skyscraper sheaf $\CC_\infty$ on $\PP^1$.

The space of stability conditions on this category is known to be isomorphic to $\CC^2$ as a complex manifold \citep{Oka}, and there is a geometric (top dimensional) chamber in $\Stab(\PP^1)$ where all the rank one skyscraper sheaves are stable. In particular, the nontrivial extension $I \to L \to I$, represented by an immersed Lagrangian with one self-intersection as in Figure \ref{fig:annuluslag}, is semistable. So self-intersecting semistable objects do appear generically, that is, on open loci in stability spaces, but they always must have Jordan-H\"older decompositions into embedded objects.
\end{remark}

\begin{figure}[h]
    \centering
    \includegraphics[width=0.9\textwidth]{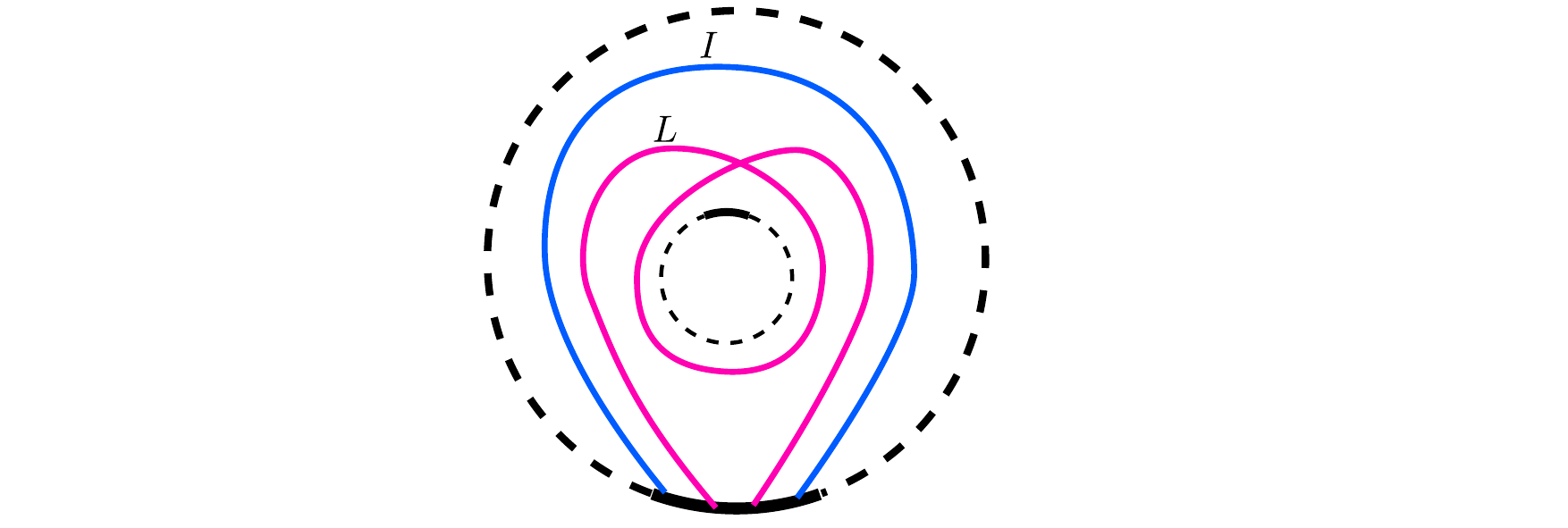}
    \caption{The annulus mirror to $D^b(\Coh(\PP^1))$. For a geometric stability condition on $\PP^1$, the truly immersed object $L$ (corresponding to an irreducible rank 2 skyscraper sheaf $\cO_{x^2}$) is semistable.}
    \label{fig:annuluslag}
\end{figure}

The result above characterizes which objects can be stable, namely embedded intervals and embedded circles with indecomposable local systems. It turns out that similar index computations also allows us to constrain the form of the HN decompositions of objects.

\begin{definition}(Chain of stable intervals)
Let us fix a stability condition $\sigma \in \Stab(\cF(\Sigma))$ and consider an indecomposable object $X$ in $\cF(\Sigma)$. We say that $X$ has a chain of stable intervals decomposition (\textsc{cosi} decomposition) under $\sigma$ if there is
\begin{itemize}
    \item A sequence of stable (therefore embedded) interval objects $X_1,\dots,X_N$, with respective phases $\phi_1,\dots,\phi_N$ and a sequence of marked boundary intervals $M_0,\dots,M_N$, where the support $\gamma_i$ of the object $X_i$ has ends on $M_{i-1}$ and $M_i$,
    \item Extension morphisms $\eta_i \in \Ext^1(X_i,X_{i+1})$ when $\phi_i \le \phi_{i+1}$  or $\eta_i \in \Ext^1(X_{i+1},X_i)$ when $\phi_i \ge \phi_{i+1}$ corresponding to the shared $M_i$ marked boundary (including an extension at $M_0 = M_N$ if $X$ is a circle object),
\end{itemize}
such that the iterated extension by all the $\eta_i$ is isomorphic to $X$.
\end{definition}

\begin{figure}[h]
    \centering
    \includegraphics[width=0.8\textwidth]{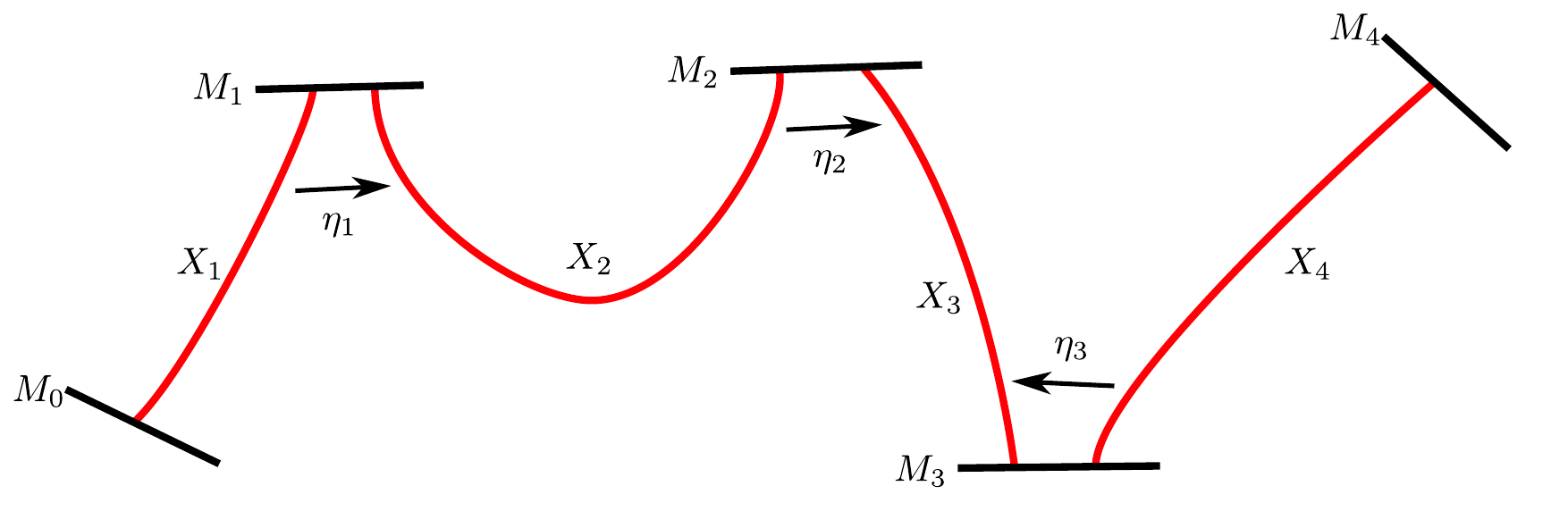}
    \caption{A chain of stable intervals with N=4.}
\end{figure}

\begin{remark}
Note that the order $X_1,\dots,X_N$ here is not the ordering of semistable objects in the HN decomposition of $X$: the extension maps are allowed to go either way, and the same phase may appear several times.
\end{remark}

Note that if $X$ has a \textsc{cosi} decomposition then its HN decomposition can be produced from it by grouping together all stable interval objects of the same phase.

\begin{lemma}
If $X$ has a \textsc{cosi} decomposition under $\sigma$, then it is essentially unique, ie. the sets $\{X_i\}$ and $\{M_i\}$ are uniquely defined up to isomorphism.
\end{lemma}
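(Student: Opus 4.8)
The plan is to reduce the statement to two standard uniqueness results: uniqueness of the Harder-Narasimhan filtration, and the Jordan-H\"older theorem inside each abelian category $\cP_\phi$. Suppose $X$ carries a \textsc{cosi} decomposition with stable intervals $X_1,\dots,X_N$, marked intervals $M_0,\dots,M_N$ and extensions $\eta_i$. By the observation recorded immediately before the lemma, the HN filtration of $X$ is obtained by grouping the $X_i$ according to phase: if $\phi_1 > \cdots > \phi_m$ are the distinct phases occurring among the $X_i$, then the $j$-th HN factor $Y_j$ of $X$ is an iterated extension of exactly those $X_i$ with $\phi(X_i) = \phi_j$.

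Given this, I would argue as follows. Each $Y_j$ lies in the abelian category $\cP_{\phi_j}$ of semistable objects of phase $\phi_j$, which is of finite length because $\sigma$ is locally finite (as used already in the proof of Proposition \ref{prop:stableobjects}); hence the Jordan-H\"older theorem applies in $\cP_{\phi_j}$. Each $X_i$ is stable, so it is a simple object of $\cP_{\phi(X_i)}$, and an object obtained from simple objects by iterated extensions has precisely those simples, counted with multiplicity, as its composition factors. Therefore the multiset of Jordan-H\"older factors of $Y_j$ is exactly $\{X_i : \phi(X_i) = \phi_j\}$, up to isomorphism. Now I assemble: the HN factors $Y_1,\dots,Y_m$ are determined by $X$, and by Jordan-H\"older each $Y_j$ determines the multiset $\{X_i : \phi(X_i)=\phi_j\}$; taking the union over $j$ shows that the multiset $\{X_1,\dots,X_N\}$, and in particular the integer $N$, is an invariant of $X$ and $\sigma$. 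Finally, by Proposition \ref{prop:stableobjects} each $X_i$ is an embedded interval object, hence has a well-defined supporting arc whose two endpoints are the marked intervals $M_{i-1}$ and $M_i$; so once $\{X_i\}$ is pinned down, the collection $\{M_0,\dots,M_N\}$ is pinned down as well. The cyclic case ($X$ a circle object, with $M_0 = M_N$ and the extra closing extension) needs no separate treatment, since $X$ is still built from the $X_i$ by iterated extensions and the composition-factor count is unchanged.

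The only genuinely non-formal input is the reduction to Jordan-H\"older, ie. the claim that the HN factor $Y_j$ is an iterated extension of the phase-$\phi_j$ members of $\{X_i\}$ and of nothing else. This is the content of the observation preceding the lemma, and is the step I would be most careful with: it uses the linear (chain) structure of a \textsc{cosi} decomposition to reorder the iterated extension so that equal-phase pieces are grouped together, together with the vanishing of $\Hom$ and negative $\Ext$ between semistable objects in the appropriate phase order. Modulo that, the argument above is purely categorical and does not use the geometry of $\Sigma$ beyond Proposition \ref{prop:stableobjects}.
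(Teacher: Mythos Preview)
Your proof is correct and follows the same approach as the paper, which simply cites uniqueness of the HN filtration together with Jordan--H\"older uniqueness on each finite-length $\cP_\phi$. Your version is a careful unpacking of exactly that one-line argument.
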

\begin{proof}
Follows from the uniqueness of the HN filtration and the uniqueness (up to permutation) of the Jordan-H\"older filtration on each finite-length abelian category $\cP_\phi$.
\end{proof}

This decomposition also captures the isotopy class of the object $X$. Let us produce an immersed curve $\gamma$ from this data as follows: for each $i$, if the extension map $\eta_i$ belongs to $\Ext^1(X_i,X_{i+1})$ we connect $\gamma_i$ to $\gamma_{i+1}$ counterclockwise (ie. by a boundary path following $M_i$ and keeping $\Sigma$ to the right), and if $\eta_i \in \Ext^1(X_{i+1},X_i)$ we use the corresponding clockwise path from $\gamma_i$ to $\gamma_{i+1}$. From the geometricity result (Theorem \ref{thm:geometricity}) we can deduce that the object $X$ can be represented by the curve $\gamma$ endowed with an appropriate local system.

The following lemma will be central to our proofs later, and essentially means that \textsc{cosi} decompositions are not allowed to cross each other. From now on, we will leave the extension morphisms implicit and denote a \textsc{cosi} decomposition by its stable intervals.

\begin{lemma}\label{lemma:noncrossing}
Let $X$ and $Y$ be two objects with respective \textsc{cosi} decompositions $(X_1,\dots,X_m)$ and $(Y_1,\dots,Y_n)$. We choose representatives in minimal positions for every pair of those stable intervals. Then on the surface $\Sigma$ there are none of the following arrangements:
\begin{enumerate}
    \item Polygons bounded by the two chains and two transversal crossings between stable intervals.
    \item Polygons bounded by the two chains and two common marked boundary intervals (with boundary paths inside the polygon).
    \item Polygons bounded by the two chains, one transversal crossing and one common marked boundary interval.
\end{enumerate}
\end{lemma}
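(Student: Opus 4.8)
The plan is to argue by contradiction, reducing to an innermost bad polygon and exploiting the interplay between the grading combinatorics of $\cF(\Sigma)$ and the constraints that stability imposes on the intervals $X_i,Y_j$. Assume some polygon of one of the three types occurs and, among all of them (for every pair of stable intervals from the two chains), choose one, $P$, bounding an embedded disk $D\subset\Sigma$ that is \emph{innermost}: $D$ contains in its interior no sub-arc of any $X_i$ or $Y_j$ and no smaller polygon of the three types. Producing such a $P$ needs a brief argument, since by Remark \ref{rem:annulus} and Proposition \ref{prop:stableobjects} the chains $\gamma_X,\gamma_Y$ may themselves be self-intersecting; but any sub-arc, crossing, or self-crossing intruding into $D$ cuts off, together with part of $\partial D$, a strictly smaller polygon of one of the three types, so minimality forces $D$ to be empty.

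Two ingredients then do the work. (a) The index bookkeeping underlying the $A_\infty$-structure of $\cF(\Sigma)$ in \citep{HKK}: an immersed polygon whose sides run along graded curves and boundary paths satisfies a Gauss--Bonnet-type identity relating its corner degrees, its number of sides, and the winding of the line field $\eta$ along the boundary paths, so that for a ``bigon'' (two sides, two corners) the two corner degrees are rigidly constrained. (b) The consequences of semistability: at a transversal crossing $p$ between a stable interval $X_i$ of phase $\phi_i$ and a stable interval $Y_j$ of phase $\psi_j$, the crossing contributes classes in complementary degrees $d$ and $1-d$ to $\Hom(X_i,Y_j)$ and $\Hom(Y_j,X_i)$ --- nonzero by minimality of crossings, exactly as in the proof of Proposition \ref{prop:stableobjects} (embedded intervals carry only trivial local systems) --- and since $\Hom^k(A,B)=0$ for semistable $A,B$ as soon as $k<\phi_A-\phi_B$, one gets $\phi_i-\psi_j\le d\le 1+(\phi_i-\psi_j)$; at a common marked boundary interval the boundary path between two adjacent arc-ends represents a morphism of degree $0$ or $1$, the degree-$1$ ones being exactly the \textsc{cosi} extension maps $\eta_i$.

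With $D$ empty I would finish case by case. For type (1) both corners of $P$ are interior crossings, so neither side of $P$ can meet $\partial\Sigma$ (which would create a marked-interval corner), hence each side lies within a single stable interval; thus $P$ is an honest immersed bigon between one $X_i$ and one $Y_j$, and the bigon criterion lets us isotope across $D$ to remove two crossings, contradicting minimality of the total crossing number. For types (2) and (3) the marked-interval corner(s) obstruct any such isotopy, so instead I would read off from the empty disk $D$ a nonzero $A_\infty$-operation among the crossing morphism(s), the boundary-path morphisms, and the chain extension maps running along the two sides --- equivalently a nonzero differential or product in the $\Hom$-complex of $X$ and $Y$ computed from $\gamma_X,\gamma_Y$ in minimal position. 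The degree count then fails: the rigid constraint of (a) for the ``bigon'' $P$, combined with the degree windows of (b) at its corners, forces either a nonzero component of $m_1$ between two generators of curves in minimal position (impossible), or a morphism of strictly negative degree between two of the stable intervals, contradicting their semistability.

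The step I expect to be the main obstacle is precisely the type (2)/(3) analysis: one must determine unambiguously, from the \textsc{cosi} extension data and the side of $D$ on which each boundary path lies, which of the two (degree $0$ or degree $1$) boundary morphisms occurs at each marked-interval corner, record the orientations with which the two sub-chains traverse $\partial D$, and then check case by case that the index identity for $P$ genuinely excludes every remaining combination. The self-intersection subtlety in the innermost-disk reduction is a second, smaller technical point.
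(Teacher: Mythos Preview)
Your type (1) argument has a genuine gap. The innermost polygon of type (1) need \emph{not} be a bigon between a single $X_i$ and a single $Y_j$: one side can run along several consecutive stable intervals $X_i,X_{i+1},\dots$ joined by boundary paths at the marked intervals $M_i$. These boundary paths lie on $\partial\Sigma$ and are part of $\partial D$, but they are not ``corners'' in the sense of meeting points between the two chains --- they are bends internal to the $X$-side. Such a polygon (e.g.\ a triangle with vertices at two transversal crossings $p,q$ and at one marked interval $M_i$ on the $X$-side, with a single $Y_j$ on the other side) can be innermost: there is no smaller polygon of any of the three types inside it. So your isotopy-removal argument does not apply, and the minimality contradiction is not reached.

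The paper's proof takes a different route which you are missing. First it perturbs $\sigma$ to a nearby $\Xi$-generic $\sigma'$ (where $\Xi$ is the finite set of classes of the $X_i,Y_j$), so that any two of these objects with non-proportional classes have distinct phases; the $X_i,Y_j$ remain stable and still give \textsc{cosi} decompositions. Then it runs exactly your ingredient (b), but around the \emph{whole} polygon without any innermost reduction: the two corner crossings give $\phase(Y_1)\le\phase(X_1)$ and $\phase(X_k)\le\phase(Y_l)$ (fixing the grading so one crossing has index $1$, which forces the other to have index $0$ by the embedded-disk constraint), while the boundary paths at the internal marked intervals --- which must all lie on the inside of the disk --- force the phases to be monotone along each side. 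Chaining these gives $\phase(Y_1)\le\phase(X_1)\le\cdots\le\phase(X_k)\le\phase(Y_l)\le\cdots\le\phase(Y_1)$, so all phases are equal, contradicting $\Xi$-genericity. Cases (2) and (3) are then small variations of the same chain-of-inequalities argument, not a separate $A_\infty$ computation. Your proposal never invokes genericity, so even where your degree bookkeeping is correct it would only conclude ``all phases equal'' without a contradiction.
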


\begin{figure}[h]
    \centering
    \includegraphics[width=\textwidth]{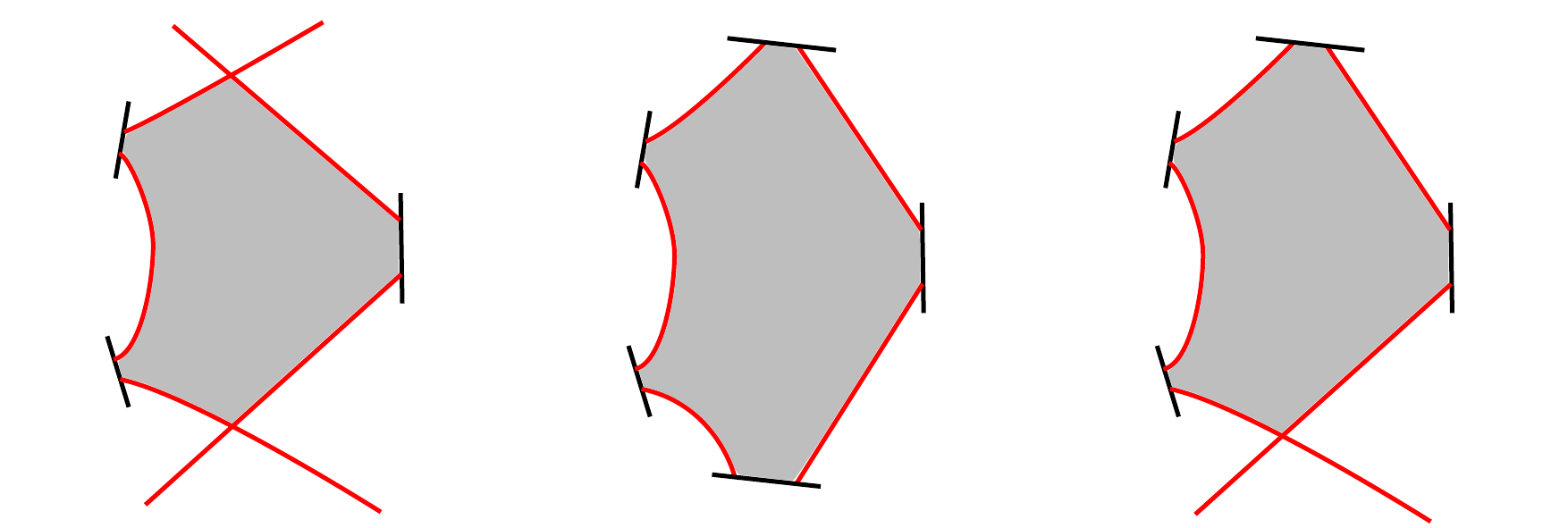}
    \caption{The three kinds of polygons of stable intervals that cannot appear by Lemma \ref{lemma:noncrossing}. Here we have polygons with $k=3$ sides on the left and $l = 2$ sides on the right. The shaded interior means that these polygons bound disks inside of $\Sigma$.}
\end{figure}

\begin{figure}[h]
    \centering
    \includegraphics[width=\textwidth]{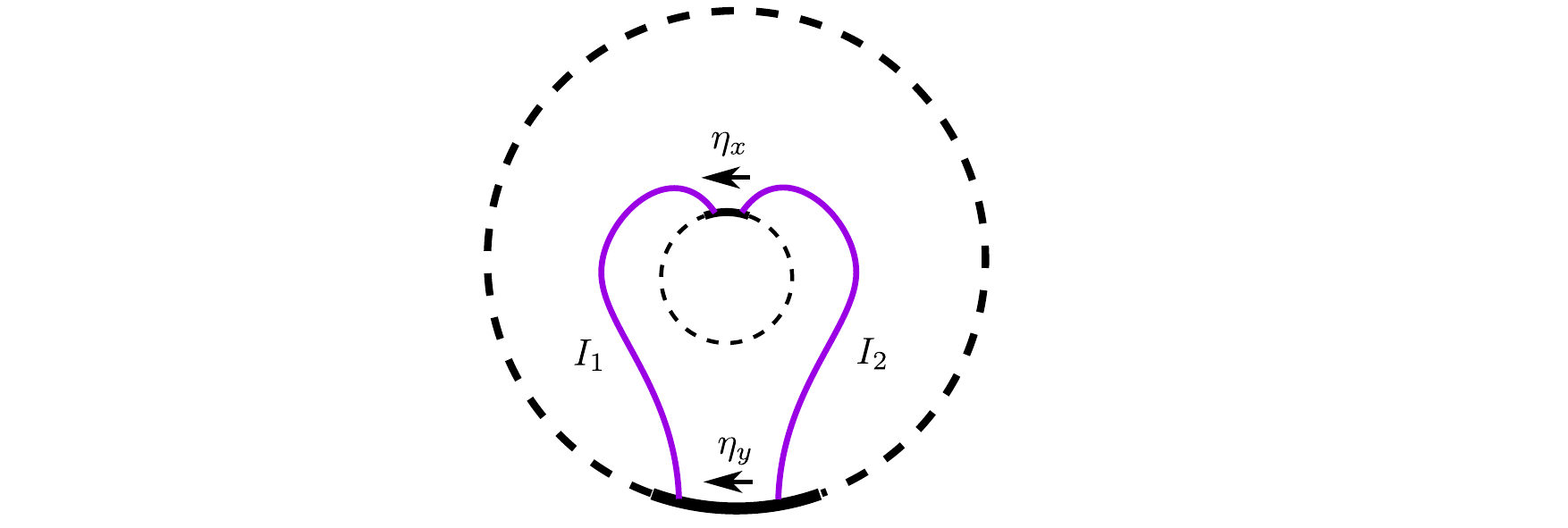}
    \caption{The annulus $\Delta^*_{(1,1)}$ mirror to $\PP^1$. Under this correspondence $\cF(\Delta^*_{(1,1)}) \cong D^b(\Coh(\PP^1))$ we have $I_1 \cong \cO(1)$ and $I_2 \cong \cO$ with $\Hom(I_2,I_1)$ spanned by $\eta_x,\eta_y$. Note that is not a counterexample to case (2) of Lemma \ref{lemma:noncrossing} since the intervals do not bound an embedded polygon.}
\end{figure}

\begin{proof}
Let us first argue that it is sufficient to prove the statement for adequately generic $\sigma$. By standard arguments, the locus of $\Stab(\cD)$ in which the all the objects $X_i,Y_i$ are stable is open. Consider now the collection $\Xi \subset \Lambda$ containing all the classes of these objects; the corresponding union of walls $\bar W_\Xi$ is a locally-finite union of closed subsets of positive codimension. So we can find some other stability condition $\sigma'$, arbitrarily close to $\sigma$, where $X_i,Y_i$ still give \textsc{cosi} decompositions of $X,Y$, and where the phases of any $X_i$ and $Y_j$ are pairwise distinct when $[X_i]$ and $[Y_j]$ are not proportional. If the noncrossing statement of the lemma is true for $\sigma'$ it is also true for $\sigma$.

We start with the first type of polygon. Assume the polygon has $k$ edges on the right and $l$ edges on the left, and for ease of notation we label the intervals in this polygon starting by 1 on both sides. Without loss of generality shift the grading of $X$ such that the intersection point $p$ has index $i_p(X_1,Y_1) = 1$. By minimality of crossings $p$ contributes nonzero classes in $\Ext^1(X_1,Y_1)$ and in $\Hom(Y_1,X_1)$. Since both are stable objects, this implies that
\[ \phase(Y_1) \le \phase(X_1) \le \phase(Y_1) + 1. \]

Smoothing out each one of the chains of intervals separately, one gets a bigon with vertices at $p$ and $q$; the existence of the embedded bigon constrains the index of $q$ to be $i_q(X_k,Y_l) = 0$, and by the same argument we have
\[ \phase(X_k) \le \phase(Y_l) \le \phase(X_k) + 1. \]

By assumption, all the other vertices of this polygon give, on the left hand side, extension maps $X_i \xrightarrow{+1} X_{i+1}$, and on the right hand side, extension maps $Y_{i+1} \xrightarrow{+1} Y_i$. Since all these maps appear in HN decompositions we must have the following inequalities between phases
\begin{align*} \phase(X_i) & \le \phase(X_{i+1}) \text{\ for all\ } 1 \le i \le k-1, \\ \phase(Y_j) & \ge \phase(Y_{j+1}) \text{\ for all\ } 1 \le j \le l-1,
\end{align*}
that, together with the previous inequalities, imply that the phases are all equal. But since we excluded the degenerate polygons, at least two of the $K_0$ classes of this object these objects are not multiples of the same class so by $\Xi$-genericity of $\sigma'$ they have distinct phases. The three other cases are proven by small variations of this same argument.
\end{proof}

\begin{remark}
Note that the two chains might still share a common stable interval; this is not ruled out by the argument above and in fact happens generically. Similarly, note that our definition of chain-of-intervals decomposition above does not exclude the possibility that the chain of intervals overlaps with itself. Again, in the annulus example consider some algebraic stability condition such that the stable objects are two intervals $I_1,I_2$ connecting the outer and inner boundary, and consider the embedded interval object also connecting the two boundaries but wrapping around more times; this object has a \textsc{cosi} decomposition given by multiple copies of $I_1$ and $I_2$.
\end{remark}

Self-overlapping chains of intervals will pose some serious technical difficulties later on, so we will rule them out with the following criterion. Let $X$ be an indecomposable object with a \textsc{cosi} decomposition $(X_1,\dots,X_N)$, with $X_i$ supported on $\gamma_i$.

\begin{definition}
This is a \emph{simple} \textsc{cosi} decomposition if for each pair $\gamma_i, \gamma_j, i \neq i$, $\gamma_i$ and $\gamma_j$ are in pairwise distinct isotopy classes, and $\gamma_i \cap \gamma_j = \emptyset$.
\end{definition}
In other words, the decomposition is simple if the set of arcs given by all the $\gamma_i$ can be extended to an arc system on $\Sigma$. In general, objects will not have a simple \textsc{cosi} decomposition, but the following topological condition is sufficient.

\begin{lemma}\label{lemma:simplechain}
Let $X$ be an object with a \textsc{cosi} decomposition, supported on an embedded interval $\gamma$ separating the surface $\Sigma$ into two connected components, such that the two ends of $\gamma$ belong to distinct marked boundary intervals. Then $X$ has a simple \textsc{cosi} decomposition.
\end{lemma}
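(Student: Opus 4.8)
\emph{Proof proposal.} The plan is to reduce the statement to the combinatorial assertion that the marked intervals of the decomposition are pairwise distinct, and then to use the separating hypothesis to force a stable interval to be supported on a contractible arc, which is impossible. \textbf{Reduction.} It suffices to prove that $M_0,\dots,M_N$ are pairwise distinct. Indeed, if two of the stable intervals, say the supports $\gamma_a,\gamma_b$ of $X_a$ and $X_b$ with $a\neq b$, are isotopic, then they span the same unordered pair of endpoint marked intervals, so $\{M_{a-1},M_a\}=\{M_{b-1},M_b\}$, which already forces a repetition among $M_0,\dots,M_N$; and ``the internal $M_k$ are distinct from the endpoint marked intervals'' is part of the same condition. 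Since $M_0\neq M_N$ is given, the goal is to exclude a coincidence $M_i=M_j=:M$ with $0\le i<j\le N$ and $(i,j)\neq(0,N)$.

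\textbf{Geometric model and the offending loop.} Using the lemma identifying the support $\gamma_X=\gamma$ with the concatenation $\gamma_1\ast\cdots\ast\gamma_N$ (joined by boundary paths at the $M_k$), together with the embeddedness of $\gamma$, I would fix an embedded representative realizing that concatenation: this writes $\gamma$ as a union of consecutive subarcs $\sigma_1,\dots,\sigma_N$, with $\sigma_k$ isotopic to $\gamma_k$, where adjacent ones are joined by a small ``U-turn'' of $\gamma$ hugging the boundary near $M_k$, and $\sigma_1,\sigma_N$ carry the endpoints of $\gamma$ on $M_0,M_N$. Granting a repetition $M_i=M_j=M$, the subarc $\gamma_Y:=\sigma_{i+1}\cup\cdots\cup\sigma_j$ of $\gamma$ is an \emph{embedded} arc both of whose ends lie near $M$; pushing its ends onto $M$, it is (by the same identification lemma) the support of the interval object $Y$ obtained by iterating the extensions $\eta_{i+1},\dots,\eta_{j-1}$ on $X_{i+1},\dots,X_j$, an object with both endpoints on $M$; when $j=i+1$ this is simply $Y=X_{i+1}$.

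\textbf{The contradiction.} Here the separating hypothesis enters. Cutting $\Sigma$ along the embedded separating arc $\gamma$ produces two surfaces, on the boundary of each of which $\gamma$ appears as an unmarked arc. Since $\gamma_Y$ is a subarc of that boundary arc, in the piece containing $M$ it becomes boundary-parallel, so its closed-up version bounds a disk disjoint from the marked locus — that is, $\gamma_Y$ is a contractible arc. But by Theorem \ref{thm:geometricity} a contractible arc is not an admissible curve representing a nonzero indecomposable, so it supports no nonzero object of $\cF(\Sigma)$; hence $Y=0$. If $j=i+1$ this forces $X_{i+1}=0$, contradicting stability of $X_{i+1}$ (and Proposition \ref{prop:stableobjects}). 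If $j>i+1$, then $Y=0$ lets one delete the block $X_{i+1},\dots,X_j$ from the chain — the remaining marked intervals still chain up because $M_i=M_j$ — yielding a strictly shorter \textsc{cosi} decomposition of $X$, which contradicts the essential uniqueness of \textsc{cosi} decompositions. In all cases we reach a contradiction, so no repetition occurs and the decomposition is simple.

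\textbf{Where the work is.} The delicate point — and the only place the separating hypothesis is used — is the claim that a subarc of an embedded \emph{separating} arc, closed up, is contractible: one must track carefully how the two components of $\Sigma\setminus\gamma$ meet a half-disk collar of $M$ to see that the closing-up can be routed within one of the two pieces along its boundary. I expect this planar, Jordan-curve-type analysis near $M$ to be the main obstacle. The hypothesis is genuinely essential: the annulus examples in the remarks exhibit non-simple \textsc{cosi} decompositions, with repeated marked intervals, for \emph{non}-separating embedded supports, and there the analogous loop is an essential curve around the annulus rather than a contractible one.
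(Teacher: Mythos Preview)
Your reduction step is fine, but the ``contradiction'' step has a genuine gap: the claim that the closed-up $\gamma_Y$ bounds a disk disjoint from the marked locus does not follow from the separating hypothesis alone. You argue that after cutting along $\gamma$, the subarc $\gamma_Y$ sits on the boundary of the piece $\Sigma_M$ containing $M$, and then assert that closing it up along $M$ gives a contractible loop. But $\Sigma_M$ need not be a disk; it can itself carry extra boundary components or handles, and the loop $\gamma_Y\cup\delta$ can wind around those. Concretely, take $\Sigma$ a pair of pants with outer circle $C$ and inner circles $C',C''$, with $M_0,M_N\subset C$ and $M\subset C'$; draw an embedded separating arc $\gamma$ from $M_0$ to $M_N$ that dips to $M$, loops once around $C''$, dips back to $M$, and proceeds to $M_N$. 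This $\gamma$ is separating (any embedded boundary-to-boundary arc in a planar surface is), it makes two passes near $M$, and the subarc $\gamma_Y$ between the two passes, closed up along $M$, is freely homotopic to the core loop around $C''$ --- certainly not contractible, and the corresponding object $Y$ is nonzero. So the implication you need at this step is simply false as a topological statement.

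The paper's proof avoids this by a genuinely different mechanism: it takes a \emph{maximal} repeating subsequence and performs a case analysis on the directions of the extension morphisms at its two ends. In two of the four cases the concatenated chain is forced to have an essential self-crossing (here the noncrossing Lemma~\ref{lemma:noncrossing} is used to rule out an isotopy removing the crossing), contradicting embeddedness of $\gamma_X$; in the remaining two cases the concatenation is forced to be non-separating, contradicting the separating hypothesis. Thus both hypotheses and, crucially, the stability input via Lemma~\ref{lemma:noncrossing} are used --- your argument invokes only the separating hypothesis, and that is precisely why it cannot close. If you want to salvage your approach, you would need to bring in the phase inequalities coming from the extra boundary morphisms at $M$ among $X_{i},X_{i+1},X_{j},X_{j+1}$ (which is essentially what the paper's case analysis encodes) rather than a purely topological contractibility claim.
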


\begin{proof}
Let us write as before $\gamma_1,\dots,\gamma_N$ for the intervals and $M_1,\dots,M_{N-1}$ for the marked boundary intervals between them. We would like to rule out the possibility of having repeated intervals.

Suppose that the subsequence
\[ M_i, \gamma_{i+1}, M_{i+1}, \dots, M_{i+k-1}, \gamma_{i+k}, M_{i+k} \]
repeats itself, ie. all those intervals and marked boundary components are isomorphic to
\[ M_j, \gamma_{j+1}, M_{j+1}, \dots, M_{j+k-1}, \gamma_{j+k}, M_{j+k} \]
for some other $j$. For simplicity assume that $j > i+k$ so there's no overlap; and let us assume that $k$ is maximal. Let us also assume that $i > 0$ and $j+k < N$ so that we are in the middle of the chain and not at the ends, and that $j$ is the smallest index possible with these properties (because this sequence could in principle repeat many times).

There are then four possibilities for the extension maps at $M_i$ and $M_{i+k}$, as below:
\begin{figure}[h]
    \centering
    \includegraphics[width=\textwidth]{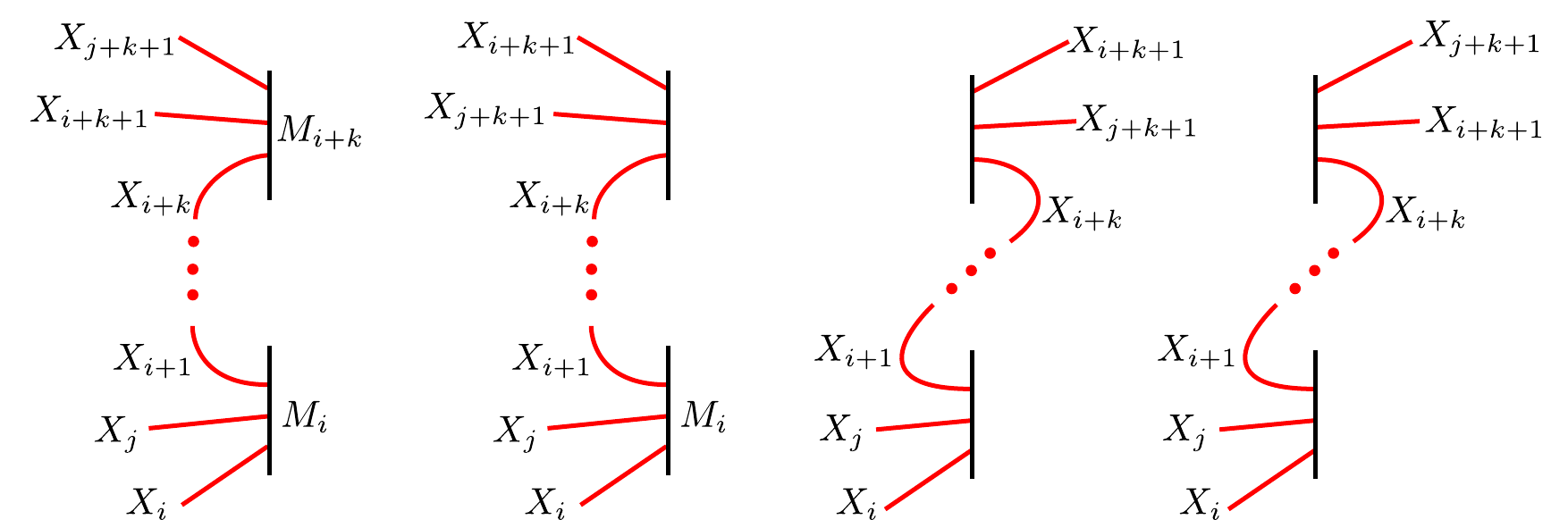}
    \caption{Four possible cases for extensions within a self-overlapping chain.}
\end{figure}

If we are in the first case or third case, note that concatenating the chain by those boundary walks leads to a self-crossing of $\gamma_X$. This self-crossing cannot be eliminated by isotopy, because due to Lemma \ref{lemma:noncrossing} there are no polygons of stable intervals bound by the chain. Since we assumed that $X$ is an embedded interval object this is impossible.

As for the second case and fourth case, note that concatenating the chain by those boundary walks leads to an embedded interval that does not separate the surface into two parts, contradicting the topological condition.

The special cases to be dealt with are when this repeated sequence is at one end of the chain; in this case it is easy to see that the concatenation is always non-trivially self-intersecting, unless the overlap is just a single boundary component $M_0 = M_N$ which we also excluded by assumption. The more general case of repeated intersections, nested intersections etc. poses no essential difficulties and can be argued by repeating the argument above recursively.
\end{proof}

With these lemmas, we prove the following proposition constraining the form of the HN decomposition of an object.

\begin{proposition}\label{prop:intervalchain}
Let $X$ be an rank one indecomposable object of $\cD = \cF(\Sigma)$ and $\sigma \in \Stab(\cD)$ any stability condition. Then $X$ is either a semistable circle or has a chain of stable intervals decomposition under $\sigma$.
\end{proposition}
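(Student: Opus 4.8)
The plan is to argue by induction on the total number $\ell(X)$ of stable factors occurring in a maximal filtration of $X$ (the Harder--Narasimhan filtration refined by Jordan--H\"older filtrations of its semistable quotients); note $\ell(X)=1$ exactly when $X$ is stable. First I would invoke Theorem \ref{thm:geometricity} to represent $X$ by an admissible graded curve $\gamma_X$ — an immersed interval or an immersed circle — carrying a rank one local system. The base case $\ell(X)=1$ is immediate: if $X$ is stable then by Proposition \ref{prop:stableobjects} $\gamma_X$ is an embedded interval, which is the trivial (length one) \textsc{cosi} decomposition, or an embedded circle, which is the first alternative. So from now on $\ell(X)\ge 2$ and $X$ is neither stable nor a stable circle.

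The heart of the inductive step is to peel off an extremal stable interval. Let $X_{+}\subset X$ be the maximal destabilizing subobject (of phase $\phi_{\max}$), and choose a Jordan--H\"older factor $S$ of $X_{+}$, so $S$ is stable of phase $\phi_{\max}$ and, by Proposition \ref{prop:stableobjects}, an embedded interval or embedded circle. The key claim is that $S$ must be an \emph{interval}. Indeed, suppose $S$ were an embedded circle $C$. Morphisms into or out of a circle come only from transversal crossings; a crossing of degree $i_p$ with any other stable object of phase $\phi_{\max}$ would simultaneously produce classes in degrees $i_p$ and $1-i_p$ in the two directions, and semistability forces $i_p\in\{0,1\}$, so one of these is a nonzero degree zero morphism between two distinct stable objects of the same phase — contradicting Schur's lemma — while a nonzero class in $\Ext^1(C,C)$ would give a rank two self-extension supported on $\gamma_C$, impossible for the rank one object $X$. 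Hence $C$ occurs with multiplicity one among the factors of $X_{+}$ and has no $\Ext^1$-link to any other factor of phase $\phi_{\max}$; and it has no $\Ext^1$-link to the lower-phase part $X/X_{+}$ either, since such a class in $\Ext^1(X/X_{+},C)$ would come from a degree one crossing and hence force a nonzero $\Hom(C,B)$ with $B$ stable of phase $<\phi_{\max}$. Therefore $C$ would split off as a direct summand of $X$, forcing $X\cong C$, which is excluded. (If no stable factor of $X_{+}$ can be peeled off conveniently, one runs the symmetric argument with the minimal destabilizing quotient of $X$ instead.)

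Having extracted a stable interval $S$, the triangle $S\to X\to X'\to S[1]$ (or its quotient counterpart) has $X'$ supported on the single immersed interval obtained from $\gamma_X$ by deleting the segment $\gamma_S$, carrying a rank one local system, hence indecomposable, with $\ell(X')=\ell(X)-1$; moreover the connecting class is a boundary path along a marked boundary interval at an end of $\gamma_S$ — a transversal crossing would again, by extremality of $\phi_{\max}$, force an illegal degree zero morphism — so $\gamma_X$ is the smoothed concatenation of $\gamma_S$ and $\gamma_{X'}$ along that marked interval. By the inductive hypothesis $X'$ is a stable circle or has a \textsc{cosi} decomposition; the former is impossible since $\gamma_{X'}$ ends on the shared marked interval, so $X'$ has a \textsc{cosi} decomposition $(X'_1,\dots,X'_m)$, and prepending (resp.\ appending) $S$ with the boundary-path extension just produced yields a \textsc{cosi} decomposition of $X$ — a chain of intervals that closes up into a circle precisely when $\gamma_X$ is a circle. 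One checks via the lemma identifying the concatenated curve $\gamma$ with $\gamma_X$ (and, if needed, Lemma \ref{lemma:noncrossing} applied to $S$ and $X'$) that this reproduces the correct isotopy class. The main obstacle is exactly the structural claim used twice above: that an extremal destabilizing piece is always a stable interval peeled off an end of $\gamma_X$ along a boundary path, i.e.\ that the algebraic Harder--Narasimhan/Jordan--H\"older data of $X$ is compatible with the geometric concatenation structure of its curve. This is where the surgery/cone description of $\cF(\Sigma)$ together with the phase inequalities forced by semistability do the essential work, and where the circle case requires the most care in reassembling the cyclic chain.
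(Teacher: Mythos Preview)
Your overall strategy — induction on the total length of a Harder--Narasimhan/Jordan--H\"older refinement, and peeling off an extremal stable piece — matches the paper's, and your argument ruling out a stable circle as the extremal factor is close in spirit to the paper's. But the inductive step as you have written it has a genuine gap.

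The central unjustified claim is that the cofiber $X'$ in the triangle $S\to X\to X'$ is \emph{indecomposable}, supported on ``the single immersed interval obtained from $\gamma_X$ by deleting the segment $\gamma_S$''. This presupposes that $\gamma_S$ sits at an \emph{end} of $\gamma_X$, which is exactly what you are trying to establish; and in fact it is false in general. The extension class in $\Ext^1(X',S)$ can involve \emph{both} boundary paths at the two ends of $\gamma_S$, together with transversal crossings, and the resulting $X'$ can split as a direct sum of two interval objects. The paper faces the same issue and does \emph{not} attempt to show indecomposability: instead it writes the extension class as $c_1 M_1 + c_2 M_2 + \sum_p c_p p$, strips off the boundary components to obtain an auxiliary object $X'$ which explicitly decomposes as $I_1\oplus I_2$ or $I_1\oplus I_2\oplus I_3$, observes that each $I_j$ has strictly smaller total length, applies induction to each summand separately, and then re-concatenates the resulting chains at the shared marked boundaries. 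Your proof needs this extra step.

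A related weakness is the assertion that ``a transversal crossing would, by extremality of $\phi_{\max}$, force an illegal degree zero morphism''. The relevant degree-zero class lives in $\Hom(S,X')$, but $X'$ can still have Jordan--H\"older factors of phase $\phi_{\max}$ (namely the other factors of $X_+$), so this hom space need not vanish. Likewise, your exclusion of the circle case implicitly assumes you already know the geometric shape of the lower-phase part $X/X_+$ in order to read its crossings with $C$; the paper avoids this circularity by invoking the induction hypothesis on $X_{N-1}$ (which has length $N-1$) to obtain its \textsc{cosi} decomposition first, and only then comparing crossings with the circle $A_N$.
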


\begin{proof}
Note that for any object, being stable is an open condition in stability space, therefore it is enough to assume that the stability condition $\sigma$ is appropriately generic (that is, for any finite set of classes $\Xi$ as in Definition \ref{def:xistable}).

Suppose first that $X$ is not a semistable circle. Consider the HN decomposition of $X$ under $\sigma$ and further decompose each semistable factor of phase $\phi$ using the Jordan-H\"older filtration on the abelian category $\cP_\phi$. We get then a total filtration
\[\xymatrix@=4pt{
0 \ar[rr] & 				& X_1 \ar[ld]^{\pi_1} \ar[rr] & 		& X_2 \ar[ld]^{\pi_2} \ar[r] 	& \dots \ar[r]	& X_{N-1}  \ar[rr]	&			& X_N = X \ar[ld]^{\pi_n} \\
& A_1 \ar@{.>}[lu]^{\epsilon_1=0}	& & A_2 \ar@{.>}[lu]^{\epsilon_2}	&						&				& & A_N \ar@{.>}[lu]^{\epsilon_n}	&
}\]
where each factor $A_i$ is stable but the phases $\phi_i$ might repeat.

We will prove by induction on the total length $N$. The case $N=1$ is obvious. Assume now that the statement is true for any object of total length $N-1$, and take an object $X$ as above.

Consider the extension $X_{N-1} \to X_N \to A_N$. Since the object $A_N$ is stable, by Lemma \ref{prop:stableobjects} it is either representable either by an embedded interval or an embedded circle. We will treat these cases separately.

If $A_N$ is an interval object supported on a embedded interval $\alpha_N$, and $X_{N-1}$ is supported on some collection of immersed curves $\gamma_{N-1}$. Note that we can also express $X_{N-1}$ as an extension \[A_N[-1] \to X_{N-1} \to X_N\],
so we conclude that $X_{N-1}$ is either supported on a single immersed curve (interval or circle) or a direct sum of two intervals.

\begin{figure}[h]
    \centering
    \includegraphics[width=\textwidth]{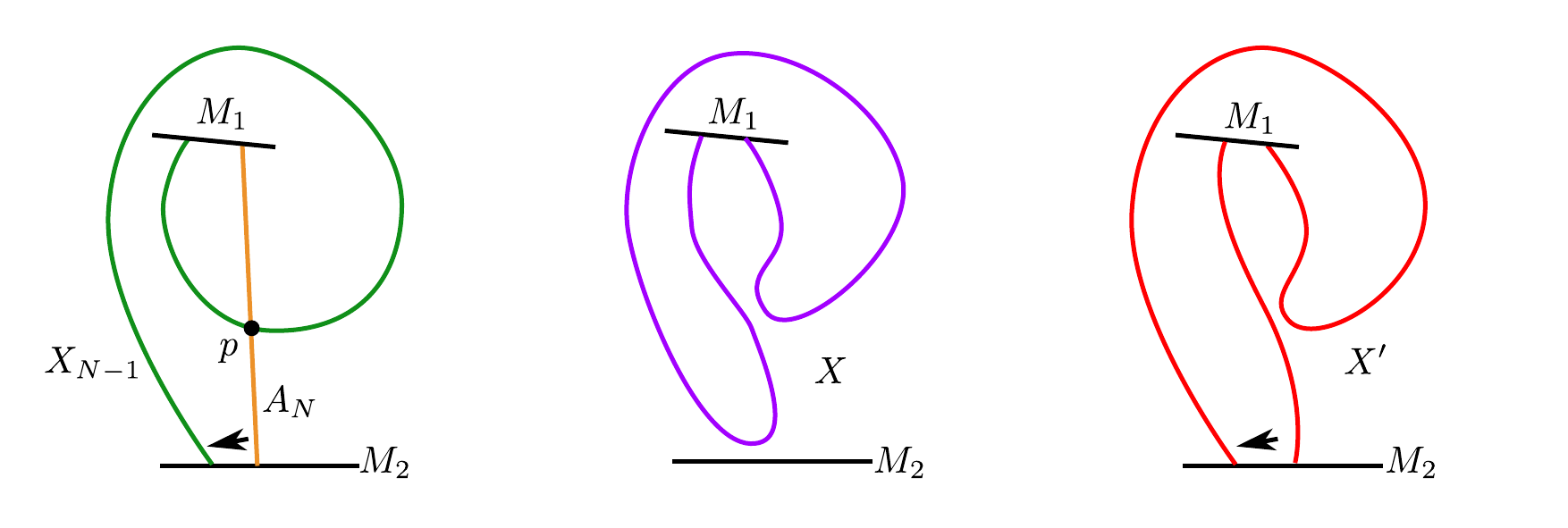}
    \caption{One example where $A_N$ extends $X_{N-1}$ with an extension map $c_2 M_2 + c_p p$. Using only the extension at $p$ we obtain $X'$ which is the sum of two interval objects (each of smaller total length), which can be extended at $M_2$ to give $X$. In this case $X_{N-1}$ and $A_N$ shared the other boundary too; this does not have to be the case in general}
\end{figure}

We choose $\alpha_N$ and $\gamma_{N-1}$ to be in minimal position. The extension map $\eta \in \Ext^1(A_N,X_{N-1})$ comes from a linear combination of classes corresponding to generalized intersections in $\alpha_N \cap' \gamma_{N-1}$. Let us write
\[ \eta = c_1 M_1 + c_2 M_2 + \sum_p c_p p \]
where $M_1, M_2$ are extension maps given by the marked boundary intervals at the end of $A_N$ and $p$ labels extension maps coming from intersection points. Note that the coefficients $c_1,c_2,c_p$ are not uniquely defined.

We see that it is impossible to have $c_1 = c_2 = 0$. If the extension happens only at transverse intersection points, then this extension is supported on two (or more) superimposed curves which is impossible since we assumed $X_N = X$ was indecomposable.

Consider then the modified extension map
\[ \eta' = \sum_p c_p p \]
and the corresponding extension $X_{N-1} \to X' \to A_N$. This is supported on a set of curves that share the marked boundary intervals $M_1$ and/or $M_2$ and moreover can be extended at those to obtain the original object $X$. This topologically constrains $X'$ to be of one of three types:
\begin{enumerate}
    \item $X' = I_1 \oplus I_2$, two intervals which can be extended at a common boundary to form the interval object $X$,
    \item $X' = I_1 \oplus I_2 \oplus I_3$, three intervals which can be extended at two common boundaries to form the interval object $X$,
    \item $X' = I_1 \oplus I_2$, two intervals which can be extended at both common boundaries to form a circle object $X$.
\end{enumerate}

Whichever case we are in, since total length is additive, the indecomposable factors $I_1,I_2,I_3$ are all of length $\le N-1$ so by the induction hypothesis they have \textsc{cosi} decompositions, which can then be composed at the shared marked boundaries to give a \textsc{cosi} decomposition for $X$.

It remains to deal with the case where $A_N$ is a circle object. Since there is no boundary, the extension map $\eta \in \Ext^1(A_N,X_{N-1})$ must be given by a linear combination
\[ \eta = \sum_p c_p p \]
of the classes given by transverse intersections $p$ between $\alpha_N$ and $\gamma_{N-1}$. Assume first that $N \ge 3$; then $N-1 \ge 2$ and therefore $X_{N-1}$ is not a semistable circle so by the induction hypothesis it has a \textsc{cosi} decomposition coming from concatenating intervals $\alpha_1,\dots,\alpha_{N-1}$.

We see that every transverse intersection of index $1$ between $\alpha_N$ and $\gamma_{N-1}$ must come from one or more transverse intersections of index $1$ between $\alpha_N$ and another $\alpha_i$. However this gives a nonzero class in $\Hom(A_i,A_N)$ which cannot happen if $\phi_{A_i} \ge \phi_{A_N}$, so the only possibility is that these have the same phase, which can be discarded by the genericity condition. The only last case to deal with is when $N = 2$ and $X$ is an extension of two stable circle objects $A_1,A_2$; by the same argument as above this can only happen if the two circles have the same phase, which does not happen by the genericity assumption.
\end{proof}

One easy consequence of this result is that the monodromy of the rank one local system carried by the curve does not matter for its semistability.
\begin{corollary}
Fix any stability condition $\sigma$ as above, and $X$ any rank one object supported on a embedded circle $\gamma$. If $X$ is semistable under $\sigma$, then any other rank one object $X'$ supported on $\gamma$ is also semistable under $\sigma$.
\end{corollary}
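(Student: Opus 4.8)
The plan is to reduce at once to the case of circle objects and then transport a putative chain-of-stable-intervals decomposition between the two objects $X$ and $X'$. Since $X$ is stable, Proposition~\ref{prop:stableobjects} forces its support $\gamma$ to be an embedded curve. If $\gamma$ is an embedded interval there is nothing to prove, because every local system on an embedded interval is trivial, so up to isomorphism there is only one rank one object supported on $\gamma$, namely $X$ itself. Hence I would assume $\gamma$ is an embedded circle; by Theorem~\ref{thm:geometricity} it is admissible (it represents the indecomposable object $X$), so for each $z \in \CC^\times$ there is a well-defined indecomposable rank one object $X_z$ supported on $\gamma$ with monodromy $z$, and every rank one object on $\gamma$ is one of these. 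Writing $X = X_z$ and $X' = X_{z'}$, the goal is to show $X_{z'}$ is stable.

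I would argue by contradiction. Assume $X_{z'}$ is not stable; then it is a rank one indecomposable that is not a stable circle, so Proposition~\ref{prop:intervalchain} equips it with a \textsc{cosi} decomposition: stable embedded interval objects $Y_1,\dots,Y_N$ on arcs $\delta_1,\dots,\delta_N$, together with extension morphisms $\eta_1,\dots,\eta_N$ (including the closing one at $M_0 = M_N$), whose iterated extension is $X_{z'}$. The crucial point is then that $X_z$ has a \textsc{cosi} decomposition with the \emph{same} stable intervals: rescaling the closing extension morphism $\eta_N$ by a scalar $\lambda \in \CC^\times$ and leaving the rest fixed yields a rank one circle object supported on $\gamma$ whose monodromy is multiplied by $\lambda$, since the monodromy of such an object is the product of the gluing scalars around the circle. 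Taking $\lambda = z/z'$ produces $X_z$ (up to isomorphism), and the $Y_i$ remain $\sigma$-stable, so $X_z$ too admits a \textsc{cosi} decomposition into $\sigma$-stable intervals.

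Finally I would derive the contradiction: a \textsc{cosi} decomposition realizes $X_z$ as an iterated extension, via genuine elements of $\Ext^1$, of the stable objects $Y_1,\dots,Y_N$. Since $X_z$ is stable it is in particular semistable, so by the description of HN decompositions in terms of \textsc{cosi} decompositions all the $Y_i$ must share one phase $\phi$; hence $X_z$ lies in the finite-length abelian category $\cP_\phi$ with composition factors among the $Y_i$. But $X_z$ is a circle object and cannot be isomorphic to any interval object $Y_i$, so it has at least two composition factors and is not simple in $\cP_\phi$, contradicting its stability. The one step I expect to require care is the claim that rescaling a single gluing morphism in the chain sweeps out exactly the circle objects with the correspondingly rescaled monodromy --- an elementary but slightly fiddly computation with the twisted complexes representing these curve objects.
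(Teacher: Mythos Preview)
Your argument is correct and follows essentially the same route as the paper's own proof: assume $X'$ is not stable, invoke Proposition~\ref{prop:intervalchain} to obtain a \textsc{cosi} decomposition of $X'$, then observe that rescaling the extension classes in the chain produces the same object $X$ supported on $\gamma$ with any desired monodromy, contradicting the stability of $X$. You supply more detail than the paper does---in particular the explicit reduction to the circle case and the spelled-out contradiction via Jordan--H\"older length in $\cP_\phi$---but the underlying idea is identical.
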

\begin{proof}
Suppose otherwise; then $X'$ has a \textsc{cosi} decomposition into stable intervals, not all of the same phase. But the same chain of intervals can be concatenated to give $X$ as well, by taking different multiples of the extension classes between the intervals in the chain, contradicting the uniqueness of the HN decomposition.
\end{proof}

The only indecomposable objects not covered by Theorem \ref{prop:intervalchain} are circle objects with higher rank local systems, but this will cause no further problems:
\begin{lemma}
Let $X$ be an indecomposable object supported on a circle $\gamma$ with higher-rank local system. Then there are two possibilities for $X$:
\begin{enumerate}
    \item $X$ is a semistable circle whose stable components are all rank one objects supported on $\gamma$,
    \item $X$ has a decomposition as as chain of semistable intervals, ie. similar to a \textsc{cosi} decomposition except that every piece is a direct sum of stable intervals instead of a single stable interval.
\end{enumerate}
\end{lemma}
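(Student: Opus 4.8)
The plan is to reduce everything to the rank one situation already settled by Proposition \ref{prop:intervalchain}, exploiting the Jordan block structure of the local system. By Theorem \ref{thm:geometricity} I may assume $X$ is the object attached to the pair $(\gamma, J_r(\lambda))$, where $J_r(\lambda)$ denotes the single Jordan block of size $r \ge 2$ and eigenvalue $\lambda \in \CC^\times$ (an indecomposable local system on a circle is exactly such a block). The standard flag of subrepresentations $0 \subsetneq \langle e_1\rangle \subsetneq \langle e_1, e_2\rangle \subsetneq \dots \subsetneq J_r(\lambda)$ induces, through the exact passage from local systems on the embedded circle $\gamma$ to objects of $\cF(\Sigma)$, a filtration by subobjects $0 = X^{(0)} \subset X^{(1)} \subset \dots \subset X^{(r)} = X$ with every subquotient $X^{(k)}/X^{(k-1)}$ isomorphic to $L$, the rank one object supported on $\gamma$ with monodromy $\lambda$. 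Proposition \ref{prop:intervalchain} then gives a dichotomy for $L$: either $L$ is a stable circle object, or it admits a \textsc{cosi} decomposition. I will show these two alternatives yield conclusions (1) and (2) respectively.

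In the first case, write $\phi_0$ for the phase of $L$. Since $L$ is stable it is a simple object of the abelian category $\cP_{\phi_0}$, which is finite length by the support property. As $X$ is an iterated extension of copies of $L$ and $\cP_{\phi_0}$ is closed under extensions, $X$ lies in $\cP_{\phi_0}$, i.e. $X$ is semistable of phase $\phi_0$; and the filtration $X^{(\bullet)}$ is a composition series all of whose factors are $\cong L$, so by Jordan--H\"older uniqueness every stable component of $X$ is a copy of $L$, a rank one object supported on $\gamma$. This is conclusion (1).

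In the second case, let the \textsc{cosi} decomposition of $L$ be given by stable intervals $A_1, \dots, A_N$ on arcs $\gamma_1, \dots, \gamma_N$, with shared marked boundary intervals $M_1, \dots, M_{N-1}$ and the wrap-around extension at $M_0 = M_N$. The claim I would establish is that $X$ is the ``$r$-fold thickening'' of this chain: the iterated extension of the pieces $A_i \otimes \CC^r$, using at each interior $M_i$ the same extension map tensored with $\mathrm{id}_{\CC^r}$, and at the wrap-around $M_0 = M_N$ the extension map tensored with the operator making the total monodromy around $\gamma$ equal to $J_r(\lambda)$ instead of $\lambda\cdot\mathrm{id}$. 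The object so built is supported on the circle obtained by concatenating the $\gamma_i$ and carries an indecomposable rank $r$ local system with monodromy $J_r(\lambda)$; by the uniqueness clause of Theorem \ref{thm:geometricity} it must be $X$. Each piece $A_i \otimes \CC^r = A_i^{\oplus r}$ is a direct sum of the stable interval $A_i$, so this is exactly a chain-of-semistable-intervals decomposition, which is conclusion (2). An alternative route is induction on $r$ via the short exact sequence $0 \to X' \to X \to L \to 0$ with $X'$ the object of the sub-block $J_{r-1}(\lambda)$: the same dichotomy for $X'$ is controlled by the very same $L$, so the inductive hypothesis places $X'$ in the same case as $X$, and one only has to graft on one more copy of $L$.

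The step I expect to be the main obstacle is the geometric bookkeeping in the previous paragraph: checking carefully that the higher-rank local system on the concatenated circle is genuinely reconstructed by thickening the chain of stable intervals $A_i$ and twisting a single wrap-around gluing, so that Theorem \ref{thm:geometricity} can be invoked to identify the thickened chain with $X$ (and, in the inductive variant, that the extension $0 \to X' \to X \to L \to 0$ really refines the chain of $X'$ rather than creating new crossings --- here Lemma \ref{lemma:noncrossing} should again be the relevant tool). Everything else is a routine application of the rank one results, of Jordan--H\"older uniqueness in the finite-length categories $\cP_\phi$, and of extension-closedness of the $\cP_\phi$.
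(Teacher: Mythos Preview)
Your proposal is correct and follows essentially the same approach as the paper: both split on whether the rank one circle $L$ supported on $\gamma$ is stable, then in case (1) present $X$ as an iterated self-extension of $L$ to get semistability, and in case (2) thicken the \textsc{cosi} decomposition of $L$ by $r$ copies. Your version is in fact more careful than the paper's --- you make explicit the Jordan-block filtration, the extension-closedness of $\cP_{\phi_0}$, and the geometric identification via Theorem \ref{thm:geometricity} in case (2), whereas the paper simply says ``extend them appropriately to construct the local system $\cL$'' --- but the strategy is identical.
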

\begin{proof}
Suppose $X$ carries a rank $r$ indecomposable local system $\cL$. If the rank one objects supported on $\gamma$ are stable, then we pick $r$ such objects with monodromies given by the eigenvalues of $\cL$; using the self-extension of the circle we can present $X$ as an iterated extension of these objects, proving that $X$ is semistable, so we are in case (1).
Otherwise, these rank one objects have a \textsc{cosi} decomposition; again we take $r$ copies of this chain of stable intervals and extend them appropriately to construct the local system $\cL$, and we are in case (2).
\end{proof}

Combining the results above, we conclude that certain kinds of embedded intervals always have simple \textsc{cosi} decompositions.
\begin{corollary}\label{corollary:chainofobjects}
Let $X$ be an object of $\cF(\Sigma)$ represented by an embedded interval $\gamma_X$ with trivial rank one local system, such that $\gamma_X$ cuts the surface into two, and has ends on distinct marked boundary intervals. Then $X$ has a simple \textsc{cosi} decomposition under any stability condition on $\cF(\Sigma)$.
\end{corollary}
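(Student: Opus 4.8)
The plan is to obtain this corollary by stringing together the three structural results just established, verifying only that the hypotheses line up across the chain. First I would record that an embedded interval equipped with a trivial rank one local system represents an indecomposable object of $\cF(\Sigma)$: a rank one local system has no nontrivial idempotents, and by Theorem \ref{thm:geometricity} a graded curve with indecomposable local system gives an indecomposable. Moreover $X$ is an interval object, so in particular it is not a circle object. Hence, fixing an arbitrary $\sigma \in \Stab(\cF(\Sigma))$, Proposition \ref{prop:intervalchain} applies and the ``stable circle'' alternative is excluded, so $X$ acquires a \textsc{cosi} decomposition $(X_1,\dots,X_N)$ under $\sigma$.

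Next I would feed this \textsc{cosi} decomposition into Lemma \ref{lemma:simplechain}, whose hypotheses are precisely the two topological conditions assumed in the corollary: that $\gamma_X$ separates $\Sigma$ into two connected components, and that the two ends of $\gamma_X$ lie on distinct marked boundary intervals. Both conditions are intrinsic to the isotopy class of $\gamma_X$ and therefore independent of $\sigma$, so they are available for every stability condition. Lemma \ref{lemma:simplechain} then upgrades the \textsc{cosi} decomposition to a \emph{simple} one, so that the supporting arcs $\gamma_i$ are pairwise non-isotopic and the marked boundary intervals $M_0,\dots,M_N$ (together with the ends of $X$) are pairwise distinct. Finally, a simple \textsc{cosi} decomposition of an interval object whose ends lie on distinct marked boundary intervals is exactly the input to the interval branch of Lemma \ref{lemma:ANquiver}, which identifies $\HNEnv(X)$ with the Fukaya category of the disk $\Delta_{N+1}$ and hence with $D^b(A_N)$; this yields the claimed abstract equivalence of triangulated categories.

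Since all of the substantive work has already been carried out in Proposition \ref{prop:intervalchain}, Lemma \ref{lemma:simplechain} and Lemma \ref{lemma:ANquiver}, I do not expect a genuine obstacle here; the only point requiring care is bookkeeping. Specifically, one must check that the ``distinct marked boundary intervals at the ends'' hypothesis is used consistently in both Lemma \ref{lemma:simplechain} and in the interval case of Lemma \ref{lemma:ANquiver}, and that it, together with the separating hypothesis, is part of the corollary's standing assumptions rather than something to be re-derived. It is also worth emphasizing that the integer $N$ (the number of stable intervals in the chain) depends on $\sigma$, whereas the \emph{form} of the conclusion --- that the \textsc{cosi} decomposition is simple and that $\HNEnv(X)\cong D^b(A_N)$ --- does not, which is exactly the content of the phrase ``under any stability condition''.
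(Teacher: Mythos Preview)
Your proposal is correct and follows exactly the approach the paper intends: the corollary is stated without proof under the rubric ``Combining the results above,'' and the results being combined are precisely Proposition~\ref{prop:intervalchain}, Lemma~\ref{lemma:simplechain}, and Lemma~\ref{lemma:ANquiver}, chained in the order you describe. Your additional remarks about indecomposability and the $\sigma$-dependence of $N$ are helpful bookkeeping but add nothing beyond what the paper implicitly assumes.
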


\section{Relative stability conditions}\label{sec:relstab}
In this section, we present a notion of stability conditions on a surface $\Sigma$ \emph{relative} to part of its boundary. This construction will exhibit functorial behavior and satisfy cutting and gluing relations. First we will give some presentations of the category $\cF(\Sigma)$ that will be useful in stating that definition.

\subsection{Pushouts}\label{subsec:pushout}
In \citep{HKK}, it is shown that given a full system of arcs on $\Sigma$, one can define a graph $G$ dual to it and a constructible cosheaf $\cE$ of $A_\infty$-categories on $G$ such that:
\begin{theorem}\citep[Theorem 3.1]{HKK}
The category $\cF(\Sigma)$ represents global sections of the cosheaf $\cE$, ie. is the homotopy colimit of the corresponding diagram of $A_\infty$-categories.
\end{theorem}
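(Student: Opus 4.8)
The plan is to match both sides against one explicit combinatorial model. First I would fix the chosen full arc system $\cA$ and recall that, by its construction (or by the prior characterization of the surface Fukaya category), $\cF(\Sigma)$ is quasi-equivalent to $\Perf$ of the $A_\infty$-category $\cA_\Sigma$ whose objects are the arcs of $\cA$, with hom-complexes spanned by graded ``corner paths'' running along the marked boundary intervals and higher products $\mu^n$ read off from the immersed polygons cut out by $\cA$ --- equivalently, $\Perf$ of the associated graded gentle algebra. The input that makes this legitimate is that $\cA$ generates $\cF(\Sigma)$, which for a surface one can see from a Weinstein handle decomposition in which the arcs are the cocores, or, granting the geometric classification of Theorem \ref{thm:geometricity}, by resolving any admissible curve into iterated cones of arcs.

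Next I would pin down the local data of the cosheaf. Each vertex $v$ of the dual graph $G$ is dual to a complementary polygon $P_v$ of $\Sigma \setminus \cA$, and $\cE(v) = \cF(P_v)$ is the Fukaya category of a disk, modeled by the full sub-$A_\infty$-category $\cA_{P_v} \subset \cA_\Sigma$ on the arcs bounding $P_v$; each edge $e$ of $G$ is dual to an arc $a_e$, and $\cE(e) = \Perf k$ is the one-object category $\langle a_e \rangle$ with $\End(a_e) = k$; the corestriction functors are the fully faithful inclusions recording that $a_e$ is a side of $P_v$. Since $G$ is one-dimensional, the homotopy colimit of $\cE$ over the face poset of $G$ is just the homotopy coequalizer of the two maps $\coprod_{e} \cE(e) \rightrightarrows \coprod_{v} \cE(v)$ induced by the source and target of each edge; and because every $\cE(e)$ is a point-category with trivial endomorphisms, this homotopy colimit takes the explicit shape of an amalgamated free product of $A_\infty$-categories: its objects are the arcs of $\cA$, and $\Hom(a,b)$ is freely spanned by reduced alternating words in the local corner-morphisms of the $\cA_{P_v}$, composable through shared arcs.

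I would then exhibit the comparison functor from this homotopy colimit to $\cA_\Sigma$ --- the identity on arcs, and on morphisms the concatenation of corner paths into a path of the gentle algebra --- and check it is a quasi-equivalence. Since it is bijective on objects, this reduces to showing it induces quasi-isomorphisms on hom-complexes, which in turn amounts to the assertion that every morphism between two arcs in $\cF(\Sigma)$ is a unique reduced composite of elementary corner morphisms and that every higher product is a sum of contributions each supported in a single polygon $P_v$. Passing to $\Perf$ of both sides and idempotent-completing then yields the equivalence of $\cF(\Sigma)$ with the global sections of $\cE$.

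The main obstacle is precisely this last matching of hom-complexes and $A_\infty$-structures, and it is here that \emph{fullness} of $\cA$ does the real work: one must argue that after isotoping the arcs into minimal position every intersection occurs at a marked boundary interval, so that morphisms are genuinely generated by local corners, and that every immersed polygon with sides on the arcs is contained in a single complementary region, so that $\mu^{\ge 2}$ carries no ``global'' terms and the gluing loci of the colimit are honestly discrete. This discreteness is also what licenses the naive amalgamated-free-product description of the homotopy colimit --- a priori the bar resolution computing the colimit could contribute higher correction terms, and one needs these to vanish because each $\cE(e)$ is formal and concentrated in a single degree. The generation statement, while standard, is the other ingredient one should not take entirely on faith.
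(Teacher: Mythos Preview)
The paper does not contain a proof of this statement: it is quoted verbatim as \citep[Theorem 3.1]{HKK} and used as a black box input to the pushout presentations in Section~\ref{subsec:pushout}. There is therefore nothing in the present paper to compare your proposal against.

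That said, your outline is a reasonable sketch of how the result is actually established in \citep{HKK}: one identifies $\cF(\Sigma)$ with $\Perf$ of the graded gentle $A_\infty$-category on the arcs, observes that for a \emph{full} arc system every complementary region is a disk so that all morphisms are boundary paths and all higher products are local to a single polygon, and then matches this against the homotopy colimit over the ribbon graph, which for a one-dimensional diagram with discrete edge-categories reduces to an amalgamated free product. One caution on logical order: you invoke Theorem~\ref{thm:geometricity} as a route to generation, but in \citep{HKK} the cosheaf theorem is proven before and independently of the geometricity classification, so if you were reconstructing the argument from scratch you would want the generation statement via the handle/cocore picture (or directly from the combinatorics of full arc systems) rather than via the classification of indecomposables.
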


We will describe how to use this result to express $\cF(\Sigma)$ as certain useful homotopy colimits. Let $\gamma$ be some embedded interval dividing $\Sigma$ into two surfaces, $\Sigma_L$ and $\Sigma_R$. Suppose that we have a chain of intervals $\gamma_1,\dots,\gamma_N$ in distinct isotopy classes connecting $n+1$ distinct marked boundary intervals $M_0,\dots,M_n$, such that their concatenation gives the interval $\gamma$.

\begin{lemma}\label{lemma:systemofarcs}
$\Sigma$ admits a full system of arcs $\cA = \cA_L \sqcup \cA_\gamma \sqcup \cA_R$ such that every arc in $\cA_L$ has a representative contained in $\Sigma_L$, every arc in $\cA_R$ has a representative contained in $\Sigma_R$, and $\cA_\gamma = \{\gamma_1,\dots,\gamma_N\}$.
\end{lemma}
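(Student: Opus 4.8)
The plan is to build the desired full arc system by extending the given chain $\cA_\gamma = \{\gamma_1,\dots,\gamma_N\}$ to full arc systems on each side separately, then observing that the union is a full arc system on $\Sigma$. The key point is that cutting $\Sigma$ along $\gamma$ produces two surfaces $\Sigma_L'$ and $\Sigma_R'$, each of which acquires new \emph{marked} boundary intervals $M_0',\dots,M_n'$ along the cut locus; more precisely, each marked interval $M_i$ through which $\gamma$ passes is split into two marked intervals, one on $\partial\Sigma_L'$ and one on $\partial\Sigma_R'$, and the arcs $\gamma_i$ themselves become arcs in both $\Sigma_L'$ and $\Sigma_R'$ with ends on the new marked intervals. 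Since we assumed $\Sigma$ is fully stopped and the $M_i$ are marked, the new surfaces $\Sigma_L'$, $\Sigma_R'$ remain fully stopped marked surfaces. I would then invoke the standard fact (implicit in \citep{HKK}) that any collection of pairwise disjoint, pairwise non-isotopic arcs on a fully stopped surface extends to a full arc system: apply this to the surface $\Sigma_L'$ with the sub-collection $\{\gamma_1,\dots,\gamma_N\}$ (viewed as arcs in $\Sigma_L'$, each cutting off a boundary-parallel strip), obtaining a full arc system $\cA_L'$ on $\Sigma_L'$; similarly obtain $\cA_R'$ on $\Sigma_R'$. Set $\cA_L = \cA_L' \setminus \{\gamma_1,\dots,\gamma_N\}$ and $\cA_R = \cA_R'\setminus\{\gamma_1,\dots,\gamma_N\}$.

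Next I would check that, under the gluing $\Sigma = \Sigma_L' \cup \Sigma_R'$ (reversing the cut), the arcs in $\cA_L$ can be isotoped to lie in $\Sigma_L \subset \Sigma$, and similarly for $\cA_R$; this is immediate since they were already contained in the respective half-surfaces and the cut locus is disjoint from them. The arcs $\gamma_1,\dots,\gamma_N$ glue back to genuine arcs of $\Sigma$ (their endpoints lie on the $M_i$, which are marked boundary intervals of $\Sigma$), and by hypothesis their concatenation is the embedded interval $\gamma$. Thus $\cA = \cA_L \sqcup \cA_\gamma \sqcup \cA_R$ is a collection of pairwise disjoint, pairwise non-isotopic arcs on $\Sigma$. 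The remaining point is fullness: a full arc system is one whose complementary regions in $\Sigma$ are all disks (or more precisely, whose dual ribbon graph is a deformation retract and each face is a polygon with exactly one unmarked boundary arc). Since $\cA_L' $ is full on $\Sigma_L'$ and $\cA_R'$ is full on $\Sigma_R'$, every complementary region of $\cA$ in $\Sigma$ is a complementary region of $\cA_L'$ in $\Sigma_L'$ or of $\cA_R'$ in $\Sigma_R'$, hence a disk of the correct type; no new regions are created by the gluing because we glue precisely along the arcs $\gamma_i$, which belong to both arc systems. Therefore $\cA$ is full.

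The step I expect to require the most care is the bookkeeping around the cut: one must make sure that cutting along $\gamma$ and requiring the $\gamma_i$ to be arcs of both pieces is consistent, i.e. that the $M_i$ (which $\gamma$ must pass through for the $\gamma_i$ to be arcs ending on marked intervals) split correctly and that no complementary region straddling $\gamma$ is accidentally created or destroyed. Concretely, one should argue that each $\gamma_i$ is two-sided in $\Sigma$ with $\Sigma_L$ on one side near $\gamma_i$ and $\Sigma_R$ on the other (which holds because the concatenation $\gamma$ separates $\Sigma$, so locally near the interior of each $\gamma_i$ the surface is divided), and that the strip between $\gamma_i$ and $\gamma_{i+1}$ around $M_i$ is itself a disk that gets cut into two disks by the cut. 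This local picture is elementary but needs to be stated carefully; everything else is a routine application of the existence of full arc system extensions on fully stopped surfaces. I would keep the exposition at the level of this local picture plus the abstract extension statement, and not belabor the combinatorics of the dual graph.
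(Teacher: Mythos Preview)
Your cut step does not work as described. The interval $\gamma$ is a single embedded arc with endpoints on $M_0$ and $M_N$; it does not pass through the intermediate marked intervals $M_1,\dots,M_{N-1}$. (The chain $\gamma_1,\dots,\gamma_N$ visits those intervals, but its concatenation is only isotopic to $\gamma$ after one pushes the corners off $\partial\Sigma$.) So when you cut $\Sigma$ along $\gamma$, each intermediate $M_i$ lands entirely in one of the two pieces; it is not split. The $\gamma_i$ are not segments of $\gamma$ and do not become arcs of both halves: whenever $M_{i-1}$ and $M_i$ lie on opposite sides of $\gamma$ --- which is the typical situation here, since the chain carries extensions on both the left and the right --- the arc $\gamma_i$ must cross $\gamma$ and is contained in neither piece after cutting. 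Cutting instead along the union $\gamma_1\cup\cdots\cup\gamma_N$ also fails to yield two pieces; already in a disk one obtains several complementary polygons.

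The paper avoids cutting altogether. It enlarges $\cA_\gamma$ to a \emph{closure} $\overline{\cA}_\gamma$ by adjoining, on each side of the chain, an auxiliary chain of arcs linking in order those $M_i$ that lie on that side. The arcs of $\overline{\cA}_\gamma$ are pairwise disjoint and non-isotopic, so the standard extension fact (applied on $\Sigma$ itself) completes $\overline{\cA}_\gamma$ to a full arc system $\cA$. The role of the auxiliary arcs is to wall off a strip around the chain that is already cut into polygons; every arc of $\cA$ disjoint from $\overline{\cA}_\gamma$ is therefore forced into $\Sigma_L$ or $\Sigma_R$, and the partition $\cA_L\sqcup\cA_\gamma\sqcup\cA_R$ follows at once. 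Your overall plan --- extend $\cA_\gamma$ while controlling the side of each new arc --- is the right one; the closure trick is the missing device that makes it go through without any cutting.
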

\begin{proof}
Consider a (non-full) system of arcs $\overline{\cA}_\gamma$ given by the `closure' of $\cA_\gamma =\{\gamma_1,\dots,\gamma_N\}$; that is containing also a chain of arcs connecting all the marked boundary intervals to the left of the chain $\gamma$, and the analogous chain to the right of it.

\begin{figure}[h]
    \centering
    \includegraphics[width=\textwidth]{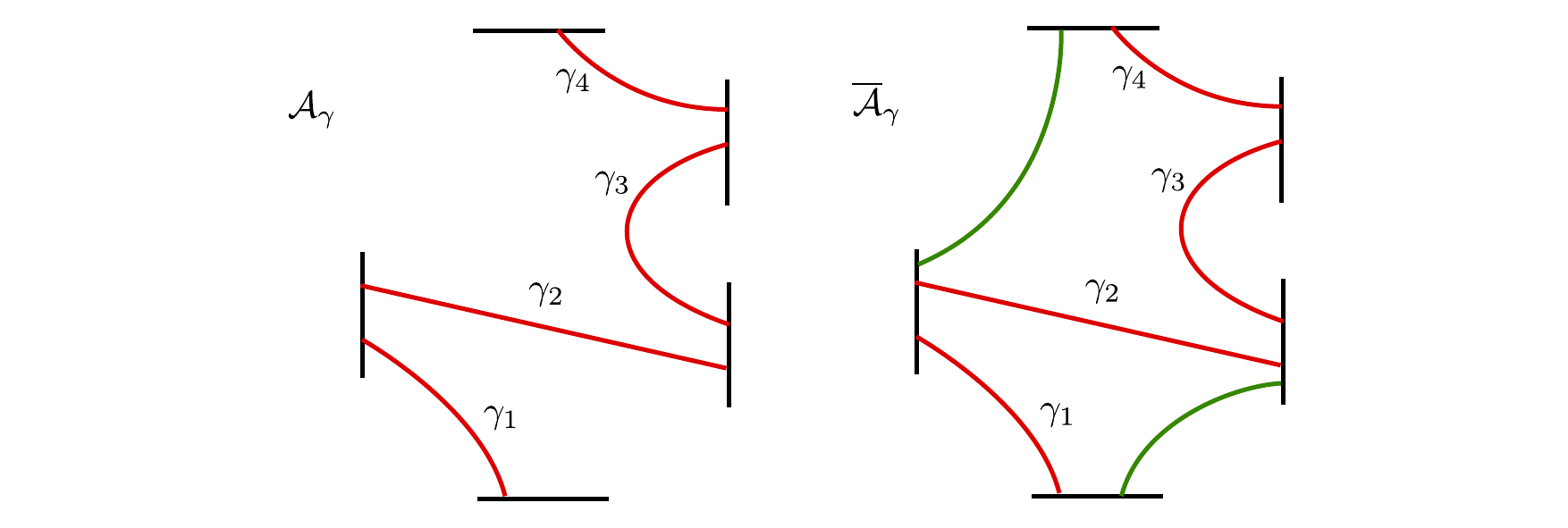}
    \caption{The (non-full) system of arcs $\cA_\gamma$ and its closure $\overline{\cA}_\gamma$. The green arcs are elements of $\overline{\cA}_\gamma \setminus \cA_\gamma$.}
\end{figure}

Since all the intervals in $\overline{\cA}_\gamma$ are non-intersecting and not pairwise isotopic there is some full arc system $\cA$ of $\Sigma$ containing them; and since $\gamma$ (and therefore the chain made by the $\gamma_i$) cuts the surface into two we can partition the arcs $\cA$ that are not among the $\gamma_i$ into left and right subsets $\cA_L$ and $\cA_R$. By construction every arc in $\cA_L$ is contained in $\Sigma_L$ and every arc in $\cA_R$ is contained in $\Sigma_R$.
\end{proof}

Consider this arc system $\cA$. Let us define $\tilde\Sigma_L$ to be the smallest marked surface with an inclusion into $\Sigma$ that contains all the arcs in $\cA_L \sqcup \cA_\gamma$; we define $\tilde\Sigma_R$ analogously.

We see that topologically, $\tilde\Sigma_L,\tilde\Sigma_R$ can being constructed from $\Sigma_L,\Sigma_R$ by attaching a disk along $\gamma$, that is
\[ \tilde\Sigma_L = \Sigma_L \cup_\gamma \Delta_{m}, \qquad \tilde\Sigma_R = \Sigma_R \cup_\gamma \Delta_{n}, \]
where $\Delta_k$ is the disk with $k$ marked boundary intervals. By minimality of these surfaces, we must have $(m-2) + (n-2) = N-1$.

\begin{figure}[h]
    \centering
    \includegraphics[width=0.8\textwidth]{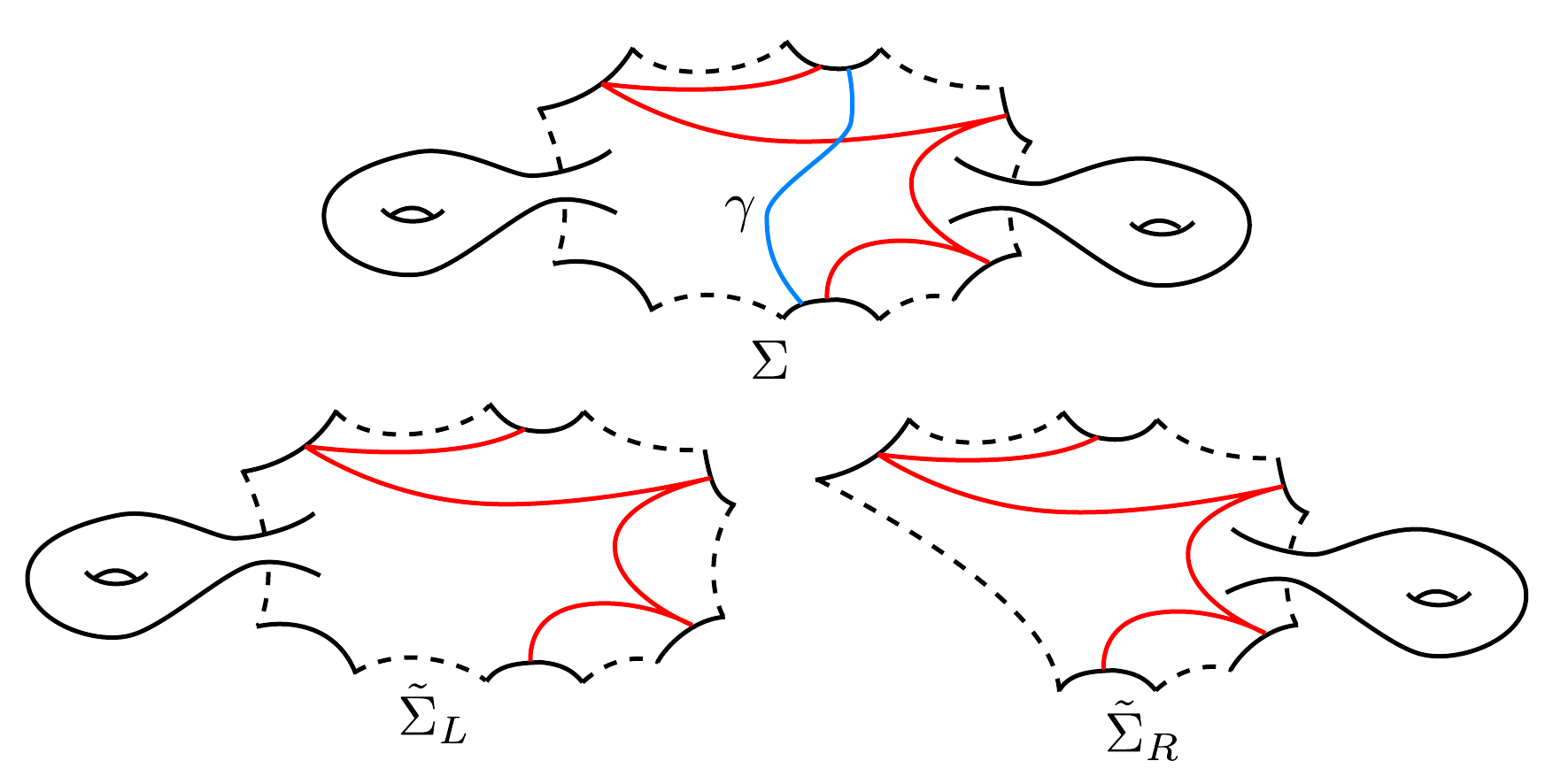}
    \caption{The two `modified' surfaces $\tilde\Sigma_L$ and $\tilde\Sigma_R$. Each one is obtained from $\Sigma_L,\Sigma_R$ respectively by adding more marked intervals ($m$ and $n$ of them) along the boundary according to the chain. In this example $N=4, m=2, n=1$.}
\end{figure}

Let us denote the triangulated closure of the object represented in an arc system by $\langle \cA \rangle$. Then we have $\cF(\tilde\Sigma_L) = \langle \cA_L \sqcup \cA_\gamma \rangle$ and $\cF(\tilde\Sigma_R) = \langle \cA_R \sqcup \cA_\gamma \rangle$. Using the cosheaf description above we can assemble all these categories into the following cube diagram:
\[\xymatrix@=18pt{
\langle \cA_\gamma \rangle \ar[rrr] \ar[ddd]    &                                               &                               & \cF(\tilde\Sigma_R) \ar[ddd] \\
                                                & \langle \gamma \rangle \ar[r] \ar[lu] \ar[d]  & \cF(\Sigma_R) \ar[ru] \ar[d]  & \\
                                                & \cF(\Sigma_L) \ar[ld] \ar[r]                  & \cF(\Sigma) \ar[rd]^\cong     & \\
\cF(\tilde\Sigma_L) \ar[rrr]                    &                                               &                               & \cF(\Sigma)
}\]
where the inner and outer squares, and the top and left sides are all pushouts (ie. homotopy colimits).

\subsection{Main definitions}
Consider now some surface $S$ with an embedded interval $\gamma$ which connects two adjacent marked boundary intervals $M,M'$, and runs parallel to the unmarked boundary interval between them (for example we can take $(S,\gamma) = (\Sigma_L,\gamma)$ as above).

\begin{definition}\label{def:relativestabcond}
A \emph{relative stability condition} on the pair $(S, \gamma)$ is the data of:
\begin{itemize}
    \item An integer $n \ge 2$, and
    \item A stability condition $\tilde\sigma \in \Stab(\cF(\tilde S))$, where $\tilde S = S \cup_\gamma \Delta_n$ is the ``extended surface'' obtained by gluing a disk $\Delta_n$ to $\gamma$ along one of its unmarked boundaries.
\end{itemize}
\end{definition}

Note that the embedded interval $\gamma \subset \tilde S$ cuts the surface into two, so by Lemma \ref{lemma:simplechain} any indecomposable object $C$ supported on $\gamma$ has a simple \textsc{cosi} decomposition under $\tilde\sigma$.

Fix a relative stability condition $\sigma= (Z,\cP)$ and let us denote by $C_1,\dots,C_N$ the corresponding chain of stable intervals in the decomposition of $C$, supported on arcs $\gamma_1,\dots,\gamma_N$. As in the previous subsection, we can take $(\Sigma_L,\Sigma_R) = (S, \Delta_n)$; this defines an arc system $\cA_L \sqcup \cA_\gamma \sqcup \cA_R$ on $\tilde S$.

\subsection{Restricting stability conditions and minimality}
Consider now the central charges
\[ Z_L = Z|_{\langle \cA_L \sqcup \cA_\gamma \rangle}, \qquad  Z_R = Z|_{\langle \cA_\gamma \sqcup \cA_R \rangle} \]
and the `candidates for slicings' $\cP_L,\cP_R$, given by intersecting the full triangulated subcategories $\cP_\phi$ with the full triangulated subcategories $\langle \cA_L \sqcup \cA_\gamma \rangle, \langle \cA_\gamma \sqcup \cA_R \rangle$, respectively.

\begin{lemma}
$\sigma|_L = (Z_L,\cP_L)$ and $\sigma|_R = (Z_R,\cP_R)$ give stability conditions on the subcategories $\langle \cA_L \sqcup \cA_\gamma \rangle$ and $\langle \cA_\gamma \sqcup \cA_R \rangle$.
\end{lemma}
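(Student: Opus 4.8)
The plan is to deduce this from a general principle: if $\cC\subseteq\cD$ is a full triangulated subcategory of a triangulated category equipped with a stability condition $\sigma=(Z,\cP)$, then the pair $(Z|_{K_0(\cC)},\{\cP_\phi\cap\cC\}_\phi)$ is a stability condition on $\cC$ \emph{if and only if} every object of $\cC$ admits a Harder--Narasimhan filtration all of whose semistable factors (taken in $\cD$) already lie in $\cC$. The ``only if'' direction is immediate from uniqueness of HN filtrations, and for ``if'' the remaining conditions are inherited by routine arguments: the slicing axioms for $\{\cP_\phi\cap\cC\}_\phi$ (full additive subcategories, compatibility with the shift, $\Hom$-vanishing between decreasing phases) and the compatibility $Z(E)=m(E)e^{i\pi\phi}$ for $0\neq E\in\cP_\phi\cap\cC$ are restrictions of the corresponding statements for $\sigma$, and the support property persists because every $\sigma|_\cC$-semistable object is $\sigma$-semistable. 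So the entire content of the lemma is the claim that $\langle\cA_L\sqcup\cA_\gamma\rangle$, and symmetrically $\langle\cA_\gamma\sqcup\cA_R\rangle$, is closed under passing to HN factors.

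First I would use the identification $\langle\cA_L\sqcup\cA_\gamma\rangle=\cF(\tilde\Sigma_L)$ from Subsection~\ref{subsec:pushout}: by geometricity (Theorem~\ref{thm:geometricity}) the indecomposable objects of this subcategory are precisely those represented by admissible graded curves lying in $\tilde\Sigma_L$ carrying indecomposable local systems. By Lemma~\ref{lemma:hnlen} the HN factors of $E\oplus F$ are those of $E$ together with those of $F$, so it suffices to treat an indecomposable $E$, supported on some curve $\gamma_E\subset\tilde\Sigma_L$. If $E$ is semistable, or is a stable circle object, there is nothing to check. Otherwise Proposition~\ref{prop:intervalchain} — together with its counterpart for circle objects with higher-rank local systems — furnishes a chain of (semi)stable interval objects $X_1,\dots,X_N$, supported on arcs $\gamma_1,\dots,\gamma_N$, whose concatenation (with the connecting boundary paths) is isotopic to $\gamma_E$. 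Hence the $\gamma_i$ may be isotoped into $\tilde\Sigma_L$, so each $X_i$ is an object of $\cF(\tilde\Sigma_L)=\langle\cA_L\sqcup\cA_\gamma\rangle$; and since the HN filtration of $E$ is obtained from the chain by grouping the $X_i$ of equal phase (as observed just after the definition of a \textsc{cosi} decomposition), every HN factor of $E$ is an iterated extension of objects among the $X_i$, hence an object of the triangulated subcategory $\langle\cA_L\sqcup\cA_\gamma\rangle$. This establishes the HN-closedness, and with it the lemma; the case of $\langle\cA_\gamma\sqcup\cA_R\rangle$ is identical.

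The hard part will be the assertion that the chain pieces $X_i$ genuinely belong to $\cF(\tilde\Sigma_L)$. The geometric input is that the concatenated curve, including the boundary paths joining successive $\gamma_i$ along their shared marked intervals $M_i$, is isotopic to $\gamma_E\subset\tilde\Sigma_L$; one must then verify that these connecting boundary paths can be taken inside $\tilde\Sigma_L$ as well, i.e.\ that the intervals $M_i$ occurring in the chain are honest marked boundary intervals of $\tilde\Sigma_L$ and not artifacts of the larger surface $\tilde S$. This should follow from the minimality in the construction of $\tilde\Sigma_L$ and from the fact that $\gamma$ separates $\tilde S$, but it is the one point where the argument is topological rather than purely formal. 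An alternative, more algebraic route would be to argue straight from the cosheaf presentation of Subsection~\ref{subsec:pushout}, using that $\cF(\tilde\Sigma_L)$ is one vertex of the pushout cube there; but I do not expect this to shorten the verification in any essential way.
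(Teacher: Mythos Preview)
Your outline matches the paper's approach exactly: reduce to checking that the subcategory is closed under HN factors, pass to indecomposables via geometricity, and use the \textsc{cosi} decomposition from Proposition~\ref{prop:intervalchain}. The gap is precisely where you suspect it is, but your proposed fix misses the actual mechanism.

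The inference ``the concatenation of the $\gamma_i$ is isotopic to $\gamma_E\subset\tilde\Sigma_L$, hence each $\gamma_i$ may be isotoped into $\tilde\Sigma_L$'' does not follow from minimality or from the mere fact that $\gamma$ separates $\tilde S$. A priori the chain $X_1,\dots,X_N$ could cross the $\gamma$-chain $C_1,\dots,C_N$, wander on the right-hand side, and cross back --- the concatenation would still be isotopic to $\gamma_E$. What rules this out is Lemma~\ref{lemma:noncrossing}: since both endpoints of $\gamma_E$ lie on the left of the $\gamma$-chain (and the $\gamma$-chain separates), any excursion of the $X$-chain to the right would force two crossings with the $C$-chain, producing exactly one of the forbidden polygons of that lemma. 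The paper's proof is a single sentence invoking Lemma~\ref{lemma:noncrossing} for this reason; the more detailed version in the proof of Lemma~\ref{lemma:cut} spells out the same crossing-and-return argument. Your worry about whether the intermediate marked intervals $M_i$ belong to $\tilde\Sigma_L$ is a secondary issue that is also handled by the same non-crossing argument, not by minimality.
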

\begin{proof}
The compatibility between the central charges and filtrations is obvious by construction; we only need to check that $\cP_L,\cP_R$ do in fact give slicings, ie. that every object in either category has an HN decomposition by objects in each restricted slicing. This can be checked on indecomposable objects and follows from Lemma \ref{lemma:noncrossing}; every indecomposable object on either side can be represented by some immersed curve keeping to the same side of the chain $\gamma$, so therefore its HN decomposition under the original stability condition $\sigma$ cannot cross to the other side.
\end{proof}

Note that this construction $\sigma \to (\sigma|_L,\sigma|_R)$ does not give a map from $\Stab(\cF(\tilde S))$ to any other fixed stability space; as $\sigma$ varies, the target categories $\langle \cA_L \sqcup \cA_\gamma \rangle$ change since the decomposition of the interval object $C$ changes as we cross a wall. However, this only happens across some specific kinds of walls, defined by the following condition:

\begin{definition}
The relative stability condition $\sigma$ is \emph{non-reduced} if there are two interval objects $C_i,C_{i+1}$ extended on the right (ie. by an extension map $C_{i+1} \xrightarrow{+1} C_i$), with the same phase. Otherwise, we will say $\sigma$ is \emph{reduced}.
\end{definition}

By standard results \citep{bridgeland2015quadratic}, the subset of non-reduced stability conditions is contained in a locally finite union of walls of $\Stab(\cF(\tilde S))$ walls, so the subset of reduced stability conditions is composed of open chambers.
\begin{lemma}\label{lemma:constanttarget}
Within each chamber $\cC$ of reduced relative stability conditions, the target subcategory $\langle \cA_L \sqcup \cA_\gamma \rangle$ is constant and the map $\Stab(\cF(\tilde S)) \to \Stab(\langle \cA_L \sqcup \cA_\gamma \rangle)$ is continuous.
\end{lemma}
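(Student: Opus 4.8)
The plan is to treat the two claims in turn. For the constancy of the target subcategory $\langle\cA_L\sqcup\cA_\gamma\rangle$, I would first note that this subcategory depends on $\sigma$ only through the isotopy classes of the arcs $\gamma_1,\dots,\gamma_N$ forming $\cA_\gamma$ — i.e. through the simple \textsc{cosi} decomposition of the fixed indecomposable object $C$ supported on $\gamma$ (which exists by Lemma \ref{lemma:simplechain}) — since the auxiliary arcs $\cA_L\subset S$ completing $\overline{\cA}_\gamma$ to a full system (Lemma \ref{lemma:systemofarcs}) may be chosen once and for all. So it suffices to show the \textsc{cosi} decomposition $(C_1,\dots,C_N)$ is constant on $\cC$. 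This decomposition is read off from the HN filtration of $C$ followed by Jordan--H\"older refinement in each $\cP_\phi$, and so is locally constant on the complement of the walls $\bar W_\Xi$, where $\Xi$ is the (locally finite) set of classes of the stable intervals $C_i$. I would then connect two given points of the connected set $\cC$ by a path crossing such walls transversally at finitely many generic points, and show the decomposition is preserved at each crossing.

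The heart of the argument is the analysis of one such crossing, at a point $\sigma_0\in\cC$. If no stable interval $C_i$ becomes strictly semistable at $\sigma_0$, then the multiset of Jordan--H\"older factors of $C$ is unchanged across $\sigma_0$ (only phases move, or two semistable HN factors of $C$ swap order); since a \textsc{cosi} decomposition is essentially unique and its curve is isotopic to the fixed $\gamma$, the whole chain — ordering, connecting marked intervals, and extension directions — is recovered from the set $\{C_i\}$ together with $\gamma$, hence is unchanged. Suppose instead some $C_i$ is strictly semistable at $\sigma_0$. By Proposition \ref{prop:intervalchain} its Jordan--H\"older factors in $\cP_{\phi(C_i)}$ form a \textsc{cosi} decomposition $D_1,\dots,D_k$ with $k\ge 2$, supported on arcs $\delta_1,\dots,\delta_k$ whose concatenation is $\gamma_i$, all of the common phase $\phi(C_i)$. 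Because $\gamma$ runs parallel to an unmarked boundary interval of $S$, the $S$-side of $\gamma$ carries no marked intervals near the interior of $\gamma_i$, so $\gamma_i$ can only be subdivided at marked intervals lying on the $\Delta_n$-side; a check of the orientation conventions tying extension directions to boundary paths then shows that each new extension $D_{j+1}\xrightarrow{+1}D_j$ is `on the right'. Refining the \textsc{cosi} decomposition of $C$ by replacing $C_i$ with $D_1,\dots,D_k$ therefore produces two consecutive equal-phase stable intervals extended on the right, i.e. $\sigma_0$ is non-reduced, contradicting $\sigma_0\in\cC$. Hence along any path inside $\cC$ no $C_i$ destabilizes, so the \textsc{cosi} decomposition, and with it $\langle\cA_L\sqcup\cA_\gamma\rangle$, is constant on $\cC$.

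For continuity of the resulting map $\cC\to\Stab(\langle\cA_L\sqcup\cA_\gamma\rangle)$ (the target has finite-dimensional $K_0$, being $\cF(\tilde\Sigma_L)$), I would use that Bridgeland's topology is the coarsest one making the central charge and the functions $\sigma\mapsto\phi^\pm_\sigma(E)$, $\sigma\mapsto m_\sigma(E)$ continuous. The restricted central charge $Z_L$ is linear in $Z$, hence continuous; and by the preceding lemma (that $\sigma|_L$ is a stability condition), for every object $E$ of $\langle\cA_L\sqcup\cA_\gamma\rangle$ the $\sigma|_L$-HN filtration coincides with the $\sigma$-HN filtration, so that $\phi^\pm_{\sigma|_L}(E)=\phi^\pm_\sigma(E)$ and $m_{\sigma|_L}(E)=m_\sigma(E)$ depend continuously on $\sigma\in\cC$. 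Continuity then follows; indeed the map is $1$-Lipschitz for Bridgeland's metric.

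I expect the main obstacle to be the geometric input in the second paragraph: that destabilizing a stable interval $C_i$ forces $\gamma_i$ to be subdivided at marked intervals on the $\Delta_n$-side, and that the resulting extensions are `on the right' rather than `on the left'. This rests on the position of $\gamma$ relative to $\partial S$ and on a careful fixing of the orientation conventions, and is exactly the sort of topological ingredient — in the spirit of Lemma \ref{lemma:noncrossing} — that cannot yet be dispensed with here. The remaining steps (local finiteness of walls, local constancy of the \textsc{cosi} decomposition off walls, and the standard description of Bridgeland's topology) are routine.
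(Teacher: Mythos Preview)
Your argument for the first claim has a genuine gap. You assert that the \textsc{cosi} decomposition of $C$ is constant on a reduced chamber $\cC$, and your justification hinges on the claim that when a stable interval $C_i$ destabilizes, the resulting subdivision occurs only at marked intervals on the $\Delta_n$-side, forcing all new extensions to be ``on the right''. This is not correct. The arcs $\gamma_i$ in the \textsc{cosi} decomposition of $C$ are not sub-arcs of $\gamma$ nor are they constrained to lie near it; they are arbitrary embedded arcs in $\tilde S$ whose concatenation is merely \emph{isotopic} to $\gamma$. In particular the intermediate marked intervals $M_1,\dots,M_{N-1}$ can be any marked intervals of $\tilde S$, including marked intervals of $S$ far from the unmarked boundary arc that $\gamma$ runs along. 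When some $C_i$ destabilizes into $D_1,\dots,D_k$ and the new marked intervals where these meet lie on the $S$-side, the new extensions are on the \emph{left}, so $\sigma_0$ remains reduced; thus the \textsc{cosi} decomposition genuinely changes inside $\cC$.

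The paper's proof does not attempt to show that $\cA_\gamma$ is constant on $\cC$. It explicitly acknowledges that the chain $\cA_\gamma$ changes across these ``internal'' walls (adjacent intervals of equal phase extended on the left), and instead argues that the generated subcategory $\langle \cA_L\sqcup\cA_\gamma\rangle$ is unaffected by such changes: the marked interval $M_i$ being merged or split lies in $S$, hence is already in the surface $\tilde\Sigma_L$ and is reached by the arcs of $\cA_L$, so $\cF(\tilde\Sigma_L)=\langle\cA_L\sqcup\cA_\gamma\rangle$ is the same on both sides of the wall. This is the step you are missing. Your continuity argument via Bridgeland's metric is fine and is equivalent to the paper's local-homeomorphism argument, but it only applies once the target category is known to be constant.
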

\begin{proof}
Within each reduced chamber $\cC$, the chain $\gamma$ is constant except for the (internal) walls on which two (or more) adjacent interval objects of the same phase $C_i,C_{i+1}$ are extended on the left (ie. by an extension map $C_i \xrightarrow{+1} C_{i+1}$). However, though the chain $\cA_\gamma$ changes across such a wall, by construction of $\cA_L$ we see that $\langle \cA_\gamma \sqcup \cA_L \rangle$ stays constant. Continuity follows from the fact that a small enough neighborhood of every stability condition on some category $\cD$ is isomorphic to $(K_0(\cD))^\vee = \Hom_\ZZ(K_0(\cD),\CC)$ and in that neighborhood the map $\Stab(\cF(\tilde S)) \to \Stab(\langle \cA_L \sqcup \cA_\gamma \rangle)$ is described by the projection dual to the inclusion $K_0(\langle \cA_L \sqcup \cA_\gamma \rangle) \to K_0(\cF(\tilde S))$.
\end{proof}

For our later uses, we would like to define a notion of minimality, in the sense that the integer $n$ of marked boundary intervals of $\Delta_n$ is as small as possible.
\begin{definition}
A relative stability condition $\sigma$ on $(S,\gamma)$ \emph{minimal} if every marked boundary interval of $\Delta_n$ appears in the simple chain of stable intervals decomposition of $C$.
\end{definition}

Another way of phrasing the minimality condition is:
\begin{lemma}\label{lemma:minimal}
$\sigma$ is minimal if and only if and $\langle \cA_R \rangle \subseteq \langle \cA_\gamma \rangle$.
\end{lemma}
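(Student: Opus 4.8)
The plan is to translate both sides of the equivalence into statements about which marked boundary intervals are met by the relevant arcs, and then compare the two triangulated closures inside $\cF(\tilde S)$ using geometricity. Write $M_0,\dots,M_N$ for the marked intervals appearing in the simple \textsc{cosi} decomposition of $C$ (which exists by Lemma \ref{lemma:simplechain}), with $\gamma_i$ running from $M_{i-1}$ to $M_i$, and let $R_\gamma \subseteq \tilde S$ be the component of $\tilde S \setminus \gamma$ on the $\Delta_n$ side, so that $R_\gamma$ agrees with $\Sigma_R = \Delta_n$ up to a collar and $\gamma$ becomes an unmarked boundary arc of it. First I would record the description of $\langle \cA_\gamma \rangle$ that will be used throughout: since the $\gamma_i$ form a chain with no polygons, $\langle \cA_\gamma \rangle$ coincides with $\HNEnv(C)$, and Lemma \ref{lemma:ANquiver} gives an equivalence $\langle \cA_\gamma \rangle \cong \cF(\Delta_{N+1}) \cong D^b(A_N)$; combining this with Theorem \ref{thm:geometricity}, every indecomposable object of $\langle \cA_\gamma \rangle$ is an interval object supported on some concatenation $\gamma_a \cup \cdots \cup \gamma_b$, hence has both of its endpoints among $\{M_0,\dots,M_N\}$ (and in particular $\langle \cA_\gamma \rangle$ contains no circle objects).

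For the forward implication, suppose $\sigma$ is minimal. Then by definition every marked interval of $\Delta_n$ lies in $\{M_0,\dots,M_N\}$, so $R_\gamma$ is a disk whose marked intervals are exactly $M_0,\dots,M_N$, i.e.\ $R_\gamma \cong \Delta_{N+1}$, and $\cA_\gamma$ is a full arc system of $R_\gamma$. Hence the essential image of the inclusion $\cF(R_\gamma) \hookrightarrow \cF(\tilde S)$ is precisely $\langle \cA_\gamma \rangle$. Every arc of $\cA_R$ is contained in $\Sigma_R = \Delta_n \subseteq R_\gamma$, so it defines an object in the essential image of $\cF(R_\gamma)$, hence in $\langle \cA_\gamma \rangle$; since $\cA_R$ generates $\langle \cA_R \rangle$, this yields $\langle \cA_R \rangle \subseteq \langle \cA_\gamma \rangle$.

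For the converse I would argue the contrapositive. If $\sigma$ is not minimal, pick a marked interval $M^\ast$ of $\Delta_n$ not among $M_0,\dots,M_N$. It is a marked interval of $\tilde\Sigma_R$, and $\cA_R \sqcup \cA_\gamma$ is a full arc system of $\tilde\Sigma_R$, so some arc of $\cA_R \sqcup \cA_\gamma$ has an endpoint on $M^\ast$; as no $\gamma_i$ does, there is an arc $r \in \cA_R$ with an endpoint on $M^\ast$. Now $r$ is an indecomposable object with an endpoint on $M^\ast$, whereas by the description above every indecomposable object of $\langle \cA_\gamma \rangle$ has both endpoints in $\{M_0,\dots,M_N\}$, which does not contain $M^\ast$. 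By the uniqueness clause of Theorem \ref{thm:geometricity}, $r$ is not isomorphic to any object of $\langle \cA_\gamma \rangle$, so $\langle \cA_R \rangle \not\subseteq \langle \cA_\gamma \rangle$. (One can also run this last step at the level of Grothendieck groups: under $K_0(\cF(\tilde S)) \cong H^1(\tilde S,\cM;\ZZ_\tau)$ the class of $r$ involves $M^\ast$, while no class coming from $\langle \cA_\gamma \rangle$ does.)

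The step I expect to be the main obstacle is the first paragraph: making precise that the abstract equivalence $\langle \cA_\gamma \rangle \cong \cF(\Delta_{N+1})$ of Lemma \ref{lemma:ANquiver} is realized geometrically inside $\cF(\tilde S)$ by the subsurface $R_\gamma$ (so that objects of $\langle \cA_\gamma \rangle$ really are curves supported in $R_\gamma$), together with the fact that an inclusion of subsurfaces induces a fully faithful embedding of Fukaya categories whose essential image is the subcategory of objects supported in the subsurface. This rests on the cosheaf description of \citep{HKK}, plus the standard fact that a full arc system meets every marked boundary interval. Once that dictionary is in place, both implications are short.
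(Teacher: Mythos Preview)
The paper gives no explicit proof of this lemma; it is stated as a direct rephrasing of the minimality definition. Your converse direction is correct and is essentially the argument one would write down.

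Your forward direction, however, has a genuine gap. You identify $R_\gamma$ (the $\Delta_n$ piece of $\tilde S\setminus\gamma$) with a disk $\Delta_{N+1}$ carrying $\cA_\gamma$ as a full arc system. This is only correct when every intermediate marked interval $M_1,\dots,M_{N-1}$ of the \textsc{cosi} chain lies on the $\Delta_n$ side. In general the chain zigzags: some of the $M_i$ lie in $S$ (exactly those where the extension is ``on the left''), and then $R_\gamma=\Delta_n$ has strictly fewer than $N+1$ marked intervals, some arcs $\gamma_i\in\cA_\gamma$ are not even contained in $R_\gamma$, and the essential image of $\cF(R_\gamma)\hookrightarrow\cF(\tilde S)$ is a proper subcategory of $\langle\cA_\gamma\rangle$, not equal to it.

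The fix is to replace $R_\gamma$ by the surface $\tilde\Sigma_R$ constructed in Subsection~\ref{subsec:pushout}, which by definition is the smallest marked subsurface of $\tilde S$ containing all of $\cA_R\sqcup\cA_\gamma$, and satisfies $\cF(\tilde\Sigma_R)=\langle\cA_R\sqcup\cA_\gamma\rangle$. Its marked intervals are exactly those touched by $\cA_R\sqcup\cA_\gamma$: the set $\{M_0,\dots,M_N\}$ from $\cA_\gamma$, together with whatever marked intervals of $\Delta_n$ are touched by $\cA_R$. Minimality says precisely that the latter contributes nothing new, so $\tilde\Sigma_R$ is a disk with marked intervals exactly $M_0,\dots,M_N$, the chain $\cA_\gamma$ is a full arc system of it, and hence
\[
\langle\cA_R\rangle \;\subseteq\; \langle\cA_R\sqcup\cA_\gamma\rangle \;=\; \cF(\tilde\Sigma_R) \;=\; \langle\cA_\gamma\rangle.
\]
With this correction your argument goes through; the rest of your write-up (the description of indecomposables of $\langle\cA_\gamma\rangle$ via Lemma~\ref{lemma:ANquiver}, and the $K_0$ alternative for the converse) is fine.
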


\subsection{The space of relative stability conditions}\label{subsec:space}
For our purposes, the part of the stability condition `purely on the disk side' (that is, restricted to the subcategory generated by $\cA_R$) does not matter; we realize this by using an equivalence relation. Let $\sigma \in \Stab(\cF(\tilde S = S \cup_\gamma \Delta_m))$ and $\sigma' \in \Stab(\cF(\tilde S' = S \cup_\gamma \Delta_n))$ be two relative stability conditions on $(S,\gamma)$. As above, one can (non-uniquely) pick corresponding arc systems $\cA_L \sqcup \cA_\gamma \sqcup \cA_R$ and $\cA'_L \sqcup \cA'_\gamma \sqcup \cA'_R$ on $\tilde S$ and $\tilde S'$, and restrict stability conditions to each side.

We will see that we need to be careful about genericity when defining the correct equivalence relation. For motivation let us first define a naive notion of equivalence:
\begin{definition}(Naive equivalence)
$\sigma \sim_{\mathrm{naive}} \sigma'$ if there is some extended surface $\tilde S'' = S \cup_\gamma \Delta_\ell$ with inclusions $\tilde S \hookrightarrow \tilde S''$ and $\tilde S' \hookrightarrow \tilde S''$, such that there is an equality
\[ \langle \cA_L \sqcup \cA_\gamma \rangle \cong \langle \cA'_L \sqcup \cA'_\gamma \rangle \]
of subcategories of $\cF(\tilde S'')$ generated by (the images) of the arc systems $\cA_L \sqcup \cA_\gamma$ and $\cA'_L \sqcup \cA'_\gamma$, such that the restricted stability conditions $\sigma|_{\langle \cA_L \sqcup \cA_\gamma \rangle}$ and $\sigma'|_{\langle \cA'_L \sqcup \cA'_\gamma \rangle}$ agree.
\end{definition}
Note that to compare the two restricted stability conditions above, we use the equivalence induced by the identification of these categories with the same subcategory of $\cF(\tilde S'')$.

\begin{lemma}\label{lem:equivrelation}
The relation $\sim_\mathrm{naive}$ is an equivalence relation on the set of relative stability conditions on $(S,\gamma)$.
\end{lemma}
\begin{proof}
The identity and reflexive axioms are satisfied automatically; transitivity follows from the combinatorics of full arc systems on discs. More specifically, the data of the arc system $\cA_\gamma \sqcup \cA_R$ is given by the data of a full arc system on a disc $\Delta_{\bar m}$ for some $\bar m \ge m$; and so on for the other two arc systems. The transitive axiom is satisfied since any two surfaces $S \cup_\gamma \Delta_{\ell_1}$ and $S \cup_\gamma \Delta_{\ell_2}$ giving two relations $\sigma \sim_{\mathrm{naive}} \sigma'$ and $\sigma' \sim_{\mathrm{naive}} \sigma''$ can always be extended to a larger surface with an arc system on a disc $\Delta_{\bar \ell}$, with $\bar \ell \ge \bar \ell_1 + \bar \ell_2$.
\end{proof}

We would like to define the space of relative stability conditions as the quotient of the space
\[ \SS = \bigsqcup_{n \ge 2} \Stab(\cF(S \cup_\gamma \Delta_n) )\]
by the relation $\sim_{\mathrm{naive}}$, but it turns out that this quotient is ill-behaved; it does not give a Hausdorff space, because the graph  $\Gamma_{\sim \mathrm{naive}} \subset \SS \times \SS$ of the naive relation is not a closed subset.
\begin{example}
Take the simple example where $S \cong \Delta_2$ with unique (up to shift) indecomposable object $C$ and $\tilde S \cong \tilde S' \cong \Delta_3$, with objects $A,B,C$ as below.

\begin{figure}[h]
    \centering
    \includegraphics[width=\textwidth]{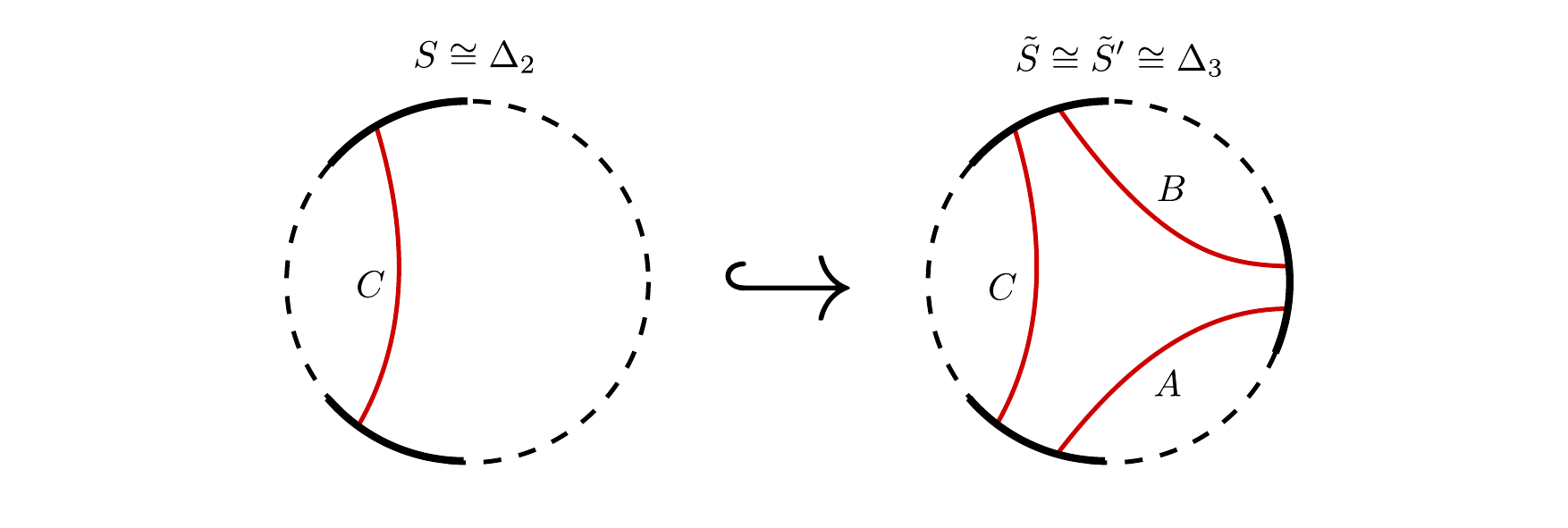}
    \caption{The surfaces $S \cong \Delta_2$ and $\tilde S \cong \tilde S' \cong \Delta_3$. The category $\cF(S)$ is equivalent to $\Mod(A_1)$ and $\cF(\tilde S)$ is equivalent to $\Mod(A_2)$.}
\end{figure}

We have a distinguished triangle $A \to C \to B$. Consider two infinite families of stability conditions on $\cF(\Delta_3)$, $\{\sigma_m = (Z_m,\cP_m)\}$ and $\{\sigma'_m = (Z'_m,\cP'_m)\}$ with $m\in \ZZ_+$, on $\cF(\Delta_3)$ given by the central charges
\[ Z_m(A) = \frac{1}{3} + i \frac{1}{m}, \qquad Z_m(B) = \frac{2}{3} - i \frac{1}{m} \]
\[ Z'_m(A) = \frac{2}{3} + i \frac{1}{m}, \qquad Z'_m(B) = \frac{1}{3} - i \frac{1}{m} \]
with $A,B$ and $C$ stable in all of them, picking phases for all these objects between $-1/2$ and $1/2$. Each one of these sequences converges in $\Stab(\cF(\Delta_3))$ respectively, to the stability conditions $\sigma_\infty, \sigma'_\infty$ with central charges
\[ Z_\infty(A) = \frac{1}{3}, \qquad Z_\infty(B) = \frac{2}{3}\]
\[ Z'_\infty(A) = \frac{2}{3}, \qquad Z'_\infty(B) = \frac{1}{3} \]
where $A,B$ are stable but $C$ is only semistable, with Jordan-H\"older factors $A,B$.

Seen as relative stability conditions on $(\Delta_2,\gamma)$, all the $\sigma_m,\sigma'_m$ for any $m$ are equivalent under $\sim_{\mathrm{naive}}$; the subcategory $\langle \cA_L \sqcup \cA_\gamma \rangle$ is $\cF(\Delta_2) = \langle C \rangle$ and the central charge of $C$ is 1 for all finite $m$. On the other hand, $\sigma_\infty$ and $\sigma'_\infty$ are not equivalent under $\sim_{\mathrm{naive}}$, since for those two $\langle \cA_L \sqcup \cA_\gamma \rangle$ is the whole category. Thus $(\sigma_\infty,\sigma'_\infty) \in \overline{\Gamma_{\sim \mathrm{naive}}} \setminus \Gamma_{\sim \mathrm{naive}}$.
\end{example}

As in the example above, the problem always arises when we have relative stability conditions which are non-reduced. Consider a relative condition $\sigma$ on $(S,\gamma)$ given by a stability condition on $\cF(\tilde S)$ for some $\tilde S = S \cup_\gamma \Delta_n$, where the object $C$ supported on $\gamma$ has a \textsc{cosi} decomposition $C_1,\dots,C_N$. Assume that $\sigma$ is non-reduced; this means that there is a nonempty set of indices $R \subset \{1,\dots,N\}$ such that the extension map is `on the right' (ie. $\in \Ext^1(C_{i+1},C_i)$) and $C_i$ and $C_{i+1}$ have the same phase. Let us suppose that the set $R$ is of the form $j,j+1,\dots,j+m$ for some $1 \le j \le j+m \le N-2$ with all objects $C_j,\dots,C_{j+m+1}$ having the same phase $\phi$; the general case (where $R$ is the disjoint union of a number of those subsets) will not be any more difficult.

Consider now the reduced arc system given by
\[ \cA^{\mathrm{red}}_\gamma = \{\gamma_1, \dots, \gamma_{j-1}, \tilde\gamma, \gamma_{j+m+2}, \dots, \gamma_N\}, \]
where $\tilde\gamma$ is obtained by concatenating the intervals $\gamma_j,\dots,\gamma_{j+m+1}$ at the $m$ marked boundaries $M_i$ with index $i \in R$. Let us now define a reduced restriction $\sigma^\mathrm{red}$ given by restricting the data of $\sigma$ to the subcategory $\langle \cA_L \sqcup \cA^{\mathrm{red}}_\gamma \rangle$, and then adding to the category $\cP_\phi$ the objects supported on $\tilde\gamma$.

\begin{lemma}\label{lemma:reduced}
$\sigma^\mathrm{red}$ is a stability condition on the category $\langle \cA_L \sqcup \cA^{\mathrm{red}}_\gamma \rangle$.
\end{lemma}
\begin{proof}
It suffices to prove that every object in the subcategory $\langle \cA_L \sqcup \cA^{\mathrm{red}}_\gamma \rangle$ has an HN decomposition into stable objects also in that same subcategory. Because of Lemma \ref{lemma:noncrossing}, the only way this could fail is if there is some indecomposable object $X$ of $\langle \cA_L \sqcup \cA^{\mathrm{red}}_\gamma \rangle$ in whose decomposition some but not all of the stable interval objects $C_j,\dots,C_{j+m+1}$ appear (if all of them appear we just replace that semistable object with the stable object $\tilde C$ supported on $\tilde\gamma$). But this cannot happen for phase reasons, following a similar argument as the proof of Lemma \ref{lemma:noncrossing}.
\end{proof}

For completeness let us define $\sigma^\mathrm{red} = \sigma|_L$ if $\sigma$ is reduced. With this definition we can now state the correct notion of equivalence.
\begin{definition}(Equivalence)
$\sigma \sim \sigma'$ if there is some extended surface $\tilde S'' = S \cup_\gamma \Delta_\ell$ with inclusions $\tilde S \hookrightarrow \tilde S''$ and $\tilde S' \hookrightarrow \tilde S''$, such that there is an equality
\[ \langle \cA_L \sqcup \cA^{\mathrm{red}}_\gamma \rangle \cong \langle \cA'_L \sqcup \cA'^{\mathrm{red}}_\gamma \rangle \]
of subcategories of $\cF(\tilde S'')$ generated by (the images) of the arc systems $\cA_L \sqcup \cA_\gamma$ and $\cA'_L \sqcup \cA'_\gamma$, such that the reduced restricted stability conditions $\sigma^\mathrm{red}$ and $\sigma'^{\mathrm{red}}$ agree.
\end{definition}

It is clear from the definition that $\sim$ is an equivalence relation on the set $\SS = \bigsqcup_{n \ge 2} \Stab(\cF(S \cup_\gamma \Delta_n))$, by the same argument as in Lemma \ref{lem:equivrelation}.
\begin{lemma}\label{lemma:uniqueminimal}
There is a unique minimal and reduced relative stability condition in each equivalence class of the equivalence relation $\sim$.
\end{lemma}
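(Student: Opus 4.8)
The plan is to show that the reduced restriction $\sigma^{\mathrm{red}}$ of Lemma \ref{lemma:reduced} is already the representative we want, and that, conversely, every minimal reduced relative stability condition is recovered from its $\sim$-class in exactly this way.

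For \emph{existence}, I would start with an arbitrary relative stability condition $\sigma$ in a given $\sim$-class, living on $\cF(\tilde S)$ with $\tilde S = S\cup_\gamma\Delta_n$ and with $C$ having cosi decomposition $C_1,\dots,C_N$, and form $\sigma^{\mathrm{red}}$ as in Lemma \ref{lemma:reduced}. By Lemma \ref{lemma:systemofarcs} and the discussion following it, $\cA_L\sqcup\cA^{\mathrm{red}}_\gamma$ is a full arc system for the minimal surface $\tilde S^{\mathrm{red}} = S\cup_\gamma\Delta_{n^{\mathrm{red}}}$ containing it, so $\langle\cA_L\sqcup\cA^{\mathrm{red}}_\gamma\rangle = \cF(\tilde S^{\mathrm{red}})$ and $\sigma^{\mathrm{red}}$ is again a relative stability condition on $(S,\gamma)$. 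It is reduced: collapsing a maximal run of equal-phase right extensions produces a new stable interval whose extensions to its two neighbours are, by maximality of the run, either left extensions or between objects of distinct phase, so no new equal-phase right extension appears (for a general index set $R$ one collapses each run separately, with the same local argument). It is minimal by Lemma \ref{lemma:minimal}, because the arc system $\cA_L\sqcup\cA^{\mathrm{red}}_\gamma$ has empty right part, so $\langle\cA_R\rangle = 0\subseteq\langle\cA^{\mathrm{red}}_\gamma\rangle$. Finally $\sigma\sim\sigma^{\mathrm{red}}$ straight from the definition of $\sim$: both have reduced restriction $\sigma^{\mathrm{red}}$, using that the reduced restriction of the reduced stability condition $\sigma^{\mathrm{red}}$ is just its left restriction, which is all of $\cF(\tilde S^{\mathrm{red}})$.

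For \emph{uniqueness}, I would first record that a minimal reduced relative stability condition equals its own reduced restriction: if $\sigma$ on $\tilde S = S\cup_\gamma\Delta_n$ is reduced then $\sigma^{\mathrm{red}} = \sigma|_L$ lives on $\langle\cA_L\sqcup\cA_\gamma\rangle$, and minimality plus Lemma \ref{lemma:minimal} gives $\langle\cA_R\rangle\subseteq\langle\cA_\gamma\rangle$, hence $\cF(\tilde S) = \langle\cA_L\sqcup\cA_\gamma\sqcup\cA_R\rangle = \langle\cA_L\sqcup\cA_\gamma\rangle$ and so $\sigma^{\mathrm{red}} = \sigma|_L = \sigma$. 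Now if $\sigma,\sigma'$ are two minimal reduced relative stability conditions on $\tilde S,\tilde S'$ with $\sigma\sim\sigma'$, the definition of $\sim$ provides an equivalence $\langle\cA_L\sqcup\cA^{\mathrm{red}}_\gamma\rangle\cong\langle\cA'_L\sqcup\cA'^{\mathrm{red}}_\gamma\rangle$ compatible with the inclusion of $\cF(S)$ and matching $\sigma^{\mathrm{red}}$ with $\sigma'^{\mathrm{red}}$; by the observation this is an equivalence $\cF(\tilde S)\cong\cF(\tilde S')$ compatible with $\cF(S)$ and matching $\sigma$ with $\sigma'$, i.e.\ an isomorphism of relative stability conditions. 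To see that the underlying surfaces also agree, note that the distinguished object $C$, supported on $\gamma\subset S$, lies in $\cF(S)$ and hence is preserved by the equivalence; since the equivalence intertwines the two stability conditions, the length $N$ of the cosi decomposition of $C$ — independent of the chosen rank one local system by Corollary \ref{corollary:chainofobjects} — is the same on both sides, and for a minimal relative stability condition $n$ is determined by $N$ (the marked intervals of $\Delta_n$ are exactly the internal marked intervals $M_1,\dots,M_{N-1}$ of the chain). Thus $n = n'$ and the minimal reduced representative is unique.

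I expect the only step requiring genuine care to be the verification that $\sigma^{\mathrm{red}}$ is reduced — i.e.\ that collapsing the equal-phase right extensions does not recreate such a configuration, and that this works simultaneously for a disconnected index set $R$; this is the one place where one must run the phase inequalities, in the style of the proof of Lemma \ref{lemma:noncrossing}. The rest is a formal unwinding of the definitions of $\sim$, ``reduced'' and ``minimal'', together with the identification $\langle\cA_L\sqcup\cA_\gamma\rangle = \cF(S\cup_\gamma\Delta_k)$.
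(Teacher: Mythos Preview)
Your approach is essentially the paper's: produce the representative via $\sigma^{\mathrm{red}}$ for existence, and for uniqueness use that a minimal reduced $\sigma$ equals its own reduced restriction, so the $\sim$-equivalence becomes an equivalence $\cF(\tilde S)\cong\cF(\tilde S')$ over $\cF(S)$ matching the stability conditions. Your existence argument is in fact more carefully spelled out than the paper's (which only gestures at why $\sigma^{\mathrm{red}}$ is minimal and reduced).

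The one weak spot is your final step showing $n=n'$. The parenthetical claim that ``the marked intervals of $\Delta_n$ are exactly the internal marked intervals $M_1,\dots,M_{N-1}$ of the chain'' is not correct as stated: some of the internal $M_i$ in the \textsc{cosi} decomposition of $C$ can lie on $S$ rather than on $\Delta_n$, so $N$ alone need not determine $n$. What is true is that the equivalence $\cF(\tilde S)\cong\cF(\tilde S')$ you have already produced preserves $\cF(S)$ and the stability conditions, hence preserves which of the $M_i$ lie in $S$; then the count of the remaining ones gives $n-2=n'-2$. The paper sidesteps this by the more direct observation that no two categories $\cF(S\cup_\gamma\Delta_n)$ are equivalent for different $n$, e.g.\ by comparing ranks of $K_0$ --- once you have $\cF(\tilde S)\cong\cF(\tilde S')$ this immediately gives $n=n'$. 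Either fix closes the gap with no change to the structure of your argument.
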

\begin{proof}
Consider some relative stability condition $\sigma$; as above it defines a stability condition $\sigma^\mathrm{red}$ on the subcategory $\langle \cA_L \sqcup \cA^{\mathrm{red}}_\gamma \rangle$. Note that this subcategory is also of the form $\cF(S \cup_\gamma \Delta_n)$, with $n = |\cA^\mathrm{red}_\gamma| + 1$, and also by construction $\sigma$ is equivalent to the reduced $\sigma^\mathrm{red}$ when both are viewed as relative stability conditions on $(S,\gamma)$.

Suppose now that we have two stability conditions $\sigma \sim \sigma'$ which are minimal and thus reduced; then the arcs in $\cA_R,\cA'_R$ can be generated by the other arcs so by compatibility we have
\[ \cF(\tilde S) \cong \langle \cA_L \sqcup \cA_\gamma \rangle \cong \langle \cA'_L \sqcup \cA'_\gamma \rangle \cong \cF(\tilde S'), \]
but it is easy to see that no two categories $\cF(S \cup_\gamma \Delta_n)$ are equivalent for different $n$ (for example by taking $K_0$) so $\tilde S \cong \tilde S'$ (compatibly with the embedding of $S$) with equivalent stability conditions.
\end{proof}

\begin{definition}(Space of relative stability conditions)
Let us define $\RelStab(S,\gamma)$ as the set of minimal and reduced stability conditions; this set is given the quotient topology by the identification $\RelStab(S,\gamma) = \SS/\sim$,
\end{definition}

\begin{proposition}\label{prop:hausdorff}
The space $\RelStab(S,\gamma)$ is Hausdorff.
\end{proposition}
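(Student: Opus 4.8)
The statement to prove is that $\RelStab(S,\gamma)$ is Hausdorff. The strategy is to identify $\RelStab(S,\gamma)$, via Lemma \ref{lemma:uniqueminimal}, with the set of minimal reduced relative stability conditions sitting inside $\SS = \bigsqcup_{n \ge 2} \Stab(\cF(S \cup_\gamma \Delta_n))$, and to show that two distinct classes can always be separated by open sets. Since each $\Stab(\cF(S \cup_\gamma \Delta_n))$ is a complex manifold, hence Hausdorff, the only way Hausdorffness can fail under the quotient is via the non-closedness of the graph of $\sim$, exactly as exhibited in the worked example with the degenerating sequences $\sigma_m,\sigma'_m$. So the plan is: (i) reduce to understanding when a sequence of minimal reduced relative stability conditions $[\sigma_k]$ can have two distinct limit points $[\sigma_\infty] \neq [\sigma'_\infty]$; (ii) show that this forces at least one of the limits to be non-reduced, and then the \emph{reduction} operation $\sigma \mapsto \sigma^{\mathrm{red}}$ of Lemma \ref{lemma:reduced} collapses the apparent ambiguity, so that in fact $[\sigma_\infty] = [\sigma'_\infty]$ in $\RelStab(S,\gamma)$, a contradiction.

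\textbf{Key steps.} First I would observe that the restriction map $\sigma \mapsto \sigma|_L$ from $\Stab(\cF(\tilde S))$ to $\Stab(\langle \cA_L \sqcup \cA_\gamma\rangle)$ is continuous on each reduced chamber (Lemma \ref{lemma:constanttarget}), and that, crucially, the \emph{reduced} restriction $\sigma \mapsto \sigma^{\mathrm{red}}$ extends this across the relevant walls: on a wall where adjacent $C_i,C_{i+1}$ extended on the right have equal phase, the reduced arc system $\cA^{\mathrm{red}}_\gamma$ concatenates them into $\tilde\gamma$, and the target category $\langle \cA_L \sqcup \cA^{\mathrm{red}}_\gamma\rangle$ is exactly the constant subcategory one gets by continuity from either side. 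Thus $\sigma^{\mathrm{red}}$ is genuinely continuous as a function on all of $\SS$ with values in the space of stability conditions on the appropriate (possibly varying) category, and the equivalence $\sim$ is precisely the fiberwise identification under this map together with the bookkeeping of $K_0$-ranks. Concretely, I would fix a compact neighborhood and use the local finiteness of walls (the cited Lemma 7.7 of \citep{bridgeland2015quadratic}) to ensure that only finitely many reduction patterns occur near a given point, so that a convergent sequence $\sigma_k \to \sigma_\infty$ in some $\Stab(\cF(S\cup_\gamma \Delta_n))$ has, after passing to a subsequence, a stable reduction pattern; the image $\sigma_k^{\mathrm{red}}$ then converges to $\sigma_\infty^{\mathrm{red}}$. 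Given two sequences of minimal reduced representatives of a single $\sim$-class converging to $\sigma_\infty$ and $\sigma'_\infty$ respectively, both limits have the same reduced restriction (namely the common limit of $\sigma_k^{\mathrm{red}} = \sigma_k|_L$), hence represent the same element of $\RelStab(S,\gamma)$ after passing to the minimal reduced representative via Lemma \ref{lemma:uniqueminimal}. Separating two genuinely distinct classes is then routine: lift to minimal reduced representatives $\sigma,\sigma'$; if they live on the same $\tilde S$ and have the same target subcategory, separate them inside the Hausdorff space $\Stab(\langle\cA_L\sqcup\cA_\gamma\rangle)$ and pull back; if the target subcategories differ (different $n$, or different reduction behavior), a small enough neighborhood of each keeps the reduction pattern constant and the $K_0$-ranks distinguish them.

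\textbf{Main obstacle.} The delicate point is step (ii): making precise that $\sigma \mapsto \sigma^{\mathrm{red}}$ really is continuous \emph{across} the non-reduced walls, i.e. that the category $\langle \cA_L \sqcup \cA^{\mathrm{red}}_\gamma\rangle$ and the stability condition on it vary continuously even as the chain $\cA_\gamma$ jumps. This is where one must use Lemma \ref{lemma:noncrossing} together with the phase argument of Lemma \ref{lemma:reduced} to guarantee that the stable objects of $\sigma^{\mathrm{red}}$ are exactly the $C_i$ that are not being merged, plus the merged $\tilde C$, and that no new stable object appears or disappears in a neighborhood — so the central charge and slicing genuinely converge. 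A second, more bookkeeping-heavy subtlety is handling the general non-reduced configuration, where the index set $R$ of Lemma \ref{lemma:reduced} is a disjoint union of several runs $j,j+1,\dots,j+m$; as the excerpt notes this introduces no new difficulty, and one merges each run independently. Once continuity of the reduction map is established, Hausdorffness of $\RelStab(S,\gamma)$ follows formally from Hausdorffness of each $\Stab(\cF(S\cup_\gamma \Delta_n))$ and the identification of $\sim$-classes with fibers of a continuous map.
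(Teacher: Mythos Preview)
Your proposal is essentially correct and takes the same approach as the paper: both use the wall-and-chamber structure on $\Stab(\cF(\tilde S))$ given by the non-reduced locus, continuity of the restriction map on each reduced chamber (Lemma \ref{lemma:constanttarget}), and the fact that on a wall the reduced equivalence is weaker than the naive equivalence inherited from the adjacent chambers, so limits of equivalent pairs remain equivalent. The paper formulates this directly as closedness of the graph $\Gamma_\sim \subset \SS \times \SS$ (checking chamber interiors and then the two types of codimension-one wall separately) rather than as continuity of a map $\sigma\mapsto\sigma^{\mathrm{red}}$ with varying target, which sidesteps exactly the issue you flag as the main obstacle, but the underlying content is the same.
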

\begin{proof}
This is equivalent to showing that the graph $\Gamma_\sim$ of the equivalence relation is closed in $\SS\times \SS$. Since $\SS$ is an disjoint union this is equivalent to showing $\Gamma_\sim$ is closed in each component $\Stab(\cF(\tilde S)) \times \Stab(\cF(\tilde S'))$.

The spaces $\Stab(\cF(\tilde S))$ have a wall-and-chamber structure where the walls are the locus of non-reduced stability conditions. By standard arguments, the union of all walls is a locally finite union of real codimension one subsets. The complement is composed of open chambers, and by Lemma \ref{lemma:constanttarget} the target subcategory $\cT = \langle \cA_L \sqcup \cA_\gamma \rangle$ is constant on each chamber.

In the interior of each chamber
\[ \cC = \cC_\rho \times \cC_\sigma \subset \Stab(\cF(\tilde S)) \times \Stab(\cF(\tilde S')), \]
the locus $\Gamma_\sim$ is the preimage of the diagonal $\Delta \subset \Stab(\cT) \times \Stab(\cT)$, so it is closed by continuity.

Let us look at the walls surrounding the chamber $\cC$, and start with a simple codimension one wall $W$, ie. the locus at the boundary of $\cC$ where the phases $\phi_i,\phi_{i+1}$ of two adjacent interval objects $C_i,C_{i+1}$ (with an extension to the right) agree. There are two possibilities: $\phi_i < \phi_{i+1}$ or $\phi_i > \phi_{i+1}$ inside of $\cC$. In the former case, comparing the target categories we see that the reduced target category $\cT^\mathrm{red}_W$ on the wall is equal to the usual target category $\cT_\cC$ in the interior of the chamber, so we can apply the same argument as inside the chamber and conclude that $\Gamma_\sim \cap W$ is closed.

In the latter case $\cT^{\mathrm{red}}_W$ is smaller than $\cT_\cC$, as it doesn't contain the objects $C_i,C_{i+1}$, only their extension. However, the closure $\overline{\Gamma_\sim \cap \cC}$ meets $W$ along a closed locus contained within $\Gamma_\sim \cap W$, as the reduced equivalence condition is strictly weaker than the naive equivalence condition on $W$. The general case for walls of higher codimension is essentially the same and can be obtained iteratively.

Now, over the entire space $\Stab(\cF(\tilde S)) \times \Stab(\cF(\tilde S'))$, since each point is surrounded by finitely many reduced chambers and $\Gamma_\sim$ is closed within the closure of each one of them, $\Gamma_\sim$ is the locally finite union of closed subsets.
\end{proof}

\begin{remark}
Unlike the space of stability conditions $\Stab(\cF(S))$, the space $\RelStab(S,\gamma)$ is not a complex manifold; it is an infinite-dimensional space obtained by gluing complex manifolds of unbounded dimension along real-analytic subsets (defined by inequalities of phases of stable objects).
\end{remark}

\subsection{Compatibility}
Consider now two surfaces $S$ and $S'$ with embedded intervals $\gamma,\gamma'$ and relative stability conditions $\sigma \in \RelStab(S,\gamma)$ and $\sigma \in \RelStab(S',\gamma')$. Given any two such surfaces, we can glue them by identifying $\gamma = \gamma'$ and obtain a surface $S \cup_\gamma S'$. Since there is a full arc system on this surface containing the arc $\gamma$, one can take the ribbon graph dual to this arc system and get a pushout presentation
\[ \cF(S \cup_\gamma S') = \cF(S) \cup_{\cF(\gamma)} \cF(S'). \]
The relative stability conditions $\sigma,\sigma'$ have unique minimal and reduced representatives by Lemma \ref{lemma:uniqueminimal}. However they also have many minimal but non-reduced representatives.
\begin{definition}\label{def:compatibility}
A \emph{compatibility structure} between $\sigma$ and $\sigma'$ is the following data:
\begin{itemize}
    \item Minimal representatives $\tilde\sigma \in \Stab(\cF(\tilde S))$ and $\tilde\sigma' \in \Stab(\cF(\tilde S')) $ of $\sigma$ and $\sigma'$.
    \item Inclusions of surfaces
    \[ S \hookrightarrow \tilde S \hookrightarrow S \cup_\gamma S', \qquad S' \hookrightarrow \tilde S' \hookrightarrow S \cup_\gamma S', \]
\end{itemize}
such that the images of the embedded intervals in the \textsc{cosi} decompositions of $\gamma$ and $\gamma'$ agree as an arc system $\cA_\gamma$ inside of $S \cup_\gamma S'$, and the restrictions $\tilde\sigma|_{\langle \cA_\gamma \rangle}$ and $\tilde\sigma'|_{\langle \cA_\gamma \rangle}$ are the same stability condition in $\Stab(\langle \cA_\gamma \rangle)$.
\end{definition}

\section{Cutting and gluing relative stability conditions}\label{sec:cutglue}
In this section, we will explain how to cut (ordinary) stability conditions into relative stability conditions and glue relative stability conditions into (ordinary) stability conditions. This will allow us to reduce the calculations of stability conditions on general surfaces $\Sigma$ to the calculation of stability conditions on simpler surfaces. Before we present these procedures, we will need to use the following generalization of a slicing.

\begin{definition}
A \emph{pre-slicing} $\cP^{pre}$ on a category $\cC$ is a choice of full triangulated subcategories $\cP^{pre}_\phi$ for every $\phi \in \RR$, such that $\Hom(X,Y) = 0$ if $X \in \cP^{pre}_\phi$ and $Y \in \cP^{pre}_\psi$, $\phi > \psi$.
\end{definition}

\begin{remark}
This is the same data as a slicing, except that we don't require the existence of Harder-Narasimhan decompositions for objects.
\end{remark}

\begin{definition}
A \emph{pre-stability condition} on $\cC$ is the data of a central charge function $Z:K_0(\cC) \to \CC$ and a pre-slicing $\cP^{pre}$ satisfying the usual compatibility condition $Z(X)/|Z(X)| = e^{i\pi \phi}$ if $X \in \cP^{pre}_\phi$, and the support property (Definition \ref{def:supportproperty}).
\end{definition}

Let us denote by $\PreStab(\cC)$ the set of all pre-stability conditions on $\cC$. It is obvious that we have an inclusion of sets
\[ \Stab(\cC) \hookrightarrow \PreStab(\cC). \]

\subsection{Cutting stability conditions}\label{sec:cutting}
We return to the setting of a surface $\Sigma$ that is cut into $\Sigma_L,\Sigma_R$ by an embedded interval $\gamma$ supporting a rank one object $C$.

Consider a stability condition $\sigma \in \Stab(\cF(\Sigma))$. By Corollary \ref{corollary:chainofobjects}, the object $C$ has a simple \textsc{cosi} decomposition into objects $C_1,\dots,C_N$ supported on arcs $\gamma_1,\dots,\gamma_N$, which connect the marked boundary intervals $M_0,\dots, M_N$. As in subsection \ref{subsec:pushout}, there is then a full system of arcs
\[ \cA = \cA_L \sqcup \cA_\gamma \sqcup \cA_R\]
such that every arc in $\cA_L$ has a representative contained in $\Sigma_L$, every arc in $\cA_R$ has a representative contained in $\Sigma_R$, and $\cA_\gamma = \{\gamma_1,\dots,\gamma_N\}$.

Each extension between $C_i$ and $C_{i+1}$ happens either on the left (ie. by an extension map $C_i \xrightarrow{+1} C_{i+1}$) or on the right (ie. by an extension map $C_{i+1} \xrightarrow{+1} C_i$). Let $m$, $n$ be the numbers of indices with extension on the left and right, respectively, plus $2$; we have by definition $m-2+n-2=N+1=$ number of marked boundary intervals along the chain.

Then we have surfaces $\tilde\Sigma_L = \Sigma_L \cup_\gamma \Delta_m$ and $\tilde\Sigma_R = \Sigma_R \cup_\gamma \Delta_n$ such that
\[ \cF(\tilde\Sigma_L) = \langle \cA_L \sqcup \cA_\gamma \rangle, \qquad \cF(\tilde\Sigma_R) = \langle \cA_R \sqcup \cA_\gamma \rangle. \]
Consider the restrictions
\[ \sigma_L = \sigma|_{\langle \cA_L \sqcup \cA_\gamma \rangle}, \qquad \sigma_R = \sigma|_{\langle \cA_\gamma \sqcup \cA_R \rangle}, \]
that is, as in the previous section we take the data given by restricting the central charges and intersecting the slicings with each full subcategory.

\begin{lemma}\label{lemma:cut}
$\sigma_L,\sigma_R$ are stability conditions on $\cF(\tilde \Sigma_L), \cF(\tilde\Sigma_R)$.
\end{lemma}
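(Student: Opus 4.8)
The plan is to verify the axioms of a stability condition for the restricted data $\sigma_L = (Z_L, \cP_L)$ on the subcategory $\cT_L := \langle \cA_L \sqcup \cA_\gamma \rangle = \cF(\tilde\Sigma_L)$, and symmetrically for $\sigma_R$. By construction the compatibility $Z_L(X)/|Z_L(X)| = e^{i\pi\phi}$ for $X \in \cP_\phi \cap \cT_L$ is inherited verbatim from $\sigma$, and the subcategories $\cP_\phi^L := \cP_\phi \cap \cT_L$ are visibly full triangulated subcategories satisfying the vanishing $\Hom(X,Y)=0$ for $X \in \cP_\phi^L$, $Y \in \cP_\psi^L$ with $\phi > \psi$ (this vanishing holds already in $\cF(\Sigma)$). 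So the only substantive point is the existence of Harder–Narasimhan filtrations: every object of $\cT_L$ must admit an HN filtration whose semistable factors lie in $\cT_L$ and are semistable in the restricted sense, not merely in $\cF(\Sigma)$.

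First I would reduce to indecomposable objects, since $\cT_L$ is the triangulated (hence idempotent-complete, up to the conventions in play) closure of the arc system $\cA_L \sqcup \cA_\gamma$ and HN filtrations of direct sums are assembled from those of the summands (Lemma \ref{lemma:hnlen}). For an indecomposable object $X$ of $\cT_L$, the key geometric input is that $X$ is supported on a curve that can be isotoped into $\tilde\Sigma_L$, i.e. onto the side of the chain $\cA_\gamma$ cut out inside $\Sigma$ (more precisely, $\tilde\Sigma_L$ sits inside $\Sigma$ as the subsurface spanned by $\cA_L \sqcup \cA_\gamma$). Now apply Corollary \ref{corollary:chainofobjects} / Proposition \ref{prop:intervalchain}: under $\sigma$, either $X$ is a stable circle or it has a \textsc{cosi} decomposition into stable interval objects $X_1,\dots,X_k$, whose HN filtration is obtained by grouping the $X_j$ by phase. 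The claim is that each $X_j$ — being stable in $\cF(\Sigma)$ and hence stable in the restricted sense if it lies in $\cT_L$ — actually lies in $\cT_L$, and likewise each phase-grouping lies in $\cT_L$; this is exactly where Lemma \ref{lemma:noncrossing} enters, as invoked already in the proof of the earlier restriction lemma in Section \ref{sec:relstab}.

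The main obstacle, and the heart of the argument, is this: I must show the stable interval objects $X_j$ appearing in the \textsc{cosi} decomposition of $X$ do not "cross over" to the $\Sigma_R$ side. Concretely, choosing minimal-crossing representatives, suppose some $\gamma_j$ has a transverse crossing with some arc $\gamma_i \in \cA_\gamma$ or leaves $\tilde\Sigma_L$; then the chain supporting $X$ together with the chain $\cA_\gamma$ would bound one of the forbidden polygons (type (1), (2), or (3)) of Lemma \ref{lemma:noncrossing}, unless everything in sight is proportional in $K_0$ — but that degenerate case can be handled directly, or ruled out by the genericity reduction already used in the proof of Lemma \ref{lemma:noncrossing}. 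Hence, after isotopy, each $\gamma_j$ stays within $\tilde\Sigma_L$, so $X_j \in \cT_L$; a stable object of $\cF(\Sigma)$ that lies in $\cT_L$ is a fortiori $\sigma_L$-semistable because any $\sigma_L$-destabilizing subobject would also destabilize it in $\cF(\Sigma)$. Grouping by phase gives the semistable factors in $\cT_L$, and the extension triangles of the \textsc{cosi} decomposition assemble into the required HN filtration of $X$ inside $\cT_L$. Finally I would remark that the circle case is immediate (a stable circle in $\cT_L$ is already semistable there), and that the argument for $\sigma_R$ is identical with $L$ and $R$ exchanged, completing the proof.
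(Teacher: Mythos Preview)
Your proposal is correct and follows essentially the same approach as the paper: reduce to indecomposables, invoke Proposition \ref{prop:intervalchain} to get a \textsc{cosi} decomposition (or a stable circle), and use the non-crossing Lemma \ref{lemma:noncrossing} against the chain $\cA_\gamma$ to conclude that none of the stable pieces can escape to the $\Sigma_R$ side. The paper is slightly terser on the pre-slicing axioms and slightly more explicit that a non-stable circle object also carries a \textsc{cosi} decomposition whose pieces are constrained by the same non-crossing argument, but these are presentational differences only.
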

\begin{proof}
The condition $Z(X) = m(X)\exp(i\pi \phi_X)$ on every semistable object $X$ is satisfied by construction, so we just need to check that (1) every object $X \in \cF_L$ has a HN filtration, ie. that $\cP_L$ indeed defines a slicing, and (2) the resulting pairs of central charge and slicing satisfy the support property.

It is enough to check (1)  on indecomposable objects. By geometricity, every such object $X$ is represented by an immersed curve in $\tilde\Sigma_L$ with indecomposable local system. Consider its image in $\cF(\Sigma)$ which is also an immersed curve, and its chain-of-interval decomposition under $\sigma$.

If $X$ is an interval object, then both of its ends are on marked boundary components belonging to $\tilde\Sigma_L$, and since the associated chain of intervals is isotopic to the support of $X$, if any of those intervals in in $\Sigma_R$, then the chain must cross back to $\Sigma_L$, creating a polygon of the sort prohibited by Lemma \ref{lemma:noncrossing}. And if $X$ is a circle object then it is by definition supported on a non-nullhomotopic immersed circle, so by the same argument its chain of intervals cannot cross over to $\Sigma_R$ without also creating a prohibited polygon. Thus every stable component of the HN decomposition is in $\cF_L$.

As for (2), the support property for each side follows directly from the support property for the original stability condition $\sigma$, since by definition the sets of semistable objects of $\sigma_L,\sigma_R$ are subsets of the set of semistable objects of $\sigma$, and both the central charges and norms on $K_0(\cF(\tilde \Sigma_L))_\RR, K_0(\cF(\tilde \Sigma_R))_\RR$ are defined by pullback from $\cF(\Sigma)$, so the relevant ratios are just calculated by the original ratio $|Z(-)|/\lVert [-] \rVert$ on $\cF(\Sigma)$. 
\end{proof}

We then use the inclusions of marked surfaces $\Sigma_L \hookrightarrow \tilde\Sigma_L$ and $\Sigma_R \hookrightarrow \tilde\Sigma_R$ to interpret these stability conditions as relative stability conditions:
\begin{definition}
The cutting map
\[ \cut_\gamma : \Stab(\cF(\Sigma)) \to \RelStab(\Sigma_L,\gamma) \times \RelStab(\Sigma_R,\gamma) \]
sends a stability conditions $\sigma$ as above to the image of the stability conditions $(\sigma_L,\sigma_R)$.
\end{definition}

By Lemma \ref{lemma:uniqueminimal} every element of $\RelStab$ has a unique minimal and reduced representative, so we can alternatively define the cutting map by using the `reduced restriction' of Lemma \ref{lemma:reduced}
\[ \cut_\gamma(\sigma) = (\sigma^\mathrm{red}_L, \sigma^\mathrm{red}_R). \]

\begin{lemma}\label{lemma:cutcontinuous}
The map $\Stab(\cF(\Sigma)) \xrightarrow{\cut_\gamma} \RelStab(\Sigma_L,\gamma) \times \RelStab(\Sigma_R,\gamma) $ is continuous.
\end{lemma}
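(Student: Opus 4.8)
The plan is to prove continuity locally and to reduce it, by way of the wall-and-chamber analysis already used to show that $\RelStab(S,\gamma)$ is Hausdorff, to the continuity of restriction of stability conditions to a fixed subcategory. So fix $\sigma_0\in\Stab(\cF(\Sigma))$; since $\Stab(\cF(\Sigma))$ is a manifold, hence first countable, it suffices to check sequential continuity of $cut_\gamma$ at $\sigma_0$. By Corollary \ref{corollary:chainofobjects} the object $C$ on the dividing curve has a simple \textsc{cosi} decomposition $C_1,\dots,C_N$ under $\sigma_0$; as in Section \ref{subsec:pushout} I complete the supporting arcs to a full arc system $\cA_L\sqcup\cA_\gamma\sqcup\cA_R$, obtaining surfaces $\tilde\Sigma_L=\Sigma_L\cup_\gamma\Delta_m$, $\tilde\Sigma_R=\Sigma_R\cup_\gamma\Delta_n$ and target subcategories $\cT_L=\cF(\tilde\Sigma_L)$, $\cT_R=\cF(\tilde\Sigma_R)$. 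Recall that by construction $cut_\gamma(\sigma)=\big(q_L(\sigma|_{\cT_L}),\,q_R(\sigma|_{\cT_R})\big)$, where $q_L$ and $q_R$ are the quotient maps onto $\RelStab(\Sigma_L,\gamma)$ and $\RelStab(\Sigma_R,\gamma)$ and $\cT_L,\cT_R$ are read off from the \textsc{cosi} decomposition of $C$ under $\sigma$.

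First I would dispose of the generic case. If $\sigma_0$ lies in the interior of a reduced chamber $\cC$, then by the argument of Lemma \ref{lemma:constanttarget} the \textsc{cosi} decomposition of $C$ — and hence $\cT_L$ and $\cT_R$ — is constant on $\cC$, and the restriction maps $\sigma\mapsto\sigma|_{\cT_L}$, $\sigma\mapsto\sigma|_{\cT_R}$ are continuous there: near any point each of $\Stab(\cF(\Sigma))$, $\Stab(\cT_L)$, $\Stab(\cT_R)$ is identified with an open subset of $\Hom_\ZZ(K_0(-),\CC)$, compatibly with the linear projections dual to $K_0(\cT_L)\hookrightarrow K_0(\cF(\Sigma))$ and $K_0(\cT_R)\hookrightarrow K_0(\cF(\Sigma))$, and Lemma \ref{lemma:cut} guarantees the restricted data really is a stability condition. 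Composing with the continuous quotient maps $q_L\times q_R$ then gives continuity of $cut_\gamma$ throughout $\cC$.

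The hard part is a point $\sigma_0$ lying on a wall. Here I would take a sequence $\sigma_k\to\sigma_0$; by local finiteness of the walls $W_\gamma(\alpha)$ for the finitely many classes $[C_i]$ \citep{bridgeland2015quadratic}, and after passing to a subsequence, I may assume all the $\sigma_k$ lie in a single reduced chamber $\cC$ (or in a single stratum of the walls bounding it) with $\sigma_0\in\overline\cC$. On $\cC$ the targets $\cT_L^\cC,\cT_R^\cC$ are constant and the restriction maps continuous on a neighborhood of $\sigma_0$, so $\sigma_k|_{\cT_L^\cC}\to\sigma_0|_{\cT_L^\cC}$ in $\Stab(\cT_L^\cC)$ and likewise on the right. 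The remaining, and I think essential, point is to identify these limits: I must show $q_L(\sigma_0|_{\cT_L^\cC})=cut_\gamma(\sigma_0)_L$, even though the \textsc{cosi} decomposition of $C$ under $\sigma_0$ may be strictly longer than the one valid on $\cC$, so that \emph{a priori} $cut_\gamma(\sigma_0)_L$ lands in a different component of $\SS_L$. This is exactly what the reduced-restriction construction is designed to handle: by Lemma \ref{lemma:reduced} the relative stability condition $\sigma_0|_{\cT_L^\cC}$ on $(\Sigma_L,\gamma)$ is $\sim$-equivalent to its reduction, and by Lemma \ref{lemma:uniqueminimal} that reduction shares the unique minimal reduced representative of $cut_\gamma(\sigma_0)_L$, so the two coincide in $\RelStab(\Sigma_L,\gamma)$; the same on the right yields $cut_\gamma(\sigma_k)\to cut_\gamma(\sigma_0)$. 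The main obstacle is the case distinction that this last identification requires, which should parallel the wall-by-wall analysis in the Hausdorff proof: when the extra stable intervals appearing in the \textsc{cosi} decomposition of $C$ at $\sigma_0$ are extended ``on the right'' (so $\sigma_0$ is non-reduced), reduction genuinely collapses the target and one must verify that the naive restriction $\sigma_0|_{\cT_L^\cC}$ still descends to $cut_\gamma(\sigma_0)$; when they are extended ``on the left'' the target is already constant across the wall and the chamber argument applies verbatim. Since $\sigma_0$ meets only finitely many walls, continuity at $\sigma_0$, and hence everywhere, follows.
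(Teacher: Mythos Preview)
Your overall strategy is sound, and it is close in spirit to the paper's, but there is a real gap at the crucial step. You assert that ``the restriction maps [are] continuous on a neighborhood of $\sigma_0$, so $\sigma_k|_{\cT_L^\cC}\to\sigma_0|_{\cT_L^\cC}$ in $\Stab(\cT_L^\cC)$.'' This presupposes that $\sigma_0|_{\cT_L^\cC}$ is a stability condition on $\cT_L^\cC$; but at $\sigma_0$ the \textsc{cosi} decomposition of $C$ is in general \emph{strictly longer} than on the chamber $\cC$, so $\cT_L^\cC\subsetneq\cT_L^{\sigma_0}$ and Lemma~\ref{lemma:cut} only gives you a stability condition on the bigger category $\cT_L^{\sigma_0}$. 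Whether the naive restriction to the smaller $\cT_L^\cC$ is again a stability condition --- i.e.\ whether no object of $\cT_L^\cC$ acquires, under $\sigma_0$, an HN factor ending at one of the ``extra'' marked boundaries $M_i$ --- is exactly the nontrivial point, and your later invocation of Lemma~\ref{lemma:reduced} is placed after you have already assumed it. (Your closing case distinction between left and right extensions also does not line up with the actual dichotomy, which is between the two chambers on either side of the wall.)

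The paper handles this differently and, in a sense, dually: rather than fixing the \emph{smaller} chamber target and pushing to the wall, it fixes the \emph{larger} target $\cT_{\mathrm{fix}}=\cT_L^{\sigma_0}$ at the wall point and shows that $\sigma'|_{\cT_{\mathrm{fix}}}$ remains a stability condition for every $\sigma'$ in a full neighborhood $U$. The difficulty then shifts to the chambers where the pair $C_i,C_{i+1}$ is no longer the \textsc{cosi} decomposition of any object, so the usual non-crossing argument against the chain $\{\gamma_j\}$ is unavailable; the paper circumvents this by running the non-crossing argument against the two sub-chains $C_{\mathrm{bot}}=(C_1,\dots,C_i)$ and $C_{\mathrm{top}}=(C_{i+1},\dots,C_N)$ separately, which \emph{do} remain \textsc{cosi} decompositions throughout $U$. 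This yields a single continuous lift $\widetilde{cut}_\gamma:U\to\Stab(\cT_{\mathrm{fix}})$ agreeing with $cut_\gamma$ after passing to $\RelStab$. Your approach can be repaired --- either by invoking (an adaptation of) Lemma~\ref{lemma:reduced} \emph{before} the limit step to show $\sigma_0|_{\cT_L^\cC}\in\Stab(\cT_L^\cC)$, or by a general upper-semicontinuity argument for HN filtrations --- but as written it skips the step that carries all the content.
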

\begin{proof}
We must look separately at the maps to each side; let us prove continuity of the map $\Stab(\cF(\Sigma)) \xrightarrow{\cut_L} \RelStab(\Sigma_L,\gamma)$. Recall that in subsection \ref{subsec:space} we define the topology on the $\RelStab$ spaces as the quotient topology inherited from $\SS = \bigsqcup_n \Stab(S \cup_\gamma \Delta_n)$.

Note that the construction for the map $\cut_L$ does not give a manifestly continuous map since the target $\cT = \langle \cA_L \sqcup \cA_\gamma \rangle$ changes across walls in $\Stab(\cF(\Sigma))$. We remediate this by locally defining other maps that are continuous, and which agree with $\cut_L$ after identifying by the equivalence relation $\sim$.

Let $\sigma$ be a stability condition on $\cF(\Sigma)$ such that $\sigma_L = \sigma|_{\langle \cA_L \cup \cA_\gamma \rangle}$ is a non-reduced stability condition, and let us say that under $\sigma$ the object $C$ supported on $\gamma$ has a decomposition into $C_1,\dots,C_N$ supported on embedded intervals $\gamma_1,\dots,\gamma_N$ with respective phases $\phi_1,\dots,\phi_N$. Non-reducedness means that there is some collection of indices $i$ such that $C_i,C_{i+1}$ have the same phase, and are extended on the right. For simplicity, suppose first that we have a single such index; the general case can be deduced by iterating this argument. Let us denote $C_\mathrm{bot}$ to be the object obtained by concatenating $C_1,\dots,C_i$, and $C_\mathrm{top}$ to be the object obtained by concatenating $C_{i+1},\dots,C_N$.

By standard arguments, the locus on which the objects $C_1,\dots,C_N$ are simple is open, so there is a neighborhood $U \ni \sigma$ on which all these objects are simple, and with a complex isomorphism $U \cong (K_0(\cF(\Sigma)))^\vee$. If necessary we further restrict $U$ such that on this open set the $\phi_{i-1} \neq \phi_i$ and $\phi_{i+1} \neq \phi_{i+2}$. This implies that on $U$ the chains $C_1,\dots,C_i$ and $C_{i+1},\dots,C_N$ gives \textsc{cosi} decompositions of $C_\mathrm{bot}$ and $C_\mathrm{top}$, respectively.

Consider now a fixed target category $\cT_\mathrm{fix}$ given by the target $\cT_\sigma = \langle \cA_L \sqcup \cA_\gamma \rangle$ at $\sigma$. We argue that for every stability condition $\sigma' \in U$, $\sigma'|_{\cT_\mathrm{fix}}$ is a stability condition. Note that this does not follow immediately from Lemma \ref{lemma:noncrossing} since along some chambers in $U$, the pair $C_i,C_{i+1}$ is not the \textsc{cosi} decomposition of any object so we cannot directly use the non-crossing argument.

Nevertheless, we can use a small modification of that argument. Consider some indecomposable object $X$ in the subcategory $\cT_\mathrm{fix}$; by geometricity this can be represented by an immersed curve $\xi$ to the left of the chain of intervals, and by the results of Section \ref{sec:lemmas}, $X$ has a \textsc{cosi} decomposition into intervals $\xi_1,\dots,\xi_M$ whose concatenation is isotopic to $\xi$.

Now, since both ends of $\xi$ are to the left of the $\gamma$ chain, and this chain is divided into two stable chains, extended on the left, the only way that the $\xi$ chain can cross the $\gamma$ chain is it if crosses the chain for $C_\mathrm{bot}$ or $C_\mathrm{top}$ (or both). But again this is prohibited by the noncrossing argument of Lemma \ref{lemma:noncrossing}.

Thus this defines a map $\widetilde{\cut}_\gamma: U \to \Stab(\cT_\mathrm{fix})$ which by construction is continuous and agrees with $\cut_\gamma$ on $U$; doing this for every wall gives continuity of $\cut_\gamma$.
\end{proof}

Note that by construction we have representatives $\sigma_L \in \Stab(\cF(\tilde\Sigma_L))$ and $\sigma_R \in \Stab(\cF(\tilde\Sigma_R))$ of the relative stability conditions $\sigma^\mathrm{red}_L, \sigma^\mathrm{red}_R$, and also inclusions of surfaces $\tilde\Sigma_L \hookrightarrow \Sigma$ and $\tilde\Sigma_R \hookrightarrow \Sigma$. It follows directly from the construction above that:
\begin{lemma}
This is a compatibility structure between $\sigma^\mathrm{red}_L$ and $\sigma^\mathrm{red}_R$.
\end{lemma}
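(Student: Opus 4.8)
The plan is to unwind Definition~\ref{def:compatibility} with $(S,\gamma)=(\Sigma_L,\gamma)$, $(S',\gamma')=(\Sigma_R,\gamma)$ and $S\cup_\gamma S'=\Sigma$, and to check item by item that the data produced by the cutting construction — the representatives $\sigma_L\in\Stab(\cF(\tilde\Sigma_L))$ and $\sigma_R\in\Stab(\cF(\tilde\Sigma_R))$ together with the inclusions $\Sigma_L\hookrightarrow\tilde\Sigma_L\hookrightarrow\Sigma$ and $\Sigma_R\hookrightarrow\tilde\Sigma_R\hookrightarrow\Sigma$ — is exactly a compatibility structure. There are four things to verify: that $\sigma_L$ and $\sigma_R$ are \emph{minimal} representatives of $\sigma^\mathrm{red}_L$ and $\sigma^\mathrm{red}_R$; that the two chains of surface inclusions are present; that the arcs occurring in the \textsc{cosi} decompositions of $\gamma$, computed on the left and on the right, have the same images inside $\Sigma$, forming a single arc system $\cA_\gamma$; and that $\sigma_L|_{\langle\cA_\gamma\rangle}=\sigma_R|_{\langle\cA_\gamma\rangle}$ in $\Stab(\langle\cA_\gamma\rangle)$. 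The inclusion chains are already part of the cutting construction, so only the other three points need work.

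For minimality, recall that $\sigma_L,\sigma_R$ are stability conditions on $\cF(\tilde\Sigma_L),\cF(\tilde\Sigma_R)$ by Lemma~\ref{lemma:cut}, and that they represent $\sigma^\mathrm{red}_L,\sigma^\mathrm{red}_R$ by the definition of the cutting map together with the fact (used in the proof of Lemma~\ref{lemma:uniqueminimal}) that a relative stability condition is equivalent under $\sim$ to its reduced restriction. Now $\tilde\Sigma_L=\Sigma_L\cup_\gamma\Delta_m$ was built as the smallest marked surface inside $\Sigma$ containing $\cA_L\sqcup\cA_\gamma$; hence the marked boundary intervals of the added disk $\Delta_m$ all occur among the marked intervals $M_0,\dots,M_N$ of the chain, each of which appears in the simple \textsc{cosi} decomposition $C_1,\dots,C_N$ of $C$ (the decomposition is simple by Lemma~\ref{lemma:simplechain}, since $\gamma$ separates $\Sigma$ and has ends on distinct marked intervals). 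This is precisely the minimality condition; the argument for $\sigma_R$ is symmetric.

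Next, the object $C$ supported on $\gamma$ lies in both $\langle\cA_\gamma\sqcup\cA_L\rangle=\cF(\tilde\Sigma_L)$ and $\langle\cA_\gamma\sqcup\cA_R\rangle=\cF(\tilde\Sigma_R)$, and $\sigma_L,\sigma_R$ are the restrictions of the single stability condition $\sigma$ to these two subcategories. Since the HN filtration of $C$ under $\sigma$ and its Jordan--H\"older refinement are all supported on the $\gamma_i$, hence lie in both subcategories, uniqueness of these filtrations (exactly the input used for uniqueness of \textsc{cosi} decompositions) shows that the \textsc{cosi} decomposition of $C$ under $\sigma_L$, under $\sigma_R$, and under $\sigma$ all coincide: it is $C_1,\dots,C_N$ supported on $\gamma_1,\dots,\gamma_N$. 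Transporting along $\tilde\Sigma_L\hookrightarrow\Sigma$ and $\tilde\Sigma_R\hookrightarrow\Sigma$ therefore produces the same arc system $\cA_\gamma=\{\gamma_1,\dots,\gamma_N\}$ in $\Sigma$, and $\sigma_L|_{\langle\cA_\gamma\rangle}$, $\sigma_R|_{\langle\cA_\gamma\rangle}$ are both the restriction of $\sigma$ to the one subcategory $\langle\cA_\gamma\rangle$, so they agree. This checks the remaining conditions.

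I do not expect a genuine obstacle here: the only point with any content is the minimality verification, and even that was essentially done when $\tilde\Sigma_L,\tilde\Sigma_R$ were defined as minimal surfaces and when the cutting map was shown (Lemma~\ref{lemma:cut}) to land in $\RelStab$. The statement is really a bookkeeping lemma recording that the cutting construction outputs exactly the package of data that Definition~\ref{def:compatibility} permits to be fed into the gluing construction of the next subsection — which is what will be needed for the $\mathrm{glue}\circ\mathrm{cut}=\mathrm{id}$ half of Theorem~\ref{thm:cutglue}.
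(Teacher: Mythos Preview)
Your proposal is correct and is exactly the verification the paper leaves implicit: the paper gives no proof for this lemma, merely writing ``It follows directly from the construction above that:'' before the statement, so you have supplied the bookkeeping the author omitted. Your item-by-item check of Definition~\ref{def:compatibility} --- minimality via the construction of $\tilde\Sigma_L,\tilde\Sigma_R$ as smallest surfaces containing $\cA_L\sqcup\cA_\gamma$ and $\cA_\gamma\sqcup\cA_R$, and agreement of the \textsc{cosi} decompositions and restricted stability conditions because both $\sigma_L,\sigma_R$ arise by restricting the single $\sigma$ --- is the intended argument.
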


\subsection{Gluing stability conditions}
As in the previous section consider a surface $\Sigma = \Sigma_L \cup_\gamma \Sigma_R$ cut into two parts by an embedded interval. Suppose we have relative stability conditions $\sigma_L \in \RelStab(\Sigma_L,\gamma)$ and $\sigma_R \in \RelStab(\Sigma_R,\gamma)$ with some compatibility structure between them (as in Definition \ref{def:compatibility}).

Unpacking this data, we have non-negative integers $m$ and $n$ and stability conditions $\sigma_L = (Z_L,\cP_L)$ on
\[ \cF_L = \cF(\tilde\Sigma_L) = \cF(\Sigma_L \cup_\gamma \Delta_m) \]
and $\sigma_R = (Z_R,\cP_R)$ on
\[ \cF_R = \cF(\tilde\Sigma_R) = \cF(\Sigma_R \cup_\gamma \Delta_n) \]
representing $\sigma_L,\sigma_R$, together with inclusions of marked surfaces $\Sigma_L \hookrightarrow \tilde\Sigma_L \hookrightarrow \Sigma$ and $\Sigma_R \hookrightarrow \tilde\Sigma_R \hookrightarrow \Sigma$.

The compatibility condition implies that the chain-of-intervals decomposition $C^L_1, \dots, C^L_N$ of the indecomposable object $C^L \in \cF_L$ supported on $\gamma \subset \tilde\Sigma_L$ and the chain-of-intervals decomposition $C^R_1,\dots,C^R_N$ of the indecomposable object $C^R \in \cF_R$ supported on $\gamma \subset \tilde\Sigma_R$ are of the same length $N$ on both sides, and that the central charges agree, ie.
\[ Z_L(C^L_i) = Z_R(C^R_i) \]
for all $i$. Also compatibility also requires that the extension maps $\eta^L_i$ and $\eta^R_i$ go the same direction, ie. either both go forward
\[ \eta^L_i \in \Ext^1(C^L_i, C^L_{i+1}) \mathrm{\ and\ } \eta^R_i \in \Ext^1(C^R_i, C^R_{i+1}) \]
or both go backward
\[ \eta^L_i \in \Ext^1(C^L_{i+1}, C^L_i) \mathrm{\ and\ } \eta^R_i \in \Ext^1(C^R_{i+1}, C^R_i), \]
so we have the relation $(m-2)+ (n-2) = N-1$ due to minimality of $\sigma_L$ and $\sigma_R$.

The compatibility structure gives an identification between the images of $C^L_1,\dots,C^L_N$ and $C^L_1,\dots,C^L_N$ inside of $\cF(\Sigma)$; we denote this full subcategory spanned by these arcs $\langle \cA_\gamma \rangle$ as in previous sections. This gives a pushout presentation
\[
\xymatrix{
\langle \cA_\gamma \rangle 	\ar[r] \ar[d]	& \cF_R \ar[d]^{j_R}  \\
\cF_L \ar[r]^{j_L}    & \cF(\Sigma)
}
\]
From this data we will produce a central charge function $K_0(\cF(\Sigma)) \to \CC$ and a pre-slicing $\cP$ on $\cF(\Sigma)$.

\subsubsection{The central charge}\label{subsec:centralcharge}
Applying the functor $K_0$ to the pushout above gives us a diagram of $\ZZ$-modules
\[
\xymatrix{
K_0(\langle \cA_\gamma \rangle) 	\ar[r] \ar[d] 	& K_0(\cF_R) \ar[d] \\
K_0(\cF_L) \ar[r]    & K_0(\cF(\Sigma))
}
\]
\begin{lemma}
This is a pushout of $\ZZ$-modules.
\end{lemma}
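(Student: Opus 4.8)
The plan is to deduce this from the behaviour of algebraic $K$-theory on pushouts of stable categories, using the localization sequences attached to the two Verdier quotients visible in the square.

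\emph{Structural input and surjectivity.} The square is a homotopy pushout of pretriangulated $A_\infty$-categories: it is the outer square of the cube in Subsection~\ref{subsec:pushout}, produced from the cosheaf-of-categories description of \citep{HKK}. Moreover the functor $\langle\cA_\gamma\rangle\to\cF_R$ is fully faithful, since $\langle\cA_\gamma\rangle$ is generated by a subsystem of the full arc system $\cA_\gamma\sqcup\cA_R$ of $\tilde\Sigma_R$ -- equivalently it is the Fukaya category of the disk-shaped subsurface spanned by $\cA_\gamma$ (cf. Lemma~\ref{lemma:ANquiver}), and inclusions of subsurfaces induce fully faithful functors. A pushout of stable categories along a fully faithful functor is ``exact'': the opposite leg $\cF_L\to\cF(\Sigma)$ is again fully faithful, and there is a canonical equivalence of idempotent-complete Verdier quotients $\cF(\Sigma)/\cF_L\simeq\cF_R/\langle\cA_\gamma\rangle$; write $Q$ for this common quotient. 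Finally, since the arcs of $\cF_L$ and $\cF_R$ together make up a full arc system of $\Sigma$, these two subcategories generate $\cF(\Sigma)$, so the natural map from the $\ZZ$-module pushout $K_0(\cF_L)\oplus_{K_0(\langle\cA_\gamma\rangle)}K_0(\cF_R)\to K_0(\cF(\Sigma))$ is surjective.

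\emph{Injectivity via the connecting maps.} Denote the four functors in the square by $i_L\colon\langle\cA_\gamma\rangle\to\cF_L$, $i_R\colon\langle\cA_\gamma\rangle\to\cF_R$ and $j_L,j_R$ as in the excerpt, using the same letters for the induced maps on $K_0$. Applying $K$-theory to the Verdier sequences $\langle\cA_\gamma\rangle\to\cF_R\to Q$ and $\cF_L\to\cF(\Sigma)\to Q$ yields exact sequences
\[
K_1(Q)\xrightarrow{\,\partial'\,}K_0(\langle\cA_\gamma\rangle)\xrightarrow{\,i_R\,}K_0(\cF_R)\longrightarrow K_0(Q)\longrightarrow 0,
\]
\[
K_1(Q)\xrightarrow{\,\partial\,}K_0(\cF_L)\xrightarrow{\,j_L\,}K_0(\cF(\Sigma))\longrightarrow K_0(Q)\longrightarrow 0.
\]
The pushout square together with the identification of quotients furnishes a map of Verdier sequences from the first to the second, with vertical maps $i_L$, $j_R$, $\mathrm{id}_Q$, so by naturality of the connecting homomorphism $\partial=i_L\circ\partial'$. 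Now take $(a,b)\in K_0(\cF_L)\oplus K_0(\cF_R)$ with $j_L(a)+j_R(b)=0$; I would push forward to $K_0(Q)$, where $j_L$ dies and $j_R$ becomes the quotient map of the first sequence, to obtain $b\in\operatorname{im}(i_R)$, say $b=-i_R(z)$. Then $a-i_L(z)\in\ker j_L=\operatorname{im}\partial=i_L(\operatorname{im}\partial')=i_L(\ker i_R)$, so $a-i_L(z)=i_L(u)$ with $i_R(u)=0$, and hence $(a,b)=\bigl(i_L(z+u),-i_R(z+u)\bigr)$ lies in the image of $K_0(\langle\cA_\gamma\rangle)$ under $(i_L,-i_R)$. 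Since the kernel of the map out of the $\ZZ$-module pushout is exactly such pairs modulo that image, the map is injective, and the square is a pushout of $\ZZ$-modules.

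\emph{Where the difficulty lies.} I expect the only real work to be the structural input: verifying that this particular pushout of $A_\infty$-categories is exact -- that the arc-subsystem inclusions are fully faithful and that the two Verdier quotients above are canonically identified -- so that the $K$-theory localization sequences and their compatibility are legitimately available; everything after that is a formal diagram chase. As a consistency check, or an alternative route, one may instead invoke Theorem~\ref{thm:K0} to rewrite the four groups as the relative cohomology groups $H^1(-,\cM_{-};\ZZ_\tau)$ of the surfaces $\Delta_{N+1}$ (for $\langle\cA_\gamma\rangle$), $\tilde\Sigma_L$, $\tilde\Sigma_R$ and $\Sigma$, which sit in a Mayer--Vietoris configuration, and then use that each surface is connected with nonempty marked boundary (so its $H^0$ relative to $\cM$ vanishes) and that the overlapping disk $\Delta_{N+1}$ is contractible (so its $H^1$ vanishes) to collapse the relative Mayer--Vietoris sequence to the desired short exact sequence.
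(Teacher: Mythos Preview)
Your main argument via $K$-theory localization sequences is correct, but it is a genuinely different---and considerably heavier---route than the paper's. The paper gives a two-line hands-on proof: using the full arc system $\cA_L\sqcup\cA_\gamma\sqcup\cA_R$, the group $K_0(\cF(\Sigma))$ is presented with the arcs as generators and the polygons as relations; since the chain $\cA_\gamma$ separates $\Sigma$, no polygon crosses it, so every relation already lives in $K_0(\cF_L)$ or $K_0(\cF_R)$. That is exactly the pushout statement. Your approach instead invokes exactness of pushouts of stable categories along fully faithful functors, the long exact $K$-theory sequence (including $K_1$), and naturality of the connecting map---all valid, and pleasantly general (it would work for any such pushout, not just surface Fukaya categories), but substantial overhead for a fact that here is combinatorially transparent.

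One slip in your alternative Mayer--Vietoris sketch: you write that ``the overlapping disk $\Delta_{N+1}$ is contractible (so its $H^1$ vanishes)'', but the group appearing via Theorem~\ref{thm:K0} is the \emph{relative} group $H^1(\Delta_{N+1},\cM;\ZZ_\tau)\cong K_0(\langle\cA_\gamma\rangle)\cong\ZZ^N$, which is certainly nonzero. A Mayer--Vietoris argument can be made to work, but not by that vanishing; you would need to chase the correct relative $H^0$ and $H^2$ terms instead. Since this was only a side remark and your primary proof stands on its own, this does not affect the overall correctness.
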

\begin{proof}
A diagram of this type (that is, coming from the cosheaf property of the Fukaya category) need not be \emph{a priori} a pushout, since $K_0$ does not necessarily commute with colimits. However note that in this case we have an explicit description of the $K_0$ groups in terms of $H^1$ groups because of Theorem \ref{thm:K0}, and the result follows from the fact that we are gluing along a single chain.

More explicitly, note that $K_0(\cF(S))$ for some marked surface $S$ is generated by the arcs in an arc system modulo relations coming from polygons. Completing $\cA_\gamma$ to a full arc system $\cA_L \sqcup \cA_\gamma \sqcup \cA_R$ we see that since there are no polygons crossing between the two sides of the chain, so the set of relations on $K_0(\cF(\Sigma))$ is the union of the sets of relations defining $K_0(\cF_L)$ and $K_0(\cF_R)$; this implies the statement above.
\end{proof}

By compatibility of the relative stability conditions $\sigma_L$ and $\sigma_R$, the central charges on both sides agree when restricted to $K_0(\langle \cA_\gamma \rangle)$, so we get a map $Z:K_0(\cF(\Sigma)) \to \CC$; this will be our central charge.

\subsubsection{The pre-slicing}

We will define full subcategories $\cP_\phi$ of semistable objects in two steps. Let us first define initial subcategories $\cP'_\phi$ by
\[\cP'_\phi = j_L((\cP_L)_\phi) \cup j_R((\cP_R)_\phi), \]
that is, we take the images of the semistable objects under $\sigma_L$ and $\sigma_R$ to be stable in $\cF(\Sigma)$.

Now let us algorithmically add some objects to the slicing by the following prescription. We first define a particular kind of arrangement of stable objects. Let us denote by
\[ \MM_\gamma = \{ M_0, \dots, M_N\} \subseteq \MM \]
the marked intervals appearing in the decomposition of $\gamma$, in order, that is, between $M_{i-1}$ and $M_i$ there is a stable interval object $C_i$ appearing in the decomposition of the rank 1 object supported on $\gamma$.

\begin{definition}\label{def:lozenge}
A \emph{lozenge} of stable intervals is the following arrangement on $\Sigma$:
\begin{itemize}
    \item Four marked boundaries $M_\ell, M_r, M_{up}, M_{down}$, where 
    \[ M_{up} = M_i, \quad M_{down} = M_{i-1} \]
    for some $0 \le i < N$. 
    \item A chain of intervals $\alpha_1,\dots,\alpha_a$ linking $M_\ell$ to $M_{up}$, such that $\alpha_i$ supports a stable object $A_i \in \cP'_{\phase(A_i)}$, and
    \[ \phase(A_1) \le \dots \le \phase(A_a). \]
    \item A chain of intervals $\beta_1,\dots,\beta_b$ linking $M_{up}$ to $M_r$, such that $\beta_i$ supports a stable object $B_i \in \cP'_{\phase(B_i)}$, and
    \[ \phase(B_1) \le \dots \le \phase(B_b). \]
    \item A chain of intervals $\delta_1,\dots,\delta_d$ linking $M_\ell$ to $M_{down}$, such that $\delta_i$ supports a stable object $D_i \in \cP'_{\phase(D_i)}$, and
    \[ \phase(D_1) \ge \dots \ge \phase(D_d). \]
    \item A chain of intervals $\eta_1,\dots,\eta_d$ linking $M_{down}$ to $M_r$, such that $\eta_i$ supports a stable object $E_i \in \cP'_{\phase(E_i)}$, and
    \[ \phase(E_1) \ge \dots \ge \phase(E_e). \]
\end{itemize}
such that the phases of these stable objects satisfy
\[ \phase(D_1) \le \phase(A_1) \le \phase(D_1) + 1, \quad \phase(B_1) \le \phase(A_a) \le \phase(B_1) + 1,\]
\[ \phase(B_b) \le \phase(E_e) \le \phase(B_b) + 1, \quad \phase(D_d) \le \phase(E_1) \le \phase(D_d) + 1. \]
and such that these four chain of stable intervals bound a disk containing the interval supporting the object $C_i$. This kind of arrangement is pictured in Figure \ref{fig:lozenge}.
\end{definition}

\begin{figure}[h]
    \centering
    \includegraphics[width=\textwidth]{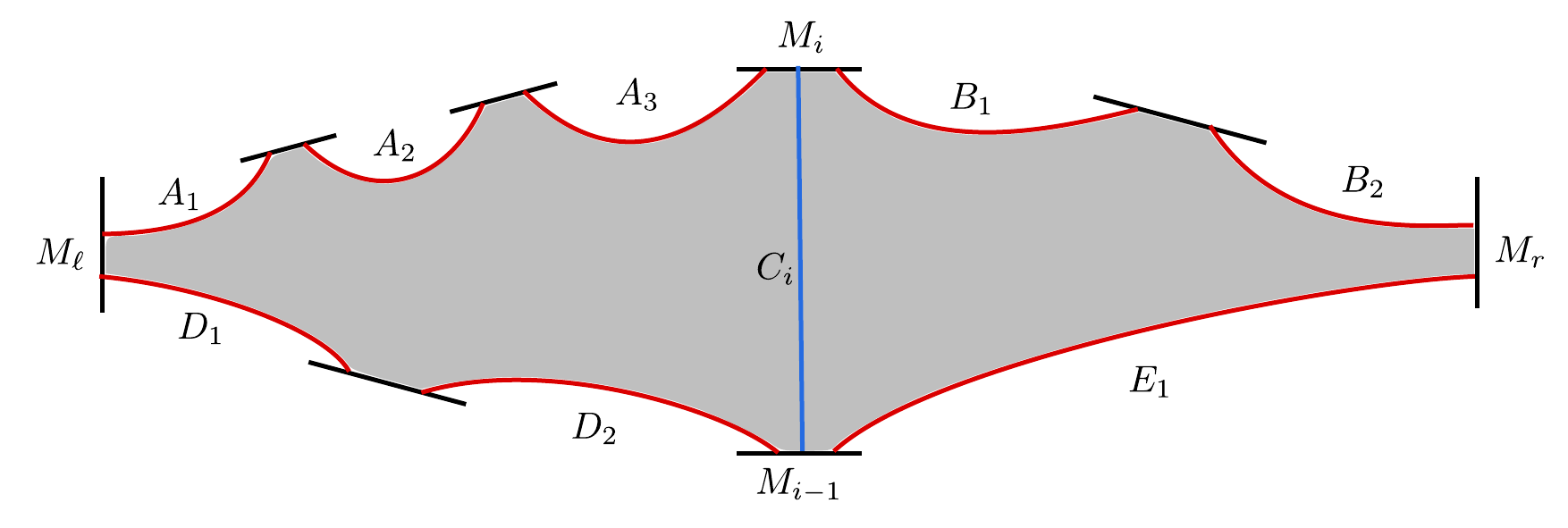}
    \caption{A lozenge of stable objects with $a=3,b=2,d=2,e=1$.}
    \label{fig:lozenge}
\end{figure}

Consider now the complex number
\[ Z(X) := \sum_i Z(A_i) + \sum_i Z(B_i) = \sum_i Z(D_i) + \sum Z(E_i), \]
which is the central charge of the object $X$ supported on the interval from $M_\ell$ to $M_r$ one gets by successive extensions of the $A_i, B_i$ or $D_i,E_i$. The equality follows from well-definedness of $Z$.

\begin{definition}
We call such a lozenge \emph{unobstructed} if there is a choice of branch of the argument function $\arg:\CC^\times \to \RR$ such that the following inequalities between the phases are satisfied:
\[ \phase(D_1) \le \arg(Z(X)) \le \phase(A_1), \quad \phase(B_b) \le \arg(Z(X)) \le \phase(E_e). \]
\end{definition}

\begin{figure}[h]
    \centering
    \includegraphics[width=\textwidth]{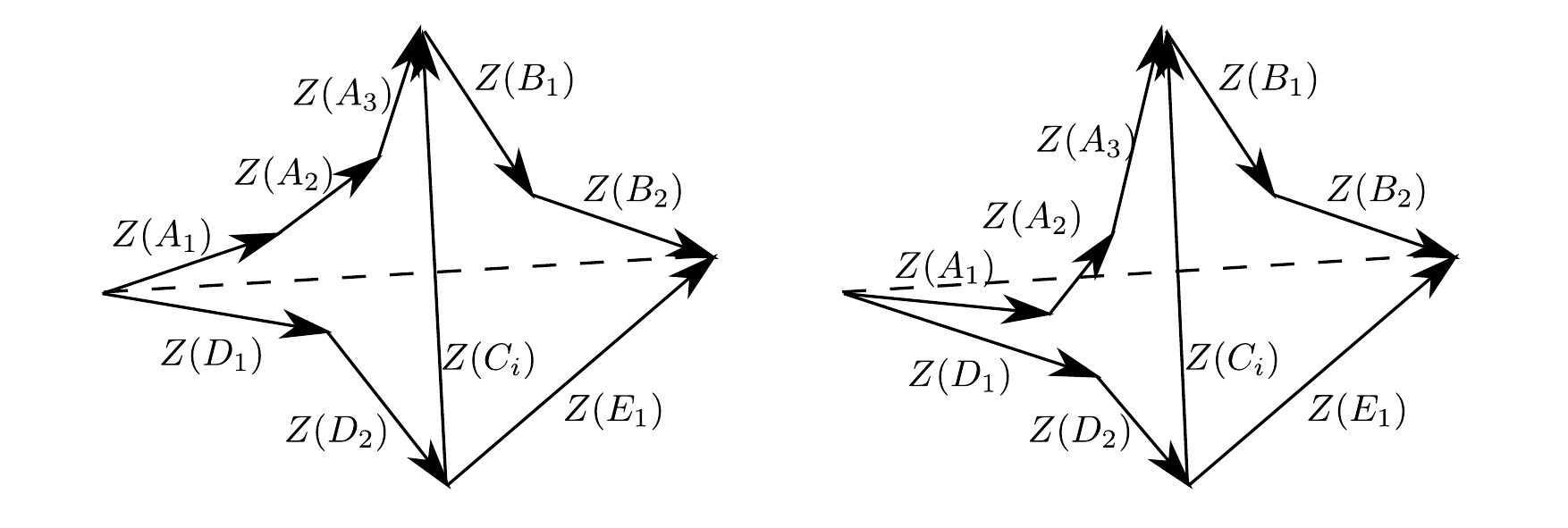}
    \caption{The central charges of the objects in an unobstructed lozenge (left) and in an obstructed lozenge (right).}
\end{figure}

It follows from the inequalities above that if a lozenge is unobstructed then there is only a single choice of $\arg(Z(X))$ satisfying the condition; let's call it $\phi_X \in \RR$. 

\begin{definition}
The preslicing $\cP$ is defined by setting $\cP_\phi$ to be the minimal additive subcategory containing all objects in $\cP'_\phi$ plus all objects $X$ of phase $\phi_X = \phi$ corresponding to unobstructed lozenges.
\end{definition}

\begin{remark}
Note that even though we termed the marked intervals to the `left' and `right' of the lozenge as $M_\ell$ and $M_r$, this \emph{does not mean} that $M_\ell$ is necessarily part of $\Sigma_L$ or that $M_r$, of $\Sigma_r$; there can be lozenges which cross back and forth between $\Sigma_L$ and $\Sigma_R$. It is still true that by definition, the interior of each lozenge must non-trivial intersection with both sides of the surface.
\end{remark}

\begin{lemma}\label{lemma:glue}
The data $Z$ and $\cP$ as above define a prestability condition on $\cF(\Sigma)$.
\end{lemma}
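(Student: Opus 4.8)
The plan is to check the three ingredients of a pre-stability condition in turn, disposing quickly of the central charge and the phase compatibility and concentrating the work on the Hom-vanishing axiom of the pre-slicing. That $Z\colon K_0(\cF(\Sigma))\to\CC$ is a well-defined homomorphism is the content of the preceding lemma on the $K_0$-pushout together with the fact that $Z_L$ and $Z_R$ agree on $K_0(\langle\cA_\gamma\rangle)$; by the universal property $Z$ restricts to $Z_L$ on $K_0(\cF_L)$ and to $Z_R$ on $K_0(\cF_R)$. For the compatibility $Z(X)\in\RR_{>0}\,e^{i\pi\phi}$ on each $X\in\cP_\phi$ it suffices to check the generators. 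If $X=j_L(A)$ with $A$ a $\sigma_L$-semistable object of phase $\phi$ (and symmetrically for $j_R$) then $Z(X)=Z_L(A)$ lies on the ray $\RR_{>0}e^{i\pi\phi}$ since $\sigma_L$ is a stability condition; this is unambiguous because $\sigma_L$ and $\sigma_R$ restrict to the same stability condition on $\langle\cA_\gamma\rangle$, so the two possible phases of an object of the overlap coincide. If $X$ is the interval object attached to an unobstructed lozenge then $Z(X)=\sum_iZ(A_i)+\sum_iZ(B_i)$ and $\phi_X$ was \emph{defined} as the unique branch of $\tfrac1\pi\arg Z(X)$ meeting the unobstructedness inequalities, so the condition is tautological; moreover such an $X$ crosses $\gamma$, hence is not an object of $\cF_L$ or $\cF_R$, so there is no clash with the $\cP'$-objects, and any two lozenge presentations of the same underlying interval give the same class in $K_0(\cF(\Sigma))$ and hence the same phase. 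Finally each $\cP_\phi$ is, by its construction as the extension-closure of the listed generators, a full extension-closed subcategory, with $[1]$ carrying $\cP_\phi$ onto $\cP_{\phi+1}$ (a shift of a $\sigma_L$-semistable is semistable of phase $\phi+1$, and $X[1]$ is attached to the lozenge with all component phases shifted). This establishes everything except the Hom-vanishing axiom, which is the real content.

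That axiom amounts to $\Hom_{\cF(\Sigma)}(X,Y)=0$ whenever $X\in\cP_\phi$, $Y\in\cP_\psi$ and $\phi>\psi$; applied to shifts (using that $\cP$ is closed under $[1]$) this is the full pre-slicing condition. Since $\Hom(-,Y)$ and $\Hom(X,-)$ are cohomological and each $\cP_\phi$ is extension-closed, it is enough to treat pairs of generators, which come in three kinds: \emph{$L$-type} ($j_L$-images of $\sigma_L$-stable objects, including the objects of $\langle\cA_\gamma\rangle$ on which $\sigma_L$ and $\sigma_R$ agree), \emph{$R$-type}, and \emph{lozenge-type}. For two $L$-type generators --- and symmetrically for two $R$-type --- full faithfulness of the pushout leg $j_L$ (which holds for such inclusions of subsurfaces by the cosheaf presentation of $\cF(\Sigma)$ in \citep{HKK}) identifies $\Hom_{\cF(\Sigma)}(j_LA,j_LA')$ with $\Hom_{\cF_L}(A,A')$, and the latter vanishes because $\sigma_L$ is an honest stability condition and $\phi(A)>\phi(A')$.

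The remaining, mixed cases are where the geometry of $\Sigma$ enters. Every stable object is represented by an embedded curve (Proposition \ref{prop:stableobjects}), and an $L$-type generator is supported, up to isotopy, on the $\Sigma_L$-side of $\gamma$ while an $R$-type generator is supported on the $\Sigma_R$-side, so their supports can meet only along the chain $\gamma$ --- at the shared marked intervals $M_0,\dots,M_N$ --- and any morphism between them is carried by boundary paths at those intervals. A lozenge object $X$ is resolved by a distinguished triangle $X_A\to X\to X_B$ (or its reverse), where $X_A$ is the $L$-type iterated extension of $A_1,\dots,A_a$ and $X_B$ the $R$-type iterated extension of $B_1,\dots,B_b$, and likewise through the $D_i,E_i$. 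Feeding these triangles into the long exact sequences reduces every mixed $\Hom$ to a combination of $L$/$R$-type $\Hom$'s (already handled) and of the ``corner'' classes at the $M_i$; a nonzero degree-zero corner morphism, by the non-crossing Lemma \ref{lemma:noncrossing} together with the chain of phase inequalities among the intervening stable intervals $C_j$, forces $\phi\le\psi$, and in the lozenge cases the inequalities one needs are exactly those built into the definition of an \emph{unobstructed} lozenge, so a violation of $\phi>\psi$ would contradict unobstructedness. I expect this last point to be the main obstacle: organizing the mixed Hom-vanishing as bookkeeping of boundary corners against the phases of the stable intervals $C_j$ and of the lozenge chains, and checking that ``unobstructed'' is precisely the hypothesis that makes this bookkeeping close up. This is also where genericity enters, through Lemma \ref{lemma:noncrossing}. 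Note that no Harder--Narasimhan property is claimed here --- that is precisely why the output is only a \emph{pre}-stability condition, the existence of HN filtrations being the substance of the later gluing theorem.
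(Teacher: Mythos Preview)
Your proposal follows essentially the same strategy as the paper: dispose of the central charge and phase compatibility quickly, then reduce the Hom-vanishing axiom to pairs of generators split into $L$-type, $R$-type, and lozenge-type, using full faithfulness for the pure $L/L$ and $R/R$ cases and the triangle $B\to X\to A$ (resp.\ $D\to X\to E$) to decompose lozenge objects. The key observation that the unobstructedness inequalities are precisely what closes the bookkeeping is correct and matches the paper exactly.

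Two points where your write-up diverges from the paper's argument are worth noting. First, you invoke the non-crossing Lemma~\ref{lemma:noncrossing} and genericity for the mixed cases; the paper uses neither here. For the pure $L$--$R$ case the argument is more elementary than you suggest: if $X\in\cP^L$ and $Y\in\cP^R$ share a marked interval $M_i$ with a boundary path $X\to Y$, then after shifting $C_i$ so that the boundary map $X\to C_i$ has degree zero, an index count forces the boundary map $C_i\to Y$ also to have degree zero, and since all three are stable in their respective stability conditions one reads off $\phi_X\le\phi_{C_i}\le\phi_Y$. No polygons, no genericity. Second, in the lozenge-versus-$L$ case the paper's argument is that $\Hom(B,Y)=0$ outright (because $B$ lives on the right of the chain and $Y$ on the left, so their representatives can be chosen disjoint), which forces any nonzero $\Hom(X,Y)$ to factor through some $\Hom(A_i,Y)$; then $\phi_X\le\phi_{A_1}\le\phi_{A_i}\le\phi_Y$ using only the lozenge inequalities and stability on the $L$-side. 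The lozenge-versus-lozenge case is an iteration of this. Your sketch gestures at this but routing it through ``corner classes'' and Lemma~\ref{lemma:noncrossing} is unnecessary and would make the write-up harder, not easier.
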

\begin{proof}
The compatibility between the argument of $Z$ and the phase of the subcategories $\cP$ is automatic from the definition, since every stable object either comes directly from one side or has central charge and phase defined by the formula above. So we have to prove that $\cP$ is in fact a preslicing: we must show that $\Hom(X,Y) = 0$ if $X \in \cP_{\phi_X}$ and $Y \in \cP_{\phi_Y}$ with $\phi_X > \phi_Y$, and that the resulting collection of objects satisfies the support property with respect to $Z$.

By definition, each full subcategory $\cP_\phi$ can be spanned by three full subcategories
\[ \cP^L_\phi = j_L((\cP_L)_\phi), \quad \cP^R_\phi = j_R((\cP_R)_\phi), \quad \cP^\lozenge_\phi, \]
where $\cP^\lozenge_\phi$ has all the objects of phase $\phi$ obtained from unobstructed lozenges. Note that $\cP^\lozenge_\phi$ is disjoint from the other two, but $\cP^L_\phi$ and $\cP^R_\phi$ are not disjoint; in fact their intersection is spanned by the objects supported on the chain of intervals $\{\gamma_i\}$.

Let us check vanishing of the appropriate homs. It is enough to check on stable objects. If $X,Y \in \cP^L$ then
\[ \Hom(X,Y) \neq 0 \implies \phi_X \le \phi_Y \]
automatically since they're both semistable in $\cF_L$ and $\cF_L \to \cF(\Sigma)$ is fully faithful; same for the case $X,Y \in \cP^R$. So there are four remaining cases:
\begin{enumerate}
    \item $X \in \cP^L_{\phi_X}$        and $Y \in \cP^R_{\phi_Y}$
    \item $X \in \cP^\lozenge_{\phi_X}$ and $Y \in \cP^L_{\phi_Y}$
    \item $X \in \cP^L_{\phi_X}$        and $Y \in \cP^\lozenge_{\phi_Y}$
    \item $X \in \cP^\lozenge_{\phi_X}$ and $Y \in \cP^\lozenge_{\phi_Y}$
\end{enumerate}
All the other cases can be obtained symmetrically by switching left and right. Let us treat each case separately:
\begin{enumerate}
    \item We can find representatives of $X,Y$ contained in the images of $\tilde\Sigma_L,\tilde\Sigma_R$ respectively, such that neither intersects the chain $\{\gamma_i\}$; so there are no intersections between them. The only way we can have $\Hom(X,Y) \neq 0$ is if $X$ and $Y$ are intervals sharing a common boundary component at one of the $M_i$ along the chain, with a boundary path from $X$ to $Y$.

    \begin{figure}[h]
    \centering
    \includegraphics[width=0.5\textwidth]{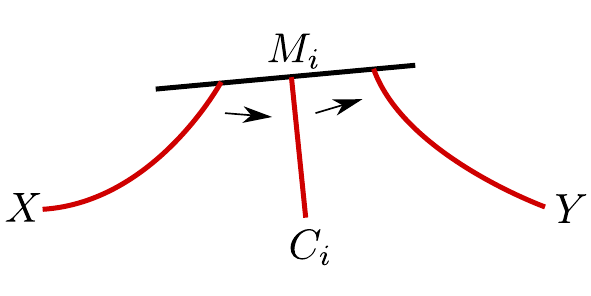}
    \end{figure}

    Consider then $C_i$ and shift its grading so that the morphism $X \to C_i$ is in degree zero; then by index arguments the morphism $C_i \to Y$ is also in degree zero. But since these three objects are stable we have
    \[ \phi_X \le \phi_{C_i} \le \phi(Y). \]

    \item Let $X$ be obtained from an unobstructed lozenge with notation as in Definition \ref{def:lozenge}, and $Y \in \cF_L$. Consider the distinguished triangle $B \to X \to A$ and let us apply the functor $\Hom(-,Y)$ to get a distinguished triangle
    \[ \Hom(A,Y) \to \Hom(X,Y) \to \Hom(B,Y). \]
    Since $Y$ comes from $\cF_L$, it has a representative that stays to the left of the chain and therefore of $B$ so by assumption we have $\Hom(B,Y) = 0$. Thus if $\Hom(X,Y) \neq 0$ then $\Hom(A,Y) \neq 0$. Since $A$ is given by the iterated extension of the $A_i$, there must be some $A_i$ with $\Hom(A_i,Y) \neq 0$; but $A_i$ and $Y$ are both in the image of $\cF_L$ we must have $\phi_{A_i} \le \phi_Y$, and also by construction $\phi_X \le \phi_{A_1}$ so we have
    \[ \phi_X \le \phi_{A_1} \le \phi_{A_i} \le \phi_Y. \]

    \begin{figure}[h]
    \centering
    \includegraphics[width=0.5\textwidth]{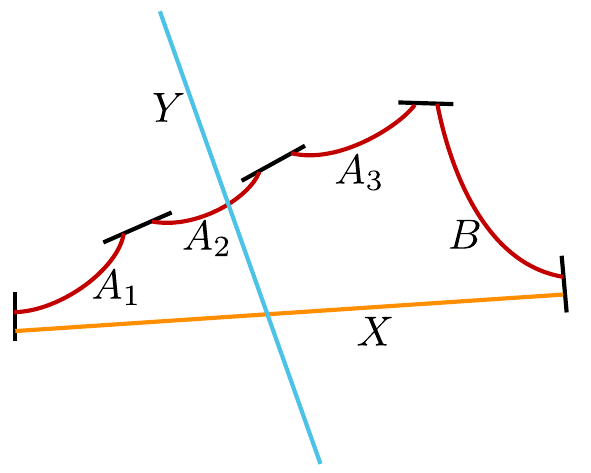}
    \end{figure}

    \item Suppose we have an unobstructed lozenge with sides $A,B,D,E$ and diagonal $Y$. A similar argument as in case (2) shows that if $\Hom(X,Y) \neq 0$, then $\Hom(X,D) \neq 0$, and then for some $i$ we have $\Hom(X,D_i) \neq 0$
    \[ \phi_X \le \phi_{D_i} \le \phi_{D_d} \le \phi_Y. \]

    \item This case can be obtained by an iterated version of the argument in case $(2)$. Let us denote the two lozenges by $A_X,B_X,D_X,E_X$ with diagonal $X$ and $A_Y,B_Y,D_Y,E_Y$ with diagonal $Y$. Suppose that $\Hom(X,Y) \neq 0$, and consider the triangle $D_X \to X \to E_X$. Consider first the case $\Hom(D_X,Y) = 0$ then $\Hom(E_X,Y) \neq 0$. Now consider the triangle $B_Y \to Y \to A_Y$. Since $E_X$ and $A_Y$ have representatives contained in the right and the left side, respectively, and don't share a boundary component we have $\Hom(E_X,A_Y) = 0$ so we must have $\Hom(E_X,B_Y) \neq 0$. But then there must be indices $i,j$ such that $\Hom((E_X)_i,(B_Y)_j) \neq 0$ so then
    \[ \phi_X \le \phi_{(E_X)_i} \le \phi_{(B_Y)_j} \le \phi_Y. \]
    The other case is $\Hom(D_X,Y) \neq 0$. Consider the triangle $B_Y \to Y \to A_Y$. By an analogous argument we can find indices $i,j$ such that
    \[ \phi_X \le \phi_{(A_X)_i} \le \phi_{(D_Y)_j} \le \phi_Y. \]
\end{enumerate}

We now turn to the support property. Let us pick a norm $\lVert \cdot \rVert$ on $K_0(\cF(\Sigma))_\RR$, and use its restrictions $\lVert \cdot \rVert_L, \lVert \cdot \rVert_R$ on $K_0(\cF_L)_\RR$, $K_0(\cF_R)_\RR$. By assumption, the stability conditions $\sigma_L,\sigma_R$ satisfy the support property\footnote{Recall that if the support condition holds for some norm, it holds for all norms, though possibly with different constants.} with some constants $c_L,c_R$, so we set $c:= \min(c_L,c_R)$.

The only new objects we added to the collection of semistable objects were the diagonals of unobstructed lozenges, so it remains to prove the support property for those. Let $X$ be such an object, with the notation of Definition \ref{def:lozenge}. From the triangle inequality we have 
\begin{align*}\lVert X \rVert &\le \sum \lVert A_j \rVert + \sum \lVert B_j \rVert, \\
\lVert X \rVert &\le \sum \lVert D_j \rVert + \sum \lVert E_j \rVert.
\end{align*}
Now, from the unobstructed condition on the complex plane of central charges, we have the following inequalities:
\begin{align*} |Z(X)| + |Z(C_i)| &\ge \sum |Z(A_j)| + \sum |Z(E_j)|, \\
|Z(X)| + |Z(C_i)| &\ge \sum |Z(B_j)| + \sum |Z(D_j)|.
\end{align*}
Note now that there are finitely many $C_i$, so we can bound $|Z(C_i)|$ from above by some constant $\Lambda > 0$ uniformly for all lozenges. We now combine the inequalities above to obtain
\begin{align*} 2(|Z(X)| + L) &\ge \sum |Z(A_j)| + \sum |Z(B_j)| + \sum |Z(D_j)| \sum |Z(E_j)| \\
&> c(\lVert A_j \rVert + \sum \lVert B_j \rVert + \lVert D_j \rVert + \sum \lVert E_j \rVert) \\
&\ge 2 c \lVert X \rVert,
\end{align*}
implying that $|Z(X)|/\lVert X \rVert > c - L/\lVert X \rVert$.

By assumption $\rk(K_0(\cF(\Sigma)) < \infty$ so we only need to worry about the support condition for objects with large norm, which is proven by the inequality above. More explicitly, for any radius $\rho > 0$, the set $\{ [X] \ |\  \lVert X \rVert \le \rho \} \subset K_0(\cF(\Sigma))$ is finite. We pick a radius $\rho > \Lambda/c$, and then have
\[  \inf_{0 \neq X \mathrm{semistable}} \frac{|Z(X)|}{\lVert X \rVert} > c' \]
where
\[ c' = \min \left( \min_{\lVert X \rVert \le \rho} \frac{|Z(X)|}{\lVert X \rVert}, c - \frac{\Lambda}{\rho} \right) > 0, \]
proving the support condition with constant $c'$.
\end{proof}

\subsection{Uniqueness of compatibility structure}
In the same setting as the previous subsection, let $\Gamma \subset \RelStab(\Sigma_L,\gamma)\times \RelStab(\Sigma_R,\gamma)$ be the locus of pairs of relative stability conditions $(\sigma_L,\sigma_R)$ such that there exists a compatibility condition between $\sigma_L$ and $\sigma_R$.
\begin{lemma}
For each $(\sigma_L,\sigma_R) \in \Gamma$, there is a unique compatibility structure between $\sigma_L$ and $\sigma_R$, up to equivalence.
\end{lemma}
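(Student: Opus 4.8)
The plan is to show that every choice entering a compatibility structure between $\sigma_L$ and $\sigma_R$ is forced, so that any two such structures differ only by the surface isotopies and category equivalences already built into the equivalence relation.

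First I would pass to canonical representatives. By Lemma \ref{lemma:uniqueminimal}, $\sigma_L$ and $\sigma_R$ admit unique minimal reduced representatives, say $\bar\sigma_L$ on $\cF(\Sigma_L\cup_\gamma\Delta_{m_0})$ and $\bar\sigma_R$ on $\cF(\Sigma_R\cup_\gamma\Delta_{n_0})$, with canonical simple \textsc{cosi} chains $\bar C^L_1,\dots,\bar C^L_{N_L}$ and $\bar C^R_1,\dots,\bar C^R_{N_R}$ of the interval object supported on $\gamma$. Two facts from Section \ref{sec:relstab} are used: (i) any minimal representative occurring in a compatibility structure is obtained from the reduced one by subdividing stable intervals into same-phase chains extended toward a fixed side of $\gamma$ (Lemmas \ref{lemma:reduced}, \ref{lemma:constanttarget}); and (ii) a minimal reduced chain has no two consecutive intervals of the same phase (equal-phase right-extended pairs are excluded by reducedness, equal-phase left-extended pairs by minimality together with the $K_0$-injectivity argument in the proof of Lemma \ref{lemma:uniqueminimal}). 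Unwinding Definition \ref{def:compatibility}, a compatibility structure is then the same data as a single chain of embedded arcs $\cA_\gamma$ in $\Sigma=\Sigma_L\cup_\gamma\Sigma_R$, central charges for its members, and isotopy classes of the embeddings $\Sigma_L\hookrightarrow\tilde\Sigma_L\hookrightarrow\Sigma$ and $\Sigma_R\hookrightarrow\tilde\Sigma_R\hookrightarrow\Sigma$, such that the reduction of $\cA_\gamma$ toward the $\Sigma_L$-side recovers $\bar C^L_\bullet$ and its reduction toward the $\Sigma_R$-side recovers $\bar C^R_\bullet$.

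Next I would prove the chain is unique. The key point is that merging equal-phase pairs at turns toward one side of $\gamma$ does not affect the turns toward the other side; combined with fact (ii), this forces the turns of $\cA_\gamma$ toward the $\Sigma_R$-side to coincide with those of $\bar C^L_\bullet$ and the turns toward the $\Sigma_L$-side to coincide with those of $\bar C^R_\bullet$, with no further same-phase subdivision possible. This determines the combinatorial type of $\cA_\gamma$ — the ordered sequence of marked intervals it visits — and since $\gamma$ has a fixed isotopy class separating $\Sigma$, the arc system $\cA_\gamma\subset\Sigma$ is then determined up to isotopy. Given $\cA_\gamma$, the central charges of its members are forced: on arcs already appearing in $\bar C^L_\bullet$ they must agree with $\bar\sigma_L$, on the remaining ones with $\bar\sigma_R$, and these two prescriptions are consistent exactly because $(\sigma_L,\sigma_R)\in\Gamma$; the phases then follow. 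Finally, each $\tilde\sigma_L$ is recovered as the refinement of $\bar\sigma_L$ carrying these central charges on $\langle\cA_\gamma\rangle$, hence is unique (that it is a stability condition is checked as in Lemma \ref{lemma:reduced}), and $\tilde\Sigma_L,\tilde\Sigma_R$ are the minimal subsurfaces containing $\Sigma_L$, resp. $\Sigma_R$, together with $\cA_\gamma$, hence unique up to isotopy. Assembling these identifications exhibits an equivalence between any two compatibility structures.

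I expect the main obstacle to be the combinatorial uniqueness of $\cA_\gamma$: one must rule out that $\cA_\gamma$ is strictly finer than the common refinement of $\bar C^L_\bullet$ and $\bar C^R_\bullet$, i.e.\ that it carries extra same-phase subdivisions surviving neither reduction. Excluding this rests on fact (ii) — such a subdivision would persist under one of the two reductions, producing a consecutive equal-phase pair in a minimal reduced chain, a contradiction — and, more delicately, on the non-crossing Lemma \ref{lemma:noncrossing}, which is what guarantees that the reductions toward the two sides are compatible and that the chains glued on either side remain genuine simple \textsc{cosi} decompositions. A secondary technical point is verifying that the prescribed refinement of $\bar\sigma_L$ is a bona fide stability condition and is the unique minimal representative inducing $\cA_\gamma$, which proceeds along the lines of Lemma \ref{lemma:reduced}.
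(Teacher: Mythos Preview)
Your overall strategy — pinning down $\cA_\gamma$ combinatorially and then reading off the remaining data — is close in spirit to the paper's, but there is a genuine gap at the point you yourself flag as the main obstacle.

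The problem is your fact (ii). A minimal reduced representative can very well have two consecutive stable intervals of the same phase extended on the left. Reducedness only forbids same-phase pairs extended toward the disk side, and minimality only says that every marked interval of $\Delta_n$ appears in the chain; neither condition touches left-extended same-phase pairs. Indeed the paper explicitly allows this: Lemma~\ref{lemma:constanttarget} describes ``internal walls'' inside a reduced chamber where exactly such a left-extended same-phase pair occurs, and the chain $\cA_\gamma$ changes across them while $\langle\cA_L\sqcup\cA_\gamma\rangle$ stays fixed. The $K_0$-injectivity argument in Lemma~\ref{lemma:uniqueminimal} only shows that two minimal reduced representatives live on the same surface; it does not exclude equal-phase adjacencies in the chain. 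Since your uniqueness-of-$\cA_\gamma$ argument rests on fact (ii) to rule out extra same-phase subdivisions, that step does not go through.

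The paper replaces your fact (ii) with the correct invariant: the function $int(\tilde\sigma)$ counting internal (left-side) extensions is constant on \emph{all} minimal representatives of a given relative stability condition, because reduced restriction only merges right-side same-phase pairs and hence leaves $int$ unchanged. Compatibility forces $int(\tilde\sigma_L)=ext(\tilde\sigma_R)$ and vice versa, which pins down $m$ and $n$ and hence $\tilde\Sigma_L,\tilde\Sigma_R$. Your claim that the central charges on $\cA_\gamma$ are read off directly from one side or the other is also too quick: an arc of $\cA_\gamma$ running from a $\Sigma_L$-marked interval to a $\Sigma_R$-marked interval need not appear in either reduced chain. The paper handles this with an explicit ``zip-up'' induction, determining $Z(C_i)$ and $\phi_i$ one by one using that suitable concatenations of consecutive $C_j$'s land in the image of one side or the other, together with the classification of stability conditions on $\cF(\Delta_{N+1})$.
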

\begin{proof}
Let us first prove that the numbers $m,n$ defining $\tilde\Sigma_L,\tilde\Sigma_R$ are unique. Consider the subset
\[ \MM_\sigma \subset \SS = \bigsqcup_{n \ge 2} \Stab(\cF(\Sigma_L \cup_\gamma \Delta_n) ) \]
of its minimal (but possibly not reduced) representatives. Given $\tilde\sigma \in \MM_\sigma$ we consider the \textsc{cosi} decomposition of the rank one object $C$ supported on $\gamma$ as before, and define the numbers $i(\tilde\sigma), e(\tilde\sigma)$ to be respectively the number of internal/external extensions in the $\gamma$ chain, ie. the number of indices $i$ such that the corresponding extension happens on the left/right, or equivalently by an extension map $\in \Ext^1(C_{i+1},C_i)$/$\in \Ext^1(C_i,C_{i+1})$. This defines constructible functions $i,e: \MM_\sigma \to \ZZ_{\ge 0}$ such that $i(\tilde\sigma) + e(\tilde\sigma) = N-1$, where $N-1$ is the total length of the object $C$ under $\tilde\sigma$.

We argue that the function $i$ is constant; by Lemma \ref{lemma:uniqueminimal} there is a unique minimal and reduced representative $\sigma^\mathrm{red}$ of every relative stability condition. However, reduced restriction does not change the $i$ of a stability condition, so $i(\tilde\sigma) = i(\tilde\sigma^\mathrm{red}) = i(\sigma^\mathrm{red})$ on all of $\MM_\sigma$. We define the same functions on the right side for the relative stability condition $\sigma' \in \RelStab(\Sigma_R,\gamma)$. Compatibility implies that $i(\tilde\sigma) = e(\tilde\sigma'), e(\tilde\sigma) = i(\tilde\sigma')$, but since $i$ is constant there is only one possibility for the value of $e$. Comparing with the gluing map we have $m=e(\tilde\sigma), n=e(\tilde\sigma')$.

This determines the isomorphism type of the surfaces $\tilde\Sigma_L$ and $\tilde\Sigma_R$. Consider now the inclusion of marked surfaces $j_L:\tilde\Sigma_L \hookrightarrow \Sigma_L \cup_\gamma \Sigma_R$. By definition of compatibility structure, $j_L|_{\Sigma_L}$ agrees with the inclusion $\Sigma_L \hookrightarrow \Sigma_L \cup_\gamma \Sigma_R$, so the `left part' of $j_L$ is fixed;  $j_L$ is determined up to equivalence by the images of the extra $m-2$ marked boundary intervals in the disk $\Delta_m$ attached along $\gamma$ (two of the marked boundary intervals are fixed to the ends of $\gamma$).

Analogously, $j_R$ is determined up to equivalence by the image of the extra $n-2$ marked boundary intervals of $\Delta_n$. But the images of the extra $m-2$ marked intervals under $j_L$ is contained in the image of the marked intervals coming from $\Sigma_R$ under $j_R$, so they are fixed; the same is true for the image of the extra $n-2$ marked intervals under $j_R$. Minimality implies that the subcategory $\langle \cA_L \sqcup \cA_\gamma \rangle$ is the whole category $\cF(\tilde\Sigma_R)$ so once we fix $\sigma$, the representative $\tilde\sigma$ is completely determined by its restriction to $\langle \cA_\gamma \rangle \cong \cF(\Delta_{N+1})$.

By the classification of stability conditions on the Fukaya category of a disk presented in \citep[Section 6.2]{HKK}, stability conditions on $\cF(\Delta_{N+1})$ are entirely determined by the central charges and phases of the $N+1$ intervals in the chain. Let us label the marked boundary intervals $M_0,\dots,M_N$ in sequence. We argue that the central charges and phases of the objects $C_1,\dots,C_N$ are unique using the following `zip-up' procedure. Consider first the object $C_1$; since $M_0$ is in the common image of $\Sigma_L$ and $\Sigma_R$, and $M_1$ is `internal' (in the subset counted by the $int$ function) to either of those surfaces, the interval supporting $C_1$ is contained in the image of either $\Sigma_L$ or $\Sigma_R$, so its central charge $Z(C_1)$ and phase $\phi_1$ are fixed by either $\sigma_L$ or $\sigma_R$.

Suppose without loss of generality that the interval supporting $C_1$ is in the image of $\Sigma_L$, and consider now $C_2$. There are two possibilities for $M_2$; either it is internal to $\Sigma_L$ or to $\Sigma_R$. In the former case since $M_1$ and $M_2$ are in the image of the same side $\Sigma_L$, $Z(C_2)$ and $\phi_2$ are fixed by $\sigma_L$. In the latter case, $C_2$ is not in the image of either $\Sigma_L$ or $\Sigma_R$, but we consider the concatenation $C_{1+2}$ given by extending at $M_1$; both ends of this object are in the image of $\Sigma_R$ so the central charge $Z(C_{1+2})$ of this (non-stable) object is fixed by $\sigma_R$. So $Z(C_2) = Z(C_{1+2}) - Z(C_1)$ is also fixed. Moreover, among the shifts of $C_2$, there is a unique one with the extension map at $M_1$ in the correct degree, so $\phi_2$ is also fixed. Proceeding by induction we find that all $Z(C_i),\phi_i$ are fixed by the initial data $\sigma_L,\sigma_R$.
\end{proof}

\subsection{Cut and glue maps on interval-like stability conditions}
Because of the uniqueness of compatibility structure proven above and Lemma \ref{lemma:glue}, we can define a gluing map
\[ \RelStab(\Sigma_L,\gamma) \times \RelStab(\Sigma_R,\gamma) \supset \Gamma \xrightarrow{\glue_\gamma} \PreStab(\cF(\Sigma_L \cup_\gamma \Sigma_R)) \]
that produces a prestability condition.

In general, nothing guarantees that the map $\glue_\gamma$ gives actual stability conditions, that is, whether the pre-slicings obtained by the lozenges of the previous Section give enough objects in the slicing. We now establish sufficient conditions for when this happens.

\begin{definition}
A (pre)stability condition $\sigma = (Z,\cP) \in \Stab(\cF(\Sigma))$ is \emph{interval-like} if all stable objects are embedded interval objects. We denote the subsets of all interval-like stability conditions and prestability conditions by $\Stab(\cF(\Sigma))^{int}$ and $\PreStab(\cF(\Sigma))^{int}$, respectively.
\end{definition}
Note that by Proposition \ref{prop:stableobjects}, for a stability condition $\sigma$, being interval-like is equivalent to having no stable circle objects; note that it can still have semistable circle objects but these will have decompositions in terms of interval objects of the same phase.

Let us restrict the map $\cut_\gamma$ to the locus $\Stab(\cF(\Sigma))^{int}$. By definition, relative stability conditions in the image of this locus are represented by stability conditions on surfaces $\tilde\Sigma_L$ and $\tilde\Sigma_R$ that are also interval-like; let us analogously denote that locus by $\Gamma^{int}$.

It is also clear that from the gluing prescription, one only adds interval objects (corresponding to the unobstructed lozenges), therefore the gluing map restricted to $\Gamma^{int}$ also lands in $\PreStab(\cF(\Sigma))^{int}$

\begin{theorem}\label{thm:inverses}
The composition
\[ \Stab(\cF(\Sigma))^{int} \xrightarrow{\cut_\gamma} \Gamma^{int} \xrightarrow{\glue_\gamma} \PreStab(\cF(\Sigma))^{int} \]
is equal to the canonical inclusion $\Stab(\cF(\Sigma))^{int} \hookrightarrow \PreStab(\cF(\Sigma))^{int}$.
\end{theorem}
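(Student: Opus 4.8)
The plan is to fix a stability condition $\sigma = (Z,\cP) \in \Stab(\cF(\Sigma))$, write $(\sigma^{\mathrm{red}}_L,\sigma^{\mathrm{red}}_R) = cut_\gamma(\sigma)$ and $\sigma' = (Z',\cP') = glue_\gamma(cut_\gamma(\sigma))$, and prove the identity $\sigma' = \sigma$ in two steps: $Z' = Z$, and $\cP'_\phi = \cP_\phi$ for every $\phi \in \RR$. I would carry out the gluing using the concrete minimal representatives $\sigma_L = \sigma|_{\cF_L}$ and $\sigma_R = \sigma|_{\cF_R}$ that the cutting construction itself furnishes, so that $(\cP_L)_\phi = \cP_\phi \cap \cF_L$ and $(\cP_R)_\phi = \cP_\phi \cap \cF_R$ tautologically. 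The equality $Z' = Z$ is then immediate: by the lemma of Section \ref{subsec:centralcharge}, $K_0(\cF(\Sigma))$ is the pushout of $K_0(\cF_L)$ and $K_0(\cF_R)$ over $K_0(\langle\cA_\gamma\rangle)$, and $Z'$ is defined as the unique map out of this pushout restricting to the central charges used in the gluing; since those are the restrictions of $Z$ (cutting alters only the slicing, not the central charge), $Z$ itself is such a map, so uniqueness gives $Z' = Z$.

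For the slicing I would establish the two inclusions separately. The inclusion $\cP'_\phi \subseteq \cP_\phi$ is the easier half: $\cP'_\phi$ is generated by $j_L((\cP_L)_\phi)$, $j_R((\cP_R)_\phi)$, and the diagonals of unobstructed lozenges. Objects of $(\cP_L)_\phi = \cP_\phi \cap \cF_L$ and of $(\cP_R)_\phi = \cP_\phi \cap \cF_R$ are $\sigma$-semistable of phase $\phi$ by definition, so the first two families lie in $\cP_\phi$. For the diagonal $X$ of an unobstructed lozenge of phase $\phi_X$, I would use its two iterated-extension presentations, over $M_{up}$ out of the $A_i,B_i$ and over $M_{down}$ out of the $D_i,E_i$, all of which are $\sigma$-semistable; the defining inequalities of a lozenge together with unobstructedness place every $A_i$ and $E_i$ at phase $\ge \phi_X$ and every $B_i$ and $D_i$ at phase $\le \phi_X$, and feeding this into the distinguished triangles of the two presentations and applying the phase axiom pins all Harder--Narasimhan factors of $X$ under $\sigma$ to phase exactly $\phi_X$, so $X \in \cP_{\phi_X}$. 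This is a short diagram chase.

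The reverse inclusion $\cP_\phi \subseteq \cP'_\phi$ is the substantive one. Since $\cP'_\phi$ is a full triangulated subcategory and $\cP_\phi$ is a finite-length abelian category, it suffices to show every $\sigma$-stable object $Y$ of phase $\phi$ lies in $\cP'_\phi$. By Proposition \ref{prop:stableobjects}, $Y$ is an embedded interval or an embedded circle object. If some representative of $Y$ can be isotoped off the separating arc $\gamma$, then $Y$ lies in $\cF_L$ or $\cF_R$, and $\sigma$-stability forces it to be $\sigma_L$- or $\sigma_R$-stable (a proper subobject of $Y$ in $(\cP_L)_\phi = \cP_\phi \cap \cF_L$ would be a proper subobject of $Y$ in $\cP_\phi$), so $Y \in \cP'_\phi$. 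If instead every representative of $Y$ crosses $\gamma$, I would argue — after first reducing to a $\Xi$-generic $\sigma$, with $\Xi$ the finite set of classes involved, exactly as in the proof of Lemma \ref{lemma:noncrossing} — that the intersection of $Y$ with the chain $C_1,\dots,C_N$ is forced into lozenge combinatorics: the portions of $Y$ lying in $\Sigma_L$ and $\Sigma_R$ are themselves chains of $\sigma$-stable intervals given by their cosi decompositions, which by Lemma \ref{lemma:noncrossing} cannot cross back across $\gamma$ and hence assemble into four chains $A_\bullet,B_\bullet,D_\bullet,E_\bullet$ with diagonal $Y$; the hypothesis that $Y$ is genuinely $\sigma$-semistable of phase $\phi$ then translates precisely into the phase inequalities and the unobstructedness condition defining a lozenge, so $Y$ is one of the objects adjoined during gluing and $Y \in \cP'_\phi$.

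The step I expect to be the main obstacle is precisely this last topological claim: that a $\sigma$-stable object meeting a separating chain always does so in a lozenge pattern, with no surviving exotic configurations (multiple returns, nested pieces, or a stable circle genuinely linking $\gamma$). This should follow from Lemma \ref{lemma:noncrossing} and genericity by the same phase-matching bookkeeping used there, but it is where the global topology of $\Sigma$ enters in an essential way. Once $\cP'_\phi = \cP_\phi$ is known for all $\phi$ we have $\sigma' = \sigma$; in particular $glue_\gamma(cut_\gamma(\sigma))$ is a genuine stability condition, and $glue_\gamma \circ cut_\gamma$ coincides with the canonical inclusion $\Stab(\cF(\Sigma)) \hookrightarrow \PreStab(\cF(\Sigma))$.
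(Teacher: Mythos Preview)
Your overall plan matches the paper's: prove $Z'=Z$ via the $K_0$ pushout, then establish both inclusions between the slicings. The central charge step is correct, and your treatment of the ``hard'' inclusion $\cP\subseteq\cP'$ is essentially the paper's approach, which packages it as the forward direction of Lemma~\ref{lemma:lozenges}: a $\sigma$-stable interval crossing the $\gamma$-chain crosses it exactly once and is the diagonal of an unobstructed lozenge, while a $\sigma$-stable circle cannot cross the chain at all by Lemma~\ref{lemma:noncrossing} and hence already lies in $\cF_L$ or $\cF_R$.

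The gap is in the direction you call easy, $\cP'\subseteq\cP$. Knowing that every $A_i,E_i$ has phase $\ge\phi_X$ and every $B_i,D_i$ has phase $\le\phi_X$ does \emph{not} pin the HN factors of the diagonal $X$ to phase $\phi_X$. From a triangle built out of $P$ and $Q$ one only gets that the maximal HN phase of $X$ is at most $\max(\phi(P_i),\phi(Q_j))$ and the minimal one at least the corresponding minimum; applied to your two presentations this yields only that the HN phases of $X$ lie in the interval $[\max(\phi(B_1),\phi(D_d)),\,\min(\phi(A_a),\phi(E_1))]$, which is perfectly compatible with $X$ being unstable. To force semistability by comparing the two presentations you would need each of the triangles (over $M_{up}$ and over $M_{down}$) to be an HN-truncation triangle, i.e.\ with the sub having all factors of phase $>\phi_X$ and the quotient all of phase $<\phi_X$; but the direction of each triangle is dictated by the geometry of the boundary path at $M_{up},M_{down}$, not by the phase inequalities, and you have not checked it. The paper does not attempt a formal chase here: it instead proves the converse direction of Lemma~\ref{lemma:lozenges} by taking a \textsc{cosi} decomposition of the diagonal $X$ under $\sigma$, using Lemma~\ref{lemma:noncrossing} to confine that chain to the disk bounded by the lozenge, and then running a phase-matching argument along the lozenge sides to reach a contradiction unless $X$ is already stable. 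So both inclusions ultimately rest on the non-crossing lemma; neither is a short diagram chase.
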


Note that the theorem can be also stated as saying that the gluing map lands in $\Stab(\cF(\Sigma))^{int}$ and gives an right-inverse to the cutting map. It is then immediate from the definitions that this is also a left-inverse; the cutting map forgets all the stable objects coming from the lozenges so the composition
\[ \Gamma^{int} \xrightarrow{\glue_\gamma} \Stab(\cF(\Sigma))^{int} \xrightarrow{\cut_\gamma} \Gamma^{int} \]
is the identity on pairs of compatible relative stability conditions.

We will need the following lemma in the proof of \ref{thm:inverses}:
\begin{lemma} \label{lemma:lozenges}
Let $X$ be a stable interval object (under $\sigma$), with a representative that crosses the interval $\gamma$. Then there is an unobstructed lozenge (under $\sigma_L,\sigma_R$) with diagonal $X$. Conversely, the diagonal of every unobstructed lozenge is stable under $\sigma$.
\end{lemma}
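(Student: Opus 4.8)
## Proof proposal for Lemma \ref{lemma:lozenges}

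The plan is to prove the two directions separately, using in both cases the geometric description of stable objects (Proposition \ref{prop:stableobjects}, Corollary \ref{corollary:chainofobjects}) together with the non-crossing Lemma \ref{lemma:noncrossing}, and the index/phase bookkeeping that is by now routine in this paper.

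For the forward direction, I would start with a stable interval object $X$ under $\sigma$ whose supporting arc $\gamma_X$ is chosen to meet $\gamma$ transversally in a single point $p$. This point $p$ divides $\gamma_X$ into two sub-arcs, one contained in $\Sigma_L$ and one in $\Sigma_R$. The idea is that each sub-arc, pushed into $\tilde\Sigma_L$ (resp. $\tilde\Sigma_R$), becomes an interval object with ends on marked boundary intervals, one of which lies on $\cA_\gamma$ (namely the one `at $p$', which becomes an end on one of the marked intervals $M_{up}$ or $M_{down}$ created when we capped off $\gamma$), and the other of which is an end of $X$ itself. By Lemma \ref{lemma:cut} each of these restricted objects has a \textsc{cosi} decomposition under $\sigma_L$ (resp. $\sigma_R$); call the four resulting chains $A_i, B_i$ (left side) and $D_i, E_i$ (right side), grouped according to which of the two marked intervals of $\cA_\gamma$ at $p$ they abut. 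The monotonicity of phases along each chain, and the inequalities at the four corners $M_\ell, M_r, M_{up}, M_{down}$, follow exactly as in the proof of Lemma \ref{lemma:noncrossing}: each corner carries a nonzero degree-zero or degree-one morphism between stable objects (nonzero by minimality of intersections, together with the fact that the extension defining $X$ is nontrivial at $p$), which forces the stated phase bounds. That the four chains bound a disk is a purely topological consequence of $\gamma_X$ being an \emph{embedded} interval crossing $\gamma$ once — there is simply no room for the chains to do anything else, and this is where one invokes that $X$ is stable hence embedded. Finally, the lozenge is unobstructed because $X$ itself is $\sigma$-semistable: $Z(X) = Z(A) + Z(B) = Z(D) + Z(E)$ by additivity, and the phase $\phi_X = \arg Z(X)$ automatically satisfies $\phase(D_1) \le \phi_X \le \phase(A_1)$ and $\phase(B_b) \le \phi_X \le \phase(E_e)$ since these are precisely the HN-ordering inequalities for the two presentations of $X$ as an iterated extension.

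For the converse, I would take an unobstructed lozenge with diagonal $X$ and show $X$ is $\sigma$-stable. First, $X$ is an embedded interval object: its support is the concatenation of the $\alpha_i, \beta_i$ (equivalently the $\delta_i, \eta_i$), and non-crossing (Lemma \ref{lemma:noncrossing}, applied to the two chains forming the lozenge, which meet only at $M_\ell, M_r$ and whose interior is a disk) guarantees this concatenation has no unremovable self-intersection. To see it is $\sigma$-semistable of phase $\phi_X$, note that by construction of the glued pre-slicing in Lemma \ref{lemma:glue}, $X$ lies in $\cP_{\phi_X}$, and Lemma \ref{lemma:glue} already establishes that $\cP$ is a pre-slicing; combined with the fact (to be used from Theorem \ref{thm:inverses}'s proof, or rather established en route to it) that $\cP$ is an honest slicing on the objects in question, $X$ is $\sigma$-semistable. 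Stability then follows because $X$, being an embedded interval with distinct marked ends, admits no proper sub/quotient of the same phase unless the lozenge degenerates — any such would exhibit a sub-chain of the lozenge all of whose objects share the phase $\phi_X$, which the unobstructedness inequalities (strict at least at one corner, since we excluded the degenerate all-proportional case) forbid.

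The main obstacle I expect is the topological step asserting that the four \textsc{cosi}-chains obtained by restriction actually \emph{do} bound a disk and fit together into the rigid lozenge shape, rather than winding around $\Sigma$ in some more complicated way. The subtlety is that $\gamma$ is an embedded interval dividing $\Sigma$, but the capping disks $\Delta_m, \Delta_n$ and the choice of how the extra marked intervals sit on $\gamma$ introduce genuine combinatorial choices; one must check that the restricted chains land on the \emph{correct} marked intervals $M_{up}, M_{down}$ of $\cA_\gamma$ and in the correct cyclic order, which ultimately rests on $X$ crossing $\gamma$ exactly once and on Lemma \ref{lemma:noncrossing} ruling out the polygons that a `wrong' configuration would produce. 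The phase-inequality bookkeeping, by contrast, is entirely parallel to arguments already carried out in Lemmas \ref{lemma:noncrossing} and \ref{lemma:glue} and should present no new difficulty.
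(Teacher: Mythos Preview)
Your forward direction is essentially the paper's approach, though the paper is more concrete about the construction: rather than ``pushing'' the half-arcs of $\gamma_X$ into $\tilde\Sigma_L, \tilde\Sigma_R$ (which is vague, since the point $p$ is not a marked boundary), it works in $\cF(\Sigma)$ and takes the extension $C_j \to A \oplus E \to X$ and cone $B \oplus D \to X \to C_j$, where $C_j$ is the unique interval in the \textsc{cosi} decomposition of $C$ that $\gamma_X$ crosses. This produces honest objects $A,B,D,E$ with ends on $M_{j-1}, M_j$ (which become your $M_{up}, M_{down}$), and their \textsc{cosi} decompositions under $\sigma$ furnish the lozenge sides. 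Note also that your side-labeling is off: per Definition~\ref{def:lozenge}, $A$ and $D$ both start at $M_\ell \in \cM_L$ and $B,E$ both end at $M_r \in \cM_R$, so it is $A,D$ on the left and $B,E$ on the right, not $A,B$ left and $D,E$ right as you wrote. The unobstructedness argument you sketch (phase inequalities forced by nonzero maps to and from the stable $X$) is the same as the paper's, which phrases it as a proof by contradiction.

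Your converse, however, is circular. You argue that $X$ is $\sigma$-semistable because $X \in \cP_{\phi_X}$ for the \emph{glued} pre-slicing of Lemma~\ref{lemma:glue}, and then appeal to ``the fact (to be used from Theorem~\ref{thm:inverses}'s proof \dots) that $\cP$ is an honest slicing''. But the proof of Theorem~\ref{thm:inverses} establishes both inclusions $\cP_g \subseteq \cP$ and $\cP \subseteq \cP_g$ \emph{by invoking Lemma~\ref{lemma:lozenges}}; you cannot assume the glued pre-slicing coincides with $\sigma$'s slicing at this point. The paper's argument is instead direct: assume $X$ is not $\sigma$-stable, take its \textsc{cosi} decomposition $\{X_i\}$ under $\sigma$, and run a dichotomy. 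Either some $X_i$ crosses out of the disk bounded by the lozenge --- impossible by Lemma~\ref{lemma:noncrossing}, since each lozenge side separates $\Sigma$ and the chain would have to cross back over the same side --- or all $X_i$ stay inside, in which case every extension in the chain $\{X_i\}$ occurs at a marked interval on the lozenge boundary, and a phase-bookkeeping argument (pick the last $X_i$ ending on, say, the $A$-side and compare its phase to $X_{i+1}$) contradicts the existence of the required $\Ext^1$ class. This direct case analysis is the missing idea in your converse.
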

\begin{proof}
Let $C_1,\dots,C_N$ be the \textsc{cosi} decomposition of the object $C$ supported on $\gamma$. Since the chain $C_i$ is homotopic to $\gamma$, we can find one with a representative intersecting $X$  transversely. Then we have $\Ext^1(C_j,X) \cong \Hom(X,C_j) \cong k$; consider the corresponding extension and cone
\[ C_j \to A \oplus E \to X, \qquad B \oplus D \to X \to C_j. \]

\begin{figure}[h]
    \centering
    \includegraphics[width=\textwidth]{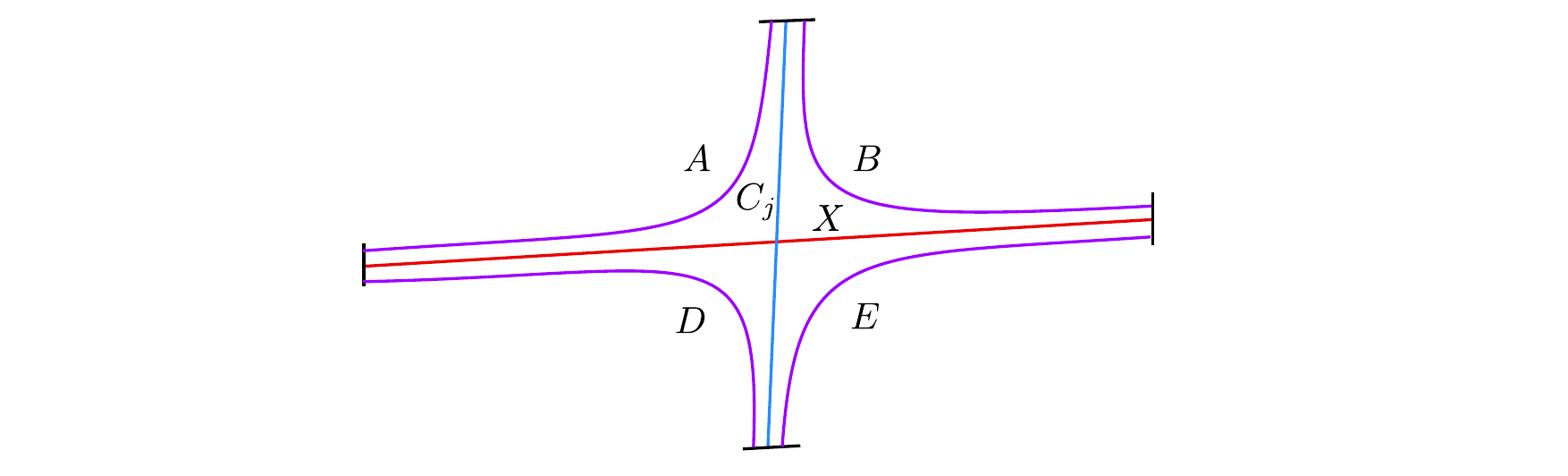}
\end{figure}

Each one of the objects $A,B,D,E$ is an embedded interval object and by Proposition \ref{prop:intervalchain} has a \textsc{cosi} decomposition; we denote the objects in these chains by $\{A_i\},\{B_i\},\{D_i\},\{E_i\}$, respectively.

We argue that $\{A_i\}$ and $\{B_i\}$ only have extensions on the right, and $\{D_i\},\{E_i\}$ only have extensions on the left. Note first that the chains of intervals $\{A_i\}$, $\{D_i\}$ and the interval $\gamma$ don't intersect mutually, since this would contradict Lemma \ref{lemma:noncrossing}. Consider the chain made up of supports of the $A_i$ and $D_i[-1]$. This chain together with $\gamma$ bounds a disk, therefore every extension is on the right; this translates to extensions on the right $\in \Ext^1(A_i,A_{i+1})$ and extensions on the left $\Ext^1(D_{i+1},D_i)$. An analogous argument applies to $B$ and $E$; note that since none of these chains crosses $\gamma$, and $\gamma$ separates $\Sigma$, they do not intersect one another.

Thus we have a lozenge whose diagonal is $X$; it remains to prove it is unobstructed. Suppose that the lozenge $A,B,D,E$ is obstructed; therefore we must have at least one of the following inequalities
\[ \phi_{A_1} \le \phi_X, \quad \phi_{D_1} \ge \phi_X, \quad \phi_{B_b} \ge \phi_X, \quad \phi_{E_e} \le \phi_X. \]

Suppose first that $\phi_{A_1} < \phi_X$. Consider then the object $X'$ given by the iterated extension of $A_2, \dots A_a,B_1,\dots B_b$, we then have a distinguished triangle
\[ X' \to X \to A_1 \]
and the map $X \to A_1$ cannot be zero since $X'$ is indecomposable (by Theorem \ref{thm:geometricity}), which cannot happen since $\phi_X > \phi_{A_1}$. The other cases are similar; moreover, the case of coinciding phases poses no further problems since we can always take $\sigma$ to be appropriately generic (since we need to be off of finitely many walls).

This proves one of the directions. For the converse, suppose that we have an unobstructed lozenge $A,B,D,E$
as above, with diagonal object $X$ which is not stable. By construction $X$ is an embedded interval, so it has a chain-of-interval decomposition $\{X_i\}$ under $\sigma$. There are two mutually exclusive cases:
\begin{enumerate}
    \item There are representatives for all the $X_i$ contained in the lozenge, ie. contained in the disk bounded by the lozenge or running along its sides.
    \item At least one of the representatives necessarily crosses out of the lozenge.
\end{enumerate}
The concatenation of the chain $\{X_i\}$ is isotopic to the object $X$. Therefore in case (2), if the chain crosses out of the lozenge along one of the sides it must cross back in, and along the \emph{same} side, since each of the objects $A,B,D,E$ cuts the surface into two. Therefore we have a configuration prohibited by Lemma \ref{lemma:noncrossing}.

As for case (1), every extension between $X_i$ and $X_{i+1}$ must happen at one of the marked components along the boundary of the lozenge. Note that even though the chain $\{X_i\}$ may not be simple (intervals could in principle double back), it must not cross itself by the same lemma, and therefore there are only two options: either $X_i$ and $X_{i+1}$ share a boundary component along the top of the lozenge (ie. along $A$ or $B$ sides) and the extension happens on the right, or it is along the bottom (ie. along $D$ or $E$ sides) and the extension happens on the left. Suppose that at least one of the intervals $X_i$ ends on the $A$ side; let $i$ be maximal among such indices. Then $X_{i+1}$ stretches between the $A$ side and another side of the lozenge, however its phase is smaller than $X_i$ so this contradicts the existence of a nontrivial extension on the right $\in \Ext^1(X_i,X_{i+1})$. The same argument can be applied along any of the other sides, in the case where no interval ends on the $A$ side. Therefore there cannot be more than one stable interval, and $X$ itself is stable.
\end{proof}

The lemma above should be interpreted as stating that the unobstructed lozenges ``see'' all the stable interval objects that were eliminated by cutting along $\gamma$.

\begin{proof}(of Theorem \ref{thm:inverses})
For clarity let us denote $\sigma = (Z, \cP) \in \Stab(\cF(\Sigma))^{int}$, $(\sigma_L,\sigma_R) = ((Z_L,\cP_L),(Z_R,\cP_R))$ for its image under the cutting map, and $\sigma_g = (Z_g,\cP_g)$ for the pre-stability condition glued out of $\sigma_L$ and $\sigma_R$. It is clear that the central charges $Z$ and $Z_g$ are the same; it is enough to check on a set of generators and we can pick the arc system $\cA_L \sqcup \cA_\gamma \sqcup \cA_R$ where the central charges agree by construction.

As for the (pre)slicings, the inclusions $\cP_g \subseteq \cP$ and $\cP \subseteq \cP_g$ follow from Lemma \ref{lemma:lozenges}.
\end{proof}

\subsection{Metric properties of the cutting map}
Recall from \cite{Bri1} that the topology on stability spaces $\Stab(\cD)$ can be defined with the use of a distance function
\[ d: \Stab(\cD) \times \Stab(\cD) \to \RR_{\ge 0} \cup \{\infty\} \]
defined on a pair of stability conditions by
\[ d(\sigma_1,\sigma_2) = \sup_{0 \neq X \in \cD} \left( |\phi^-_1(X) - \phi^-_2(X)|, |\phi^+_1(X) - \phi^+_2(X)|, \left| \log\frac{m_1(X)}{m_2(X)}\right| \right), \]
where as usual $\phi^+_i(X),\phi^-_i(X)$ are the maximum and minimum phases of the semistable components of $X$ under $\sigma_i$, and $m_i(X)$ is the mass of $X$ under $\sigma_i$, that is, the sum of $|Z(-)|$ over all semistable components of $X$ under $\sigma_i$.

The function $d$ is proven \cite[Prop.8.1]{Bri1} to give a generalized metric on $\Stab(\cD)$, ie. a metric on each connected component, with points in distinct connected components having infinite distance from each other.

Recall from Lemma \ref{lemma:uniqueminimal} that each point in a $\RelStab(\Sigma,\gamma)$ space corresponds to a unique minimal reduced stability condition $\sigma$ on some surface $\Sigma \cup_\gamma \Delta_n$ for some $n \ge 2$; moreover it also corresponds to stability conditions on larger surfaces $\Sigma \cup_\gamma \Delta_m$ with $m > n$, all related by the equivalence relation $\sim$.

We now extend the distance function $d$ to the spaces of relative stability conditions.
\begin{definition}
	The \emph{distance} between two relative stability conditions $\sigma_1,\sigma_2 \in \RelStab(\Sigma,\gamma)$ is defined as $d(\sigma_1,\sigma_2) = d(\tilde\sigma_1, \tilde\sigma_2)$ if there exists a surface $\tilde\Sigma = \Sigma \cup_\gamma \Delta_m$ with representatives $\tilde\sigma_1,\tilde\sigma_2 \in \Stab(\cF(\tilde\Sigma))$, and $+\infty$ otherwise.
\end{definition}
It follows from the definition of the topology on $\RelStab$ spaces that $d$ is finite if and only if $\sigma_1,\sigma_2$ are in the same connected component.

Given a decomposition $\Sigma = \Sigma_L \cup_\gamma \Sigma_R$, we now define a distance function on the space of compatible stability conditions $\Gamma \subset \RelStab(\Sigma_L,\gamma) \times \RelStab(\Sigma_R,\gamma)$, by setting
\[ d\left( (\sigma^L_1,\sigma^L_2), (\sigma^R_1,\sigma^R_2) \right) = \max\left( d(\sigma^L_1,\sigma^L_2), d(\sigma^R_1,\sigma^R_2) \right). \]

Consider now the cutting map of Section \ref{sec:cutting}. We now prove the following lemma.
\begin{lemma}\label{lem:distance}
	Let $\sigma_1,\sigma_2 \in \RelStab(\Sigma,\gamma)$, let us denote $\cut_\gamma(\sigma_i) = (\sigma^L_i, \sigma^R_i)$ for $i=1,2$. Then $d\left( (\sigma^L_1,\sigma^L_2), (\sigma^R_1,\sigma^R_2) \right)$ on $\Gamma$ is finite if and only if $d(\sigma_1,\sigma_2)$ on $\Stab(\cF(\Sigma))$ is finite.
\end{lemma}
\begin{proof}
	The if direction follows from the fact that the cutting map is continuous (Lemma \ref{lemma:cutcontinuous}), therefore it sends points in the same connected component to the same connected component.
	
    The only if direction is harder and relies on the support property (Definition \ref{def:supportproperty}). We split the proof in two parts; let us denote 
    \[ d'(\sigma_1,\sigma_2) = \sup_{0 \neq X \in \cD} \left( |\phi^-_1(X) - \phi^-_2(X)|, |\phi^+_1(X) - \phi^+_2(X)| \right) \]
    which depends only on the slicings, and
    \[ d''(\sigma_1,\sigma_2) =  \sup_{0 \neq X \in \cD} \left( | \log(m_1(X)/m_2(X))| \right), \]
    for the other part, also depending on the central charge. By definition, $d = \max(d',d'')$.
    
    Suppose that $d' \left( (\sigma^L_1,\sigma^L_2), (\sigma^R_1,\sigma^R_2) \right) = D' < \infty$. Let us denote by $\cP_1,\cP_2$ the slicings corresponding to $\sigma_1$ and $\sigma_2$. By \cite[Lem.6.1]{Bri1}, we have
    \[ d'(\sigma_1, \sigma_2) = \inf \{ \epsilon | \cP_1(\phi) \subseteq \cP_2([\phi-\epsilon, \phi+\epsilon]) \}, \]
    which also holds for the distances $d'$ on the relative stability spaces.
    
    For any fixed phase $\phi$, consider any semistable object $X \in \cP_1(\phi)$. There are three mutually exclusive options for $X$: either $X$ can be represented by a curve entirely contained in $\tilde\Sigma_L$, entirely contained in $\tilde\Sigma_R$, or it necessarily crosses $\gamma$. In the first two cases, $X$ can be decomposed under $\sigma_2$ by semistable objects also keeping to either side of $\gamma$ by Lemma \ref{lemma:noncrossing} and thus we have $X \in \cP_2(\phi-D',\phi+D')$ since every such object survives the cutting map. 
    
    For the other case, we have that since $X$ intersects $\gamma$ transversely, its phase satisfies $\phi \in [\phi^-_1(C) -1, \phi^+_1(C)+1]$, where $C$ is some fixed rank one object supported on the interval $\gamma$. Since $\gamma$ is contained on both categories $\cF(\tilde\Sigma_L)$ and $\cF(\tilde\Sigma_R)$, by assumption all its semistable components are contained in $\cP_2(\phi^-_1(C)-D', \phi^+_1(C)+D')$, so $X \in \cP_2(\phi^-_1(C)-D'-1, \phi^+_1(C)+D'+1)$, so it follows that $d(\sigma_1,\sigma_2) < \infty$.
    
    Let us now treat the distance function $d''$, which by assumption is finite between the two images of the cutting map. Let us prove by contradiction, assuming that $d''(\sigma_1,\sigma_2) = \infty$ on the source $\Stab(\cF(\Sigma)$ of the cutting map.
    
    Without loss of generality, we can then find an infinite sequence of objects $(X_t)_{t \in \ZZ_+}$ such that $m_1(X_t)/m_2(X_t) \to \infty$ as $t \to \infty$. Decomposing each $X_t$ into its $\sigma_2$-semistable components, we see that $m_1(X_t)/m_2(X_t)$ is bounded above by the maximum of that ratio over its semistable components; thus we can instead assume that every object $X_t$ in the sequence is $\sigma_2$-semistable.
    
    Either infinitely many of the $X_t$ are in the joint image of $\cF(\tilde\Sigma_L)$ and $\cF(\tilde\Sigma_R)$, or cross $\gamma$ non-trivially. The first two cases are impossible by the assumption that $d''\left( (\sigma^L_1,\sigma^L_2), (\sigma^R_1,\sigma^R_2) \right) < \infty$. So without loss of generality we can assume that each $X_t$ crosses the interval $\gamma$ some number $N_t > 0$ of times.
    
    Pick any grading on $\gamma$ and consider the corresponding rank one object $C$; each crossing $p$ with $X_t$ determines a map $C[s_p] \to X$ and $X \to C[s_p+1]$ for some shift $s_p \in \ZZ$. We now use all those crossings to write the exact triangle
    \[ \bigoplus_p (C[s_p]) \to X_t \to \bigoplus_I W_i, \]
    where $I$ is an indexing set with size $N_t$ (if $X_t$ is a circle) of $N_t + 1$ (if $X_t$ is an interval), and each $W_i$ is an object which stays to either side of $\gamma$. Note that in the construction above there is always a non-zero extension map $W_i \to C$; therefore, if $W_i$ is to the left, it only ends on the upper marked boundary of $\gamma$, and if to the right, on the lower marked boundary of $\gamma$.
    
    The mass function satisfies a triangle inequality with respect to exact triangles: if $A \to B \to C$ is an exact triangle, then $m(B) \le m(A) + m(C)$ \cite[Prop.3.3]{ikeda2016mass}. So we have the bounds
    \[ \sum_I m_1(W_i) \le m_1(X_t) + N_t m_1(C) \le \sum_I m_1(W_i) + 2 N_t m_1(C) \]
    and analogously for $m_2$. We then get
    \[ \frac{m_1(X)}{m_2(X) + N_t m_2(C)} \le \frac{\sum_I m_1(W_i) + N_t m_1(C)}{\sum_I m_2(W_i)}. \tag{*}\]
    
    We now use the support property of $\sigma_1$ and $\sigma_2$ to deduce from the inequality above that $\sum_I m_1(W_i)/\sum_I m_2(W_i) \to \infty$ as $t \to \infty$; for that let us choose a norm on $K_0(\cF(\Sigma))$ that will make the argument simpler. We pick a full arc system $\cA = \cA_L \sqcup \cA_\gamma \sqcup \cA_R$ on $\Sigma$ as follows: the elements of $\cA_\gamma$ are the intervals appearing in the decomposition of $\gamma$ under $\sigma_2$, every arc in $\cA_L$ only possibly shares the upper marked boundary of $\gamma$, and every arc in $\cA_R$ only possibly shares the lower marked boundary of $\gamma$. This guarantees that each object $W_i$ is either generated by the arcs in $\cA_L$ or by the arcs in $\cA_R$. We pick a norm where the images of $\cA_L,\cA_\gamma,\cA_R$ are pairwise orthogonal subspaces. Also, by the support property on $\sigma_2$ and finiteness of the rank of $K_0$, $m_1(X_t)/m_2(X_t) \to \infty$ implies $m_1(X) \to \infty$.
    
    We note that in the chosen norm we have $\lVert X_t \rVert \ge N_t \lVert C \rVert$; therefore the ratio $N_t/\lVert X_t \rVert$ stays bounded above. From the support property and the assumption $m_1(X_t)/m_2(X_t) \to \infty$, we conclude that the lhs of $(*)$ goes to $\infty$.

    Note also that on either side of $\gamma$, there must a shortest nonzero object (with respect to the chosen norm) which links an end of $\gamma$ to itself, by finiteness of the rank of $K_0$. Let $\ell > 0$ be the shortest of those lengths; then we know that $\sum_I m_2(W_i) \le \epsilon(N_t-1)\ell$ (for $X_t$ interval) $\le \epsilon N_t \ell$ (for $X_t$ circle) for some constant $\epsilon$, again from the support property. Either way, $N_t m(C)/\sum_I m_2(W_i)$ is bounded above, implying $\sum_I m_1(W_i)/\sum_I m_2(W_i) \to \infty$ as $t \to \infty$. 
    
    The mass triangle inequality and the pigeonhole principle then imply that there is a sequence of objects $W'_t$ either in $\cF(\tilde\Sigma_L)$ or $\cF(\tilde\Sigma_R)$ such that $m_1(W'_t)/m_2(W'_t) \to \infty$ as $t \to \infty$, contradicting the assumption that the images under the cutting map are at a finite distance.
\end{proof}

\section{Application of cutting/gluing relations}
In the previous section, we defined the cutting and gluing map and discussed some of its properties. We now put all those results together with calculations of a number of base cases to prove the main result of this paper, namely, that every stability condition on $\cF(\Sigma)$ is an HKK stability condition.

\subsection{The induction argument}
As before, let $\Sigma = \Sigma_L \cup_\gamma \Sigma_R$ be a marked surface divided by an embedded interval.

\begin{lemma}\label{lem:inductivestep}
If, for any extended surfaces $\tilde\Sigma_L = \Sigma_L \cup_\gamma \Delta_m$ and $\tilde\Sigma_R = \Sigma_L \cup_\gamma \Delta_n$, every stability condition on $\cF(\tilde\Sigma_L)$ and $\cF(\tilde\Sigma_R)$ is HKK, then every stability condition on $\cF(\Sigma)$ is HKK.
\end{lemma}
\begin{proof}
Recall that we know from \cite{HKK} that the subset of HKK stability conditions is an union of connected components.

Recall the cutting map:
\[ \cut_\gamma: \Stab(\cF(\Sigma)) \to \Gamma \subset \RelStab(\Sigma_L,\gamma) \times \RelStab(\Sigma_R,\gamma). \]
As before we write $\Gamma^{int} \subset \Gamma$ for the image of the locus of interval-like stability conditions $\Stab(\cF(\Sigma))^{int}$ under the cutting map.

By assumption, every pair of stability conditions on some extended surfaces $\tilde\Sigma_L,\tilde\Sigma_R$ is a pair of HKK stability conditions, so any pair representing an element of $\Gamma_{int}$ is given by two flat surfaces of the type specified in \cite[Prop.6.1]{HKK}, that is, given by an \emph{S-graph} with a central charge on each edge. This is a graph whose vertices correspond to the horizontal strips of the flat surface.

Let $(\sigma_L,\sigma_R) = \cut_\gamma(\sigma)$. By Lemma \ref{lemma:uniqueminimal} we can assume that $\sigma_L,\sigma_R$ are stability conditions on surfaces $\tilde\Sigma_L,\tilde\Sigma_R$ which are minimal and reduced with respect to $\gamma$. 

As in the proof of \cite[Prop.6.2]{HKK}, we can find two continuous families $\sigma^t_{L,R}, t \in [0,1]$ of HKK stability conditions such that $\sigma^0_{L,R} = \sigma_{L,R}$ and $\sigma^1_{L,R}$ has the property that all simple saddle connections of the corresponding flat surface are vertical, that is, all stable objects are given by intervals with phases $\phi \in \ZZ+\frac{1}{2}$. Moreover from the description of these flat surfaces by S-graphs, we can choose these two families such that $(\sigma^t_L,\sigma^t_R)$ are compatible along $\Gamma$.

From the gluing map we know that $\glue_\gamma(\sigma^1_L,\sigma^1_R)$ is an HKK stability condition on $\sigma$, so from Theorem \ref{thm:inverses}, the pair $(\sigma^1_L,\sigma^1_R)$ is the image of the cutting map, and is therefore is in $\Gamma^{int}$ since it only has stable intervals.

Now from the metric properties of $\cut_\gamma$ we know that $\sigma$ and $\glue_\gamma(\sigma^1_L,\sigma^1_R)$ are in the same component of $\Stab(\cF(\Sigma)$, but by construction $\glue_\gamma(\sigma^1_L,\sigma^1_R)$ is an HKK stability condition, and so is $\sigma$.
\end{proof}

We can use the Lemma above to run an induction argument, by using the following decomposition. Consider some general surface $\Sigma$ with genus $g$ and punctures $p_0,p_1,\dots,p_n$ with $m_0,m_1,\dots,m_n$ marked boundaries, respectively, with $m_i \ge 1$. We can then decompose the surface into a disk with some number of marked boundary intervals, possibly some annuli with two marked boundary intervals on the outer boundary circle, and possibly some punctured tori with two marked boundary intervals on the boundary circle.

\begin{figure}[h]
    \centering
    \includegraphics[width=\textwidth]{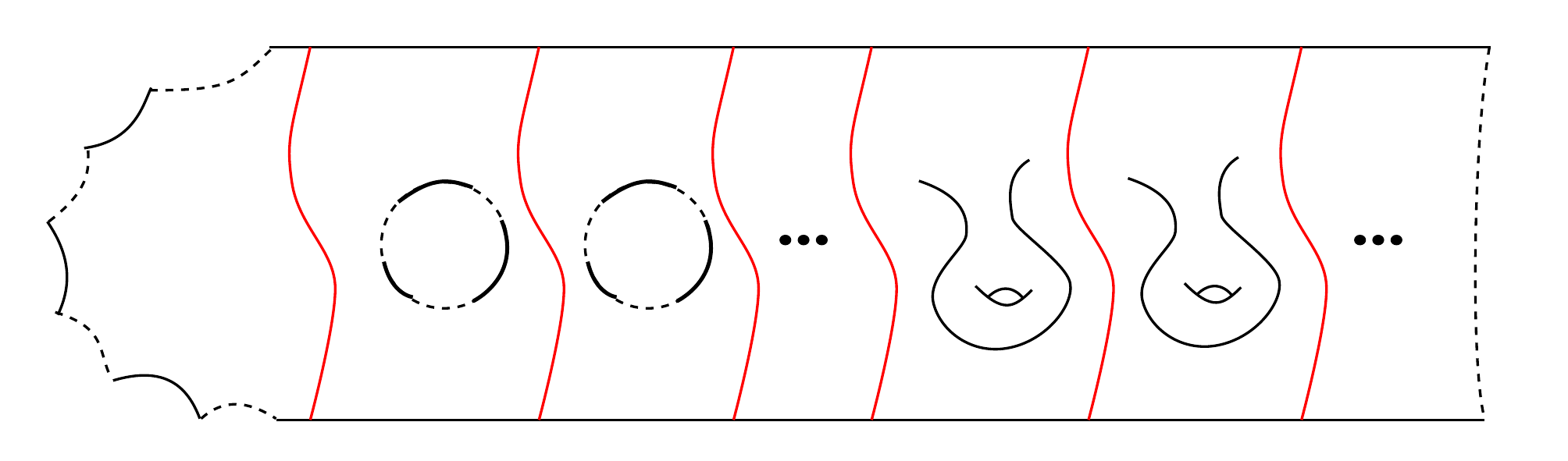}
    \caption{A decomposition of the surface $\Sigma$ into a disk, possibly several annuli and possibly several punctured tori.}
\end{figure}

We then use Lemma \ref{lem:inductivestep} as the inductive step; using as index the number $(g + n)$ of punctured tori plus annulus appearing in the decomposition above. It remains to prove in each of the three following types of base cases that all stability conditions are HKK:
\begin{enumerate}
    \item The disk $\Delta_n$ with $n \ge 2$ marked boundary intervals,
    \item The annulus $\Delta^*_{p,q}$ with $p,q$ marked boundary intervals on the outer and inner boundary circle, respectively,
    \item The punctured torus $T^*_n$ with $n$ marked boundary intervals.
\end{enumerate}

\subsection{Calculations}\label{sec:calculations}
By the main theorem of \citep{HKK} (Theorem 5.3) the locus of HKK stability conditions in $\Stab(\cF(\Sigma))$ is a union of connected components. Thus, if every stability condition can be continuously deformed into an HKK stability condition, then all stability conditions are HKK stability conditions.

We will use this strategy for the three base cases; in fact we will prove that every stability condition can be continuously deformed to a stability condition with finite heart. This argument already appears for the case of the disk and the annulus in \citep{HKK}; we will reproduce it in greater detail so that its use in the context of the punctured torus is clearer.

\subsection{Finite-heart stability conditions}
The definitions and lemmas here seem to be standard in the literature to some extent and may appear with different formulations; for clarity we will assemble them here.

\begin{definition}
A stability condition $\sigma \in \Stab(\cD)$ is \emph{finite-heart} if the corresponding heart $\cH$ is a finite abelian category, ie. a finite length abelian category that furthermore only has finitely many isomorphism classes of simple objects.
\end{definition}

Note that finite-length only means that every object is finite-length but those lengths could be unbounded; this doesn't happen in the cases we care about because of the following standard fact.
\begin{lemma}
If $\cH$ is finite-length and $\rk(K_0(\cH)) = \rk(K_0(\cD)) < \infty$ then $\cH$ is finite, and in particular the number of isomorphism classes of simple objects is equal to $\rk(K_0(\cD))$.
\end{lemma}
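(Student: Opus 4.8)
The plan is to exploit the Jordan--H\"older theorem to produce, for each simple object of $\cH$, a linear functional on $K_0(\cH)$ that is dual to the class of that simple; this forces the number of simples to be at most $\rk K_0(\cH)$, and then an elementary generation argument pins it down exactly. Since $\cH$ is the heart of a bounded t-structure on $\cD$, there is a natural isomorphism $K_0(\cH) \cong K_0(\cD)$, so we may set $r := \rk K_0(\cH) = \rk K_0(\cD) < \infty$.

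First I would recall that in a finite-length abelian category the Jordan--H\"older theorem (as in \citep{joyce2006configurations}, already used in the proof of Proposition \ref{prop:stableobjects}) guarantees that for every object $X$ and every simple object $S$ the multiplicity $[X:S]$ of $S$ among the composition factors of $X$ is independent of the chosen composition series. Applying this to the middle term of a short exact sequence together with its sub and quotient shows that $[-:S]$ is additive on short exact sequences, hence descends to a group homomorphism $\mu_S : K_0(\cH) \to \ZZ$. An object that is itself simple has a one-step composition series, so $\mu_S([S']) = \delta_{S,S'}$ for simple $S,S'$.

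Next, suppose $S_1,\dots,S_k$ are pairwise non-isomorphic simple objects of $\cH$. Applying $\mu_{S_i}$ to a vanishing $\ZZ$-linear combination of the $[S_j]$ and using $\mu_{S_i}([S_j]) = \delta_{ij}$ shows that $[S_1],\dots,[S_k]$ are $\ZZ$-linearly independent in $K_0(\cH)$; since $K_0(\cH)$ has rank $r$ this forces $k \le r$. In particular $\cH$ has only finitely many isomorphism classes of simple objects, and being finite-length by hypothesis it is therefore finite in the sense of the preceding definition. Conversely, finite length means that for every object $X$ the class $[X]$ is the sum of the classes of its composition factors, so the classes of the simples generate $K_0(\cH)$ as an abelian group; combined with their linear independence they form a $\ZZ$-basis, whence the number of isomorphism classes of simple objects equals $\rk K_0(\cH) = \rk K_0(\cD)$.

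I do not expect a real obstacle here: the only non-formal inputs are the Jordan--H\"older theorem and the identification $K_0(\cH) \cong K_0(\cD)$ for the heart of a bounded t-structure, both of which are standard. The one point that merits care in the writing is the well-definedness of $\mu_S$ on all of $K_0(\cH)$, i.e.\ the additivity of composition-factor multiplicity in short exact sequences; but as noted this is immediate from Jordan--H\"older, so the argument is essentially routine.
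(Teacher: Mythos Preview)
Your argument is correct. The paper itself does not prove this lemma: it is introduced as a ``standard fact'' and stated without proof, so there is no argument in the paper to compare against. Your approach --- using Jordan--H\"older to produce the multiplicity functionals $\mu_S$ dual to the simple classes, thereby showing the classes of simples are $\ZZ$-linearly independent in $K_0(\cH)$, and then noting that they also generate $K_0(\cH)$ --- is the standard way to establish this fact and fills the gap cleanly.
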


We have the following criterion to determine when some stability condition is finite-heart, based on the set of stable phases $\Phi \in S^1$, ie. the set of phases of stable objects.

\begin{lemma}\label{lemma:gap}
If $\Phi$ has a gap around zero (ie. $S^1 \setminus \Phi$ contains an open interval $I \ni 0$) and $K_0(\cD) < \infty$ then $\sigma$ is finite-heart.
\end{lemma}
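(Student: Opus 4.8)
The plan is to deduce that the heart $\cH = \cP((0,1])$ associated to $\sigma$ is finite from two ingredients: the gap hypothesis, which confines the phases of all semistable (hence of all stable) constituents of objects of $\cH$ to a compact subinterval of $(0,1)$ bounded away from its endpoints, and the support property, which provides a uniform lower bound on $|Z(-)|$ in terms of the lattice norm. Granting that $\cH$ is of finite length, the preceding lemma (that a finite-length heart with $\rk(K_0(\cH)) = \rk(K_0(\cD)) < \infty$ is finite) finishes the job, since the heart of a bounded $t$-structure satisfies $K_0(\cH) \cong K_0(\cD)$; thus $\cH$ is finite and $\sigma$ is finite-heart.

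First I would unwind the gap condition. Identifying $S^1 = \RR/\ZZ$ and writing $\Phi$ for the image of the set of stable phases, the hypothesis furnishes $\epsilon \in (0,1/2)$ with $\Phi \cap (-\epsilon,\epsilon) = \emptyset$, i.e. every stable phase lies in $\bigcup_{n\in\ZZ}[n+\epsilon,\,n+1-\epsilon]$. Intersecting with $(0,1]$, the Harder--Narasimhan semistable factors of any $X \in \cH$ have phase in $[\epsilon,1-\epsilon]$ (the value $1$ is excluded precisely by the gap at $0$), and so do the stable Jordan--H\"older constituents of $X$, since $\cP_\phi \neq 0$ forces $\phi$ to be a stable phase.

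Next comes the core estimate. Fix $X \in \cH$ and refine its HN filtration using the Jordan--H\"older filtrations of the categories $\cP_\phi$, which are of finite length by local finiteness (as already invoked in the proof of Proposition \ref{prop:stableobjects}); this produces a finite filtration of $X$ whose stable factors $S_1,\dots,S_m$ all have phase in $[\epsilon,1-\epsilon]$. Since $\mathrm{Im}\,Z(S_j) = |Z(S_j)|\sin(\pi\,\phi(S_j)) \ge \sin(\pi\epsilon)\,|Z(S_j)| \ge C\sin(\pi\epsilon)\,\lVert [S_j]\rVert$, additivity of $Z$ on $K_0$ gives
\[
\mathrm{Im}\,Z(X) = \sum_{j=1}^m \mathrm{Im}\,Z(S_j) \ge C\sin(\pi\epsilon)\sum_{j=1}^m \lVert [S_j]\rVert \ge m\, C\sin(\pi\epsilon)\, c_0 ,
\]
where $C > 0$ is the support-property constant and $c_0 > 0$ is the minimum norm of a nonzero class in the lattice $\Lambda = K_0(\cD)$ (positive by discreteness, with each $[S_j] \neq 0$ because $Z(S_j) \neq 0$). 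Hence $m \le \mathrm{Im}\,Z(X)\,/\,(C\sin(\pi\epsilon)\,c_0) < \infty$. Since this bounds the length of every chain of subobjects of $X$, the heart $\cH$ is Noetherian and Artinian, hence of finite length, and the proof concludes by the preceding lemma.

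The only point I expect to need care is the bookkeeping in the first step: matching the ``$S^1$'' of the statement with $\RR$-valued phases and with the chosen heart, and in particular checking that the gap at $0$ keeps the phases of the stable constituents strictly inside $(0,1)$ so that $\mathrm{Im}\,Z$ is strictly positive on each of them. Everything else is the standard central-charge/finite-length argument together with the earlier lemma; one could equally well invoke Bridgeland's general finite-length criterion in place of the displayed estimate, but the latter is self-contained.
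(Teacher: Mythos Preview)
Your proof is correct and reaches the same conclusion via the same high-level ingredients (the gap forces phases into a strict cone, the support property does the work, and the preceding lemma finishes), but your core step is more direct than the paper's. The paper argues by contradiction that the set $\{\Im Z(E) : E \text{ semistable}\}$ is discrete in $\RR$: an accumulation point would yield a sequence of semistables with bounded $|Z|$ but unbounded lattice norm, violating support. Discreteness together with positivity then bounds descending chains. You instead extract the single inequality $\Im Z(S) \ge C\sin(\pi\epsilon)\,c_0$ for every stable $S$ with phase in $[\epsilon,1-\epsilon]$, using the minimum nonzero lattice norm $c_0$, and sum. This bypasses the discreteness detour entirely and is cleaner; the paper's route, on the other hand, yields the slightly stronger intermediate fact that the imaginary parts of \emph{all} semistables form a discrete set, which you do not need.

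One expositional point: after bounding $m$ for the specific HN$+$JH filtration you should make explicit why this bounds \emph{every} chain. The clean way is to note that your inequality actually gives $\Im Z(Y)\ge C\sin(\pi\epsilon)\,c_0$ for every nonzero $Y\in\cH$ (apply your argument to $Y$ itself), and then for any strict chain $0=X_0\subsetneq\cdots\subsetneq X_n=X$ sum over the nonzero quotients $X_i/X_{i-1}\in\cH$. As written, the phrase ``this bounds the length of every chain'' is a small jump, since the $S_j$ from the HN$+$JH refinement are not a priori simple in $\cH$.
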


\begin{remark}
This fact is used in \citep{HKK} but left unstated. The clear statement and proof of this lemma were informed to me by F. Haiden.
\end{remark}
\begin{proof}
Note that $\phi$ is symmetric under a $\ZZ_2$ rotation so having a gap around zero means that $\Phi$ is contained in a strict cone in the upper half-plane. Thus there is $K > 0$ such that $|\Im(Z(X))| > K \cdot |\Re(Z(X))| $ for any semistable object $X$. We will argue that the set of semistable imaginary parts
\[ \{ \Im(Z(E)) | 0 \neq E \in \cP_\phi, \phi \in \RR \} \]
is discrete. Suppose that there is an accumulation point, which without loss of generality we assume to be $a>0$; we can then pick a sequence of pairwise non-isomorphic semistable objects $\{E_n\}$ such that $\lim_{n \to \infty} |\Im(Z(E_n)) - a| = 0$; in particular for $\delta > 0$ we can pick the sequence such that $|\Im(Z(E_n)) - a| < \delta$ for every $n$, so picking $0 < \delta < a$ gives $|\Re(Z(E_n))| < K(a + \delta)$

But since $\Lambda$ is finite rank and the $E_n$ are all distinct, we have $\lim_{n\to \infty} \lVert E_n \rVert = \infty$. We then have
\[ |Z(E_n)| < |\Im(Z(E_n))| + |\Re(Z(E_n))| \le (K+1)|\Re(Z(E_n))| \le
(K+1)K(a + \delta). \]
So we have $\lim_{n\to \infty} \frac{|Z(E_n)|}{\lVert E_n \rVert} = 0$ contradicting the support condition.

So since the set of imaginary parts of objects in the heart $\cH$ is discrete and bounded below by zero, any strictly descending chain of objects is finite, and therefore $\cH$ is finite-length, and thus $\sigma$ is finite-heart by the assumption $\rk(K_0(\cD)) < \infty$.
\end{proof}

Using the formalism of S-graphs presented in Section 6 of \citep{HKK}, one can prove the following lemma (which is implicitly used in the proofs of Theorems 6.1 and 6.2 of that same paper)
\begin{lemma}
If $\sigma$ is a finite-heart stability condition on $\cF(\Sigma)$ then it is an HKK stability condition.
\end{lemma}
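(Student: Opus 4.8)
The plan is to reconstruct a flat structure on $\Sigma$ directly from the finite heart, following the S-graph formalism of Section 6 of \citep{HKK}. Write $\sigma = (Z,\cP)$ and let $\cH$ be its heart; by the preceding lemma $\cH$ has exactly $k = \rk K_0(\cF(\Sigma))$ isomorphism classes of simple objects $S_1,\dots,S_k$, and the classes $[S_i]$ form a $\QQ$-basis of $K_0(\cF(\Sigma))\otimes\QQ$. First I would note that each $S_i$, being simple in $\cH$, is stable under $\sigma$, so by Proposition \ref{prop:stableobjects} it is represented by an embedded curve $s_i$ (an interval or a circle) carrying an indecomposable rank one local system.

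Next I would extract the combinatorial structure. Since the $S_i$ are the simples of an abelian category we have $\Hom(S_i,S_j) = 0$ for $i\neq j$ and $\Ext^{<0}(S_i,S_j) = 0$ for all $i,j$. Running the intersection-theoretic computation of morphisms used in the proof of Proposition \ref{prop:stableobjects} backwards, these vanishings force the $s_i$ to be pairwise non-isotopic and to have no transverse self- or mutual crossings: a transverse crossing of degree $d$ would contribute a class to $\Ext^{d}$ or $\Ext^{1-d}$, and neither a negative degree nor a degree-zero class between distinct simples is allowed. Hence $\{s_i\}$ is a partial arc system on $\Sigma$, the only nonzero morphisms among the $S_i$ being the endomorphisms $\Hom(S_i,S_i)=k$ and the extension classes in $\Ext^1(S_i,S_j)$ arising from shared marked boundary intervals. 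Recording $\{s_i\}$ together with the grading/dual-graph data of these $\Ext^1$'s and the complex numbers $Z([S_i])$ produces exactly an S-graph in the sense of \citep[Section 6]{HKK}.

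I would then invoke the construction of \emph{loc.\ cit.}, which turns such an S-graph into a flat structure (equivalently a quadratic differential with prescribed singularities) on $\Sigma$; let $\sigma' = (Z',\cP')$ be the associated HKK stability condition. By construction the curves $s_i$ are realized by horizontal trajectories whose periods are the prescribed numbers $Z([S_i])$, so $Z'$ and $Z$ agree on the basis $\{[S_i]\}$ and hence $Z' = Z$; moreover the $S_i$ are precisely the simple objects of the heart of $\sigma'$. Since $\sigma$ and $\sigma'$ have the same central charge and the same collection of simples, and any two stability conditions with a common heart and central charge coincide, we conclude $\sigma = \sigma'$, which is an HKK stability condition.

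The main obstacle is the middle step: checking that the data read off from $\cH$ genuinely satisfies the S-graph axioms and that the HKK construction returns $\sigma$ on the nose rather than some nearby point. Concretely one must (i) use the non-crossing and $\Ext$-vanishing constraints, together with finiteness of $\cH$, to exclude the degenerate configurations forbidden by an abstract S-graph; (ii) treat simple objects supported on circles, whose local model on the flat surface is a closed geodesic rather than a saddle connection; and (iii) verify that the abelian heart of the reconstructed $\sigma'$ is generated under extensions by the $S_i$, so that it really equals $\cH$ and not merely a tilt of it. Once these are settled, the equality $\sigma = \sigma'$ follows as above from uniqueness of a stability condition with prescribed heart and central charge.
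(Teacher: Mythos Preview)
Your approach is essentially the one the paper has in mind: the paper does not give a proof here at all, but simply refers to the S-graph formalism of \citep[Section 6]{HKK} and notes that the argument is implicit in the proofs of Theorems 6.1 and 6.2 there. Your sketch---reading off the simples of the finite heart as an embedded arc system via Proposition \ref{prop:stableobjects} and the $\Ext$-vanishing constraints, packaging this together with the central charges as an S-graph, and then invoking the HKK construction to recover $\sigma$ as a flat-surface stability condition---is precisely the intended argument, and the obstacles you flag (S-graph axioms, circle simples, identifying the reconstructed heart with $\cH$) are the right ones to check.
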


For each of the three base cases, we will see that every stability condition can be deformed to a finite-heart stability condition.

\subsection{The disk}(Section 6.2 of \citep{HKK})
We have $\cF(\Delta_n) \cong \Mod(\AA_{n-1})$, which up to shift has finitely many indecomposable objects. Thus any heart is a finite abelian category, and every stability condition is finite-heart and therefore HKK.

\subsection{The annulus}
There are two different kinds of annulus; one where the nontrivial circle is gradable, ie. has index zero, and one where it has index nonzero. Consider first the annulus $\Delta^*_{p,q,(m)}$ with $p,q$ marked boundary components and grading $m\neq 0$ around the circle.

We argue that the set of stable phases is finite. Let us fix some embedded interval object $I_0$ to have winding number zero, and measure the winding number of every other interval or circle with reference to it. By the classification of objects, there are only finitely many primitive (ie. non multiple) classes in $K_0(\cF(\Delta^*_{p,q,(m)}))$ whose winding number is less than some fixed $N$ in absolute value, so if there are infinitely many non-isomorphic stable objects there must be a sequence of stable objects $X_i$ with winding number $\to \infty$.

Consider some object $X_i$ with winding number $N_i$ which intersects $I_0$ transversely $N_i$ many times. Since the circle has index $m\neq 0$, this contributes classes to both $\Ext^*(I_0,X_i)$ and $\Ext^*(X_i,I_0)$ in a range spanning $(m-1) N_i$ degrees. But this is impossible as $N_i \to \infty$ since the stable components of $I_0$ have a minimum and maximum phase.

Consider now the annulus with zero grading. We have $\cF(\Delta^*_{p,q,(0)}) \cong \Mod(\tilde\AA_{p+q-1})$. So we have $\Gamma = K_0(\cF(\Delta^*_{p,q,(0)})) = \ZZ^{p+q}$, and denote by $S \subset \Gamma$ the subgroup generated by the circle around the annulus. Let $E \subset \Gamma$ be the set of classes of indecomposable objects. By the classification of objects $E/S$ is finite so the only possible accumulation point in the set of stable phases $\Phi$ is $\arg(Z(S))$. After a rotation (which can be arbitrarily small) we can guarantee that $\Phi$ has a gap around zero and apply Lemma \ref{lemma:gap}.

\subsection{The punctured torus}
The calculation of this case is new. From the cutting procedure we know that only need to consider the punctured torus $T^*_n$ with $n \ge 2$ marked boundary components. In fact there are many inequivalent such punctured tori, with different gradings. Let us pick simple closed curves $L$ and $M$ as longitude and meridian, and denote by $i_L,i_M$ the index of the grading along them. By picking different curves we get indices differing by an action of $\SL(2,\ZZ)$ so the set of distinct graded punctured tori is $\ZZ^2/\SL(2,\ZZ)$. The orbits of $\SL(2,\ZZ)$ on $\ZZ^2$ are labelled by gcd, so each orbit contains a unique pair of the form $(0,m)$.

Let us fix a grading such that $(i_L,i_M) = (0,m)$. It will be important for us to know what are the circle objects. The classes in $\pi_1(T^*)$ which are representable by simple closed curves are the curves winding $(p,q)$ times around the longitude and meridian, with $\text{gcd}(p,q) = 1$, plus the curve $MLM^{-1}L^{-1}$, ie. the circle around the puncture.

For any of these tori, the index of the circle around the puncture is always $2$ for topological reasons (it bounds a punctured torus) so this curve is never gradable. On the torus with $(i_L,i_M) = (0,m \neq 0)$ torus the index of the $(p,q)$ curve is $mq \neq 0$ if $q \neq 0$, so all of the embedded circle objects are supported on the longitude $L$. On the torus with $(i_L,i_M) = (0,0)$, every simple closed curve is gradable and supports embedded circle objects.

\begin{remark}
This is the fundamental reason why the calculation for the $(0,0)$ will be more involved than the case of the annulus; in that case the lattice spanned by the circle objects inside of $K_0(\cD)$ is rank one, so there can be at most one direction of phase accumulation. In the punctured torus, the central charges of stable objects could in principle occupy every direction of the lattice, making $\Phi$ dense; we will prove that this doesn't happen generically.
\end{remark}

\subsubsection{The $(0,m \neq 0)$ torus}
Let us denote $\cD = \cF(T^*_{n,(0,m)})$ where $n$ is the number of marked boundaries. This case will be very similar to the index zero annulus. There is only one type of embedded circle object $L$, since no other circles are gradable. Let $\Gamma = K_0(\cD)$ and $E \subset \Gamma$ be the set of classes of stable objects.

We argue that the set $E/\langle L \rangle$ is finite. Suppose otherwise, and note that by the classification of embedded curves, the number of embedded curves with winding numbers $(p,q)$ with $|q \le N|$ is infinite, but they form finitely many orbits in $K_0(\cD)$ under the action of the subgroup $\langle L \rangle$. Thus, if we have an infinite sequence of stable objects $\{E_i\}$ with winding numbers $(p_i,q_i)$ and pairwise distinct classes $[E_i] \in K_0(\cD)/\langle L \rangle$, there is a subsequence with $\lim_{i\to\infty} |q_i| = \infty$.

This is impossible in any stability condition. Note that an object with winding $q_i$ along the meridian intersects $L$ transversely $|q_i|$ times; but since $m \neq 0$ the difference in degree between each two consecutive intersections is $|m|$, so the amplitude of nonzero degrees in both $\Hom(E_i, L)$ and $\Hom(L,E_i)$ is $m(q_i-1)$. Since $|q_i| \to \infty$ we can find stable objects $E_i$ with arbitrarily large amplitude morphisms in both directions which is impossible since $L$ has some HN decomposition with finitely many semistable factors, having a minimum and a maximum phase.

From the fact that $E/\langle L \rangle$ is finite we can proceed as in the annulus case, and after an infinitesimal rotation we can guarantee that any stability condition has an gap in $\Phi$.

\subsubsection{The $(0,0)$ torus}
Let us denote $\cD = \cF(T^*_{n,(0,0)})$, where $n$ is the number of marked boundaries. We will first need some facts about $K_0(\cD)$. By Theorem 5.1 of \citep{HKK} there is an isomorphism
\[ K_0(\cF(\Sigma,M)) = H_1(\Sigma,M;\ZZ_\tau), \]
where $\ZZ_\tau$ is the $\ZZ$-local system  associated to the orientation double cover of the foliation. In our case, since we are looking at the foliation with $(0,0)$ winding, $\ZZ_\tau$ is trivial.

Let us pick an explicit set of generators of $K_0(\cD)$ as below: first choose a basis of $H_1(T,\ZZ)$ and a labeling $M_1,\dots,M_N$ of the marked boundary components.
The classes $[L]$ and $[M]$ are represented by circles around the longitude and meridian, and $[E_i], i = 1,\dots,N$ is represented by intervals that connect adjacent $M_i$ and $M_{i+1}$ along the boundary. Consider the object $X$ winding around the longitude with ends at $M_1,M_N$. Extending it by $E_1,\dots,E_{n-1}$ and by $E_n$ both give $L$, so in $K_0$ we have $\sum_{i=1}^n [E_i] = 0$

So the classes $[L],[M],[E_1],\dots, [E_{n-1}]$ give a basis of $K_0(\cD)$. Since every immersed curve has well-defined winding numbers, we have a projection map
\[ w: K_0(\cD) {\longrightarrow} \ZZ^2 \cong \ZZ\cdot[L] \oplus \ZZ\cdot[M] \]
taking a curve of $(p,q)$ winding numbers to $p[L] + q[M]$. The following lemma tells us that the distribution of stable phases is not essentially changed by $w$.

\begin{lemma}
For any sequence of stable objects $\{X_k\}$ (with all $X_k$ pairwise distinct) if $\lim_{k\to \infty} \arg(Z(X_k))$ exists then
\[ \lim_{k\to\infty} \arg(Z(w([X_k]))) = \lim_{k\to \infty} \arg(Z(X_k)). \]
\end{lemma}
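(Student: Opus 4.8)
The plan is to exploit that the map $w$ only discards the ``boundary contribution'' to the class of $X_k$, which is uniformly bounded on stable objects; a bounded additive change to a central charge whose modulus tends to infinity cannot affect the limiting argument. Concretely, writing every class in the chosen basis $[L],[M],[E_1],\dots,[E_{n-1}]$, we have $[X_k]-w([X_k])\in\ZZ\langle[E_1],\dots,[E_{n-1}]\rangle$ for every $k$, and therefore
\[ Z(X_k)-Z(w([X_k]))=Z\bigl([X_k]-w([X_k])\bigr). \]
The first step is to show the right-hand side is bounded independently of $k$.

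For this I would use Theorem~\ref{thm:K0} to identify $K_0(\cD)=H_1(T^*,\cM;\ZZ)$ (recall $\ZZ_\tau$ is trivial here) and consider the connecting homomorphism $\partial\colon H_1(T^*,\cM;\ZZ)\to H_0(\cM;\ZZ)$. Its kernel is the image of $H_1(T^*;\ZZ)=\ZZ\langle[L],[M]\rangle$, which meets $\ZZ\langle[E_i]\rangle$ only in $0$ since $[L],[M],[E_i]$ form a basis; hence $\partial$ is injective on $\ZZ\langle[E_i]\rangle$. As $\partial(w([X_k]))=0$, the element $[X_k]-w([X_k])$ is the unique preimage in $\ZZ\langle[E_i]\rangle$ of $\partial[X_k]$. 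By Proposition~\ref{prop:stableobjects} each $X_k$ is an embedded interval object or an embedded circle object; for a circle $\partial[X_k]=0$, and for an interval with ends on $M_a,M_b$ one has $\partial[X_k]=\pm([M_b]-[M_a])$. In all cases $\partial[X_k]$ lies in the finite set $\{0\}\cup\{[M_b]-[M_a]:a\neq b\}$, so $[X_k]-w([X_k])$ takes only finitely many values and $\bigl|Z(X_k)-Z(w([X_k]))\bigr|\le D$ for a constant $D$ independent of $k$.

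Next I would reduce to the case $\lVert[X_k]\rVert\to\infty$. If that fails, then after passing to a subsequence the classes $[X_k]$ are bounded, hence take finitely many values, and after a further subsequence $[X_k]=v$ is constant; then $Z(X_k)=Z(v)$ is constant and both sides of the asserted identity are fixed. But an infinite family of pairwise non-isomorphic stable objects of a single class $v$ can only be a family of rank-one objects on a fixed embedded circle (interval objects carry rigid rank-one local systems, and a given class is represented by at most finitely many embedded arcs up to isotopy), so $v=p[L]+q[M]$, whence $w(v)=v$ and the identity is trivial. In the remaining case $\lVert[X_k]\rVert\to\infty$, the support property gives $|Z(X_k)|\ge C\lVert[X_k]\rVert\to\infty$ for some $C>0$, and then $|Z(w([X_k]))|\ge|Z(X_k)|-D\to\infty$ as well; since two complex numbers of modulus tending to infinity whose difference stays bounded have asymptotically equal arguments (and $\arg Z(X_k)$ is the phase of the stable object $X_k$, so there is a canonical branch to compare against), we get $\arg Z(X_k)-\arg Z(w([X_k]))\to0$, hence the two limits coincide.

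The main obstacle is the uniform bound of the first two steps — namely making precise, via the $H_1/H_0$ description of $K_0(\cD)$ and Proposition~\ref{prop:stableobjects}, that $w$ discards only a ``boundary'' contribution ranging over a finite set on stable objects. Once that is in hand, everything else is the support property combined with the elementary fact that a bounded additive perturbation does not move the argument of a complex number whose modulus goes to infinity.
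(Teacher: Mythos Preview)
Your argument is correct and follows essentially the same strategy as the paper: establish a uniform bound $|Z(X_k)-Z(w([X_k]))|\le D$ (you do this via the boundary map $\partial$ on relative $H_1$, the paper by a direct concatenation argument), then show $|Z(X_k)|\to\infty$ so that a bounded additive perturbation cannot move the limiting argument. Your use of the support property to force $|Z(X_k)|\to\infty$ once $\lVert[X_k]\rVert\to\infty$ is cleaner than the paper's version, and your explicit treatment of the bounded-class subsequence (reducing to circle objects, where $w$ is the identity) fills in a case the paper glosses over; both proofs ultimately rest on the same finiteness assertion for embedded intervals of fixed winding.
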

\begin{proof}
By the classification of indecomposables, $X$ is represented by some circle or interval with winding $(p,q)$. If $X$ is a circle we already have $w([X]) = [X]$. Given embedded interval with boundaries on $M_1,M_i$, one can express it as the concatenation of $p$ copies of the interval winding along the longitude with both ends at $M_1$ (whose class is $[L]$), $q$ copies of the interval winding along the meridian with both ends at $M_1$ (whose class is $[M]$) and a chain of intervals $E_1,\dots, E_{i-1}$ connecting $M_1$ to $M_i$. This chain can wind around the circle any number of times, but since $\sum_{j=1}^n [E_j] = 0$, its class is always $[E_1] + \dots + [E_i]$. Applying $|Z(\cdot)|$, since this sum is bounded above we have
\[ |Z(X) - Z(w(X))| \le C \]
for some fixed constant $C$.

Consider now the stable objects $X_k$. Without loss of generality suppose that $\lim_{k\to \infty} \arg(Z(X_k)) = 0$ (ie. the positive real direction). These objects can be represented by embedded intervals; note that there are finitely many embedded intervals with fixed winding numbers. Thus in the infinite sequence of distinct objects $\{X_k\}$ we must have $p_k^2 + q_k^2 \to \infty$ so therefore $|Z(X_k)| \to \infty$ and $\Re(Z(X_k)) \to +\infty$.

The triangle inequality,
\[ |Z(X_k)| - C \le |Z(w(X_k))| \le |Z(X_k)| + C \]
also implies similar inequalities for the real and imaginary parts. Since $|\Re(Z(X_k))| \to \infty$ we have
\[
    \lim_{k\to\infty} \frac{|\Im(Z(w(X_k)))|}{|\Re(Z(w(X_k)))|} \le
    \lim_{k\to\infty} \frac{|\Im(Z(X_k))| + C}{|\Re(Z(X_k))| - C}
    = \lim_{k\to\infty} \frac{|\Im(Z(X_k))|}{|\Re(Z(X_k))|} = 0.
\]
so $\lim_{k\to \infty} \arg(Z(w(X_k))) = \lim_{k\to \infty} \arg(Z(X_k)) = 0$.
\end{proof}

\begin{corollary}\label{cor:wphases}
If the set of stable phases $\Phi$ is dense in $S^1$ then the set
\[ \Phi_w = \left\{ \arg(Z(w(X))) \mid X \mathrm{stable}  \right\}\]
is also dense in $S^1$
\end{corollary}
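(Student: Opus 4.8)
The plan is to deduce the Corollary directly from the preceding Lemma, the only real work being to produce, for a prescribed target direction, a sequence of pairwise distinct stable objects whose phases accumulate there. Fix $\theta \in S^1$ and $\epsilon > 0$; it suffices to find a stable object $X$ with $w([X]) \neq 0$ and $\arg(Z(w(X)))$ within $\epsilon$ of $\theta$.

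First I would observe that a dense subset of $S^1$ has no isolated points: if some $\theta' \in \Phi$ had a neighborhood meeting $\Phi$ only in $\{\theta'\}$, the punctured neighborhood would be a nonempty open set disjoint from $\Phi$, contradicting density. Hence there is a sequence of pairwise distinct points $\phi_k \in \Phi$ converging to $\theta$. For each $k$ pick a stable object $X_k$ of phase $\phi_k$; since the $\phi_k$ are pairwise distinct, the $X_k$ are pairwise non-isomorphic. Choosing compatible real lifts we may arrange $\arg(Z(X_k)) \to \tilde\theta$ for some lift $\tilde\theta$ of $\theta$.

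Now the preceding Lemma applies verbatim to the sequence $\{X_k\}$: it consists of pairwise distinct stable objects and $\lim_k \arg(Z(X_k))$ exists, so $\lim_k \arg(Z(w([X_k]))) = \tilde\theta$ as well (in particular $Z(w(X_k)) \neq 0$ for $k$ large, so these arguments are honest elements of $\Phi_w$). Therefore for all sufficiently large $k$ the object $X := X_k$ has $\arg(Z(w(X)))$ within $\epsilon$ of $\theta$, which proves $\Phi_w$ is dense.

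The step I expect to need the most care is the middle one: checking that the $X_k$ can be chosen pairwise distinct — which is exactly what makes the Lemma's hypothesis available — and, implicitly, that infinitely many of them have $w([X_k]) \neq 0$. Both follow formally: distinctness from the no-isolated-points observation (distinct phases force non-isomorphic objects), and non-vanishing of $w([X_k])$ for large $k$ from the internal estimate $|Z(w(X_k))| \ge |Z(X_k)| - C$ in the proof of the Lemma together with $|Z(X_k)| \to \infty$, which holds because by Theorem \ref{thm:geometricity} there are only finitely many embedded curves of bounded winding number, so the winding numbers — hence the norms $\lVert [X_k] \rVert$, hence $|Z(X_k)|$ by the support property — are unbounded along the sequence. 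No new ideas are needed beyond assembling these inputs; everything else is formal, and I would also note in passing that the contrapositive form (a gap in $\Phi_w$ forces a gap in $\Phi$) is what gets combined with Lemma \ref{lemma:gap} later.
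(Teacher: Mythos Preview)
Your proposal is correct and is precisely the intended derivation: the paper states the Corollary without proof because it follows immediately from the preceding Lemma, and your argument---pick pairwise distinct phases $\phi_k \to \theta$ using that a dense subset of $S^1$ has no isolated points, then apply the Lemma to the resulting pairwise non-isomorphic stable $X_k$---is exactly the natural unpacking. One small wrinkle: the claim ``finitely many embedded curves of bounded winding number'' is literally false for circles (the $\CC^\times$ of monodromies), but this is harmless since circles satisfy $w([X])=[X]$ trivially, and for the remaining intervals your finiteness-and-support-property argument goes through as written.
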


We will also need to know a bit more about which objects necessarily intersect transversely.
\begin{lemma}
Consider two embedded objects  $X$ and $Y$ with winding numbers $(p_X,q_X)$ and $(p_Y,q_Y)$, respectively. If $|p_X q_Y - q_X p_Y| \ge 2$ then $X$ and $Y$ intersect transversely.
\end{lemma}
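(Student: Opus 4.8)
The plan is to pass to the closed torus and compare geometric with algebraic intersection numbers; the key point is that filling the puncture costs at most one intersection point, which is exactly why the hypothesis is $|p_Xq_Y-q_Xp_Y|\ge 2$ rather than $\ge 1$.

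First I would reduce the statement to showing that $X$ and $Y$, placed in minimal position, cannot be made disjoint. Cap off the boundary circle $\partial T^*$ with a disk $D$ to obtain the closed torus $T$, so that $H_1(T;\ZZ)=\ZZ\langle[L],[M]\rangle$ carries its standard intersection pairing $\langle(a,b),(c,d)\rangle = ad-bc$. If the support of $X$ is an embedded circle, set $\hat X$ to be that circle; if it is an embedded arc with endpoints on marked intervals $M_a,M_b\subset\partial T^*$, choose an embedded arc $\epsilon_X\subset D$ joining those two endpoints and set $\hat X = X\cup\epsilon_X$, an embedded loop in $T$. Do the same for $Y$, obtaining $\hat Y = Y\cup\epsilon_Y$, and perturb so that all endpoints of $X$ and $Y$ on $\partial D$ are distinct and everything is transverse.

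The key computation is that $[\hat X] = (p_X,q_X)\in H_1(T;\ZZ)$. This follows from the description of the winding number already used in the proof of the preceding lemma: writing $X$ as a concatenation of $p_X$ longitudinal loops, $q_X$ meridional loops and a chain $E_a,\dots,E_{b-1}$ of boundary-parallel intervals, the first two groups contribute $p_X[L]+q_X[M]$, while the $E$-chain together with the closing arc $\epsilon_X$ is contained in a regular neighbourhood of $\partial T^*\cup D$, which is a disk, hence is null-homotopic in $T$; for circle objects the statement is immediate. Therefore the algebraic intersection number in $T$ is $[\hat X]\cdot[\hat Y] = p_Xq_Y - q_Xp_Y$, and since the algebraic intersection number is bounded above in absolute value by the number of intersection points of any transverse pair of representatives, we get $\#(\hat X\cap \hat Y)\ge |p_Xq_Y-q_Xp_Y|\ge 2$.

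Finally I would choose the closing arcs economically: two embedded arcs in the disk $D$ with endpoints on $\partial D$ can always be isotoped, rel endpoints, to meet in at most one point, so $\#(\epsilon_X\cap\epsilon_Y)\le 1$. Since $X,Y\subset T^*$ and $\epsilon_X,\epsilon_Y\subset D$ meet only along $\partial D$ (and we have arranged the endpoints to be distinct), $\hat X\cap\hat Y = (X\cap Y)\sqcup(\epsilon_X\cap\epsilon_Y)$, whence
\[ \#(X\cap Y) \;=\; \#(\hat X\cap\hat Y) - \#(\epsilon_X\cap\epsilon_Y) \;\ge\; 2 - 1 \;=\; 1, \]
so $X$ and $Y$ intersect transversely. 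The only genuinely delicate step is this last bookkeeping: one must verify that the two closing arcs can be taken to meet at most once, so that the single intersection point lost in passing from $\hat X,\hat Y$ back to $X,Y$ is absorbed by the slack in the hypothesis. The identification $[\hat X]=w(X)$ and the algebraic-versus-geometric inequality are standard.
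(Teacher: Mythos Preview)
Your proof is correct and takes a genuinely different route from the paper's. The paper argues by symmetry: it first treats the normalized case $(p_X,q_X)=(1,0)$, observes directly that an embedded curve with $|q_Y|\ge 2$ must cross a $(1,0)$-curve (while $|q_Y|=1$ may fail when both are intervals), and then transports this to arbitrary $(p_X,q_X)$ by the element of $\SL(2,\ZZ)$ sending $(1,0)\mapsto(p_X,q_X)$. Your argument instead caps the boundary, compares algebraic to geometric intersection on the closed torus, and accounts for the discrepancy by the single possible crossing of the two closing arcs in the capping disk.

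What your approach buys is a transparent explanation of the constant $2$: it is exactly the slack needed to absorb the at-most-one intersection of the closing arcs, and the argument goes through uniformly without splitting into circle/interval cases or invoking the mapping class group action. It also does not implicitly use primitivity of $(p_X,q_X)$, which the $\SL(2,\ZZ)$ reduction does. What the paper's approach buys is brevity and a more hands-on picture of why intervals with $|q_Y|=1$ can slip past---it makes the failure mode at the threshold visible. Both are short; yours is the more robust of the two, and the self-identified delicate step (two proper arcs in a disk meet at most once after isotopy rel endpoints) is indeed elementary.
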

\begin{proof}
If $X$ is a circle with $(p_X,q_X) = (1,0)$, then any $Y$ with $|q_Y| \ge 1$ intersects $X$ transversely; if $X$ is an embedded interval with $(p_X,q_X) = (1,0)$, then circles with $|q_Y| \ge 1$ intersect $X$ transversely but intervals with $|q_Y| = 1$ may not. On the other hand, winding more times around the meridian by requiring $|q_Y| \ge 2$ necessarily causes a transverse intersection. Applying the right element of $\SL(2,\ZZ)$ that sends $(1,0) \mapsto (p_X,q_X)$ gives the statement of the lemma.
\end{proof}

The following lemma gives an existence result for a certain kind of stable object.
\begin{lemma}
Let $\sigma \in \Stab(\cD)$ be a stability condition on $\cD = \cF(T^*_N)$. Then there is some stable object represented by an embedded interval with nonzero winding and ends at different marked boundaries.
\end{lemma}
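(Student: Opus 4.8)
The plan is to argue by contradiction, combining the winding homomorphism with the non‑crossing and chain‑of‑stable‑intervals results of Section \ref{sec:lemmas}. Throughout, write $\cD=\cF(T^*_{n,(0,0)})$ and let $w\colon K_0(\cD)\to\ZZ^2=\mathrm{Span}([L],[M])$ be the winding map, which is a group homomorphism (cf. the previous subsection). Recall from the proof of the $w$‑lemma above that a loop, i.e. an embedded interval with both ends on a single marked interval, has $K_0$‑class $p[L]+q[M]$ where $(p,q)$ is its winding, the same class as a simple closed curve of winding $(p,q)$; in particular such classes lie in $\mathrm{Span}([L],[M])$. Assume, for contradiction, that no stable object of $\cD$ is an embedded interval with nonzero winding and ends on two distinct marked intervals.

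\emph{Step 1: there is a stable loop of nonzero (necessarily primitive) winding.} Choose distinct marked intervals $M_1\neq M_2$ (possible since $n\ge 2$) and an embedded arc $\alpha$ from $M_1$ to $M_2$ winding once around the longitude, so $w([\alpha])=(1,0)$. By Proposition \ref{prop:intervalchain}, $\alpha$ (an interval object) has a \textsc{cosi} decomposition $\alpha_1,\dots,\alpha_s$ into stable intervals; if $s=1$ then $\alpha$ itself is stable with distinct ends and nonzero winding, contradicting our assumption, so $s\ge 2$. Since $[\alpha]=\sum_i[\alpha_i]$ and $w$ is additive, some $\alpha_{i_0}$ has $w([\alpha_{i_0}])\neq 0$; by hypothesis $\alpha_{i_0}$ is then a loop, say at a marked interval $M_c$ with winding $(p_0,q_0)\neq(0,0)$. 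Since closing up the embedded arc $\gamma_{\alpha_{i_0}}$ along $M_c$ produces an embedded circle, $(p_0,q_0)$ is primitive. Set $Y:=\alpha_{i_0}$, of phase $\phi_Y$.

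\emph{Step 2: the associated simple circle is not semistable.} Let $\gamma_C$ be the embedded closed curve obtained by closing $\gamma_Y$ along $M_c$ and let $C$ be the rank‑one circle object supported on $\gamma_C$; then $w([C])=(p_0,q_0)$ and, by the $K_0$‑description recalled above, $[C]=[Y]$. If $C$ were semistable then $Z(C)=Z(Y)=m_Ye^{i\pi\phi_Y}$, so $C$ lies in the heart $\cP_{\phi_Y}$ (up to an even shift, which does not change the class), which by local finiteness is a finite‑length abelian category. But $[C]=[Y]$ and $Y$ is simple there, so $C$ would have the single composition factor $Y$ and hence $C\cong Y$; this contradicts Theorem \ref{thm:geometricity}, since $C$ is a circle object and $Y$ an interval object. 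Hence $C$ is not semistable, and by Proposition \ref{prop:intervalchain} (circle case) $C$ admits a \textsc{cosi} decomposition $C_1,\dots,C_t$ with $t\ge 2$, whose supports, concatenated cyclically, are isotopic to $\gamma_C$.

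\emph{Step 3: deriving the contradiction.} The arcs $\gamma_{C_i}$ are pairwise non‑crossing (being disjoint sub‑arcs of the embedded $\gamma_C$), so their windings have pairwise vanishing algebraic intersection with $(p_0,q_0)$ and are therefore integer multiples of $(p_0,q_0)$; as they sum to $(p_0,q_0)$ and loop‑pieces have primitive winding, exactly one piece $C_{i^*}$ has nonzero winding, equal to $(p_0,q_0)$. If $C_{i^*}$ has distinct ends we are done, contradicting our assumption; so $C_{i^*}$ is a loop, and $\gamma_{C_{i^*}}$, being a loop sub‑arc of $\gamma_C$ of the full winding, is all of $\gamma_C$ except a sub‑arc $\delta$ whose closure $\delta'$ is null‑homologous in $H_1(T^*)$. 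Since $\delta'$ is an embedded closed curve, it is either null‑homotopic or boundary‑parallel; the latter is impossible, because then $\gamma_C$ would be a simple closed curve of winding $(p_0,q_0)$ that also loops the puncture, which cannot be embedded (there is a unique isotopy class of simple closed curve of given primitive winding on $T^*$). Hence $\delta$ is a null‑homotopic arc, $[\delta]=0$, and it is tiled by the remaining stable pieces $C_j$ ($j\neq i^*$), so $\sum_{j\neq i^*}[C_j]=0$ with each $[C_j]\neq 0$. The plan here is to rule this out: if $\delta$ is inessential it cannot be a nontrivial concatenation of essential (admissible) arcs, so there are no such $C_j$ and $t=1$, a contradiction; the remaining possibility, that $\delta$ meets other marked intervals and is tiled by several essential arcs, is eliminated by a further application of Lemma \ref{lemma:noncrossing} — if $M_c$ fails to be a break point of $C$'s decomposition, recutting $\gamma_C$ at $M_c$ realizes $Y$ as an iterated extension of $\ge 2$ objects lying in $\langle C_1,\dots,C_t\rangle$, contradicting that $Y$ is stable (so $\HNEnv(Y)=\langle Y\rangle$ with $Y$ simple), and if some $C_{j}$ is again a loop of nonzero winding one replaces $Y$ by it and repeats; since there are only $n$ marked intervals this recursion must revisit one, forcing the complementary arc to be inessential and producing the contradiction.

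\textbf{The main obstacle} is the bookkeeping in Step 3: making the final elimination airtight requires handling self‑overlapping chains — exactly the situation for which Lemma \ref{lemma:noncrossing} (and the essential‑uniqueness of \textsc{cosi} decompositions) is the key tool — and carefully distinguishing when the complementary sub‑arc $\delta$ is genuinely inessential versus when it is an essential chain through further marked intervals. Everything else (the winding additivity, the $K_0$‑identification $[C]=[Y]$, the finite‑length argument ruling out semistability of $C$) is routine given the results already established.
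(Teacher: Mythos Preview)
Your approach is genuinely different from the paper's, and it has real gaps in Steps~2 and~3.

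In Step~2 you argue that if $C$ is semistable then $[C]=[Y]$ in $K_0(\cD)$ together with $Y$ simple in $\cP_{\phi_Y}$ forces $C\cong Y$. This needs the map $K_0(\cP_{\phi_Y})\to K_0(\cD)$ to be injective: in a finite-length abelian category the Grothendieck group is free on the simples, but nothing prevents several simples in $\cP_{\phi_Y}$ from having $K_0(\cD)$-classes summing to $[Y]$. So from $[C]=[Y]$ alone you cannot read off the Jordan--H\"older factors of $C$.

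Step~3 has the more serious problem. You assert that the stable pieces $\gamma_{C_i}$ in the \textsc{cosi} decomposition of $C$ are ``disjoint sub-arcs of the embedded $\gamma_C$'' and hence have windings proportional to $(p_0,q_0)$. A \textsc{cosi} decomposition does not give that: it gives embedded arcs whose concatenation (via boundary walks) is \emph{isotopic} to $\gamma_C$, but the individual $\gamma_{C_i}$ in minimal position need not be disjoint from one another or from any fixed representative of $\gamma_C$. The circle $C$ does not satisfy the separation hypothesis of Lemma~\ref{lemma:simplechain}, so the chain need not be simple and the paper's ``non-overlapping'' machinery does not apply. Even granting disjointness, the conclusion fails: on the punctured torus a $(1,0)$-loop and a $(0,1)$-loop based at different marked intervals can be made disjoint (together they cut $T^*$ into a disk), so disjoint arcs can have non-proportional windings. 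The recursion you sketch at the end inherits these problems and is too vague to evaluate.

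For comparison, the paper argues differently: under the contradiction hypothesis it first shows (by a degree/grading computation) that extensions between two loops of type~(2) at the same marked interval cannot occur in an HN filtration, forcing \emph{every} loop of primitive winding to be semistable. It then exhibits a concrete configuration --- a stable $(1,0)$-loop $J$ and a non-semistable $(0,1)$-interval $J'$ with distinct ends --- and observes that the \textsc{cosi} decomposition of $J'$ must contain a zero-winding piece essentially crossing $J$, producing a polygon forbidden by Lemma~\ref{lemma:noncrossing}. That argument never needs to control the global shape of the \textsc{cosi} decomposition of a circle, which is exactly where your approach runs into trouble.
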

\begin{proof}
Suppose otherwise; by the classification of embedded curves, there are three remaining possibilities for a stable object:
\begin{enumerate}
    \item A semistable circle with winding $\neq (0,0)$,
    \item A semistable interval with winding $\neq (0,0)$ both ends on the same marked boundary,
    \item A semistable interval with $(0,0)$ winding and ends possibly on different marked boundaries.
\end{enumerate}

Two objects of type (2) ending on the same marked boundary $M$ will have extension morphisms between them, but we argue that if they have different classes in $K_0(\cD)$ these morphisms cannot appear in the HN decomposition of any object. By keeping track of the grading with respect to the $(0,0)$ grading on the torus, we note that if we grade the intervals such that $\deg(f) = 1$, then $\deg(g) = 0$. Thus $\phi_B \le \phi_A$ and by genericity $\phi_B \neq \phi_A$ since $[A] \neq [B]$, so $f \in \Ext^1(A,B)$ cannot appear in the HN decomposition.

Thus every interval with winding $(p,q)$, $\gcd(p,q) = 1$ and ends on the same marked boundary must be semistable, since there is no way to express it as a valid extension of the objects above. We argue that this is impossible in a generic stability condition. Take for example the semistable interval $J$ with winding $(1,0)$ and both ends on some marked boundary $M$, and consider another embedded interval $J'$ with winding $(0,1)$, with ends on $M$ and $M' \neq M$.
\begin{figure}[h]
    \centering
    \includegraphics[width=\textwidth]{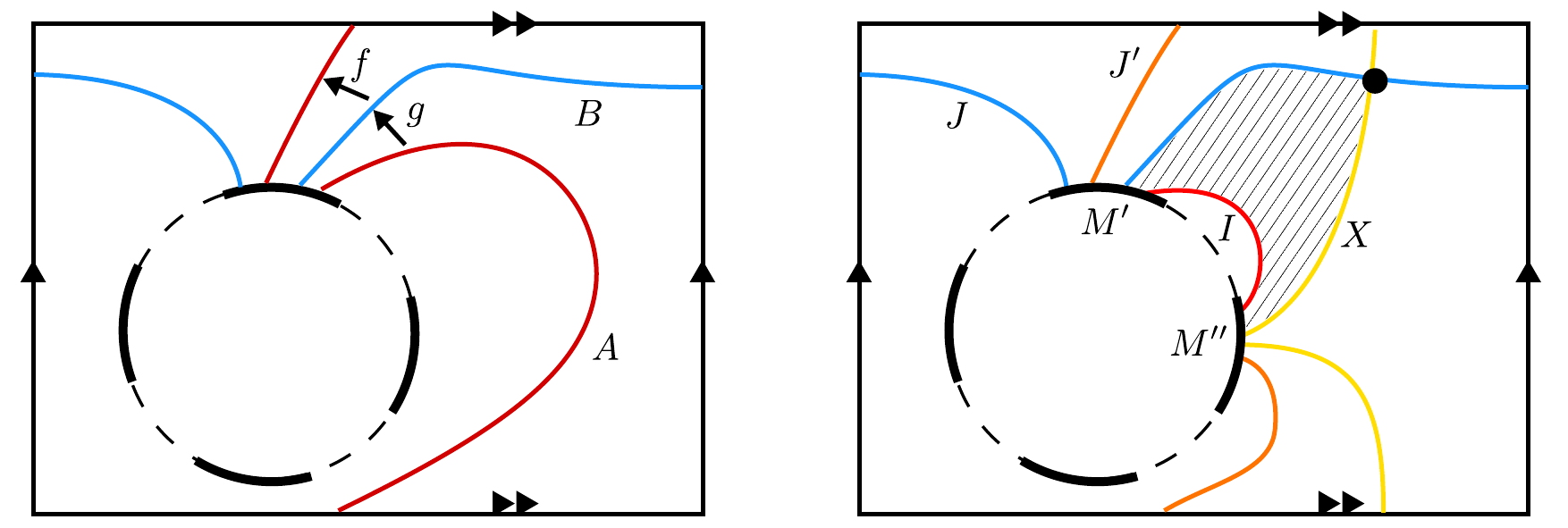}
    \caption{Left: two stable objects $A,B$ of type (2). Right: one stable object $J$ of type (2) and one (not semistable) embedded interval $J'$, in whose decomposition some object $I$ of type (3) must appear, causing a prohibited polygon (shaded) to appear.}
\end{figure}
By assumption, $J'$ is not semistable so it must have a chain-of-intervals decomposition with at least two distinct phases; consider the interval objects in this chain that end at $M$; since the other end of the chain is at another marked boundary, among these objects there must be at least one semistable interval $J'_0$ of type (3) above (ie. with zero winding). We see immediately that such an interval has an essential transversal intersection with $J$; therefore the rest of the chain (after $J'_0$) must cross $J$ as well. But this configuration is prohibited by Lemma \ref{lemma:noncrossing}.

So there must be some semistable interval object $I'$ with nonzero winding and ends on different marked boundary intervals. If $I$ is not stable, consider its Jordan-H\"older filtration into stable objects; among these there must be one stable interval object $I$ connecting two distinct marked boundaries.
\end{proof}

Using the lemmas above, in the following calculation we show that an adequately generic stability condition does not have dense phases in $S^1$.
\begin{lemma}
Let $\sigma \in \Stab(\cD)$ be a stability condition on $\cD = \cF(T^*_N)$. Then possibly after a infinitesimal deformation the set of stable phases $\Phi$ has a gap, ie. $S^1 \setminus \Phi$ contains an open interval.
\end{lemma}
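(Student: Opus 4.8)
The plan is to prove that density of the set of stable phases $\Phi\subset S^1$ is impossible for a \emph{generic} stability condition, so that an arbitrarily small perturbation of $\sigma$ (the ``infinitesimal deformation'' in the statement) already has a gap. First I would pass to such a generic $\sigma$: since $\Stab(\cD)$ is open I may perturb so that $Z$ is injective on $\mathrm{Span}([L],[M])\subset K_0(\cD)$ — hence $\arg\circ Z$ induces a homeomorphism from the space of winding directions $\PP^1(\RR)$ onto $S^1$ — and, using that $[I]-w([I])=[E_1]+\cdots+[E_j]$ takes only finitely many (nonzero) values as $I$ ranges over interval objects with distinct marked endpoints, I may further arrange that $\arg Z([I])\neq\arg Z(w([I]))$ for every such $I$ (each failure is a real–codimension one condition, and the union over $j$ and over $w([I])=p[L]+q[M]$ is meager). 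All of this is compatible with assuming, towards a contradiction, that $\Phi$ is dense.

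\textbf{From dense phases to dense winding directions.} Assume $\Phi$ is dense. By Corollary \ref{cor:wphases} the set $\Phi_w=\{\arg Z(w([X]))\mid X\text{ stable}\}$ is dense; since $\arg Z(w([X]))$ depends only on the winding direction $(p_X:q_X)$, the set of winding directions of stable objects is dense in $\PP^1(\RR)$. Because there are only finitely many embedded objects of bounded winding (and, by the lemma preceding Corollary \ref{cor:wphases}, the monodromy of a circle does not affect $\arg Z$), for each $d\in\PP^1(\RR)$ there is a sequence of pairwise distinct stable objects with winding directions tending to $d$ and with $\lVert[X]\rVert\to\infty$; moreover, since the directions with $|p q_0-q p_0|\le 1$ form a nowhere dense subset, I would extract such sequences with $|pq_0-qp_0|\ge 2$ as well (handling, by a short separate argument, the degenerate scenario in which \emph{all} stable objects near a given direction lie on the two ``Farey lines'' of that direction — one then replaces the reference object below by a suitable stable interval among them).

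\textbf{The pinning argument.} Fix a stable interval object $I$ with winding $(p_0,q_0)\neq(0,0)$ and distinct endpoints, which exists by the preceding lemma; applying an element of $\SL(2,\ZZ)$ I may take $(p_0,q_0)=(1,0)$, and write $\phi_I$ for its phase and $\psi_0=\arg Z(w([I]))=\arg Z([L])$, so that $\phi_I-\psi_0\notin\ZZ$ by the genericity arranged above. For any stable $X$ whose winding $(p,q)$ has $|q|\ge 2$, the transversality lemma produces a transverse crossing $r$ of $I$ and $X$, contributing (as in the proof of Proposition \ref{prop:stableobjects}) a nonzero class to $\Ext^{i_r}(I,X)$ and to $\Ext^{1-i_r}(X,I)$; since $I,X$ are stable and cannot be shifts of one another (their $[L]$– or $[M]$–coefficients differ, as $q\neq 0$), these force $\phi_I<\phi_X+i_r$ and $\phi_X<\phi_I+1-i_r$, i.e. $\phi_X\in(\phi_I-i_r,\phi_I-i_r+1)$. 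The geometric input is that on the $(0,0)$–torus the line field is a constant field, so the degree of a transverse crossing of two geodesic segments depends only on their directions; writing $i(d)$ for this degree, $i(\cdot)$ is locally constant on $\PP^1(\RR)$ away from finitely many directions and jumps by exactly $\pm1$ across $d_0=(1:0)$, where $X$ becomes parallel to $I$. Taking the sequences of the previous paragraph approaching $d_0$ from each side (with $|q|\ge2$ along the way), the displayed bound together with $\lim_k\arg Z(w([X_k]))=\lim_k\arg Z([X_k])$ yields, after passing to subsequences, that $\psi_0$ lies simultaneously in two adjacent closed unit intervals $[\phi_I-i,\phi_I-i+1]$ and $[\phi_I-i-1,\phi_I-i]$; their intersection is the single point $\psi_0=\phi_I-i\in\phi_I-\ZZ$, contradicting $\phi_I-\psi_0\notin\ZZ$. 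Hence $\Phi$ is not dense, so $S^1\setminus\Phi$ contains an open interval.

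\textbf{Main obstacle.} The delicate point is the claim about crossing degrees in the third paragraph: one must track the graded–isotopy class of a ``straight'' interval object as its winding direction varies — in particular the shift–monodromy incurred when the direction is rotated a full turn — and verify that the bulk of the transverse crossings with the fixed $I$ all carry the common degree $i(d)$, with the stated jump at $d_0$ and only a bounded number of exceptional crossings near the endpoints and the puncture. A secondary bookkeeping issue is to fix the shift representatives of the $X_k$ so that the phases used in the limiting argument are exactly those appearing in $\cP$ (the bound $\phi_X\in(\phi_I-i_r,\phi_I-i_r+1)$ is in fact invariant under simultaneous shift, which is what makes this harmless), and to dispatch the degenerate ``Farey line'' scenario flagged in the second paragraph.
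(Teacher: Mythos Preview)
Your strategy matches the paper's closely: perturb so that the stable interval $I$ (with winding $(1,0)$) has $\arg Z(I)\neq\arg Z(L)$; assume $\Phi$ dense; use density of $\Phi_w$ to produce stable objects whose winding directions approach that of $I$ from both sides; and extract from the crossing degrees with $I$ a contradiction with genericity.

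The divergence is precisely at the obstacle you flag. You try to define a degree function $i(d)$ on winding directions and argue it jumps by $\pm1$ at $d_0$; this requires tracking graded representatives through a family, and as you note, the shift ambiguity and the monodromy around $\PP^1(\RR)$ must be handled simultaneously. The paper circumvents this entirely. It constructs the stable objects $\{X_k\}_{k\in\ZZ}$ \emph{iteratively}, choosing each new $X_{\pm(n+1)}$ inside the nested region $\cX_{1,0}\cap\cX_{p_{-n},q_{-n}}\cap\cdots\cap\cX_{p_n,q_n}$, so that \emph{consecutive} $X_k,X_{k+1}$ also meet transversely. The three curves $X_k,X_{k+1},I$ then bound a triangle on the surface, giving a purely algebraic relation $d_k=d_{k+1}+f_k$ between the crossing degrees $d_k=\deg(X_k\cap I)$ and $f_k=\deg(X_k\cap X_{k+1})$. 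After normalizing all $\phi_k\in(0,1)$, the stability inequalities $\phi_k\le\phi_{k+1}+f_k\le\phi_k+1$ force every $f_k=0$, hence all $d_k$ are equal; the two limits $\phi_k\to 0$ and $\phi_k\to 1$ then squeeze $\phi_I$ to $0=\arg Z(L)/\pi$, contradicting genericity.

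So the paper replaces your analytic claim about $i(d)$ with a chain of finite triangle identities. Your version could plausibly be completed---it is essentially the Maslov-index computation for a constant line field---but the iterative construction with mutual transversality, and the resulting triangle relations, are what make the paper's proof go through without the bookkeeping you identify as the main obstacle; this is the substantive idea your proposal is missing.
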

\begin{proof}
By the previous lemma, there must be some stable interval $I$ with nontrivial winding and ends on distinct marked boundary components. Applying an appropriate $\SL(2,\ZZ)$ automorphism, we can assume this stable interval $I$ has winding numbers $(1,0)$, ie. winds around the longitude once. Let $L$ be the rank one trivial circle object also with winding number $(1,0)$.

The subset of $\Stab(\cD)$ where $I$ is stable is open by standard results \citep{bridgeland2015quadratic} so there is a neighborhood $U$ of $\sigma$ where $I$ is stable. From the description of $K_0(\cD)$ we know that $[I] \neq [L]$, so $Z(I),Z(L)$ are not parallel in the complement of a codimension one wall. Thus, possibly after an infinitesimal deformation inside of $U$, we can guarantee that $I$ is stable and $Z(I),Z(L)$ have different arguments.

Consider the trivial rank one objects $L$ and $M$ (which may or may not be stable) supported along the longitude and meridian, with gradings so that
\[ \deg(M,I) = \deg(M,L) = 0 \]
and for simplicity let us rotate and scale the stability condition so that $Z(L) = 1$. Since $[L] \neq [M]$ and we fixed $Z(L) \in \RR$, for a generic stability condition we must have $Z(M) \notin \RR$. Let us treat the case $\Im(Z(M)) > 0$ first; the other case follows from an analogous argument.

Suppose now that $\Phi$ is dense in $S^1$; by Lemma \ref{cor:wphases} ,$\Phi_w$ is dense too. For a choice of winding numbers $(p,q)$, let us denote by
\[ \cX_{p,q} = \{(p',q') \mid q > 0, |p q' - q p'| \ge 2 \} \subset \ZZ^2\]
the set of winding numbers whose objects necessarily intersect transversely with objects of winding number $(p,q)$, with positive winding around the meridian.

The set $\cX_{1,0}$ corresponding to $I$ is given by $q \ge 2$; so at infinity $\cX_{1,0}$ approaches a sector (with angle $\pi$). Remember that for any $N$ there are only finitely many indecomposable objects with winding satisfying $p^2 + q^2 \le N$. By density of $\Phi_w$ we can find some stable object $X_0$ with winding numbers $(p_0,q_0) \in \cX_{1,0}$.

\begin{figure}[h]
    \centering
    \includegraphics[width=\textwidth]{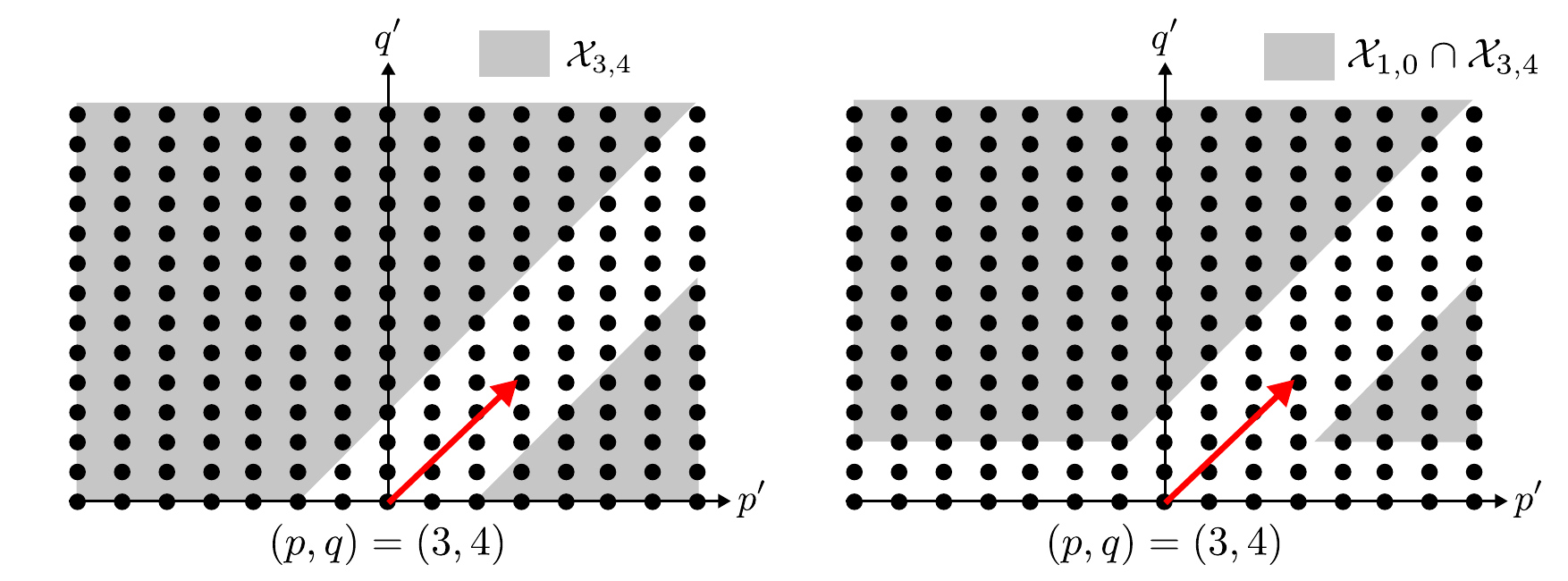}
    \caption{Left: the set $\cX_{p,q}$ for $(p,q) = (3,4)$ is composed of the $\ZZ^2$ dots inside of the shaded area. Note that all these sets have two parts, each of which at infinity approaches a sector with finite angle. Right: after the first iteration we consider $\cX_{1,0} \cap \cX_{3,4}$. Note that after any number of iterations the each side of this set still approaches a sector with finite angle at infinity.}
\end{figure}

Consider now the set $\cX_{1,0} \cap \cX_{p_0,q_0}$; this set is composed of lattice points inside of two components of a subset of $\RR \times \RR_+$. At infinity, the right component approaches a sector with angle spanning $(0,\arctan(q_0/p_0))$ and the left component approaches a sector at $(\arctan(q_0/p_0),\pi)$. Note that here we are choosing $\arctan$ to be valued between $0$ and $\pi$. Using density, let us pick some object $X_1$ with $(p_1,q_1)$ in the right component, and $X_{-1}$ with $(p_{-1},q_{-1})$ in the left component. Note that since the sectors span positive angles we can pick these objects with $q_1, q_{-1}$ arbitrarily large; since
\[ |\Im(Z(X)) - \Im(Z(w(X))| = |\Im(Z(X)) - q_X \Im(Z(M))|\]
is bounded for any indecomposable object $X$ we can also guarantee that $\Im(Z(X_1))$ and $\Im(Z(X_{-1}))$ are positive.

We would like to iterate this process; at the $n$th step we will have objects $\{X_k\}_{-n \le k \le n}$ with winding numbers $(p_k, q_k)$ running clockwise in angle, ie. $0 \le \arctan(q_k/p_k) \le \pi$ is decreasing. The set
\[ \cX_{1,0} \cap \cX_{p_{-n},q_{-n}} \cap \dots \cap \cX_{p_0,q_0} \cap \dots \cap \cX_{p_n,p_q}  \]
at infinity approaches two sectors at $(0,\arctan(q_n/p_n))$ and $(\arctan(q_{-n}, p_{-n}), \pi)$; since each of these sectors has nonzero angle we can use density and repeat the process by picking stable objects $X_{-n-1}, X_{n+1}$ in each sector, also both with central charge with positive imaginary part. Also from density of $\Phi$ it follows that we can pick objects such that
\[ \lim_{k\to +\infty} \arctan(q_k/p_k)) = 0, \quad \lim_{k \to -\infty} \arctan(q_k/p_k)) = \pi. \]

Iterating to infinity we get stable objects $\dots,X_{-1},X_0,X_1,\dots$ all mutually transversely intersecting, that also transversely intersect $I$ as well. Taking appropriate shifts we can guarantee that all these objects have phases $0 \le \phi_k \le 1$. We then get that
\[ \lim_{k\to +\infty} \phi_k = 0, \quad \lim_{k \to -\infty} \phi_k = 1. \]

\begin{figure}[h]
    \centering
    \includegraphics[width=\textwidth]{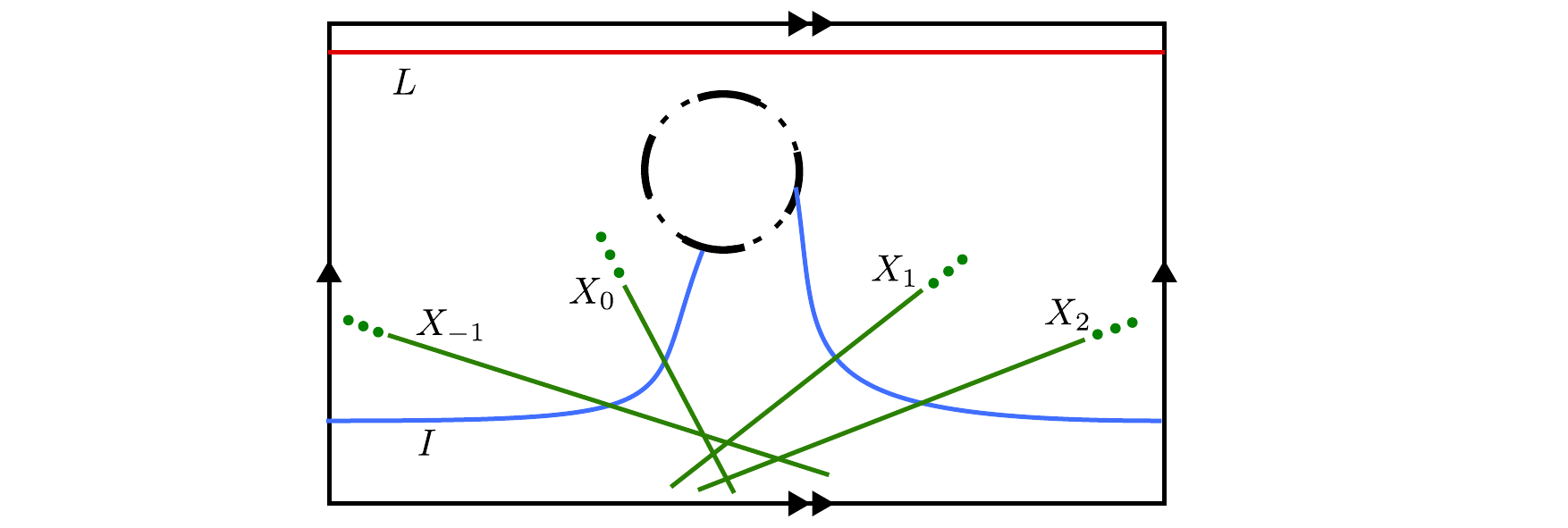}
    \caption{Stable circle $L$ and stable interval $I$ with ends on different boundary components, together with transversely intersecting stable objects $X_i, i \in \ZZ$.}
\end{figure}

Let $d_k$ be the degree of the intersection between $X_k$ and $I$, and $f_k$ be the degree of the intersection between $X_k$ and $X_{k+1}$. Let us shift $I$ such that $d_0 = -1$. The triangles with sides $X_k,X_{k+1},I$ give the relations $d_k = d_{k+1} + f_k$. Since all the objects are stable we have inequalities for the phases
\[ \phi_k \le \phi_I + d_k \le \phi_k + 1, \qquad \phi_k \le \phi_{k+1} + f_k \le \phi_k + 1. \]
But we chose the shifts such that all the $\phi_k$ are in $(0,1)$, so we must have $f_k = 0$ for all $k$, and therefore $d_k = -1$ for all $k$, so $\phi_k -1 \le \phi_I \le \phi_k$.

Taking the two limits $k\to +\infty$ and $k \to -\infty$ gives us $\phi_I = \phi_L = 0$ which contradicts the genericity of $\sigma$.
\end{proof}

\section{Conclusions}\label{sec:future}
The calculations for the three base cases above show that the every generic stability condition on those categories is an HKK stability condition; because of \citep[Theorem 5.3]{HKK} the image of the moduli of HKK stability conditions in $\Stab(\cD)$ is an union of connected components, so for all these cases there are only HKK stability conditions.

Together with the inductive step given by Lemma \ref{lem:inductivestep}, this proves Theorem \ref{thm:noexotic}: every stability condition on a graded surface $\Sigma$ is an HKK stability condition, ie. given by a quadratic differential with essential singularities.

\subsection{Future directions}

It is likely that this kind of construction could be extended beyond Fukaya categories of surfaces; this motivates many possible directions of future study. One obvious such direction is towards extending the definition of relative stability conditions to wrapped Fukaya categories of higher-dimensional symplectic manifolds, which appear in the work of Abouzaid \citep{abouzaid2010open,abouzaid2012wrapped} and others. Kontsevich \citep{kontsevich1994homological} conjectured that the wrapped Fukaya category of a Weinstein manifold in any dimension can be calculated from a cosheaf of categories on its skeleton; this has been recently proven by the work of Ganatra, Pardon and Shende \citep{ganatra2018microlocal,ganatra2018structural,ganatra2017covariantly}. The description can be made more explicit by working with constructible sheaves \citep{nadler2009constructible}, and the work of Nadler \citep{nadler2017arboreal,nadler2016wrapped} furnishes combinatorial models for these cosheaves of categories. This particular model applies to Weinstein manifolds with appropriately generic `arboreal' skeleta, and the local data are given by quiver representation categories. Comparing to the results of this paper, this model appears very suitable to the application of relative stability conditions, since the study of stability conditions on quiver representation categories is in general much simpler than on `more geometric' categories.

This paper also opens up the possibility of using these sheaf-theoretic techniques to address some questions about dynamics on surfaces; the work of Dimitrov, Haiden, Katzarkov and Kontsevich \citep{dimitrov2014dynamical,dimitrov2015bridgeland,dimitrov2016bridgeland} investigates the relation between dynamical systems on surfaces and stability conditions on their Fukaya category. The relation between Teichm\"uller theory and stability conditions was already noted in \citep{bridgeland2015quadratic,gaiotto2013wall}, and in particular there is a close relation between the set of stable phases $\Phi$ (which we analyze for some cases in Section \ref{sec:calculations}) and measures of dynamical entropy for categories. For now, the possible applications of our methods to such questions remain topic of current and future investigations.

\end{document}